\theoremstyle{plain}
\newtheorem{thm}[equation]{Theorem}
\newtheorem{cor}[equation]{Corollary}
\newtheorem{prop}[equation]{Proposition}
\newtheorem{lem}[equation]{Lemma}
\theoremstyle{definition}
\newtheorem{defn}[equation]{Definition}
\theoremstyle{remark}
\newtheorem{examp}[equation]{Example}
\newtheorem{rem}[equation]{Remark}
\newtheorem{assume}[equation]{Assumption}
\renewcommand{\subsection}{\@startsection{subsection}{2}{0pt}{-3ex
plus -1ex minus -0.2ex}{-2mm plus -0pt minus
-2pt}{\normalfont\bfseries}} \makeatother
\numberwithin{equation}{subsection}
\newcommand{\Lmod}[1]{#1\text{-}{\mathsf{mod}}}
\newcommand{\hdot}{{\:\raisebox{2pt}{\text{\circle*{1.5}}}}}
\newcommand{\idot}{{\:\raisebox{2pt}{\text{\circle*{1.5}}}}}
\DeclareMathOperator{\res}{{\mathrm{res}}}
\DeclareMathOperator{\sym}{\mathrm{Sym}}
\DeclareMathOperator{\Ker}{\mathrm{Ker}}
\DeclareMathOperator{\End}{\mathrm{End}}
\DeclareMathOperator{\rk}{\mathrm{rk}}
\DeclareMathOperator{\gr}{\mathrm{gr}}
\DeclareMathOperator{\hilb}{{\mathrm{Hilb}}}
\DeclareMathOperator{\rad}{\mathrm{rad}}
\DeclareMathOperator{\Lie}{\mathrm{Lie}}
\DeclareMathOperator{\Tr}{\mathrm{Tr}}
\DeclareMathOperator{\ggr}{{\widetilde{\mathrm{gr}}}}
\DeclareMathOperator{\Ad}{\mathrm{Ad}}
\DeclareMathOperator{\ad}{\mathrm{ad}}
\newcommand{\beq}{\begin{equation}\label}
\newcommand{\eeq}{\end{equation}}
\renewcommand{\o}{\otimes }
\newcommand{\bplus}{\mbox{$\bigoplus$}}
\newcommand{\cc}{{\scr C}}
\newcommand{\kap}{{\kappa}}
\newcommand{\cchi}{{\boldsymbol{\chi}}}
\renewcommand{\t}{{\mathfrak t}}
\newcommand{\wt}{\widetilde }
\newcommand{\zz}{{\mathfrak C}}
\newcommand{\xx}{{\mathfrak X}}
\newcommand{\erem}{\hfill$\lozenge$\end{rem}\vskip 3pt }
\newcommand{\scr}[1]{\mathscr{#1}}
\newcommand{\be}{\beta }
\newcommand{\gl}{{\mathfrak g\mathfrak l}}
\newcommand{\hodge}{{{\operatorname{Hodge}}}}
\newcommand{\dd}{{\mathscr{D}}}
\newcommand{\oo}{{\mathcal{O}}}
\renewcommand{\aa}{{\mathscr{A}}}
\renewcommand{\tt}{{\mathfrak T}}
\newcommand{\dcoh}{D^b_{\operatorname{coh}}}
\newcommand{\pa}{\partial }
\renewcommand{\gg}{{\mathfrak G}}
\newcommand{\hc}{{Harish-Chandra }}
\newcommand{\C}{\mathbb{C}}
\newcommand{\g}{\mathfrak{g}}
\renewcommand{\b}{\mathfrak{b}}
\newcommand{\norm}{{\operatorname{norm}}}
\newcommand{\pp}{{\scr P}}
\newcommand{\rr}{{\scr R}}
\newcommand{\bb}{{\scr B}}
\newcommand{\mm}{{\mathcal M}}
\newcommand{\T}{{\mathcal T}}
\newcommand{\inv}{^{-1}}
\newcommand{\Z}{{\mathbb Z}}
\newcommand{\en}{{\enspace}}
\newcommand{\sset}{\subset}
\newcommand{\mto}{\mapsto}
\newcommand{\N}{{\mathcal{N}}}
\newcommand{\tgg}{{\wt{\mathfrak G}}}
\newcommand{\mmu}{{\boldsymbol{\mu}}}
\renewcommand{\sl}{{\mathfrak s\mathfrak l}}
\newcommand{\biP}{{\mathcal P}}
\DeclareMathOperator{\Irr}{\mathrm{Irr}}
\newcommand{\eul}{\mathsf{e}\mathsf{u}}
\newcommand{\Schur}{\mathbf{S}}
\newcommand{\adh}{{\mathbf{h}}}
\newcommand{\ds}{{\dots}}
\newcommand{\pnps}{{principal nilpotent pairs }}
\newcommand{\pnp}{{principal nilpotent pair }}
\newcommand{\nw}{{\mathbf{nw}}} 
\newcommand{\se}{{\mathbf{se}}}
\newcommand{\vr}{{\varrho}}
\renewcommand{\cc}{{\C^\times\times\C^\times}}
\newcommand{\bu}{{\bullet}}
\newcommand{\s}{\mathfrak{S}}
\newcommand{\core}{\mathrm{core}}
\newcommand{\quo}{\mathrm{quo}}
\newcommand{\affs}{\widehat{\mathfrak{S}}}
\newcommand{\Pnone}{\mathcal{P}_{n-1}}
\newcommand{\Pn}{\mathcal{P}}
\newcommand{\id}{\mathrm{id}}
\newcommand{\Lsimp}{{\mathscr{L}}}
\newcommand{\IC}{IC}
\newcommand{\Oflat}{\widetilde{\theta}_{\xx}}
\newcommand{\xxnil}{\widetilde{\xx}^{\mathrm{nil}}}
\newcommand{\Hilb}{\hilb^n \C^2}
\newcommand{\taut}{\mathcal{V}}
\newcommand{\Nat}{\mathbb{N}}
\newcommand{\acoe}{\mathbf p}
\newcommand{\uf}{\mathfrak{u}}
\newcommand{\Q}{\mathbb{Q}}
\newcommand{\zf}{\mathfrak{z}}
\renewcommand{\be}{\mathbf{e}}
\newcommand{\Rbox}{\mathrm{R}}
\newcommand{\Cbox}{\mathrm{C}}
\newcommand{\Pprod}{\mathrm{c}}
\newcommand{\emptybox}{}
\newcommand{\Cs}{\C^\times}
\newcommand{\SYT}{\mathrm{SYT}}
\newcommand{\YT}{\mathrm{YT}}
\newcommand{\hcf}{\mathrm{h.c.f.} \,}
\newcommand{\Jset}{\mathcal{J}_n}
\newcommand{\Rcore}[2]{\mho^{#1}_{#2}}
\newcommand{\Gpoly}[2]{\mathcal{G}^{#1}_{#2}}
\newcommand{\Fqt}{\mathbb{F}}
\newcommand{\halfZ}{\frac{1}{2} \Z}
\newcommand{\bbe}{{\mathbf e}}
\newcommand{\zzr}{\zz^r}
\newcommand{\ggreg}{\gg^r}
\newcommand{\tggr}{\tgg^r}
\newcommand{\tggrA}{\tgg^{r,A}}
\newcommand{\zzrA}{\zz^{r,A}}
\newcommand{\ggregA}{\gg^{r,A}}
\newcommand{\mbf}{\mathbf}
\newcommand{\Snw}{\mathbf{S}_{\nw}}
\newcommand{\Sse}{\mathbf{S}_{\se}}
\newcommand{\one}{\mathbf{1}}
\newcommand{\rrnil}{{\rr}^{\mathrm{nil}}}
\newcommand{\zznil}{\widetilde{\zz}^{\mathrm{nil}}}
\newcommand{\mmnil}{{\mm^{\mathrm{nil}}}}
\newcommand{\gs}{\mathbf{d}}
\newcommand{\gss}{\mathbf{c}}
\newcommand{\Ozflat}{\widetilde{\theta}_{\zz}}
\newcommand{\Ch}[2]{\mathsf{H}_{#1,#2}}
\newcommand{\catO}[2]{\mathcal{O}_{#1,#2}}
\def\hp{\hphantom{x}}
\newcommand{\can}{\mathrm{can}}
\definecolor{gray1}{gray}{0.4}
\begin{document}
\title{{\large{\textsf{Some combinatorial identities related \\
\vskip 2pt
to commuting varieties and Hilbert schemes}}}}

\thanks{The first author would like to thank Bernard Leclerc and Iain Gordon for stimulating discussions. This work is the result of a visit of the first author to the University of Chicago, made possible through a Cecil-King travel scholarship. The first author would like to thank the London Mathematical Society and Cecil-King Foundation for this opportunity and the University of Chicago for its hospitality and support. The research of the first author was supported through the programme ``Oberwolfach Leibinz Fellows'' by the Mathematisches Forshungsinstitut Oberwolfach in 2010. The research of the second author was supported in part by the NSF award DMS-1001677.}

\author{Gwyn Bellamy and Victor Ginzburg\\\vskip0pt
{with an appendix by\\ Eliana Zoque}}
\address{\textbf{G.B.} : Max-Planck-Institut f\"ur Mathematik, Vivatsgasse 7, 53111 Bonn, Germany.}
\email{gwyn@mpim-bonn.mpg.de}
\address{\textbf{V.G.} : Department of Mathematics, University of Chicago,  Chicago, IL 
60637, USA.}
\email{ginzburg@math.uchicago.edu}
\address{\textbf{E.Z.} : Department of Mathematics, UC Riverside, USA.}
\email{elizoque@ucr.edu}

\begin{abstract} 
In this article we explore some of the combinatorial consequences of
recent results  relating the isospectral commuting variety and the Hilbert scheme of
 points in the plane.
\end{abstract}

\maketitle

\centerline{\sf Table of Contents}

$\hspace{40mm}$ {\footnotesize \parbox[t]{115mm}{
\hp${}_{}$\!\hp1.{ $\;\,$} {Introduction}\newline
\hp2.{ $\;\,$} {Bigraded $G$-character of $\rr$}\newline
\hp3.{ $\;\,$} {Principal nilpotent pairs}\newline
\hp4.{ $\;\,$} {Principal nilpotent pairs for $\mathfrak{gl}_n$}\newline
\hp5.{ $\;\,$} {Polygraph spaces}\newline
\hp6.{ $\;\,$} {Rational Cherednik algebras}\newline
\hp7.{ $\;\,$} {The Harish-Chandra module and Cherednik algebras}\newline
\hp8.{ $\;\,$} {Appendix by E. Zoque:} \newline
\hp${}_{}$ { $\;\;\,$} {$T$-orbits of \pnps}
}

\bigskip

\normalsize

\section{Introduction}

In this paper, we derive various combinatorial identities
by comparing bigraded characters of objects of four
different types. Fix an integer $n\geq 1$ and let
$\g=\gl_n$.

The objects of the first type
are associated with the {\em Procesi bundle} $\pp$
on the Hilbert scheme of $n$ points in the plane. The  Procesi bundle
was introduced and studied by M. Haiman in his work on
the $n!$ theorem \cite{HaimanJAMS}-\cite{HaimanSurvey}.
According to Haiman, the combinatorics of
the  Procesi bundle is closely related to Macdonald
polynomials. 

The objects of the second type
are associated with a certain remarkable
coherent sheaf $\rr$ on (the normalization of) the
commuting variety of the Lie algebra $\gl_n$, introduced by one of us in \cite{Iso}.
The sheaf $\rr$ has an interpretation
in terms of a certain double analogue of the Grothendieck-Springer
resolution, to be recalled in \S\ref{tx} below. Therefore, the
combinatorics of the coherent sheaf $\rr$ is related
to the geometry of the flag variety of $\gl_n$ and
to the standard
combinatorics of root systems.
Now, it was explained in  \cite{Iso} how 
one can use the sheaf $\rr$ to construct (a close cousin of) the 
Procesi bundle $\pp$. 
This yields,
on the combinatorial side, various identities relating
the combinatorics of the root system of  $\gl_n$  to 
Kostka-Macdonald
polynomials. 

The objects of the third type are associated with $D$-modules on 
the Lie algebra $\gl_n$. There is a distinguished
$D$-module $\mm$, called
the Harish-Chandra module, that has played a key
role in \cite{Iso}. The Harish-Chandra module
$\mm$ comes equipped with a canonical Hodge filtration
and the sheaf $\rr$ is obtained, essentially, as
an associated graded sheaf $\gr^\hodge\mm$.
Thus, the sheaves  $\rr$ and $\mm$ have closely
related character formulas. 

Finally, the objects of the fourth type  are associated with
representations of {\em rational Cherednik algebras}. Specifically, in section \ref{sec:rationalCherednik} we are interested in  character formulas for simple objects in the category $\oo$ for
rational Cherednik algebras of type $\mathbf A$. 
These  character formulas can be obtained,
thanks to the work of Rouquier \cite{RouquierQSchur},
from the multiplicity numbers of simple modules in standard modules for Schur algebras. 
The  latter  may be expressed in
terms of  Kazhdan-Lusztig type polynomials 
associated with canonical bases in a Fock space,
by the work of Leclerc-Thibon \cite{LT}-\cite{LTLR}
and Varagnolo-Vasserot \cite{VaragnoloVasserotDecomposition}.

On the other hand, many simple objects
of the category $\oo$ for the
rational Cherednik algebra can be constructed
by applying a version of the Hamiltonian reduction  functor
introduced by Calaque, Enriquez, and Etingof \cite{CEE}
to various direct summands of the Harish-Chandra module.
This relates the characters of the simple objects
to the characters of the Harish-Chandra module.
Thus, combining everything together, we obtain
in section \ref{sec:Dmod} an interesting identity that involves some Kazhdan-Lusztig  polynomials on one side and some Macdonald polynomials on the other side.\\

We outline in a bit more detail the main results of each section.\\

In section \ref{sec:bigraded} we introduce the sheaf $\rr$ on the normalization of the commuting variety and describe its $G \times W \times \cc$-equivariant structure. In the main result of this section, Theorem \ref{character_2}, we give a formula for the bigraded $G$-character of the global sections of $\rr$ in terms of certain degenerate Macdonald polynomials and a bivariate analogue of Kostant's partition function. This formula can be interpreted as a double analogue of Hesselink's formula for the graded $G$-character of $\C[\g]$. 

The fixed (up to conjugation by $G$) points in the commuting variety with respect to action of $\cc$ are the ``principle nilpotent pairs'' of that variety. The fiber of $\rr$ at each of these fixed points is a bigraded space of dimension $|W|$. The goal of section \ref{sec:pnp} is to give a formula, Theorem \ref{thm:pnpcharacter}, for the bigraded character of each of these special fibers. The proof of this theorem is an intricate calculation in equivariant $K$-theory, which uses in an essential way the alternative description given in \cite{Iso} of $\rr$ as a complex of sheaves on a doubled analogue of the Grothendieck-Springer resolution. The results of sections \ref{sec:bigraded} and \ref{sec:pnp} are valid for any connected complex reductive group $G$.   

In section \ref{sec:glnpnp} we show that it is possible to give an explicit combinatorial expression for the formula of Theorem \ref{thm:pnpcharacter} when $G = GL_n$. In this case it is known from \cite{Iso} and \cite{GordonMacdonald} that the fibers of the sheaf $\rr$ at the principle nilpotent pairs are isomorphic to the fibers of a $\cc$-equivariant sheaf, closely related to the Procesi bundle, at the fixed points of the Hilbert scheme. Therefore the bigraded character of these fibers is also given by a recursive formula of Garsia and Haiman, based on the Pieri rules for transformed Macdonald polynomials. We show by direct computation that our combinatorial expression is equivalent to Garsia and Haiman's formula.

This remarkable relationship between the sheaf $\rr$ and the Procesi bundle on the Hilbert scheme is exploited in section \ref{sec:polygraph} in order to describe more completely the full $G \times W \times \cc$-equivariant structure of $\rr$. For almost all irreducible $G$-representations $V_\mu$, we show in Theorem \ref{thm:Pone} that the bigraded $W$-character of the $V_\mu$-isotypic component of $\rr$ is expressible in terms of transformed Macdonald polynomials. Corollary \ref{cor:zznormchar} gives a similar formula for the $G$-isotypic components of the normalized commuting variety.\\

In section \ref{sec:rationalCherednik}, we change tack and turn our attention to the graded character of the simple modules for the rational Cherednik algebra of type $\mathbf A$. As explained above, we use work of Rouquier, Leclerc-Thibon and Varagnolo-Vasserot to calculate the graded character of these modules. In (\ref{eq:definitionGpoly}), we introduce a class of rational functions $\Gpoly{k}{n}(\lambda,\nu;t)$, defined in terms of Littlewood-Richardson coefficients and $(q,t)$-Kostka polynomials, and show that these functions give the graded $\s_m$-character of a large class of simple modules, Proposition \ref{prop:gpoly}. In this section we also describe the Calaque-Enriquez-Etingof functor which relates equivariant $D$-modules supported on the nilpotent cone to simple modules for the rational Cherednik algebra.

The Calaque-Enriquez-Etingof functor allows us to interpret the
character formulas given in section \ref{sec:rationalCherednik} for
simple modules of the rational Cherednik algebra as the characters of
certain equivariant $D$-modules on the nilpotent cone. As noted above,
the graded character of $\rr$ is closely related to the character of the
\hc module $\mm$. This module is also, via 
a $D$-module  interpretation of the Springer correspondence,
 closely related to the simple, equivariant 
$D$-modules supported on the nilpotent cone, cf. \cite{HottaKashiwara}. 
Therefore, in section
\ref{sec:Dmod}, we compare the characters of these simple $D$-modules
derived from the character formulae of $\rr$ given in section
\ref{sec:polygraph} with the formulae given in terms of the rational
functions that were introduced in section
\ref{sec:rationalCherednik}. This produces (Theorem
\ref{thm:identities}) some interesting and rather mysterious
identities. Another consequence of this comparison is that one can
define a filtration on a large class of simple modules for the rational
Cherednik algebra such that the associated graded object is bigraded and
the bigraded $\s_m$-character, Proposition \ref{prop:rcasimple}, can be
expressed in terms of transformed Macdonald polynomials.\\

Results regarding the torus orbits of principal nilpotent pairs for $\mathfrak{gl}_n$ are given by E. Zoque in the appendix.

\section{Bigraded $G$-character of $\rr$}\label{sec:bigraded}

\subsection{}\label{sec:groupactions} Throughout the paper we take $G$ to be a connected complex reductive group with Lie algebra $\g$. Fix $T\sset G$, a maximal torus, and let $\t=\Lie T$ be the corresponding Cartan subalgebra of $\g$. Let $W$ be the Weyl group associated to $T \subset G$.

Put $\gg:=\g\times\g$. The {\em commuting scheme} $\zz$, of the Lie
algebra $\g$, is defined as the {\em scheme-theoretic} zero fiber of the
commutator map $\kap:\ \gg\to\g,$ $(x,y)\mto[x,y]$. Set-theoretically,
one has $\zz=\{(x,y)\in\gg\mid [x,y]=0\}$. The group $G$ acts on $\g$
via the adjoint action $G\ni g: x\mto \Ad g(x)$ and diagonally on
$\gg$. This makes $\zz$ a closed $G$-stable subscheme of $\gg$. We put
$\tt := \t \times \t$ and let the Weyl group $W$ act
diagonally. Restriction of polynomial functions on $\gg$ to $\tt$ gives rise to a map of algebras $\res : \C[\zz]^G \rightarrow \C[\tt]^ W$ (c.f. \cite[(1.3.1)]{Iso}). 

The \textit{isospectral commuting variety} is defined to be the reduced, closed subvariety
$$
\xx = \{ (x,t) \in \zz \times \tt \en|\en P(x) = (\res P)(t), \ \forall P \in \C[\zz] \}
$$
of $\zz \times \tt$. It is shown in \cite[Theorem 1.3.4]{Iso} that the
normalization $\xx_{\norm}$ of $\xx$ is a Cohen-Macaulay, Gorenstein
variety with trivial canonical bundle. A consequence of this is that
$\zz_{\norm}$ is also Cohen-Macaulay.

 Projection onto the first factor defines a map $\xx \rightarrow
 \zz$. This lifts (\cite[\S 1.4]{Iso}) to a finite morphism $p_{\norm} :
 \xx_{\norm} \rightarrow \zz_{\norm}$.
We define $\rr := (p_\norm)_{*} \ \mathcal{O}_{\xx_{\norm}}$,
a coherent sheaf on $\zz_{\norm}$.

The group $\Cs$ acts on $\g \times \t$ by dilations. Therefore, there is
also an action of $\cc$ on $\gg \times \tt = (\g \times \t) \times (\g
\times \t)$ by dilations. Since the action of $\cc$ commutes with the
action of $G \times W$ on $\gg \times \tt$, it is a $G \times W \times
\cc$-variety. Fix $H := G \times \cc$. The variety $\xx$ is $H \times
W$-stable and, as noted in \cite[\S 1.4]{Iso}, this induces an action of
$H \times W$ on the isospectral variety $\xx_{\norm}$.
 The morphism $p_\norm$ is $G \times W \times \cc$-equivariant,
 therefore $\rr$ is a $H \times W$-equivariant coherent sheaf on $\zz_{\norm}$. 

\subsection{An analogue of the Grothendieck-Springer resolution}\label{tx}
Let $\bb$ be the flag variety, the variety of all Borel subalgebras $\b\sset\g$. The following variety was introduced by the second author in \cite[\S 3.1]{Iso}: 
$$
\tgg:=\{(\b,x,y)\in\bb\times\g\times\g\mid
x,y\in\b\}.
$$
Projection onto the first factor makes $\tgg$ a sub vector bundle of the trivial vector bundle $\bb\times\gg\to\bb$. Let $\mathfrak{q} : \tgg \rightarrow \bb$ and $\mmu:\ \tgg\to \gg,\ (\b,x,y)\mto(x,y)$ denote the $G$-equivariant projections on to the first, respectively the second and third factors. Let the group $\Cs$ act by dilations on each Borel $\b$. Then there is an action of the group $\cc$ on the fibers $\b \times \b$ of $\mathfrak{q}$. This makes $\tgg$ a $H$-variety and the maps $\mathfrak{q}$ and $\mmu$ are $H$-equivariant.  

Let $\T$ denote the tangent  bundle on $\bb$, its fiber at the point
$\b$ is $\g / \b \simeq [\b,\b]^*$. For each $n \ge 0$, let $\aa_n :=
\wedge^n \mathfrak{q}^* \T$, a vector bundle on $\tgg$. By letting $\cc$
act by dilations on the fibers of $\T$, each $\aa_n$ is naturally a
$H$-equivariant sheaf. As explained in \cite[\S 3.4]{Iso}, the
commutator map can be used to define a differential $\partial_\bullet :
\aa_\bullet \rightarrow \aa_{\bullet - 1}$ so that $\aa := \bigoplus_{n
\ge 0} \aa_n$ is a sheaf of coherent DG
$\mathcal{O}_{\tgg}$-algebras. Let $u$ denote the composite map
$\zz_{\norm} \rightarrow \zz \hookrightarrow \gg$. Then one of the main
results of \cite{Iso}, Corollary 4.5.3, says that 
there is an
isomorphism of $H$-equivariant $\mathcal{O}_{\gg}$-modules
\beq{eq:isomainiso}
\mathscr{H}^0(R \mmu_* \aa) \simeq u_* \rr,
\eeq
and $\mathscr{H}^k(R \mmu_* \aa) = 0$ for all $k \neq 0$. 
In other words, in $\dcoh(\gg)$,
the bounded derived category of coherent sheaves,
one has $R \mmu_* \aa\simeq u_* \rr$. This implies that we have an equality 
\beq{eq:isomainisoK}
\mmu_* [\aa] = u_* [\rr]
\eeq
in the Grothendieck group $K^H(\gg)$ of $H$-equivariant coherent $\mathcal{O}_{\gg}$-modules.

\subsection{Degenerate Macdonald polynomials}\label{sub:degenMac}

The representation ring of $G$, respectively $W$, will be denoted $R(G)$, respectively $R(W)$. Let $R_+\sset \t^*$ denote the set of weights of the adjoint $\t$-action on the vector space $[\b,\b]^* \simeq \g / \b$, and let $\ell(-)$ denote the length function on the Weyl group $W$. Denote by $P$ the lattice of integral weights in $\t^*$ and by $Q$ the root lattice. The set of dominant weights (i.e. those weights $\lambda \in P$ such that $\langle \lambda , \alpha^\vee \rangle \ge 0$ for all $\alpha \in R^+$) will be denoted $P^+$ and we set $Q^+ = Q \cap P^+$. The semi-group in $Q$ generated by the positive roots $R_+$ is denoted $Q_+$. Let $\rho$ denote the half-sum of all positive roots. We denote by $J$ the anti-symmetrization map on the ring $\Z[P]$, which is defined by $J(e^\lambda) := \sum_{w \in W} (-1)^{\ell(w)} e^{w(\lambda)}$. For $\mu \in P^+$, the class in $R(G)$ of the irreducible, finite dimensional $G$-module with highest weight $\mu$ will be denoted $s_\mu(z)$. We identify $s_\mu(z)$ with the $T$-character $J(e^{\mu + \rho}) / J(e^\rho)$ of $V_\mu$. All identities that we present will be elements in the ring $\Lambda := \Q (q,t) \o_{\Z} R(G) \o_{\Z} R(W)$. 

For any weight $\lambda \in P$ we define, as in \cite{StembridgeMacKernel}\footnote{we will only consider the case $\omega = \rho$ and omit it from the notation.}, the degenerate Macdonald polynomial to be
$$
P_\lambda(t) = \sum_{w \in W} w \left( e^\lambda  \prod_{\alpha \in  R^+} \frac{(1 - te^{-\alpha})}{(1 - e^{-\alpha})} \right) = \frac{J \left( e^{\lambda + \rho} \prod_{\alpha \in R^+} (1 - t e^{-\alpha}) \right)}{J(e^\rho)},
$$
The degenerate Macdonald polynomial is symmetric, therefore there exist polynomials $\acoe_{\lambda,\mu}(t)$ such that 
\beq{eq:degnexpand}
P_\lambda(t) = \sum_{\mu \in P^+} \acoe_{\lambda,\mu}(t) \cdot s_\mu(z).
\eeq
For each $\lambda \in P$ we denote by $\lambda^+$ the unique dominant weight in the $W$-orbit of $\lambda$. By \cite[Proposition 2.2]{StembridgeMacKernel}, we have $\acoe_{\lambda,\mu}(t) = 0$ unless $\mu \le \lambda^+$. When $\lambda$ is dominant, $P_\lambda(t)$ is the usual Hall-Littlewood-Macdonald polynomial and $\acoe_{\lambda,\mu}(t) = K_{\lambda,\mu}(t)$, the Kostka-Foulkes polynomial. For $\lambda \in P$, define
$$
\widetilde{P}_{\lambda}(t) = \frac{J \left( e^{\lambda + \rho} \prod_{\alpha \in R^+} (1 - t e^{\alpha}) \right)}{J(e^\rho)}.
$$

\begin{lem}\label{lem:tildeP}
For any $\lambda \in P$ we have
$$
\widetilde{P}_{\lambda}(t) = \sum_{\mu \le (\lambda + 2 \rho)^+} \widetilde{\acoe}_{\lambda,\mu}(t) \cdot s_{\mu}(z)
$$
where $\widetilde{\acoe}_{\lambda,\mu}(t) := (-t)^\gss \acoe_{\lambda + 2 \rho,\mu}(t^{-1})$ and $\gss = |R^+|$. 
\end{lem}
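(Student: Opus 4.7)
The plan is to show that $\widetilde{P}_\lambda(t)$ is essentially a rescaling of $P_{\lambda+2\rho}(t^{-1})$, and then invoke the expansion (\ref{eq:degnexpand}) together with the triangularity bound cited from \cite[Proposition 2.2]{StembridgeMacKernel}.

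The key algebraic observation is that for each positive root $\alpha$ we can rewrite
$$
(1 - t e^{\alpha}) \;=\; -t\, e^\alpha \,(1 - t^{-1} e^{-\alpha}).
$$
Taking the product over $\alpha \in R^+$ and using $\sum_{\alpha \in R^+} \alpha = 2\rho$ and $|R^+| = \gss$, we obtain
$$
\prod_{\alpha \in R^+} (1 - t e^{\alpha}) \;=\; (-t)^{\gss}\, e^{2\rho} \prod_{\alpha \in R^+}(1 - t^{-1} e^{-\alpha}).
$$
Substituting this into the definition of $\widetilde{P}_\lambda(t)$ and pulling the scalar $(-t)^\gss$ and the weight $e^{2\rho}$ out of $J(-)$ (which is legal since $J$ is $\mathbb{Z}[P]$-linear over $W$-invariants, and $e^{2\rho}$ gets absorbed into the weight $e^{\lambda+\rho}$), I get
$$
\widetilde{P}_\lambda(t) \;=\; (-t)^{\gss}\, \frac{J\!\left(e^{(\lambda+2\rho)+\rho} \prod_{\alpha \in R^+}(1 - t^{-1}e^{-\alpha})\right)}{J(e^\rho)} \;=\; (-t)^{\gss}\, P_{\lambda+2\rho}(t^{-1}).
$$

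Next I apply the expansion (\ref{eq:degnexpand}) to $P_{\lambda+2\rho}(t^{-1})$:
$$
P_{\lambda+2\rho}(t^{-1}) \;=\; \sum_{\mu \in P^+} \acoe_{\lambda+2\rho,\mu}(t^{-1})\, s_\mu(z).
$$
By the cited \cite[Proposition 2.2]{StembridgeMacKernel}, the coefficient $\acoe_{\lambda+2\rho,\mu}(t^{-1})$ vanishes unless $\mu \le (\lambda+2\rho)^+$, so the sum may be restricted accordingly. Multiplying through by $(-t)^{\gss}$ and using the definition $\widetilde{\acoe}_{\lambda,\mu}(t) = (-t)^{\gss} \acoe_{\lambda+2\rho,\mu}(t^{-1})$ gives the claim.

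There is no real obstacle: the content is the single identity $\widetilde{P}_\lambda(t) = (-t)^{\gss} P_{\lambda+2\rho}(t^{-1})$, and everything else follows from the already-known expansion of $P_\nu(t)$ into Schur functions. The only thing to check carefully is that $e^{2\rho}$ can indeed be absorbed inside the antisymmetrization $J$ as a shift of the highest weight — this is immediate since $J(e^{\lambda+\rho} \cdot e^{2\rho} \cdot f) = J(e^{(\lambda+2\rho)+\rho} \cdot f)$ for any $W$-invariant $f \in \Z[P]$, and $\prod_{\alpha \in R^+}(1 - t^{-1}e^{-\alpha})$ is $W$-invariant only after including the full Weyl-symmetric average, but in fact we only need linearity of $J$ and the fact that $e^{2\rho}$ is a fixed weight, not $W$-invariance of the rest.
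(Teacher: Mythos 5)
Your proof is correct and follows exactly the same route as the paper: the paper's entire argument is the one-line identity $\widetilde{P}_\lambda(t) = (-t)^{\gss} P_{\lambda+2\rho}(t^{-1})$, which you establish by the standard rewrite $(1-te^\alpha) = -te^\alpha(1-t^{-1}e^{-\alpha})$, and the rest is the expansion \eqref{eq:degnexpand} plus the cited triangularity. (Your closing remark about $W$-invariance is an unnecessary detour — $J$ is simply a $\Z$-linear map on $\Z[P]$, so multiplying the argument by the fixed monomial $e^{2\rho}$ before applying $J$ requires no invariance hypothesis at all — but it does not affect the correctness.)
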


\proof
This follows from the identity
$$
\frac{J \left( e^{\lambda + \rho} \prod_{\alpha \in R^+} ( 1 - t e^{\alpha}) \right)}{J(e^\rho)} = (-t)^\gss P_{\lambda + 2 \rho}(t^{-1}). \eqno\Box
$$

\subsection{Bigraded character formula}\label{sec:qtgrading}
Given a reductive group $K$, a $K$-scheme $X$ and a $K$-equivariant coherent sheaf $\mathcal F$ on $X$, we write $\cchi^K(\mathcal F)$ for its equivariant Euler characteristic, a class in the Grothendieck group of rational $K$-modules. We identify the Grothendieck group of rational $\Cs\times\Cs$-modules with the Laurent polynomial ring $\Q [q^{\pm 1}, t^{\pm 1}]$. 

We define a $(q,t)$-analogue $\biP( - ; q,t) : P \longrightarrow \Q [q, t]$ of Kostant's partition function by the generating function
$$
\prod_{\alpha \in R_+} \frac{1}{(1-q e^{\alpha})(1-t e^{\alpha})} = \sum_{\lambda \in Q_+} \biP(\lambda;q,t) e^\lambda.
$$
Each isotypic component of $\C[\gg]$ with respect to the action of $\cc$ is finite dimensional, therefore $\cchi^H(\mathcal{O}_{\gg})$ is a well-defined element in $\Lambda$. Since $\rr$ may be considered as a coherent sheaf on $\gg$, $\cchi^H(\rr)$ is also well-defined as an element in $\Lambda$. The isomorphism (\ref{eq:isomainiso}) allows us to calculate the Euler characteristic $\cchi^{H}(\rr)$ of $\rr$.

\begin{thm}\label{character_2}
The bigraded $G$-character of the global sections of the sheaf $\rr$ is given by the formula 
\beq{eq:biHesselink}
\cchi^{H}(\rr)= \frac{1}{( 1 - q)^{\mathbf r} (1 - t)^{\mathbf r}} \sum_{\stackrel{\lambda \in Q_+;}{\mu \le (\lambda + 2 \rho)^+}} \biP(\lambda;q,t) \cdot \widetilde{\acoe}_{\lambda,\mu}(q t) \cdot s_\mu(z),
\eeq
where ${\mathbf r}$ is the rank of $G$.
\end{thm}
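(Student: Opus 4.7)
The plan is to compute $\cchi^H(\rr)$ via the $K$-theoretic identity \eqref{eq:isomainisoK}. Since $u$ is a closed immersion and $R^k\mmu_*\aa = 0$ for $k \neq 0$, one has $\cchi^H(\rr) = \cchi^H(u_*\rr) = \cchi^H(R\mmu_*\aa) = \cchi^H(\aa)$, where on the right the class of $\aa$ in $K^H(\tgg)$ is the alternating sum $\sum_n (-1)^n [\aa_n]$. To evaluate this, I push forward along $\mathfrak{q}: \tgg \to \bb$. The map $\mathfrak{q}$ realizes $\tgg$ as the total space of $\mathcal{U}\oplus\mathcal{U}$, where $\mathcal{U}$ is the tautological subbundle with fiber $\b$, and the two copies carry $\cc$-weights $q$ and $t$. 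Since $\mathfrak{q}$ is affine, the projection formula gives $R\mathfrak{q}_*\aa_n = (\wedge^n\T)\otimes\mathfrak{q}_*\mathcal{O}_{\tgg}$, and the fiber character of $\mathfrak{q}_*\mathcal{O}_{\tgg}\simeq \sym(\mathcal{U}^*\oplus\mathcal{U}^*)$ at $\b_0$ is
\[\frac{1}{(1-q)^{\mathbf r}(1-t)^{\mathbf r}}\,\prod_{\alpha \in R_+}\frac{1}{(1-qe^\alpha)(1-te^\alpha)}.\]

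The key book-keeping point is that $\T$ must carry $\cc$-weight $qt$. Indeed the differential $\partial_\bullet : \aa_\bullet \to \aa_{\bullet-1}$ is contraction with the bilinear commutator $[\cdot,\cdot]:\b\times\b \to [\b,\b]\simeq\T^*$, a section of $\mathfrak{q}^*\T^*$ of $\cc$-weight $qt$; $\cc$-equivariance of $\partial_\bullet$ forces this weight on $\T$. Hence the alternating sum $\sum_n(-1)^n[R\mathfrak{q}_*\aa_n]$ equals $\wedge_{-qt}(\T)\otimes\mathfrak{q}_*\mathcal{O}_{\tgg}$ in $K^H(\bb)$, whose fiber character at $\b_0$ acquires the additional factor $\prod_{\alpha\in R_+}(1-qte^\alpha)$. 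This is precisely the source of the specialization $t\mapsto qt$ in $\widetilde{\acoe}_{\lambda,\mu}(qt)$ in the final formula.

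To evaluate the equivariant Euler characteristic on $\bb$, I apply Atiyah--Bott localization for the $T$-action. The fixed point set is $W\cdot\b_0$, the tangent weights at $w\b_0$ are $w(R_+)$, and by $G$-equivariance the fiber character at $w\b_0$ is obtained from that at $\b_0$ by applying $w$. Using the Weyl denominator formula $J(e^\rho) = e^\rho\prod_{\alpha \in R_+}(1-e^{-\alpha})$ together with $w(J(e^\rho)) = (-1)^{\ell(w)}J(e^\rho)$, the sum $\sum_w w(f)/\prod_{\alpha}(1-e^{-w\alpha})$ collapses to $J(e^\rho f)/J(e^\rho)$, producing
\[\cchi^H(\aa) = \frac{1}{(1-q)^{\mathbf r}(1-t)^{\mathbf r}}\cdot\frac{J\!\left(e^\rho\,\prod_{\alpha \in R_+}\dfrac{1-qte^\alpha}{(1-qe^\alpha)(1-te^\alpha)}\right)}{J(e^\rho)}.\]
Expanding $\prod_\alpha(1-qe^\alpha)^{-1}(1-te^\alpha)^{-1} = \sum_{\lambda \in Q_+}\biP(\lambda;q,t)\,e^\lambda$, pulling $\sum_\lambda$ through the $\C$-linear operator $J$, and recognizing $J(e^{\lambda+\rho}\prod_\alpha(1-qte^\alpha))/J(e^\rho) = \widetilde{P}_\lambda(qt)$, yields $\cchi^H(\aa) = \frac{1}{(1-q)^{\mathbf r}(1-t)^{\mathbf r}}\sum_{\lambda\in Q_+}\biP(\lambda;q,t)\,\widetilde{P}_\lambda(qt)$. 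A final application of Lemma \ref{lem:tildeP} expanding $\widetilde{P}_\lambda(qt)$ in terms of $s_\mu(z)$ delivers \eqref{eq:biHesselink}. The main delicacy is the $\cc$-weight analysis for $\T$; once the weight $qt$ is pinned down, the remainder is a routine Atiyah--Bott/Weyl character formula manipulation.
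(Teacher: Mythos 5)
Your proposal follows essentially the same route as the paper's own argument: identify $\cchi^H(\rr)$ with $\cchi^H(\aa)$ via \eqref{eq:isomainisoK}, push $\aa$ forward along the affine projection $\mathfrak{q}$ to $\bb$, localize to the $T$-fixed flags via Atiyah--Bott (as in Proposition \ref{character_one}), and then rewrite the Weyl sum through the $\biP$-expansion and Lemma \ref{lem:tildeP}. The one place where you add value beyond the paper's exposition is the explicit equivariance argument pinning the $\cc$-weight of $\T$ to $qt$ via compatibility with the contraction differential $\partial_\bullet$ --- the paper simply introduces the $(-qt)^n$ factor in \eqref{above} without comment.
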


The remainder of this section is devoted to the proof of Theorem \ref{character_2}. We first calculate the Euler characteristic $\cchi^{T \times \cc}(\rr)$ of $\rr$.

\begin{prop}\label{character_one} 
The bigraded $T$-character of the global sections of the sheaf $\rr$ is given by the formula
$$
\cchi^{T\times \Cs\times\Cs} (\rr )= \frac{1}{( 1 - q)^{\mathbf r} (1 - t)^{\mathbf r}} \sum_{w\in W} w \left(  \prod_{\alpha \in R_+} \frac{(1-q t e^{\alpha})}{(1-q e^{\alpha})(1-t e^{\alpha})(1-e^{-\alpha})} \right).
$$
\end{prop}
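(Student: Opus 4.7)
The plan is to pull the calculation back along the isomorphism (\ref{eq:isomainiso}) and then apply Atiyah--Bott--Lefschetz localization on the flag variety. Since $\zz_{\norm}$ and $\gg$ are affine and $u$ is finite, higher cohomology of $\rr$ vanishes; taking derived global sections of both sides of (\ref{eq:isomainiso}) and using $R\Gamma(\gg,-)\circ R\mmu_* = R\Gamma(\tgg,-)$ therefore yields
\[
\cchi^{T\times\cc}(\rr) \;=\; \cchi^{T\times\cc}(R\Gamma(\tgg,\aa)) \;=\; \sum_{n\geq 0}(-1)^n\cchi^{T\times\cc}(R\Gamma(\tgg,\aa_n)),
\]
where the alternating sum encodes the $K$-theoretic class of the DG sheaf $\aa=(\bigoplus_n\aa_n,\partial_\bullet)$.

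Next, since $\mathfrak{q}\colon\tgg\to\bb$ is a vector bundle of rank $2\dim\b$ whose fibers $\b\oplus\b$ carry $\cc$-weights $q$ on the first summand and $t$ on the second, the projection formula together with the affineness of $\mathfrak{q}$ gives
\[
R\mathfrak{q}_*\aa_n \;\simeq\; \wedge^n\T \,\otimes\, \sym_q(\mathfrak{B}^\vee) \,\otimes\, \sym_t(\mathfrak{B}^\vee),
\]
where $\mathfrak{B}$ denotes the tautological bundle on $\bb$ with fiber $\b$ at $\b$. This reduces the problem to an equivariant Euler-characteristic computation on the smooth projective variety $\bb$, whose $T$-fixed points are parametrized by $W$.

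I would then apply Atiyah--Bott--Lefschetz at each fixed point $w\b_0$, assembling three inputs. (i)~The tangent space has weights $w(R_+)$, giving the Weyl denominator $w\bigl(\prod_{\alpha\in R_+}(1-e^{-\alpha})\bigr)$. (ii)~Since $\mathfrak{B}|_{w\b_0}=w\b_0$ has weights $\{0^{\mathbf r}\}\cup\{-w(\alpha)\}_{\alpha\in R_+}$, the $q$-symmetric algebra contributes $(1-q)^{-\mathbf r}\cdot w\bigl(\prod_{\alpha}(1-qe^\alpha)^{-1}\bigr)$, and analogously for $t$. (iii)~The subtle point is the $\cc$-bigrading on $\aa_n$: the differential $\partial_\bullet$ on $\aa$ is contraction with the tautological section $(x,y)\mapsto[x,y]$ of $\mathfrak{q}^*\T^\vee$, which has $\cc$-bidegree $(1,1)=qt$, so $\cc$-equivariance of $\partial_\bullet$ forces $\aa_n=\wedge^n\mathfrak{q}^*\T$ to carry the bigrading shift $(qt)^n$. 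The Koszul identity then yields
\[
\sum_{n\geq 0}(-1)^n\,\mathrm{ch}(\aa_n|_{w\b_0}) \;=\; w\!\left(\prod_{\alpha\in R_+}(1-qte^\alpha)\right).
\]

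Combining the three factors at each $w$ and summing over $W$ reproduces exactly the right-hand side of the claimed formula. The hard part is input (iii): extracting the $(qt)^n$ bigrading twist on $\aa_n$ from the DG construction in \cite[\S 3.4]{Iso}. Once this is verified, the remainder is a routine Atiyah--Bott bookkeeping exercise.
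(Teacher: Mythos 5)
Your proposal is correct and follows essentially the same route as the paper: reduce via the isomorphism $R\mmu_*\aa \simeq u_*\rr$ to an equivariant Euler characteristic of the DG algebra $\aa$ on $\tgg$, push forward along the affine bundle $\mathfrak{q}\colon\tgg\to\bb$ to obtain $\sym\b^*\otimes\sym\b^*\otimes\wedge^\bullet[\b,\b]^*$ on the flag variety, and then apply Atiyah--Bott localization at the $T$-fixed points indexed by $W$. The one place where you add value over the paper's terse presentation is item (iii): the paper simply declares that $\cc$ acts on the fibers of $\T$ by dilations (producing the $(qt)^n$ twist on $\aa_n$), whereas you derive this from $\cc$-equivariance of the Koszul differential, i.e., from the fact that the tautological section $(x,y)\mapsto[x,y]$ of $\mathfrak{q}^*\T^\vee$ has bidegree $(1,1)$. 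This is a correct and useful clarification, but it does not change the overall strategy.
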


\begin{proof}
For any locally finite representation $E$, of a Borel subgroup $B\sset G$,  let $\underline{E}$ denote the corresponding induced $G$-equivariant vector bundle on $\bb=G/B$.

The terms of the complex ${\mathfrak q}_\idot\aa^\hdot$ are the quasi-coherent vector bundles ${\mathfrak q}_\idot\aa^n$, whose fiber at $\b \in \bb$ is $\sym \b^* \o \sym \b^* \o (\wedge^n [\b,\b]^*)$. Therefore we get the equation:
\beq{above}
\cchi^{T\times \C^\times\times\C^\times}({\mathfrak q}_\idot \aa^\hdot)=
\sum_{k,m,n\geq 0}\ (-qt)^n q^{k} t^{m}\cdot \cchi^T\big( \sym^k\underline{\b}^*\o\sym^m\underline{\b}^*\o(\wedge^n 
\underline{[\b,\b]}^*)\big).
\eeq

Thanks to the equality (\ref{eq:isomainisoK}), the left hand side of equation \eqref{above} is equal to the bigraded character of the $T$-module $\rr$. To obtain the formula of the proposition, one computes the right hand side of equation \eqref{above} using the Atiyah-Bott fixed point formula for $G$-equivariant vector bundles on the flag variety, as explained in \cite[\S 6.1.16]{CG}.
\end{proof}

\begin{proof}(of Theorem \ref{character_2})
Since $\prod_{\alpha \in R_+} \frac{1}{( 1- e^{-\alpha})} = e^\rho J(e^\rho)^{-1}$ and $J(e^\rho)$ is skew symmetric,
$$
\cchi^{T\times \Cs\times\Cs}(\rr)= \frac{1}{( 1 - q)^{\mathbf r} (1 - t)^{\mathbf r} J(e^\rho)} \sum_{w\in W} (-1)^{\ell(w)} w \left( e^\rho \prod_{\alpha \in R_+}\frac{(1-q t e^{\alpha})}{(1-q e^{\alpha})(1-t e^{\alpha})} \right).
$$
By definition,
$$
 \prod_{\alpha \in R_+}
\frac{1}{(1-q e^{\alpha})(1-t e^{\alpha})} = \sum_{\lambda \in Q_+} \biP(\lambda;q,t) e^\lambda
$$
which means that 
$$
\cchi^{T\times \cc} (\rr)= \frac{1}{( 1 - q)^{\mathbf r} (1 - t)^{\mathbf r} J(e^\rho)} \sum_{\lambda \in Q_+} \biP(\lambda;q,t) J \left( e^{\rho + \lambda} \prod_{\alpha \in R^+} ( 1 - qt e^{\alpha}) \right).
$$
Then equation (\ref{eq:biHesselink}) follows from the formula for $\widetilde{P}_{\lambda}(qt)$ given in Lemma \ref{lem:tildeP}.
\end{proof}

\section{Principal Nilpotent Pairs}\label{sec:pnp}
When restricted to the smooth locus of the commuting variety, the isospectral commuting variety $\rr$ is a bigraded, $G \times W$-equivariant vector bundle. In this section we use the theory of principal nilpotent pairs to study the bigraded character of the fiber of this bundle at fixed points. The main result of this section is the character formula (\ref{eq:pnp1}) of Theorem \ref{thm:pnpcharacter}.

\subsection{Definitions}\label{sec:defns}
For any pair $\mbf{x} = (x_1, x_2) \in \zz$, write $\mathfrak{z}(\mbf{x})$ for the simultaneous centralizer of $x_1$ and $x_2$. By a theorem of Richardson, \cite{RichardsonCommuting}, $G \cdot (\t \times \t)$ is open and dense in $\zz$. This implies that $\dim \mathfrak{z}(\mbf{x}) \ge \rk \g$ for all $\mbf{x} \in \zz$. The pair $(x_1,x_2)$ is said to be \textit{regular} if $\dim \mathfrak{z}(\mbf{x}) = \rk \g$. The set of all regular pairs in $\zz$ is denoted $\zzr$; it is the smooth locus of $\zz$. 

\begin{lem}[Theorem 1.5.2 (i), \cite{Iso}]
The restriction of $\rr$ to $\zzr$ is a locally free sheaf such that each fiber of the corresponding vector bundle affords the regular representation of $W$.
\end{lem}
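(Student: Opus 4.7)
The plan is to combine miracle flatness with a direct analysis of the generic fiber of $p_\norm$. First, since $\zzr$ is by definition the smooth, hence normal, locus of $\zz$, the normalization map $\zz_\norm \to \zz$ is an isomorphism over $\zzr$, and I would regard $\zzr$ as an open subscheme of $\zz_\norm$. Its preimage $\xx^\circ := p_\norm^{-1}(\zzr)$ is open in $\xx_\norm$ and hence Cohen-Macaulay by \cite[Theorem 1.3.4]{Iso}. The finite morphism $p_\norm : \xx^\circ \to \zzr$ thus goes from a Cohen-Macaulay scheme to a smooth scheme of the same dimension, so miracle flatness makes it flat; since the pushforward of the structure sheaf under a finite flat morphism is locally free, $\rr|_{\zzr}$ is a locally free $\oo_{\zzr}$-module.

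Next, to compute the rank and the $W$-module structure on fibers, I would restrict to the open dense subset $U \subset \zzr$ of pairs conjugate into $\tt^\reg := \{(s_1,s_2) \in \tt : \dim \mathfrak{z}(s_1,s_2) = \rk \g\}$. Over $U$ the projection $\mmu : \tgg \to \gg$ from Section \ref{tx} restricts to a finite étale map $\mmu^{-1}(U) \to U$ of degree $|W|$, the fiber over $(x,y) = g\cdot(s_1,s_2)$ being the set of $|W|$ Borels containing both $g s_1 g^{-1}$ and $g s_2 g^{-1}$. On $\mmu^{-1}(U)$ the commutator vanishes identically, so the Koszul-type differential on the DG-algebra $\aa$ is zero and $\aa|_{\mmu^{-1}(U)}$ is concentrated in degree zero, equal to $\oo$. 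The isomorphism $\mathscr{H}^0(R\mmu_* \aa) \simeq u_* \rr$ of \eqref{eq:isomainiso} then identifies $\rr|_U$ with the pushforward of $\oo$ along this étale $W$-torsor, which is locally free of rank $|W|$ and, upon matching the natural $W$-action on $\xx_\norm|_U$ (via the second factor $\tt$) with the $W$-deck transformations of the cover, affords the regular representation of $W$ on every fiber.

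To propagate from $U$ to all of $\zzr$, I would use that $\zz$ is irreducible by Richardson's theorem, so $\zzr$ is connected and the rank of the locally free sheaf $\rr|_{\zzr}$ is constant, equal to $|W|$. Since $\rr$ is $W$-equivariant with $W$ acting trivially on $\zz_\norm$, for each $w \in W$ the trace $\tr(w \mid \rr_{(x,y)})$ is an integer-valued, locally constant function of $(x,y) \in \zzr$, hence constant, hence equal to its generic value computed over $U$. Therefore every fiber of $\rr|_{\zzr}$ affords the regular representation of $W$.

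The main obstacle I anticipate is matching the $W$-action on $\xx_\norm$ (coming from the second factor $\tt$) with the deck transformations of the étale cover $\mmu^{-1}(U) \to U$; this amounts to showing that over a regular semisimple commuting pair the $|W|$ choices of Borel containing both elements are naturally in $W$-equivariant bijection with the $|W|$ simultaneous eigenvalue data in $\tt^\reg$, a verification which can be carried out directly via $N_G(T)$-conjugation but deserves care since it is what links the two a priori different incarnations of $W$ in the construction.
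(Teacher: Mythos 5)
Your miracle-flatness step for local freeness and your rigidity step (constancy of the fiberwise $W$-character over the connected locus $\zzr$, using Richardson's irreducibility of $\zz$) are both correct and give a clean frame. Note that the paper itself supplies no proof of this lemma --- it is quoted directly from Theorem~1.5.2(i) of \cite{Iso} --- so only the internal logic of your argument can be assessed, and the middle step contains a genuine error.

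You write that over $\mmu^{-1}(U)$ the commutator vanishes, so the Koszul-type differential of $\aa$ is zero, and conclude that ``$\aa|_{\mmu^{-1}(U)}$ is concentrated in degree zero, equal to $\oo$.'' This has matters reversed: a Koszul complex whose differential vanishes is the \emph{whole} exterior algebra $\bigoplus_{n} \wedge^{n}\mathfrak{q}^{*}\T$ with nonzero terms in every degree up to $\dim[\b,\b]$, not a sheaf concentrated in degree zero. The correct statement, if one insists on going through $\aa$, is that the commutator is a \emph{regular} section of $\mathfrak{q}^{*}\T^{*}$ on an open neighbourhood of $\mmu^{-1}(U)$ inside $\tgg$, so that $\aa$ there is a Koszul \emph{resolution} of the structure sheaf of the commuting locus of $\tgg$; and even then, extracting the fibers of $\rr|_{U}$ from $(R\mmu_{*}\aa)|_{U}$ requires care with base change, since $U$ is merely locally closed in $\gg$ and $\aa$ is not $\mathcal{O}_{\gg}$-flat.

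The cleanest repair avoids $\aa$ entirely. By definition $\rr|_{U} = (p_{\norm})_{*}\mathcal{O}_{\xx_{\norm}}|_{U}$, and over $U$ the isospectral variety $\xx$ is manifestly a free \'etale $W$-torsor: a regular semisimple commuting pair lies in a unique Cartan, and the points of $\tt$ with matching spectra form a single free $W$-orbit. Being an \'etale cover of the smooth $U$, this is smooth, hence coincides with $\xx_{\norm}$ there. Pushforward of $\mathcal{O}$ along a $W$-torsor has fiber $\C[W]$ with $W$ acting by deck transformations --- which is precisely the $W$-action $\xx_{\norm}$ inherits from the factor $\tt$. This also makes the identification-of-two-$W$-actions concern at the end of your proposal moot, since only one $W$-action appears.
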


The paper \cite{PNP} introduced the notion of principal nilpotent pairs in $\zz$: 

\begin{defn}\label{def:pnp}
A pair $\bbe = (e_1,e_2) \in \gg$ is called a \textit{principal nilpotent pair} if the following conditions hold:
\begin{enumerate}
\item $\bbe$ is a regular pair i.e. $\bbe \in \zzr$;
\item For any $(t_1,t_2) \in \cc$, there exists $g = g(t_1,t_2) \in G$ such that $(t_1 \cdot e_1, t_2 \cdot e_2) = (\Ad g(e_1),\Ad g(e_2))$.
\end{enumerate}
\end{defn}
Note that condition (2) of definition \ref{def:pnp} implies that $e_1$ and $e_2$ are nilpotent. It is shown in \cite[Theorem 1.2]{PNP} that one can associate to each \pnp $\bbe \in \zz$ a pair $\mbf{h} = (h_1,h_2) \in \zz$ of semisimple elements of $\g$ such that $[h_i,e_j] = \delta_{i,j} \cdot e_j$ for $i,j = 1,2$ and the adjoint action of $\mbf{h}$ on $\g$ defines a $\Z^2$-grading $\g = \oplus_{p,q \in \Z^2} \ \g_{p,q}$.

\subsection{}
For the remainder of this section we fix a \pnp $\bbe \in \zzr$ and associated semisimple pair $\mbf{h} = (h_1,h_2)$. Without loss of generality, $\mathfrak{z}(\mbf{h}) = \t$. Recall that $H = G \times \cc$ acts on $\g$ by $(g,\alpha,\beta) \cdot x = \Ad(g) \cdot x$ and on $\gg$ by 
$$
(g,\alpha,\beta) \cdot (x,y) =(\alpha\inv\cdot\Ad(g) \cdot x, \beta\inv\cdot\Ad(g) \cdot y).
$$
Let $A := \cc$ and let $\vr: A \to G$ be the homomorphism such that $d\vr:\ \C^2\to\g$ sends $(1,0)$ to $h_1$ and $(0,1)$ to $h_2$. The embedding $A \hookrightarrow H$, $(t_1,t_2) \mapsto (\vr(t_1,t_2),t_1,t_2,)$ defines an action of $A$ on $\g$, respectively on $\gg$. Explicitly, $(t_1,t_2)\bu x=\Ad\vr(t_1,t_2) \cdot x,$ and 
$$
(t_1,t_2)\bu(x,y)=(t_1\inv\cdot\Ad\vr(t_1,t_2) \cdot x, t_2\inv\cdot\Ad\vr(t_1,t_2) \cdot y).
$$
This $A$-action defines a $\Z^2$-grading $\gg=\bplus_{i,j\in\Z}\gg_{i,j}$ such that  
$$
\gg_{i,j} := \g_{i+1,j}\oplus \g_{i,j+1} = \{(x,y)\in\g\oplus\g \mid \Ad\vr(t_1,t_2) \cdot x = t_1^{i+1} t_2^j \cdot x,\ \Ad\vr(t_1,t_2) \cdot y = t_1^it_2^{j+1} y \}.
$$
By definition, $\g^A = \g_{0,0} \ (= \t)$ and $\gg^A = \gg_{0,0} \ (= \g_{1,0}\oplus\g_{0,1})$. Note that we have $e_1\in\g_{1,0}$ and $e_2 \in \g_{0,1}$, so $\bbe \in \gg^A$. The map $\g\to\gg$, $x \mto (\ad e_1(x), \ad e_2(x))$ is $A$-equivariant.

\subsection{}
Since $\bbe$ is fixed by $A$, the fiber $\rr_\bbe$ of $\rr$ at $\bbe$ is bigraded. In order to present a formula for this bigraded vector space we require a few further definitions. 

\begin{defn}  
A Borel subalgebra $\b$  is said to be {\em adapted} (to $e_1,e_2,h_1,h_2$) if it  contains all four elements $e_1,e_2,h_1,h_2$.
\end{defn}

\begin{lem}\label{quad} 
A Borel subalgebra $\b$ is adapted if and only if we have $\oplus_{i,j\geq0}\g_{i,j}\sset\b$.
\end{lem}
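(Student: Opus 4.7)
The plan is to handle the two directions of the equivalence separately. The $(\Leftarrow)$ direction will be immediate: if $\bigoplus_{i,j\geq 0}\g_{i,j}\subset\b$, then $h_1,h_2\in\g_{0,0}$, $e_1\in\g_{1,0}$, and $e_2\in\g_{0,1}$ all lie in $\b$, so $\b$ is adapted by definition.

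For the $(\Rightarrow)$ direction, assume $\b$ is adapted. The first substep is to upgrade the containment $\{h_1,h_2\}\subset\b$ to $\t\subset\b$. Since $h_1$ and $h_2$ are commuting semisimple elements of the solvable Lie algebra $\b$, they lie in a common maximal toral subalgebra $\t'\subset\b$; such a $\t'$ is a Cartan subalgebra of $\g$, and since it contains $h_1,h_2$ one has $\t'\subset\mathfrak{z}(\mathbf{h})=\t$, so maximality forces $\t'=\t$. Consequently $\b$ is $A$-stable and decomposes as $\b=\bigoplus_{i,j}\b_{i,j}$ with $\b_{i,j}:=\b\cap\g_{i,j}$ and $\b_{0,0}=\t$.

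The second substep is to prove $\g_{i,j}\subset\b$ for all $i,j\geq 0$ by induction on $i+j$, with base case $\g_{0,0}=\t\subset\b$. For the inductive step, assuming $\g_{i,j}\subset\b$ with $i,j\geq 0$, I would invoke the key fact that the maps $\ad e_1:\g_{i,j}\to\g_{i+1,j}$ and $\ad e_2:\g_{i,j}\to\g_{i,j+1}$ are surjective whenever $i,j\geq 0$; combined with $e_1,e_2\in\b$, this forces $\g_{i+1,j}$ and $\g_{i,j+1}$ to lie in $\b$, completing the induction.

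The main obstacle is establishing the surjectivity above, which is the only place the principal nilpotent pair hypothesis enters essentially. It is a structural feature of principal nilpotent pairs proved in \cite{PNP}: one completes $(e_1,h_1)$ and $(e_2,h_2)$ to $\mathfrak{sl}_2$-triples $\{e_1,2h_1,f_1\}$ and $\{e_2,2h_2,f_2\}$ with $f_1\in\g_{-1,0}$ and $f_2\in\g_{0,-1}$; each such triple commutes with the other's semisimple element and therefore preserves the $\Z^2$-grading, so the desired surjectivity follows from standard $\mathfrak{sl}_2$-representation theory applied to the raising operator on non-negative weight spaces. In practice I would simply quote the relevant result from \cite{PNP} rather than redo this decomposition.
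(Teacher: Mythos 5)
Your overall skeleton is sound, and the reduction of the forward direction to the surjectivity of $\ad e_1:\g_{i,j}\to\g_{i+1,j}$ and $\ad e_2:\g_{i,j}\to\g_{i,j+1}$ for $i,j\geq 0$ is exactly the right move. Indeed, since $[e_1,e_2]=0$, those two surjectivity statements (plus $\g_{0,0}=\t$) are \emph{equivalent} to the identity
$$
\bigoplus_{i,j\geq 0}\g_{i,j}\ =\ \sum_{p,q\geq 0}(\ad e_1)^p(\ad e_2)^q(\t)
$$
from the proof of \cite[Theorem 1.13]{PNP}, which is precisely what the paper's one-line proof quotes. So, modulo how you justify the surjectivity, your argument and the paper's are the same in content; yours just unwinds the quoted identity into an induction.

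The genuine gap is in your proposed $\mathfrak{sl}_2$-theoretic justification. The claim that one can complete $(e_1,2h_1)$ to an $\mathfrak{sl}_2$-triple $\{e_1,2h_1,f_1\}$ with $f_1\in\g_{-1,0}$ (and likewise for $e_2$) is \emph{false} for general principal nilpotent pairs, and this is a central subtlety in \cite{PNP}. Already for $\g=\gl_3$ and the pair attached to $\mu=(2,1)$, the nilpotent $e_1$ has Jordan type $(2,1)$, so the semisimple member of any $\mathfrak{sl}_2$-triple through $e_1$ has eigenvalues $\{1,-1,0\}$ up to conjugacy; but $2h_1$ has eigenvalues $\{0,2,0\}$, so $(e_1,2h_1,f_1)$ can never be a triple (and adding a central shift does not help either). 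The surjectivity you need is therefore \emph{not} standard $\mathfrak{sl}_2$ representation theory; Ginzburg proves it by other means, which is exactly why \cite{PNP} introduces the ``Weak Lefschetz'' and ``Hard Lefschetz'' properties (Propositions 1.12 and 1.13 there) as substitutes for the missing $\mathfrak{sl}_2\times\mathfrak{sl}_2$. Your closing sentence (``I would simply quote the relevant result from \cite{PNP}'') saves the argument, but the $\mathfrak{sl}_2$ sketch you offer as the content of that result should be replaced by a citation of \cite[Theorem 1.13]{PNP} or, equivalently, the displayed identity above.
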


\proof 
This follows from the proof of \cite[Theorem 1.13]{PNP} which shows that one has
$$
\bplus_{i,j\geq0}\g_{i,j}\ =\ \sum_{p,q\geq0}\
(\ad e_1)^p(\ad e_2)^q(\t).\eqno\Box
$$

If $\b$ contains $h_1$ and $h_2$ then $\t \subset \b$. Since the set of Borel subalgebras containing $\t$ is naturally in bijection with the elements of $W$, choosing a particular adapted Borel subalgebra $\b_1$ defines a bijection $\b_w \leftrightarrow w$ between the set of all adapted Borel subalgebras of $\g$ and a certain subset $W_{\mathrm{adp}} \subset W$. 

\subsection{Partial slices}

In \cite[\S 7]{PNP} a certain ``partial slice'' to the $G$-orbit in $\zzr$ through $\bbe$ was constructed. Decompose the centralizers $\mathfrak{z}(e_1) = \oplus_{p,q} \mathfrak{z}_{p,q}(e_1)$ and $\mathfrak{z}(e_2) = \oplus_{p,q} \mathfrak{z}_{p,q}(e_2)$ with respect to the action of $A$. For each $p,q$ such that $p \le 0$ and $q \ge 0$, choose a $T$-stable subspace $S_{p,q} \subset \mathfrak{z}_{p,q}(e_2)$ complementary to $\mathrm{Im} (\ad \ e_1 : \mathfrak{z}_{p-1,q}(e_1) \rightarrow \mathfrak{z}_{p,q}(e_1))$ and form the subspace $S_{\nw} := \bigoplus_{p \le 0, q \ge 0} S_{p,q} \subset \g$. Let $\Snw := S_{\nw} \oplus \{ 0 \} \subset \gg$ denote the corresponding subspace in $\gg$. Similarly, by considering all $p \ge 0$ and $q \le 0$, one defines $\Sse := S_{\se} \oplus \{ 0 \} \subset \gg$. The following result is noted in \cite[\S 6]{PNP}. 

\begin{lem}\label{lem:partialslice}
There is an $A$-equivariant isomorphism of vector spaces
$$
T_{\bbe} \zz \simeq \Snw \oplus \Sse \oplus \g / \mathfrak{z}(\bbe).
$$
\end{lem}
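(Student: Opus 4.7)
The plan is as follows. Since $\bbe\in\zzr$, the commuting scheme is smooth at $\bbe$ and $T_\bbe\zz$ equals the kernel of the differential $d\kappa_\bbe:\gg\to\g$, $(u,v)\mapsto[u,e_2]+[e_1,v]$. The differential of the orbit map $G\to\zz$, $g\mapsto g\cdot\bbe$, is $d\phi_\bbe:\g\to\gg$, $a\mapsto([a,e_1],[a,e_2])$, whose kernel is $\mathfrak{z}(\bbe)$ and whose image is the orbit tangent space $T_\bbe(G\cdot\bbe)\subset T_\bbe\zz$. This already supplies an $A$-equivariant summand $T_\bbe(G\cdot\bbe)\cong\g/\mathfrak{z}(\bbe)$, so the lemma reduces to producing an $A$-equivariant complement inside $T_\bbe\zz$ and identifying it with $\Snw\oplus\Sse$.

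First I would verify the inclusions $\Snw,\Sse\subset T_\bbe\zz$: an element $(s,0)\in\Snw$ satisfies $s\in\mathfrak{z}(e_2)$, hence $d\kappa_\bbe(s,0)=[s,e_2]=0$, and the $\Sse$ case is symmetric. Next I would exploit the three-term sequence
$$
\g\xrightarrow{\,d\phi_\bbe\,}\gg\xrightarrow{\,d\kappa_\bbe\,}\g,
$$
which is a complex by the Jacobi identity together with $[e_1,e_2]=0$; its middle cohomology is exactly the quotient $T_\bbe\zz/T_\bbe(G\cdot\bbe)$. All three maps are $A$-equivariant (up to the appropriate weight shift on the target $\g$), and under the bigrading $\g=\bigoplus_{p,q}\g_{p,q}$ induced by $\mbf{h}$ the complex decomposes as a direct sum, over $(p,q)\in\Z^2$, of three-term pieces
$$
\g_{p,q}\longrightarrow \g_{p+1,q}\oplus\g_{p,q+1}\longrightarrow \g_{p+1,q+1}.
$$

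The crux is to identify the middle cohomology of the bigraded complex with the corresponding bidegree piece of $\Snw\oplus\Sse$. In bidegrees with $p+1\le 0$ and $q\ge 0$ I would argue as follows: given a cocycle $(u,v)\in\g_{p+1,q}\oplus\g_{p,q+1}$ with $[u,e_2]=-[e_1,v]$, the $\sl_2$-type surjectivity of $\ad e_2:\g_{p,q}\to\g_{p,q+1}$ in this range of bidegrees (a structural property of the \pnp, cf.\ \cite[Thm.~1.13]{PNP}) permits one to modify $(u,v)$ by an element of $\im d\phi_\bbe$ so as to achieve $v=0$; the residual freedom is exactly $\ad e_1(\mathfrak{z}_{p,q}(e_2))$, so the class is uniquely represented by an element of $S_{p+1,q}\subset\mathfrak{z}_{p+1,q}(e_2)$. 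The symmetric argument (swapping the roles of $e_1$ and $e_2$) in the range $p\ge 0$, $q+1\le 0$ produces the $\Sse$-summand, while in the remaining quadrants the middle cohomology vanishes by the same \pnp structure results.

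The main obstacle I expect is the bookkeeping of the bigraded cohomology, in particular showing that the northwest and southeast contributions are disjoint and jointly exhaust the quotient. This hinges delicately on the asymmetric roles of $\ad e_1$ and $\ad e_2$ in defining $S_\nw$ and $S_\se$, and on the combinatorics of the \pnp bigrading that ensures the cohomology vanishes outside the two chosen quadrants. As a consistency check one has $\dim T_\bbe\zz=\dim\g+\rk\g$ (by regularity of $\bbe$) and $\dim\g/\mathfrak{z}(\bbe)=\dim\g-\rk\g$, so the required complement has dimension $2\,\rk\g$, matching the expected total dimension of $\Snw\oplus\Sse$.
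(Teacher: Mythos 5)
Your proposal follows the paper's proof exactly up through the key reduction: you correctly identify $T_{\bbe}\zz$ with $\Ker d_{\bbe}\kappa$, set up the three-term complex $\g\to\gg\to\g$ obtained from the orbit map and the commutator map, and observe that its middle cohomology is the $A$-equivariant complement to $T_{\bbe}(G\cdot\bbe)\cong\g/\mathfrak{z}(\bbe)$. The divergence is in how the middle cohomology is computed. The paper observes that this complex is (up to sign) precisely the complex (6.1) of \cite{PNP}, and then simply cites \cite[Theorem 6.6]{PNP}, which already asserts that the natural map $\Snw\oplus\Sse\to H^1(\mathfrak{e},\g)$ is an isomorphism. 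You instead set out to re-derive this isomorphism from scratch by a bigraded cocycle computation quadrant by quadrant, and you candidly flag that the bookkeeping — disjointness of the northwest and southeast contributions, vanishing in the remaining quadrants — is not carried out. That bookkeeping is exactly the content of \cite[Theorem 6.6]{PNP}, so the intended route amounts to reproving a result that is available to cite; as written, the argument has a genuine gap precisely where the citation would close it. Recognizing that the complex is the known one from \cite{PNP} (rather than an ad hoc object to be analyzed afresh) is the missing idea, and it collapses the remainder of the proof to a one-line invocation of Theorem 6.6.
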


\begin{proof}
Recall that $\kappa : \gg \rightarrow \g$ is the commutator map so that $\zz = \kappa^{-1}(0)$. The regular locus $\zzr \subset \zz$ is precisely the set of points where the rank of $d \kappa$ is maximal. Therefore, since $\bbe \in \zzr$, $T_{\bbe} \ \zz = \Ker d_{\bbe} \kappa$. Let $\Ad_{\bbe} : G \rightarrow \gg$ be the map $g \mapsto g \cdot \bbe$. The map $\kappa$ is $G$-equivariant, hence $G \cdot \bbe \subset \zzr$. Differentiating the maps $\Ad_{\bbe}$ and $\kappa$ at $\mbf{1} \in G$ and $\bbe$ respectively gives a three term complex
\beq{eq:threetermcomplex}
\xymatrix{
\g \ar[rr]^<>(0.5){d_{\mbf{1}} \Ad}  && \gg \ar[rr]^<>(0.5){d_{\bbe} \kappa} && \g 
}
\eeq
such that, if $H^{1}$ is the middle cohomology of the above complex, then $T_{\bbe} \ \zz \simeq H^{1} \oplus T_{\bbe} \ (G \cdot \bbe)$. The above complex is (up to sign) precisely the complex \cite[(6.1)]{PNP}. Therefore, in the notation of \cite{PNP}, $H^{1} = H^1(\mathfrak{e},\g)$. Now \cite[Theorem 6.6]{PNP} implies that the natural map $\Snw \oplus \Sse \rightarrow H^1(\mathfrak{e},\g)$ is an isomorphism. Since $T_{\bbe} (G \cdot \bbe) = \g / \mathfrak{z}(\bbe)$, the lemma follows.
\end{proof} 

\begin{lem}\label{lem:Torbitdense}
The free orbit $T \cdot \bbe$ is open in $(\g_{1,0} \oplus \g_{0,1}) \cap \zz$.
\end{lem}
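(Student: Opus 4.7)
My plan is to identify $(\g_{1,0}\oplus\g_{0,1})\cap\zz$ with the $A$-fixed subscheme $\zz^A$, and then compare the tangent space of $\zz^A$ at $\bbe$ with the image of the infinitesimal $T$-action. By the formula for the twisted $A$-action on $\gg$ given above, one has $\gg^A=\gg_{0,0}=\g_{1,0}\oplus\g_{0,1}$, and since $\zz$ is $A$-stable, $(\g_{1,0}\oplus\g_{0,1})\cap\zz=\zz^A$. Because $\bbe\in\zzr$, the scheme $\zz$ is smooth at $\bbe$, and since $A$ is a torus, the fixed-point subscheme $\zz^A$ is smooth at $\bbe$ with tangent space $T_\bbe(\zz^A)=(T_\bbe\zz)^A$.

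Next I would apply Lemma \ref{lem:partialslice} to obtain the $A$-equivariant decomposition $T_\bbe\zz\simeq\Snw\oplus\Sse\oplus\g/\mathfrak{z}(\bbe)$. A weight check on the constituents defining the partial slice shows that $(\Snw)^A=(\Sse)^A=0$: for instance, the summand $S_{p,q}\subset S_{\nw}$ acquires twisted $A$-weight $(p-1,q)$ in $\gg$, which cannot be $(0,0)$ because $p\le 0$, and similarly for $\Sse$. Using exactness of $A$-invariants, $(\g/\mathfrak{z}(\bbe))^A=\g^A/\mathfrak{z}(\bbe)^A=\t/(\t\cap\mathfrak{z}(\bbe))$, and hence $T_\bbe(\zz^A)=\t/(\t\cap\mathfrak{z}(\bbe))$.

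The differential at $1$ of the orbit map $T\to T\cdot\bbe\subset\zz^A$ is $\tau\mapsto([\tau,e_1],[\tau,e_2])$, whose image exhausts $\t/(\t\cap\mathfrak{z}(\bbe))=T_\bbe(\zz^A)$. Consequently $T\cdot\bbe$ has full dimension in $\zz^A$ at $\bbe$, and is therefore open in the component of $\zz^A$ through $\bbe$. For freeness, I would show $\t\cap\mathfrak{z}(\bbe)=0$ via uniqueness of the associated semisimple pair: if $\tau\in\t$ satisfies $[\tau,e_1]=[\tau,e_2]=0$, then $(h_1+\tau,h_2)$ is a commuting semisimple pair obeying the same defining relations $[h_1+\tau,e_j]=\delta_{1j}e_j$, $[h_2,e_j]=\delta_{2j}e_j$ as the pair associated to $\bbe$, so \cite[Theorem 1.2]{PNP} forces $\tau=0$.

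The main obstacle is the freeness assertion: openness reduces to a routine tangent-space computation via Lemma \ref{lem:partialslice} combined with the smooth-fixed-point principle, but the vanishing of $\t\cap\mathfrak{z}(\bbe)$ is not a dimension count and relies on the uniqueness of the semisimple pair attached to a principal nilpotent pair.
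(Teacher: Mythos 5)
Your proof follows the same overall route as the paper's: identify $(\g_{1,0}\oplus\g_{0,1})\cap\zz$ with $\zz^A$, use smoothness at the regular point $\bbe$ to get $T_\bbe\zz^A=(T_\bbe\zz)^A$, apply Lemma~\ref{lem:partialslice}, and then show the $A$-invariant tangent space is exactly $\t$. Where you diverge is in how you establish the two ingredients $(\Snw\oplus\Sse)^A=0$ and $\t\cap\mathfrak{z}(\bbe)=0$. The paper obtains the first by citing \cite[Theorem 6.6]{PNP}, which asserts directly that $H^1(\mathfrak{e},\g)^A=0$ (and $\Snw\oplus\Sse\cong H^1(\mathfrak{e},\g)$), and the second from Lemma~\ref{quad}; you instead do a direct weight computation for the first and invoke uniqueness of the associated semisimple pair \cite[Theorem 1.2]{PNP} for the second. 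Both routes to $\t\cap\mathfrak{z}(\bbe)=0$ are fine.

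Your weight argument for $(\Snw)^A=0$ is correct: $S_{p,q}\oplus 0$ sits in $\gg$ with twisted $A$-weight $(p-1,q)$, and $p\le 0$ makes $(0,0)$ impossible. But the step ``similarly for $\Sse$'' is where care is needed. As the text of the paper literally stands, $\Sse=S_\se\oplus\{0\}$, so the twisted weight would again be $(p-1,q)$ with the range $p\ge 0$, $q\le 0$, and the bidegree $(p,q)=(1,0)$ is \emph{not} excluded by this inequality alone. The computation of $\lambda(\Snw^*\oplus\Sse^*)$ in the proof of the $\mathfrak{gl}_n$ formula (Section~\ref{sec:glnpnp}) strongly suggests the intended convention is $\Sse=\{0\}\oplus S_\se$, in which case the twisted weight is $(p,q-1)$ and $q\le 0$ does rule out $(0,0)$; so your argument is morally right, but the parenthetical ``similarly'' masks a sign/slot asymmetry that has to be made explicit. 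This is precisely the bookkeeping that the paper sidesteps by citing the $A$-invariance vanishing from \cite[Theorem 6.6]{PNP} wholesale; if you prefer the direct weight computation, you should write out the $\Sse$ case separately and justify which factor of $\gg$ it sits in.

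Finally, for freeness: your use of the uniqueness statement in \cite[Theorem 1.2]{PNP} is a legitimate alternative to the paper's appeal to Lemma~\ref{quad}, and arguably more transparent, but you should note that you also need the pair $(h_1+\tau,h_2)$ to be \emph{the} associated pair (rather than merely a pair satisfying the commutation relations), i.e.\ the normalization built into \cite[Theorem 1.2]{PNP} has to apply; checking that $h_1+\tau$ again lies in the bigrading torus is the point that pins this down.
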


\begin{proof}
Write $Z = (\g_{1,0} \oplus \g_{0,1}) \cap \zz$. Since the component group of $T$ is trivial, the fact that $T \cdot \bbe$ is free follows from Lemma \ref{quad} which implies that $\mathfrak{z}(\bbe) \cap \t = 0$. To show that $T \cdot \bbe$ is open in $Z$ it suffices to show that $T_{\bbe} \ Z = d_{\mbf{1}} \Ad(\t)$. The map $\ad : \g \rightarrow \gg$ is $A$-equivariant. Therefore, since $\g_{1,0} \oplus \g_{0,1} = \gg^A$, Lemma \ref{lem:partialslice} says that 
$$
T_{\bbe} \ Z = (\Snw \oplus \Sse \oplus \g / \mathfrak{z}(\bbe))^A.
$$
Now $\t = \g^A$ and \cite[Theorem 6.6]{PNP} says that $H^1(\mathfrak{e},\g)^A = 0$ which implies that $(\Snw \oplus \Sse \oplus \g / \mathfrak{z}(\bbe))^A = \t$ as required. 
\end{proof}

\begin{rem}
In the case $G = SL_n$, it is shown by E. Zoque in the Appendix that the torus orbit $T \cdot \bbe$ is dense in $(\g_{1,0} \oplus \g_{0,1}) \cap \zz$. 
\end{rem}
\subsection{}
Given a rational $A$-module $V$, denote by $\lambda(V)$ the Euler characteristic of the alternating sum $\sum_{k = 0}^{\dim V} (-1)^k [\wedge^k V] \in K^A(pt) = R(A)$. If $w$ is adapted to $\bbe$ then Lemma \ref{quad} says that the bigrading on $\g$ induces a bigrading $\b_w=\oplus_{i,j}\ (\b_w)_{i,j},$ where $(\b_w)_{i,j}=\b_w\cap \g_{i,j}$. Furthermore, we have $(\b_w)_{0,0}=\g_{0,0}=\t$ and $(\b_w)_{1,1}=\g_{1,1}$. Let $\uf_w$ be the $T \times \cc$-stable complement to $\g_{1,1}$ in $[\b_w,\b_w]$, thus, we have
$$
[\b_w,\b_w]=\bigoplus_{\{i,j\ \mid\  (i,j)\neq(0,0)\}}\ (\b_w)_{i,j},
\quad\text{resp.}\quad
\uf_w  =\bigoplus_{\{i,j\ \mid\  (i,j)\neq(0,0),\ (1,1)\}}\ (\b_w)_{i,j}.
$$
\begin{thm}\label{thm:pnpcharacter}
The bigraded character of $\rr_\be$, the fiber of the vector bundle $\rr$ at the \pnp $\be\in\zz$, is given by the formula
\beq{eq:pnp1}
\cchi^A(\rr_\be)\ =\ \lambda(\Snw^* \oplus \Sse^* \oplus (\g / (\mathfrak{z}(\bbe) \oplus \t))^*) \cdot \sum_{w \in W_{\mathrm{adp}}} \lambda \left( [\b_w, \b_w] \oplus \left( \frac{\b_w \oplus \b_w}{\g_{1,0} \oplus \g_{0,1}} \right)^* \right)^{-1} \cdot \lambda(\uf_w^*).
\eeq
\end{thm}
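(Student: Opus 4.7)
The strategy is to apply $A$-equivariant $K$-theoretic localization to both sides of the identity $\mmu_*[\aa] = u_*[\rr]$ from (\ref{eq:isomainisoK}) at the fixed point $\bbe \in \gg^A$, and then to solve the resulting equality for $[\rr_\bbe]$.

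For the right-hand side, I would exploit that $\bbe \in \zzr$ is a smooth point of $\zz$ by Richardson's theorem, so locally near $\bbe$ the normalization map $u \colon \zz_\norm \to \gg$ is simply the closed embedding of a smooth subvariety. The Koszul resolution formula for a regular embedding then gives
$$
Li^*_\bbe u_*\rr \;=\; [\rr_\bbe] \cdot \lambda(N^*_{\zz/\gg,\, \bbe}) \quad \text{in } R(A),
$$
with the conormal bundle described explicitly via the decomposition $T_\bbe \zz = \Snw \oplus \Sse \oplus \g/\mathfrak{z}(\bbe)$ from Lemma \ref{lem:partialslice}.

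For the left-hand side, I would apply Atiyah--Bott fixed-point localization for $A$ on the smooth variety $\tgg$. The $A$-fixed locus decomposes as
$$
\tgg^A \;=\; \bigsqcup_{\b \in \bb^A} F_\b, \qquad F_\b \;=\; \{\b\} \times (\b \cap \g_{1,0}) \times (\b \cap \g_{0,1}).
$$
A component $F_\b$ contributes to $Li^*_\bbe \mmu_*[\aa]$ only when $\bbe \in \mmu(F_\b) \subset \gg^A$, i.e.\ when $e_1, e_2 \in \b$; combined with the $A$-stability condition $h_1, h_2 \in \b$ and Lemma \ref{quad}, this selects precisely the adapted Borels $\{\b_w\}_{w \in W_\mathrm{adp}}$. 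For each such $w$, Lemma \ref{quad} gives $F_{\b_w} = \{\b_w\} \times \gg^A$, and $\mmu|_{F_{\b_w}}$ is identified with the inclusion $\gg^A \hookrightarrow \gg$. The contribution at $\b_w$ then assembles as a product of three $\lambda$-factors: the inverse normal bundle factor $\lambda([\b_w, \b_w] \oplus ((\b_w \oplus \b_w)/\gg^A)^*)^{-1}$ computed from $T_{(\b_w, \bbe)}\tgg = (\g/\b_w) \oplus \b_w \oplus \b_w$ against $T_{(\b_w, \bbe)} F_{\b_w} = 0 \oplus \g_{1,0} \oplus \g_{0,1}$ (using $(\g/\b_w)^* \simeq [\b_w, \b_w]$ via the Killing form); the factor $\lambda(\uf_w^*)$ extracted from $\aa|_{F_{\b_w}}$, which on $F_{\b_w} \simeq \gg^A$ is the Koszul-type complex $\wedge^\bullet[\b_w,\b_w]^* \otimes \mathcal{O}_{\gg^A}$ with differential $\iota_{[-,-]}$, splitting along $[\b_w, \b_w] = \g_{1,1} \oplus \uf_w$ into $\lambda(\uf_w^*)$ tensored with the Koszul complex for the section $[-,-] \colon \gg^A \to \g_{1,1}$; and a Koszul factor $\lambda((\gg/\gg^A)^*)$ from the derived restriction of $\iota_{\gg^A *}$ at $\bbe$.

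Equating the two sides and dividing by $\lambda(N^*_{\zz/\gg, \bbe})$ yields formula (\ref{eq:pnp1}), with the $w$-independent global factor $\lambda(\Snw^* \oplus \Sse^* \oplus (\g/(\mathfrak{z}(\bbe)\oplus\t))^*)$ arising as the surviving quotient of the Koszul factor $\lambda((\gg/\gg^A)^*)$ by the conormal factor $\lambda(N^*_{\zz/\gg, \bbe})$, after using the identification $T_\bbe(\gg^A \cap \zz) = \t$ from Lemma \ref{lem:Torbitdense}. The main technical obstacle is the careful bookkeeping of Koszul factors through two successive localizations (first along the positive-dimensional components $F_{\b_w} \subset \tgg$, then along $\gg^A \hookrightarrow \gg$); in particular, one must verify the regularity of the section $[-,-] \colon \gg^A \to \g_{1,1}$, equivalently the dimension identity $\dim\gg^A = \rk G + \dim \g_{1,1}$ for principal nilpotent pairs (extractable from the structural results of \cite{PNP} together with the openness of $T \cdot \bbe$ in $\gg^A \cap \zz$ from Lemma \ref{lem:Torbitdense}), so that the Koszul complex along $\g_{1,1}$ resolves $\mathcal{O}_{\gg^A \cap \zz}$ and the global factors reassemble into the compact form stated.
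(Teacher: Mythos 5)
Your proposal captures the right high-level skeleton — localize the identity $\mmu_*[\aa]=u_*[\rr]$ in equivariant $K$-theory, observe that only adapted Borels contribute, and bookkeep Koszul factors — but the actual mechanics are set up in a way that collapses to $0=0$ and therefore cannot produce the formula.

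The defect is the first step on the right-hand side. The derived-fiber/Koszul formula you write, $Li^*_\bbe\, u_*\rr = [\rr_\bbe]\cdot\lambda(N^*_{\zz/\gg,\bbe})$ in $R(A)$, is correct as stated, but the conormal module $N_{\zz/\gg,\bbe}=\gg/T_\bbe\zz$ has a nonzero $A$-\emph{invariant} piece. Indeed $(\gg)^A=\g_{1,0}\oplus\g_{0,1}$ while $(T_\bbe\zz)^A=T_\bbe\zzrA\simeq\t$, so the $A$-trivial part of $N_{\zz/\gg,\bbe}$ has dimension $\dim\gg^A-\rk\g=\dim\g_{1,1}$, which is positive for all but degenerate $\mu$. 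Hence $\lambda(N^*_{\zz/\gg,\bbe})=0$ in $R(A)$. The same vanishing shows up on the left-hand side through the Koszul factor for the section $[-,-]\colon\gg^A\to\g_{1,1}$ that you mention (its derived fiber at $\bbe$ is $\lambda(\t^*)=0$). So your equality reads $0=0$, and ``dividing by $\lambda(N^*_{\zz/\gg,\bbe})$'' is division by zero. The bookkeeping you label a ``technical obstacle'' is not a matter of care — as you have set it up it simply cannot go through, because you are localizing at a non-isolated $A$-fixed point.

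The mechanism that resolves this, and which is missing from your proposal, is to never restrict to $\bbe$ directly. The paper works throughout in $K^{T\times A}(-)$ over the free $T$-orbit $T\cdot\bbe$, which is open in $\zzrA$ by Lemma \ref{lem:Torbitdense}. Over that orbit the relevant conormal classes $\lambda(T^*_{T\cdot\bbe}\zzr)$ and $\lambda(T^*_{\tggrA_w}\tggr)$ are non-zero-divisors in the localized ring $K^{T\times A}(T\cdot\bbe)_a\simeq R(A)_a$ (one has the criterion that $\lambda(\underline V)\neq 0$ iff the \emph{$T\times A$}-weights of $V$ are all nonzero — a strictly weaker hypothesis than nonvanishing of the $A$-weights alone), so the cancellations are legitimate. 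The problematic $\g_{1,1}$ contribution is then handled not by dividing by zero, but by the explicit identification $\lambda(\underline{\g}_{1,1}^*)=\lambda(T^*_{\zzrA}\ggregA)$ of Lemma \ref{lem:canelequality}, after which it is cancelled as a genuine non-zero-divisor against the matching factor on the other side (Lemma \ref{lem:expandlambda}). Only at the very end does one descend $K^{T\times A}(T\cdot\bbe)\to R(A)$ via $\pi^T_*$ and evaluate at a regular $a$. Rebuilding your argument around the $T$-orbit and the $Res_a$/$res_a$ formalism of \cite[Ch.\ 5]{CG}, rather than around the single point $\bbe$, is the essential missing step; the rest of your outline (selection of adapted Borels, splitting $[\b_w,\b_w]=\g_{1,1}\oplus\uf_w$, the three $\lambda$-factors per $w$) is on the right track.
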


In order to give meaning to Theorem \ref{thm:pnpcharacter}, we must explain how the torus $A$ acts on the various spaces appearing in formula (\ref{eq:pnp1}). As explained in (\ref{sec:defns}), the action of $A$ comes from its embedding in $\cc \times T$. Therefore we will just remind the reader how $T \times \cc$ acts on these spaces. Recall that in (\ref{sec:defns}) we have defined the action of $T \times \cc$ on $\g$ and $\gg$. The group acts on $\Snw^*$ and $\Sse^*$ as subspaces of $\gg^*$. The action of $T \times \cc$ on $(\g / (\mathfrak{z}(\bbe) \oplus \t))^*$ comes from its action on $\g^*$. The space $( \b_w \oplus \b_w / \g_{1,0} \oplus \g_{0,1} )^*$ is a subspace of $\gg^*$ and $[\b_w, \b_w]$ is a subspace of $\g$. One has to be careful with the action of $T \times \cc$ on $\uf_w^*$ - the space $\uf_w^*$ is a subspace of the fiber of $\mathfrak{q}^* \T$ at $(\b_w,0,0)$. Therefore $T$ acts in the natural way but $\cc$ acts by dilations, hence this is not the action of $T \times \cc$ coming from the fact that $\uf_w \subset \g$.

The remainder of this section is devoted to the proof of Theorem \ref{thm:pnpcharacter}. It is a rather long calculation in equivariant $K$-theory. We begin, following \cite[Chapter 5]{CG}, by describing the basic setup in which we work.

\subsection{Equivariant $K$-theory} 
Given a group $A$ acting on a smooth quasi-projective variety $M$, write $M^A$ for the fixed point set. We will assume that $A$ is abelian and reductive. Then, as shown in \cite[Lemma 5.11.1]{CG}, $M^A$ is also smooth. The Grothendieck group of $A$-equivariant coherent sheaves on $M$ will be denoted $K^A(M)$ and its complexification $K^A_{\C}(M)$. For an $A$-equivariant vector bundle $V$ on $M$, define $\lambda(V) = \sum_{k = 0}^{\rk V} (-1)^k [\wedge^k V] \in K^A(M)$. Fix $a \in A$ to be $M$-regular (that is, $a \in A$ such that $M^a = M^A$) and let $ev_a : K^A(M^A) \rightarrow K_{\C}(M^A)$ be the map $K^A(M^A) = R(A) \o_{\Z} K(M^A) \rightarrow K_{\C}(M^A)$ given by evaluating functions at $a$. Define $\lambda(V)_a := ev_a(\lambda(V)) \in K_{\C}(M^A)$ and (c.f. \cite[(5.11)]{CG}):
$$
res_a (\mathcal{F}) = \lambda(T^*_{M^A} M)_a^{-1} \cdot ev_a(i^* \mathcal{F}) \in K_{\C}(M^A),
$$
where $i : M^A \hookrightarrow M$. Let $K^A(M^A)_a := R(A)_a \o_{R(A)} K^A(M^A)$ denote the localization of $K^A(M^A)$ with respect to the set of functions in $R(A)$ that are non-zero at $a$. Then \cite[Proposition 5.10.3]{CG} says that $\lambda(T^*_{M^A} M)$ is invertible in $K^A(M)_a$. Write 
$$
Res_a (\mathcal{F}) = \lambda(T^*_{M^A} M)^{-1} \cdot i^* \mathcal{F} \in K^A(M^A)_a,
$$
so that $res_a = ev_a \circ Res_a$. Consider the following setup:
$$
\xymatrix{
N^A\ar@{^{(}->}[rr]^<>(0.5){u^A} \ar@{^{(}->}[d]_{\tilde{\epsilon}} && M^A \ar@{^{(}->}[d]_{\epsilon} \\
N \ar@{^{(}->}[rr]^<>(0.5){u} && M 
}
$$
where $u$ is an $A$-equivariant, closed embedding of smooth varieties. When in this situation, we will repeatedly use the following two facts (as explained in \cite[Proposition 5.4.10]{CG}): 
\begin{enumerate}

\item if $\mathcal{F}$ is a sheaf on $M$, whose support is contained in $N$, then $u^* [\mathcal{F}] = \lambda(T^*_N M) \cdot [\mathcal{F} |_N]$.

\item if $\mathcal{F}$ is a locally free sheaf on $M$ then $u^* [\mathcal{F}] = [ u^* \mathcal{F}]$.

\end{enumerate}

Here $T^*_N M$ denotes the conormal bundle of $N$ in $M$.

\begin{lem}\label{lem:calcone}
Let $\mathcal{F}$ be an $A$-equivariant sheaf on $M$ whose support is in $N$. 
\begin{enumerate}
\item In the $K$-group $K^A(N^A)_a$ we have an equality
$$
Res_a (u^* [\mathcal{F}]) = \lambda(T^*_{N^A} N)^{-1} \cdot \lambda(T^*_{M^A} M |_{N^A}) \cdot (u^A)^* Res_{a} ([\mathcal{F}]).
$$
\item In the $K$-group $K_{\C}(N^A)$ we have an equality
$$
res_a (u^* [\mathcal{F}]) = \lambda(T^*_{N^A} N)^{-1}_a \cdot \lambda(T^*_{M^A} M |_{N^A})_a \cdot (u^A)^* res_{a} ([\mathcal{F}]).
$$
\end{enumerate}
\end{lem}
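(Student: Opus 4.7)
The argument is a direct manipulation from the definition of $Res_a$ combined with the commutativity of the square, so no deep input is needed; the support hypothesis on $\mathcal{F}$ is mainly bookkeeping (ensuring that $u^*[\mathcal{F}]$ is the class we wish to compute later). I would carry out the two parts in order, deriving (2) from (1) at the end.

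\emph{Step 1: commuting pullbacks.} The first thing to record is that the square in the statement commutes, i.e.\ $\epsilon\circ u^A = u\circ\tilde{\epsilon}$. Since $M$ and $N$ are smooth, the equivariant pullbacks $u^*$, $\epsilon^*$, $\tilde{\epsilon}^*$, $(u^A)^*$ are all defined on the $K$-groups in question, and functoriality of pullback yields the identity
\[
\tilde{\epsilon}^*\circ u^* \;=\; (u^A)^*\circ \epsilon^*
\]
as maps $K^A(M)\to K^A(N^A)$.

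\emph{Step 2: proof of (1).} By the definition of $Res_a$ applied on $N$ with $\tilde{\epsilon}$ the inclusion,
\[
Res_a(u^*[\mathcal{F}]) \;=\; \lambda(T^*_{N^A}N)^{-1}\cdot \tilde{\epsilon}^*(u^*[\mathcal{F}]).
\]
Using Step 1, this equals $\lambda(T^*_{N^A}N)^{-1}\cdot (u^A)^*\bigl(\epsilon^*[\mathcal{F}]\bigr)$. On the other hand, the definition of $Res_a$ on $M$ reads
\[
\epsilon^*[\mathcal{F}] \;=\; \lambda(T^*_{M^A}M)\cdot Res_a([\mathcal{F}]),
\]
which can be rearranged because $\lambda(T^*_{M^A}M)$ is invertible in $K^A(M^A)_a$ by \cite[Proposition 5.10.3]{CG}. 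Applying $(u^A)^*$, which is a ring homomorphism commuting with $\lambda$ of a vector bundle (fact (2) in the text), yields
\[
(u^A)^*\bigl(\epsilon^*[\mathcal{F}]\bigr) \;=\; \lambda\bigl(T^*_{M^A}M\big|_{N^A}\bigr)\cdot (u^A)^*\,Res_a([\mathcal{F}]).
\]
Combining the two displays gives exactly the formula of (1) in $K^A(N^A)_a$.

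\emph{Step 3: proof of (2).} The evaluation $ev_a$ extends to a ring homomorphism from the localization $K^A(N^A)_a$ to $K_{\C}(N^A)$ (it is defined on $R(A)_a$ precisely because we invert only elements of $R(A)$ non-zero at $a$), and commutes with the $K$-theoretic pullback $(u^A)^*$. Applying $ev_a$ to both sides of (1), using $res_a=ev_a\circ Res_a$ and $\lambda(V)_a = ev_a(\lambda(V))$, immediately produces the formula of (2) in $K_{\C}(N^A)$.

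\emph{Potential obstacles.} There is no substantive difficulty; the only point that requires a little care is justifying that both sides of (1) live in $K^A(N^A)_a$ (equivalently, that the relevant $\lambda$-classes are invertible there), but this is guaranteed by the smoothness of $M$ and $N$ together with \cite[Proposition 5.10.3]{CG} applied to the inclusions $\epsilon$ and $\tilde{\epsilon}$.
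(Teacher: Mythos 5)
Your proof is correct and uses the same ingredients as the paper's: the definitions of $Res_a$ on $N$ and on $M$ together with the commutativity of the square $\epsilon\circ u^A = u\circ\tilde{\epsilon}$. The paper phrases the same argument by first expanding $u^*[\mathcal{F}]$ as $\lambda(T^*_N M)\cdot[\mathcal{F}|_N]$ (which is where the support hypothesis enters) and then tracking both sides through $\tilde{\epsilon}^*[\mathcal{F}|_N]$, whereas you apply the functoriality identity $\tilde{\epsilon}^*\circ u^* = (u^A)^*\circ\epsilon^*$ directly to $[\mathcal{F}]$ — a slightly shorter route that also correctly shows the support hypothesis is not actually needed at this step once $u^*$ is read as the derived pullback.
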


\begin{proof}
As noted above, $u^* [\mathcal{F}] = \lambda(T^*_N M) \cdot [\mathcal{F} |_N]$. Therefore, using the fact that $\tilde{\epsilon}^* \lambda(T^*_N M) = \lambda(T^*_N M \, |_{N^A})$, we have
$$
Res_a (u^* \mathcal{F}) = \lambda(T^*_N M |_{N^A}) \cdot \lambda(T^*_{N^A} N)^{-1} \cdot \tilde{\epsilon}^* [\mathcal{F} |_N].
$$
On the other hand,
$$
(u^A)^* Res_a (\mathcal{F}) = (u^A)^* (\lambda(T^*_{M^A} M)^{-1} \cdot \epsilon^* [\mathcal{F}]) = \lambda(T^*_{M^A} M |_{N^A})^{-1} \cdot (\epsilon \circ u^A)^* [\mathcal{F}].
$$
Since $\epsilon \circ u^A = u \circ \tilde{\epsilon}$,
$$
(\epsilon \circ u^A)^* [\mathcal{F}] = ( u \circ \tilde{\epsilon})^* [\mathcal{F}] = \lambda(T^*_{N} M |_{N^A}) \cdot \tilde{\epsilon}^* [\mathcal{F} |_N],
$$
from which the first equation follows. Applying $ev_a$ to the first equation gives the second.
\end{proof}

\begin{lem}\label{lem:expandlambda}
In $K^A(N^A)_a$ one has 
\beq{eq:expandlambda}
\lambda (T^*_{N} M |_{N^A}) = \lambda (T^*_{N^A} N)^{-1} \cdot \lambda (T^*_{N^A} M^A) \cdot \lambda (T^*_{M^A} M |_{N^A}).
\eeq
\end{lem}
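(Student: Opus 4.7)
The plan is to derive the identity from the interaction of conormal bundles in the two chains of closed embeddings
\[
N^A \;\hookrightarrow\; N \;\hookrightarrow\; M \qquad\text{and}\qquad N^A \;\hookrightarrow\; M^A \;\hookrightarrow\; M,
\]
using the standard multiplicativity of $\lambda$ on short exact sequences together with the localization at $a$ that makes the relevant classes invertible.

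First, for any chain $Z \hookrightarrow Y \hookrightarrow X$ of closed embeddings of smooth $A$-varieties, restricting the conormal sequence of $Y$ in $X$ to $Z$ and splicing with the conormal sequence of $Z$ in $Y$ produces a short exact sequence of $A$-equivariant vector bundles on $Z$,
\[
0 \;\longrightarrow\; T^*_Y X \big|_Z \;\longrightarrow\; T^*_Z X \;\longrightarrow\; T^*_Z Y \;\longrightarrow\; 0.
\]
Since $\lambda$ is multiplicative on short exact sequences of vector bundles in equivariant $K$-theory, this yields
\[
\lambda\bigl(T^*_Z X\bigr) \;=\; \lambda\bigl(T^*_Y X \big|_Z\bigr)\cdot \lambda\bigl(T^*_Z Y\bigr)
\]
in $K^A(Z)$. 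The second step is simply to apply this identity to the two chains above. From $N^A \subset N \subset M$ we obtain
\[
\lambda\bigl(T^*_{N^A} M \big|_{N^A}\bigr) \;=\; \lambda\bigl(T^*_{N} M \big|_{N^A}\bigr)\cdot \lambda\bigl(T^*_{N^A} N\bigr),
\]
and from $N^A \subset M^A \subset M$ we obtain
\[
\lambda\bigl(T^*_{N^A} M \big|_{N^A}\bigr) \;=\; \lambda\bigl(T^*_{M^A} M \big|_{N^A}\bigr)\cdot \lambda\bigl(T^*_{N^A} M^A\bigr).
\]

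Equating the two right-hand sides and solving for $\lambda(T^*_N M |_{N^A})$ gives the claimed formula, provided the factor $\lambda(T^*_{N^A} N)$ is invertible. This is where localization enters: by \cite[Proposition 5.10.3]{CG}, cited just above the lemma in the setup of $Res_a$, the class $\lambda(T^*_{N^A} N)$ is invertible in $K^A(N^A)_a$ because $N^A$ is precisely the $a$-fixed locus of $N$ and so the characters of $A$ appearing in the conormal weights are all non-trivial at $a$. Hence the rearrangement is legitimate in $K^A(N^A)_a$, yielding equation \eqref{eq:expandlambda}.

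The argument is essentially formal once the two conormal filtrations are set up correctly, so I do not anticipate a serious obstacle; the only point that needs care is to verify that the conormal sequence for the chain $Z \subset Y \subset X$ really is an $A$-equivariant short exact sequence when all three varieties are smooth (this uses smoothness of $Y$ at points of $Z$ to guarantee exactness on the left, which is automatic here since $N$ and $M^A$ are smooth by \cite[Lemma 5.11.1]{CG}).
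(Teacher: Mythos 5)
Your proof is correct and takes essentially the same route as the paper: equate the two factorizations of $\lambda(T^*_{N^A}M)$ arising from the conormal (or, dually, the tangent) short exact sequences for the chains $N^A\subset N\subset M$ and $N^A\subset M^A\subset M$, then divide by $\lambda(T^*_{N^A}N)$, which is invertible in $K^A(N^A)_a$ by \cite[Proposition 5.10.3]{CG}. The only cosmetic difference is that you first state the splicing lemma for a general chain $Z\subset Y\subset X$ and work with conormal rather than tangent sequences, but the content is identical.
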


\begin{proof}
The closed embeddings $N^A \hookrightarrow N \hookrightarrow M$ imply that there is a short exact sequence
$$
0 \longrightarrow T_{N^A} N \longrightarrow T_{N^A} M \longrightarrow (T_N M) |_{N^A} \longrightarrow 0
$$
and dually,
$$
0 \longleftarrow T^*_{N^A} N \longleftarrow T^*_{N^A} M \longleftarrow (T^*_{N} M) |_{N^A} \longleftarrow 0.
$$
By \cite[Corollary 5.4.11]{CG}, this implies that $\lambda(T^*_{N^A} M) = \lambda(T^*_{N^A} N) \cdot \lambda (T^*_{N} M |_{N^A})$. Similarly, the closed embeddings $N^A \hookrightarrow M^A \hookrightarrow M$ imply that $\lambda(T^*_{N^A} M) = \lambda(T^*_{N^A} M^A) \cdot \lambda (T^*_{M^A} M |_{N^A})$. Therefore
$$
\lambda (T^*_{N^A} N) \cdot \lambda (T^*_{N} M |_{N^A}) =  \lambda(T^*_{N^A} M^A) \cdot \lambda (T^*_{M^A} M |_{N^A}).
$$
Since $\lambda(T^*_{N^A} N)$ is invertible in $K^A(N^A)_a$, equation (\ref{eq:expandlambda}) follows.
\end{proof}

Given a rational $A$-module $V$, we denote by $\underline{V}$ the $A$-equivariant vector bundle on $M$ defined by the projection $\pi : M \times V \rightarrow M$, where $A$ acts diagonally on $M \times V$. 

\subsection{The DG algebra $\aa$ on $\tgg$}
We now return to the setting of Theorem \ref{thm:pnpcharacter}. Recall from (\ref{tx}) that we have a DG algebra $\aa$ on $\tgg$. This is a complex of $H$-equivariant vector bundles $\aa_n$, whose fiber at $(\b,x,y)$ is $\wedge^n [\b,\b]^*$. Let $[\aa]=\sum_n (-1)^n \cdot [\aa_n]\in K^H(\tgg)$ be the corresponding class in equivariant $K$-theory. Write $\tggr$ for the open subset of $\tgg$ consisting of points $(\b,x,y)$ such that the pair $(x,y)$ is regular. Since the semisimple pair $\mbf{h}$ is regular, $\bb^A = \bb^T$. Then $\tggrA := (\tggr)^A = \bigsqcup_{w \in W} \tggrA_w$ where 
$$
\tggrA_w := \{ \b_w \} \times (\b_w \oplus \b_w) \cap (\g_{1,0} \oplus \g_{0,1})^r.
$$
The pull-back of $\aa_n$ to $\tggrA_w$ is the $T \times A$-equivariant vector bundle $\wedge^n \underline{[\b_w,\b_w]}^*$. By definition,
$$
Res_a( [\aa]) = \sum_{w \in W} \lambda(T^*_{\tggrA_w} \tggr)^{-1} \cdot i_w^* [\aa] = \sum_{w \in W} \lambda(T^*_{\tggrA_w} \tggr)^{-1} \cdot \lambda( \underline{[\b_w,\b_w]}^* ) 
$$
where $i_w : \tggrA_w \hookrightarrow \tggr$ and $\underline{[\b_w,\b_w]}^*$ is the $T \times A$-equivariant vector bundle on $\tggrA_w$ with fibers $[\b_w,\b_w]^*$. Consider the following setup
$$
\xymatrix{
T\cdot\bbe\ \ar@{^{(}->}[r]^<>(0.5){f}& \zzrA\ar@{^{(}->}[rr]^<>(0.5){u^A} \ar@{^{(}->}[dd]^<>(0.5){\tilde{\epsilon}} && \ggregA \ar@{^{(}->}[dd]^{\epsilon} && \tggrA \ar[ll]_<>(0.5){\mmu^A}\ar@{^{(}->}[dd]^<>(0.5){i}\\
 &&&&& \\
&\zzr\ar@{^{(}->}[rr]^<>(0.5){u}&& \ggreg && \tggr \ar[ll]_<>(0.5){\mmu}
}
$$
The idea is to push and pull $Res_a( [\aa])$ all the way back to $T \cdot \bbe$. Recall that we have fixed a $T \times A$-stable complement $\uf_w$ to $\g_{1,1}$ in $[\b_w,\b_w]$ so that $\lambda(\underline{[\b_w,\b_w]}) = \lambda(\underline{\uf}_w) \cdot \lambda(\underline{\g}_{1,1})$ in $K^{T \times A}(T \cdot \bbe)$. We note that $\g_{1,1}$ equals $([\b_w,\b_w])^A$, so it will be important to keep track of $\g_{1,1}$ because $\lambda(\underline{[\b_w,\b_w]})_a = \lambda(\underline{\g}_{1,1})_a = 0$ in $K_{\C}^T(T \cdot \bbe)$, where as $\lambda(\underline{\uf}_w)_a \neq 0$. Recall also that $\mmu : \tgg \rightarrow \gg$ is the projective morphism sending $(\b,x,y)$ to $(x,y)$.

\begin{lem}\label{lem:calculateres}
In the Grothendieck group $K^{T \times A}(T \cdot \bbe)_a$ we have
\beq{eq:groth1}
(u^A \circ f)^* \mmu^A_* Res_a [\aa] = \lambda(\underline{\g}_{1,1}^*) \cdot \sum_{w \in W_{\mathrm{adp}}} \lambda(T^*_{\tgg_w^A} \tgg |_{\ggregA})^{-1} \cdot \lambda(\underline{\uf}_w^*).
\eeq
\end{lem}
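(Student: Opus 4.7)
The plan is to apply $(u^A \circ f)^* \mmu^A_*$ term-by-term to the formula
\[
Res_a [\aa] = \sum_{w \in W} \lambda(T^*_{\tggrA_w} \tggr)^{-1} \cdot \lambda(\underline{[\b_w,\b_w]}^*)
\]
obtained just above. The key geometric input is that the restriction of $\mmu^A$ to each component $\tggrA_w$ is a closed embedding with image the smooth subvariety $Y_w := (\b_w \oplus \b_w) \cap \ggregA$ of $\ggregA$. Moreover, the classes $\lambda(T^*_{\tggrA_w} \tggr)^{-1}$ and $\lambda(\underline{[\b_w,\b_w]}^*)$ are pulled back from (localized) elements of $R(T\times A)$ --- since the underlying bundles are $T\times A$-equivariantly trivial on $\tggrA_w$ --- and hence commute with every pushforward-pullback operation in sight. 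The computation therefore reduces to evaluating $(u^A \circ f)^*\iota_{w,*}[\mathcal{O}_{Y_w}]$ for each $w$, where $\iota_w : Y_w \hookrightarrow \ggregA$.

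Next comes a case analysis on $w$. For $w \notin W_{\mathrm{adp}}$, first I would observe that $T\cdot \bbe\cap Y_w=\emptyset$: any point $(t_1,t_2)\bu\bbe$ lying in $Y_w \subset \b_w\oplus \b_w$ would force $e_1,e_2\in \b_w$ (since $\b_w$ is $T$-stable and already contains $\t \ni h_1,h_2$), contradicting Lemma \ref{quad}. Since $T\cdot\bbe$ and $Y_w$ are then disjoint closed subvarieties of $\ggregA$, the pullback $(u^A \circ f)^* \iota_{w,*}[\mathcal{O}_{Y_w}]$ vanishes by excision, and only adapted $w$ contribute to the sum.

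For $w\in W_{\mathrm{adp}}$ one has $T\cdot\bbe\subset Y_w$, so $u^A \circ f$ factors as $\iota_w\circ j$ with $j:T\cdot\bbe\hookrightarrow Y_w$. Applying fact (1) preceding Lemma \ref{lem:calcone} to the closed regular embedding $\iota_w$ of smooth varieties yields $\iota_w^*\iota_{w,*}[\mathcal{O}_{Y_w}]=\lambda(T^*_{Y_w}\ggregA)$ in $K^{T\times A}(Y_w)$, which pulls back via $j$ to $\lambda(\underline{N_w^*})$ on $T\cdot\bbe$, where $N_w$ is the trivial bundle with fiber $(\g_{1,0}\oplus \g_{0,1})/((\b_w)_{1,0}\oplus (\b_w)_{0,1})$. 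The crucial simplification is that Lemma \ref{quad} forces $\g_{1,0},\g_{0,1}\subset \b_w$ for adapted $w$; hence $(\b_w)_{1,0}=\g_{1,0}$ and $(\b_w)_{0,1}=\g_{0,1}$, so $N_w=0$ and $\lambda(\underline{N_w^*})=1$.

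Assembly: note that $T^*_{\tggrA_w}\tggr = T^*_{\tgg_w^A}\tgg|_{\tggrA_w}$ (since $\tggr$ and $\tggrA_w$ are open in $\tgg$ and $\tgg_w^A$ respectively), and use the decomposition $[\b_w,\b_w]=\g_{1,1}\oplus \uf_w$ to factor $\lambda(\underline{[\b_w,\b_w]}^*) = \lambda(\underline{\g}_{1,1}^*)\cdot \lambda(\underline{\uf}_w^*)$. Collecting everything gives the formula stated in (\ref{eq:groth1}). The main obstacle will be tracking the $T\times A$-equivariant structures cleanly, particularly using the Killing-form identification $(T_{\b_w}\bb)^*\cong [\b_w,\b_w]$ to match the shorthand $T^*_{\tgg_w^A}\tgg|_{\ggregA}$ used on the right-hand side.
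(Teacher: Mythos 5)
Your proof is correct and follows essentially the same route as the paper: split the fixed-point sum into adapted and non-adapted $w$, note that for adapted $w$ the map $\mmu^A|_{\tggrA_w}$ is an isomorphism onto $\ggregA$ (equivalently $N_w=0$ since $\g_{1,0}\oplus\g_{0,1}\subset\b_w\oplus\b_w$), and that for non-adapted $w$ the pushforward is supported on $Y_w$, disjoint from $T\cdot\bbe$, hence vanishes on pullback. One small notational slip: a general point of $T\cdot\bbe$ should be written $\Ad(t)\bbe$ with $t\in T$, not $(t_1,t_2)\bu\bbe$, since $\bbe$ is $A$-fixed so the latter equals $\bbe$; the substance of the argument (that $T$-stability of $\b_w$ forces $e_1,e_2\in\b_w$, contradicting non-adaptedness) is exactly as in the paper.
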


\begin{proof}
The restriction of $\mmu^A$ to $\tggrA_w$ is a closed embedding. If $\b_w$ is adapted to $\bbe$ then $\tggrA_w = (\g_{1,0} \oplus \g_{0,1})^r$ and $\mmu^A$ is just the identity on $\tggrA_w$. Pushing forward,
$$
\mmu^A_* Res_a ([\aa]) = \sum_{w \textrm{ adp}} \lambda(T^*_{\tggrA_w} \tggr |_{\ggregA})^{-1} \cdot \lambda(\underline{[\b_w,\b_w]}^*) + \mathrm{Q} \in K^H(\ggregA)_a,
$$
where $\mathrm{Q}$ consists of terms such that $\bbe$ is not in the support (if $\bbe \notin \tggrA_w$ then $\mmu^A$ is not an isomorphism so one must take derived push-forward, but we can ignore the resulting terms). Since the terms in $ \mathrm{Q}$ are the classes of $T \times A$-equivariant sheaves, the fact that $\bbe$ is not in the support of $\mathrm{Q}$ implies that $(f \circ u^A)^* \mathrm{Q} = 0$. Pulling back along $f \circ u^A$ gives the required equation.  
\end{proof} 

\subsection{}
It is also possible to compute an expression for $(u^A \circ f)^* \mmu^A_* Res_a [\aa]$ in terms of $\rr$. Since $R \mmu_* \aa = u_* \rr$ in the derived category $D^b_{coh}(\ggreg)$, we also have $\mmu_* [\aa] = u_* [\rr]$ in $K^{T \times A}(\ggreg)$. 

\begin{lem}\label{lem:swapresmmu}
In the Grothendieck group $K^{T \times A}(T \cdot \bbe)_a$ we have the equality
$$
\lambda(T^*_{T \cdot \bbe} \zzr)^{-1} \cdot \lambda(T^*_{\ggregA} \ggreg \, |_{T \cdot \bbe}) \cdot (u^A \circ f)^* \mmu^A_* Res_a [\aa] = \lambda(T^*_{\zzr} \ggreg \, |_{T \cdot \bbe}) \cdot Res_a  [\rr].
$$
\end{lem}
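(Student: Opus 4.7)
The plan is to deduce the claim from the key identity $\mmu_* [\aa] = u_* [\rr]$ of equation (\ref{eq:isomainisoK}) by invoking two standard tools of equivariant $K$-theory: the concentration (localization) theorem, which asserts that $Res_a \circ \phi_* = \phi^A_* \circ Res_a$ for any proper $A$-equivariant morphism $\phi$ between smooth $A$-varieties, and the self-intersection formula (property (1) of the list preceding Lemma \ref{lem:calcone}) applied to the regular closed embedding $u^A$.

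First I would apply the concentration theorem to both $\mmu : \tggr \to \ggreg$ (which is proper because $\bb$ is projective) and $u : \zzr \hookrightarrow \ggreg$. Combined with the identity $\mmu_* [\aa] = u_* [\rr]$, this yields the equality
$$
\mmu^A_* Res_a [\aa] \;=\; Res_a (\mmu_* [\aa]) \;=\; Res_a (u_* [\rr]) \;=\; u^A_* Res_a [\rr]
$$
in $K^{T \times A}(\ggregA)_a$. Next I would pull back this equation along $u^A \circ f : T \cdot \bbe \hookrightarrow \zzrA \hookrightarrow \ggregA$. Since $u^A_* Res_a [\rr]$ is a sheaf on $\ggregA$ supported on $\zzrA$, the self-intersection formula gives $(u^A)^* u^A_* Res_a [\rr] = \lambda(T^*_{\zzrA} \ggregA) \cdot Res_a [\rr]$, and therefore
$$
(u^A \circ f)^* \mmu^A_* Res_a [\aa] \;=\; \lambda(T^*_{\zzrA} \ggregA |_{T \cdot \bbe}) \cdot f^* Res_a [\rr].
$$

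Comparing this with the target identity, it suffices to verify the purely formal equality of Euler classes
$$
\lambda(T^*_{T \cdot \bbe} \zzr)^{-1} \cdot \lambda(T^*_{\ggregA} \ggreg |_{T \cdot \bbe}) \cdot \lambda(T^*_{\zzrA} \ggregA |_{T \cdot \bbe}) \;=\; \lambda(T^*_{\zzr} \ggreg |_{T \cdot \bbe}).
$$
This is exactly the content of Lemma \ref{lem:expandlambda} applied to the smooth pair $N = \zzr \subset M = \ggreg$ (so that $N^A = \zzrA$ and $M^A = \ggregA$), then restricted to $T \cdot \bbe$, provided we identify $T^*_{T \cdot \bbe} \zzr$ with $T^*_{\zzrA} \zzr |_{T \cdot \bbe}$. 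This last identification follows from Lemma \ref{lem:Torbitdense}: since $T \cdot \bbe$ is open in $\zz^A \cap (\g_{1,0} \oplus \g_{0,1})$, and hence open in $\zzrA$, the conormal bundle $T^*_{T \cdot \bbe} \zzrA$ vanishes and the short exact sequence of conormal bundles for $T \cdot \bbe \subset \zzrA \subset \zzr$ collapses to the desired isomorphism.

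The main obstacle I anticipate is the careful verification of the hypotheses underlying the concentration theorem and the self-intersection formula, in particular the properness of $\mmu$ restricted to the regular loci and the smoothness of $\ggregA$ and $\zzrA$ (both inherited from being fixed loci of a reductive group acting on smooth varieties). Once these technicalities are in place, the remainder of the argument is formal bookkeeping with Lemma \ref{lem:expandlambda}.
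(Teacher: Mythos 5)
Your proof is correct, but it reorganizes the paper's argument in a genuinely different way. The paper computes the single quantity $Res_a\, u^*(\mmu_*[\aa])$ in two ways: first by applying the self-intersection formula $u^*u_*[\rr] = \lambda(T^*_{\zzr}\ggreg)\cdot[\rr]$ on $\zzr$ and then pushing $Res_a$ through the product (giving (\ref{eq:a1})); second by applying Lemma \ref{lem:calcone} with $N = \zzr$, $M = \ggreg$ and the concentration theorem for $\mmu$ alone (giving (\ref{eq:a2})); equating (\ref{eq:a1}) and (\ref{eq:a2}) and restricting to $T\cdot\bbe$ produces the result. Your version instead applies the concentration theorem to \emph{both} $\mmu$ and $u$ (exploiting that a closed embedding is proper), moving the whole equation $\mmu_*[\aa] = u_*[\rr]$ to the fixed locus $\ggregA$ in one stroke, then performs the self-intersection argument for the embedding $u^A$ of fixed loci rather than for $u$ itself, and finally discharges all the Euler-class bookkeeping via a single invocation of Lemma \ref{lem:expandlambda} (which the paper defers to the proof of the subsequent Proposition). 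Both routes are valid and of comparable length; yours is more symmetric in its treatment of $\mmu$ and $u$ and makes the role of Lemma \ref{lem:expandlambda} visible already here, at the cost of using the concentration theorem twice and front-loading a lemma the paper reserves for later. One minor wording issue: you describe $u^A_* Res_a[\rr]$ as a "sheaf on $\ggregA$ supported on $\zzrA$", but $Res_a[\rr]$ lives in the localized $K$-group $K^{T\times A}(\zzrA)_a$, not in the category of sheaves; the self-intersection formula $(u^A)^*u^A_*(c) = \lambda(T^*_{\zzrA}\ggregA)\cdot c$ does hold for arbitrary classes $c$ in the $K$-group of a regularly embedded closed smooth subvariety, so the substance is fine, but the phrasing should avoid calling a localized $K$-class a sheaf.
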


\begin{proof}
By \cite[Proposition 5.4.10]{CG}
$$
u^*( \mmu_* [\aa]) = u^*u_* [\rr] = \lambda(T^*_{\zzr} \ggreg) \cdot [\rr] \in K^{T \times A}(\zzr).
$$
This implies that 
$$
Res_a \ u^*( \mmu_* [\aa]) = Res_a (\lambda(T^*_{\zzr} \ggreg) \cdot [\rr]) \in K^{T \times A}(\zzrA)_a.
$$
Since $\lambda(T^*_{\zzr} \ggreg)$ is an alternating sum of vector bundles on $\zzr$, 
$$
Res_a (\lambda(T^*_{\zzr} \ggreg) \cdot [\rr]) = \lambda(T^*_{\zzr} \ggreg \, |_{\zzrA}) \cdot Res_a [\rr],
$$
and hence 
\beq{eq:a1}
Res_a \ u^* ( \mmu_* [\aa]) = \lambda(T^*_{\zzr} \ggreg \, |_{\zzrA}) \cdot Res_a [\rr] \in K^{T \times A}(\zzrA)_a.
\eeq
Since $\mmu_* [\aa]$ is the class of a sheaf supported on $\zzr$, Lemma \ref{lem:calcone} with $N = \zzr$ and $M = \ggreg$ implies that
\beq{eq:a2}
Res_a \ u^* ( \mmu_* [\aa]) = \lambda(T^*_{\zzrA} \zzr)^{-1} \cdot \lambda(T^*_{\ggregA} \ggreg \, |_{\zzrA}) \cdot (u^A)^* Res_a \ \mmu_* [\aa].
\eeq
As in the proof of \cite[Theorem 5.11.7]{CG}, we can apply \cite[Proposition 5.3.15]{CG} to conclude that $Res_a \ \mmu_* [\aa] =  \mmu^A_* Res_a \ [\aa]$. Combining equations (\ref{eq:a1}) and (\ref{eq:a2}) produces the required formula.
\end{proof}

\begin{lem}\label{lem:canelequality}
In $K^{T \times A}(T \cdot \bbe)$ we have $\lambda(\underline{\g}_{1,1}^*) = \lambda( T^*_{\zzrA} \ggregA)$.
\end{lem}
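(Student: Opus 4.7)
The plan is to identify both sides as trivial $T \times A$-equivariant vector bundles on the orbit $T \cdot \bbe$ and to match their fibers at $\bbe$ via a short exact sequence provided by the differential of the commutator map.

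Since $\ggregA$ is open in the linear space $\gg^A = \g_{1,0} \oplus \g_{0,1}$, the tangent bundle of $\ggregA$ restricted to $T \cdot \bbe$ is trivial with fiber $\g_{1,0} \oplus \g_{0,1}$. By Lemma~\ref{lem:Torbitdense}, $T\cdot\bbe$ is open in $\zzrA$, so $T\zzrA|_{T\cdot\bbe}$ coincides with the tangent bundle of the free torus orbit $T\cdot\bbe$, which trivialises via the orbit map to $\underline{d\Ad_\bbe(\t)}$. Consequently $(T^*_{\zzrA}\ggregA)\big|_{T \cdot \bbe}$ is the trivial $T \times A$-equivariant bundle whose fiber at $\bbe$ is $\bigl((\g_{1,0} \oplus \g_{0,1})/d\Ad_\bbe(\t)\bigr)^*$.

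Next, I would analyse the restriction of $d_\bbe\kappa$ to $\gg_{0,0} = \g_{1,0} \oplus \g_{0,1}$, namely $(u,v) \mapsto [u,e_2]+[e_1,v]$. Since $e_1\in\g_{1,0}$ and $e_2\in\g_{0,1}$, the image lies inside $\g_{1,1}$. The kernel equals $d\Ad_\bbe(\t)$: Lemma~\ref{lem:Torbitdense} identifies $T_\bbe(T\cdot\bbe)=d\Ad_\bbe(\t)$ with $T_\bbe\zzrA = \ker(d_\bbe\kappa\big|_{\gg_{0,0}})$. For surjectivity onto $\g_{1,1}$, the identity $\bigoplus_{i,j\ge 0}\g_{i,j}=\sum_{p,q\ge 0}(\ad e_1)^p(\ad e_2)^q(\t)$ recalled in the proof of Lemma~\ref{quad} forces $\g_{1,1}=(\ad e_1)(\ad e_2)(\t)\subset (\ad e_1)(\g_{0,1})=d_\bbe\kappa(\{0\}\oplus\g_{0,1})$. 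Thus we obtain a short exact sequence
\[
0\longrightarrow \t\longrightarrow \g_{1,0}\oplus\g_{0,1}\stackrel{d_\bbe\kappa}{\longrightarrow}\g_{1,1}\longrightarrow 0,
\]
which, after dualising and combining with the previous paragraph, identifies the fiber of $(T^*_{\zzrA}\ggregA)\big|_{T\cdot\bbe}$ with $\g_{1,1}^*$.

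The main obstacle is the bookkeeping for the $A$-equivariant structure. The commutator map $\kappa:\gg\to\g$ is $G\times T$-equivariant but intertwines the $\cc$-action only up to the character $(\alpha\beta)^{-1}$, so the identification furnished by $d_\bbe\kappa$ shifts the natural $A$-weight on $\g_{1,1}\subset\g$ by $(1,1)$. One must verify that this twist matches the convention used in the statement: namely, that $\underline{\g_{1,1}}^*$ carries the $A$-action coming from the realisation of $\g_{1,1}^*$ as a subspace of the fiber of $\mathfrak{q}^*\T$ on which $\cc$ acts by dilation (contributing precisely the $(1,1)$ shift on duals), as explained after the statement of Theorem~\ref{thm:pnpcharacter}. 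Once this is confirmed, the displayed isomorphism is $T\times A$-equivariant, yielding the required equality in $K^{T\times A}(T\cdot\bbe)$.
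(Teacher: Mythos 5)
Your proof is correct and follows essentially the same route as the paper: reduce to comparing the fibers at $\bbe$ as $A$-modules, use the three-term complex \eqref{eq:threetermcomplex} (restricted to its $A$-invariant part) to produce the short exact sequence $0\to\t\to\g_{1,0}\oplus\g_{0,1}\to\g_{1,1}\to 0$, and combine with Lemma~\ref{lem:Torbitdense} to identify $(T_{\zzrA}\ggregA)_\bbe$ with $\g_{1,1}$. The one place you differ in substance is the treatment of exactness: the paper simply cites \cite[Theorem~6.6]{PNP} and the ``Weak Lefschetz'' \cite[Proposition~1.12]{PNP}, whereas you extract the surjectivity onto $\g_{1,1}$ directly from the graded identity $\bigoplus_{i,j\geq0}\g_{i,j}=\sum_{p,q\geq0}(\ad e_1)^p(\ad e_2)^q(\t)$ recalled in the proof of Lemma~\ref{quad} (taking the $(1,1)$-component gives $\g_{1,1}=(\ad e_1)(\ad e_2)(\t)\subset\im(d_\bbe\kappa|_{\gg_{0,0}})$), and obtain the kernel from Lemma~\ref{lem:Torbitdense}; this makes the exactness a little more self-contained, though Lemma~\ref{lem:Torbitdense} itself ultimately relies on the same input from \cite{PNP}. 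Regarding the equivariance worry you raise at the end: you need not leave it conditional. The fiber $(T^*_{\zzrA}\ggregA)_\bbe \simeq \bigl(\gg_{0,0}/d\Ad_\bbe(\t)\bigr)^*$ is a subquotient of the $A$-invariant piece $\gg_{0,0}=\gg^A$ and so has trivial $A$-weight; on the other side, $\g_{1,1}^*$ sits inside the fiber $[\b_w,\b_w]^*$ of $\mathfrak{q}^*\T$, where $T$ contributes weight $(-1,-1)$ in the $(h_1,h_2)$-directions and the $\cc$-dilation contributes weight $(+1,+1)$, so under $A\hookrightarrow T\times\cc$ these cancel and $\g_{1,1}^*$ also has trivial $A$-weight. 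Thus the twist you anticipated is exactly compensated by the dilation convention spelled out after Theorem~\ref{thm:pnpcharacter}, and the equality in $K^{T\times A}(T\cdot\bbe)$ holds unconditionally, as in the paper's proof.
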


\begin{proof}
Since we are applying $\lambda( - )$ to $T \times A$-equivariant vector bundles on $T \cdot \bbe$, it suffices to show that the fibers of these vector bundles at $\bbe$ are isomorphic as $A$-modules. We have the following subsequence of the sequence (\ref{eq:threetermcomplex}) considered in the proof of Lemma \ref{lem:partialslice}:
\beq{eq:exactone}
0 \longrightarrow \t = \g_{0,0} \longrightarrow \g_{1,0} \oplus \g_{0,1} \longrightarrow \g_{1,1} \longrightarrow 0,
\eeq
where the first map is $\ad e_1 \oplus \ad e_2$ and the second is $\ad e_1 - \ad e_2$. This sequence is exact thanks to \cite[Theorem 6.6]{PNP} and \cite[Proposition 1.12,``Weak Lefschetz'']{PNP}. Since $T_{\bbe} \ (T \cdot \bbe) \simeq \t$ and $\gg^A = \g_{1,0} \oplus \g_{0,1}$, we have 
$$
(T_{T \cdot \bbe} \gg^A)_{\bbe} = (T_{\bbe} \gg^A) / (T_{\bbe} \ (T \cdot \bbe)) \simeq (\g_{1,0} \oplus \g_{0,1}) / \g_{0,0} \simeq \g_{1,1},
$$
where the last isomorphism is due to (\ref{eq:exactone}). On the other hand, Lemma \ref{lem:Torbitdense} says that the orbit $T \cdot \bbe$ is open in $(\g_{1,0} \oplus \g_{0,1}) \cap \zzr = \zzrA$. Therefore, we deduce that
$$
(T_{\zzrA} \ggregA)_{\bbe} = (T_{T \cdot \bbe} \gg^A )_{\bbe} \simeq \g_{1,1}.
$$
It follows that $T_{\zzrA} \ggregA = \underline{\g}_{1,1}$ and hence $T_{\zzrA}^* \ggregA = \underline{\g}_{1,1}^*$. 
\end{proof}

Since $K^{T \times A}(T \cdot \bbe) \simeq R(\cc)$ is a domain, we can cancel non-zero terms in equations holding in $K^{T \times A}(T \cdot \bbe)$.  Let $V$ be a rational $T \times A$-module and $\underline{V}$ the corresponding $T \times A$-equivariant vector bundle on $T \cdot \bbe$. Then $\lambda(\underline{V}) \neq 0$ in $K^{T \times A}(T \cdot \bbe)$ if and only if the weights of $V$ under the $T \times A$-action are all non-zero. 

\begin{prop}
In $K_{\C}(pt) \simeq \C$, 
\beq{eq:pnp2}
\Tr(a; \rr_\bbe)\ =\ \lambda((T^*_{\zzrA} \zzr)_{\bbe})_a \cdot \sum_{w \in W_{\mathrm{adp}}} \lambda((T^*_{\tggrA_w} \tggr )_{\bbe})^{-1}_a \cdot \lambda(\uf_w^*)_a
\eeq
\end{prop}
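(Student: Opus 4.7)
The plan is to combine the four preceding lemmas into an explicit formula for $Res_a[\rr]|_{T\cdot\bbe}$ and then evaluate at the point $\bbe$. First I will solve for $Res_a[\rr]$ in the identity of Lemma \ref{lem:swapresmmu}, obtaining on $T\cdot\bbe$ that
$$Res_a[\rr]|_{T\cdot\bbe}\ =\ \lambda(T^*_{\zzr}\ggreg|_{T\cdot\bbe})^{-1}\cdot\lambda(T^*_{T\cdot\bbe}\zzr)^{-1}\cdot\lambda(T^*_{\ggregA}\ggreg|_{T\cdot\bbe})\cdot(u^A\circ f)^*\mmu^A_*Res_a[\aa].$$
I will substitute the expression for $(u^A\circ f)^*\mmu^A_*Res_a[\aa]$ supplied by Lemma \ref{lem:calculateres}, and use Lemma \ref{lem:canelequality} to replace $\lambda(\underline{\g}_{1,1}^*)$ by $\lambda(T^*_{\zzrA}\ggregA)$.

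Next I will invoke Lemma \ref{lem:expandlambda} with $N=\zzr$ and $M=\ggreg$, whose $A$-fixed loci are $\zzrA$ and $\ggregA$, and restrict the resulting identity from $\zzrA$ to its open subset $T\cdot\bbe$. This rewrites $\lambda(T^*_{\zzr}\ggreg|_{T\cdot\bbe})$ as $\lambda(T^*_{T\cdot\bbe}\zzr)^{-1}\cdot\lambda(T^*_{\zzrA}\ggregA|_{T\cdot\bbe})\cdot\lambda(T^*_{\ggregA}\ggreg|_{T\cdot\bbe})$. After this substitution, the six $\lambda$-factors standing in front of the sum cancel in three pairs, leaving
$$Res_a[\rr]|_{T\cdot\bbe}\ =\ \sum_{w\in W_{\mathrm{adp}}}(u^A\circ f)^*\lambda(T^*_{\tggrA_w}\tggr|_{\ggregA})^{-1}\cdot\lambda(\underline{\uf}_w^*).$$

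To finish, I will restrict to the fiber at $\bbe$ and then apply $ev_a$. For each adapted $w$ the point $\bbe$ lies in $\tggrA_w$, so the pullback of $\lambda(T^*_{\tggrA_w}\tggr|_{\ggregA})^{-1}$ to the fiber becomes $\lambda((T^*_{\tggrA_w}\tggr)_\bbe)^{-1}$. Since $\rr$ is locally free on $\zzr$ near $\bbe$, the definition of $Res_a$ at the point $\bbe$ gives $i_\bbe^*[\rr]=\lambda((T^*_{\zzrA}\zzr)_\bbe)\cdot(Res_a[\rr])_\bbe$ in $R(A)_a$, and then $\Tr(a;\rr_\bbe)=ev_a(i_\bbe^*[\rr])$ yields precisely (\ref{eq:pnp2}).

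The main difficulty will lie in the bookkeeping among the nested inclusions $T\cdot\bbe\subset\zzrA\subset\ggregA$ on the one hand and $T\cdot\bbe\subset\zzr\subset\ggreg$ on the other: one must keep track of which $\lambda$-class lives on which space so that the restrictions and identifications line up correctly and the six-fold cancellation is legal. In particular, I will need the openness of $T\cdot\bbe$ in $\zzrA$ supplied by Lemma \ref{lem:Torbitdense} to identify restrictions such as $T^*_{\zzrA}\ggregA|_{T\cdot\bbe}$ with $T^*_{T\cdot\bbe}\ggregA$, together with the regularity of $a$ to ensure invertibility of the relevant classes in $K^{T\times A}(T\cdot\bbe)_a$.
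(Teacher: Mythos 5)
Your proposal is correct and follows essentially the same route as the paper: the paper's proof likewise combines Lemmas \ref{lem:swapresmmu}, \ref{lem:calculateres}, \ref{lem:canelequality} and \ref{lem:expandlambda} (the latter with $N=\zzr$, $M=\ggreg$), cancels the same three pairs of invertible $\lambda$-classes in $K^{T\times A}(T\cdot\bbe)_a$ to arrive at equation (\ref{eq:almostthere}), and then recovers $\Tr(a;\rr_\bbe)$ by unwinding the definition of $Res_a$ and applying the descent isomorphism $\pi^T_*:K^{T}_{\C}(T\cdot\bbe)\to K_{\C}(\bbe)$, which is what your ``restrict to the fiber at $\bbe$ and apply $ev_a$'' step amounts to. The only cosmetic difference is the order in which the lemmas are invoked and the explicit attention you pay to the openness of $T\cdot\bbe$ in $\zzrA$; both of these are consistent with the paper's argument.
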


\begin{proof}
In the case $M = \ggreg$ and $N = \zzr$, Lemma \ref{lem:expandlambda} says that the equality
\beq{eq:gettingthere}
\lambda (T^*_{\zzr} \ggreg |_{\zzrA}) = \lambda (T^*_{\zzrA} \zzr)^{-1} \cdot \lambda (T^*_{\zzrA} \ggregA) \cdot \lambda (T^*_{\ggregA} \ggreg |_{\zzrA})
\eeq
holds in $K^{T \times A}(\zzrA)_a$. Combining equation (\ref{eq:gettingthere}) with Lemmata \ref{lem:calculateres}, \ref{lem:canelequality} and \ref{lem:swapresmmu}, together with the fact that $\lambda(\underline{\g}_{1,1})$ and $\lambda(T^*_{\ggregA} \ggreg \, |_{T \cdot \bbe})$ are not zero-divisors in $K^{T \times A}(T \cdot \bbe)_a$ produces
\beq{eq:almostthere}
Res_a [\rr] = \sum_{w \in W_{\mathrm{adp}}} \lambda(T^*_{\tggrA_w} \tggr |_{T \cdot \bbe})^{-1} \cdot \lambda(\underline{\uf}_w^*)
\eeq
in $K^{T \times A}(T \cdot \bbe)_a$. Recall that 
$$
Res_a [\rr] = \lambda((T^*_{\zzrA} \zzr |_{T \cdot \bbe})^{-1} \cdot \tilde{\epsilon}^{*} [\rr],
$$
and hence 
\beq{eq:ran}
res_a [\rr] = \lambda((T^*_{\zzrA} \zzr |_{T \cdot \bbe})^{-1}_a \cdot ev_a(\tilde{\epsilon}^{*} [\rr]).
\eeq
Since $T$ acts freely on $T \cdot \bbe$, we have $K^T(T \cdot \bbe) \simeq K(\bbe)$. Let $\pi^T_* : K^T_{\C}(T \cdot \bbe) \rightarrow K_{\C}(\bbe)$ denote the isomorphism of descent (c.f. \cite[(5.2.15)]{CG}). Applying $\pi_*^T$ to equations (\ref{eq:ran}) and (\ref{eq:almostthere}) produces (\ref{eq:pnp2}). 
\end{proof}

\subsection{Cotangent Spaces}

To complete the proof of Theorem \ref{thm:pnpcharacter} we just need to show that equation (\ref{eq:pnp1}) is equivalent to equation (\ref{eq:pnp2}). This follows from:

\begin{lem}
As $A$-equivariant vector spaces,
$$
(T^*_{\zzrA} \zzr)_{\bbe} \simeq \Snw^* \oplus \Sse^* \oplus (\g / (\mathfrak{z}(\bbe) \oplus \t))^*,
$$
and, for $w$ adapted to $\bbe$ and $v := (\b_w, \bbe) \in \tggrA_w$,
$$
(T^*_{\tggrA_w} \tggr)_v \simeq [\b_w, \b_w] \oplus \left( \frac{\b_w \oplus \b_w}{\g_{1,0} \oplus \g_{0,1}} \right)^*.
$$
\end{lem}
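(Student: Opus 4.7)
The plan is to establish the two isomorphisms by directly computing the tangent and normal spaces and then dualizing, in each case reducing to a short exact sequence of $A$-modules that splits because $A$ is a torus.

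For the first isomorphism, I would combine Lemmata~\ref{lem:partialslice} and~\ref{lem:Torbitdense}. Since $\bbe\in\zzr$, smoothness of the regular locus gives $T_{\bbe}\zzr=T_{\bbe}\zz$, and Lemma~\ref{lem:partialslice} supplies an $A$-equivariant decomposition $T_{\bbe}\zz\simeq\Snw\oplus\Sse\oplus\g/\mathfrak{z}(\bbe)$. By Lemma~\ref{lem:Torbitdense}, the free orbit $T\cdot\bbe$ is open in $\zzrA$, so $T_{\bbe}\zzrA=T_{\bbe}(T\cdot\bbe)$ equals the image of $\t$ under the differential at $\mathbf{1}$ of the orbit map $T\to\gg$. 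Because $\mathfrak{z}(\bbe)\cap\t=0$ (which follows from Lemma~\ref{quad} applied to an adapted Borel), this image is an $A$-stable copy of $\t$ sitting inside the third summand $\g/\mathfrak{z}(\bbe)$ of the decomposition. Quotienting by $T_{\bbe}\zzrA$ and taking the linear dual, using that $(\g/\mathfrak{z}(\bbe))/\t=\g/(\mathfrak{z}(\bbe)\oplus\t)$ by the trivial intersection, yields the first claim.

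For the second isomorphism, I would exploit that $\tgg$ is an $H$-equivariant vector subbundle of $\bb\times\gg$ over $\bb$, with fiber $\b\oplus\b$ over $\b$. The standard short exact sequence for the tangent bundle of a vector bundle yields, at $v=(\b_w,\bbe)$, an $A$-equivariant sequence
$$
0\to\b_w\oplus\b_w\to T_v\tgg\to T_{\b_w}\bb\to 0,
$$
with $T_{\b_w}\bb\simeq\g/\b_w\simeq[\b_w,\b_w]^*$. Since $\b_w$ is adapted, Lemma~\ref{quad} gives $\g_{1,0},\g_{0,1}\subset\b_w$, so $(\b_w\oplus\b_w)\cap(\g_{1,0}\oplus\g_{0,1})=\g_{1,0}\oplus\g_{0,1}$, and hence $T_v\tggrA_w=\g_{1,0}\oplus\g_{0,1}$, contained in the summand $\b_w\oplus\b_w\subset T_v\tgg$. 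Quotienting by $T_v\tggrA_w$ and dualizing produces the short exact sequence
$$
0\to[\b_w,\b_w]\to(T^*_{\tggrA_w}\tggr)_v\to\left(\frac{\b_w\oplus\b_w}{\g_{1,0}\oplus\g_{0,1}}\right)^*\to 0
$$
of $A$-modules, which splits by semisimplicity of finite-dimensional representations of the torus $A$.

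The main subtlety I expect is the careful tracking of $A$-equivariance throughout---in particular, confirming that the decomposition of Lemma~\ref{lem:partialslice} is genuinely $A$-equivariant (which ultimately traces back to the $A$-equivariance of the three-term complex~(\ref{eq:threetermcomplex})), and that the $\cc$-factor acts compatibly on $T\bb$ via the dilation convention fixed in Section~\ref{tx}. Beyond this bookkeeping, the argument is essentially a formal consequence of the vector-bundle structures involved and the adaptedness criterion of Lemma~\ref{quad}.
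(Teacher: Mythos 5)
Your proposal is correct and follows essentially the same route as the paper's proof: for the first isomorphism you quotient the decomposition $T_{\bbe}\zzr \simeq \Snw\oplus\Sse\oplus\g/\mathfrak{z}(\bbe)$ from Lemma~\ref{lem:partialslice} by the tangent line $\t$ to the open $T$-orbit and dualize, and for the second you use the vector-bundle decomposition $T_v\tggr \simeq \g/\b_w\oplus\b_w\oplus\b_w$, identify $T_v\tggrA_w=\g_{1,0}\oplus\g_{0,1}$ via adaptedness, and dualize. The only cosmetic difference is that you present the second computation as a short exact sequence split by reductivity of $A$, whereas the paper simply writes the direct-sum decomposition directly; the content is identical.
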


\proof
At $\bbe$, $T_{\bbe} \zzr \simeq \Snw \oplus \Sse \oplus \g / \mathfrak{z}(\bbe)$. The fact that $T$ acts freely at $\bbe$ and the corresponding orbit is open in $\zzrA$ implies that $T_{\bbe} \zzr / T_{\bbe} \zzrA \simeq \Snw \oplus \Sse \oplus \g / (\mathfrak{z}(\bbe) \oplus \t)$ so that 
$$
(T^*_{\zzrA} \zzr)_{\bbe} \simeq \Snw^* \oplus \Sse^* \oplus (\g / (\mathfrak{z}(\bbe) \oplus \t))^*.
$$
At $v$, $T_v \tggr = \g / \b_w \oplus \b_w \oplus \b_w$ and $T_v \tggrA_w = \g_{1,0} \oplus \g_{0,1}$ so 
$$
(T^*_{\tggrA_w} \tggr)_v \simeq [\b_w, \b_w] \oplus \left( \frac{\b_w
\oplus \b_w}{\g_{1,0} \oplus \g_{0,1}} \right)^*.
\eqno\Box
$$

\section{Principal nilpotent pairs for $\mathfrak{gl}_n$}\label{sec:glnpnp}

In this section, we focus on the case $G = GL_n$ and hence $W = \s_n$, the symmetric group. The aim of this section is to give an explicit combinatorial expression, Theorem \ref{thm:pnpcharactergl}, for formula (\ref{eq:pnp1}). In the second part of this section we show that this combinatorial expression is equivalent to a formula of Garsia and Haiman. We begin by recalling some standard combinatorics related to the representation theory of the groups $GL_n$ and $\s_n$. 

\subsection{Partitions}\label{sec:partitions}
When $G = GL_n$, the ring $\Lambda$ is $\Q(q,t) [z_1^{\pm 1}, \ds,z_n^{\pm 1}]^{\s_n} \o_{\Z} R(\s_n)$ and the character $s_{\mu}(z)$ is the Schur polynomial labeled by $\mu$. The standard inner product on $\Lambda$, with respect to which the Schur polynomials form an orthonormal basis, will be denoted $\langle - , - \rangle$. The complete symmetric function labeled by the partition $\mu$ is denoted $h_{\mu}(z)$. 

Let $\mu$ be a partition of $n$ of length $\ell(\mu)$ and denote by $n(\mu) = \sum_{i = 1}^{\ell(\mu)} (i-1)\mu_i$ the partition statistic. The \textit{Young diagram} of $\mu$ is defined to be the subset $Y_\mu := \{ (i,j) \in \Z^2 \, |\, 0 \le j \le \ell(\mu) - 1, \, 0 \le i \le \mu_j - 1 \}$ of $\Z^2$. Each box in the diagram is called a \textit{node}. The Young diagram should be visualized as a stack of boxes, justified to the left; for example the partition $(4,3,1)$ is: 
$$
\Yboxdim18pt
\young(\emptybox::::,\emptybox\emptybox\emptybox::,\emptybox\emptybox\emptybox\emptybox:)
$$
This convention is chosen to agree with \cite{HaimanSurvey}. We put
\beq{bnu}
B_\mu(q,t) := \sum_{(r,s) \in Y_{\mu}} q^r t^s.
\eeq
 We adopt the convention that $B_{\emptyset}(q,t) = 0$, where $\emptyset$ is the
empty partition. 

For $x \in Y_\mu$, the \textit{arm} $a(x)$ of $x$ is defined to be the number
of boxes strictly to the right of $x$ and the \textit{leg} $l(x)$ of $x$
is the number of boxes strictly above $x$. We denote by $h(x)$ the
\textit{hook length} of $x$, which is defined to be $a(x) + l(x) +
1$. For instance, the hook length of $(1,0)$ in the above Young diagram
is $4$. The hook polynomial is defined to be 

\begin{displaymath}
H_{\mu}(t) = \prod_{x \in Y_\mu} (1 - t^{h(x)}).
\end{displaymath}
The dominance ordering on partitions will be denoted $\ge$. A \textit{rim-hook} of the partition $\mu$ is a connected skew partition $\mu / \nu$, for some $\nu \subset \mu$, such that $\mu / \nu$ contains no sub-diagram of type $(2,2)$. An \textit{$r$-rim-hook} is a rim-hook of size $r$. The \textit{$r$-core} of a partition is the partition obtained by removing as many $r$-rim-hooks as possible. For any given partition, the resulting $r$-core does not depend on the choice of $r$-rim-hooks removed (see \cite[Theorem 2.7.16]{JK}). For example, the $4$-core of $(4,3,3,1)$ is $(2,1)$.

A tableau $\sigma$ of shape $\mu$ is a filling of the Young diagram $Y_\mu$ with the numbers $1,\ds, n$, each number appearing exactly once. The tableau $\sigma$ is said to be standard if $\sigma(x) < \sigma(y)$ for all $y \in Y_\mu$ that are above or to the right of $x$. The set of all tableaux of shape $\mu$ is written $\YT (\mu)$ and the set of standard tableaux of shape $\mu$ is $\SYT (\mu)$. Let $\sigma_1$ the standard tableau of shape $\mu$ that is given by placing $1$ in $(0,0)$, $2$ in $(1,0)$, filling across and then beginning the next row at the left and working across.

\subsection{Plethysums}\label{sec:plethysm}

We recall the definition of plethystic substitutions as given in \cite{Explicitplethystic}, see also \cite[\S 3.3]{HaimanSurvey}. Let $\tilde{\Lambda}_{\Fqt}$ denote the algebra of symmetric functions in $z_1,z_2,\ds$ over the field $\Fqt = \Q (q,t)$. It is freely generated by the power-sum polynomials $p_k := z_1^k + z_2^k + \cdots$. If $E = E[r_1,r_2, \ds]$ is a formal Laurent polynomial in indeterminates $r_1,r_2, \ds$, which may include the parameters $t$ and $q$, then $p_k[E]$ is defined to be the formal Laurent polynomial $E[r_1^k,r_2^k,\ds]$. Since any $f \in \tilde{\Lambda}_{\Fqt}$ is a polynomial $g(p_1,p_2, \ds )$ in the power-sum polynomials, we define
$$
f[E] = g(p_1,p_2, \ds ) |_{p_k \mapsto p_k[E]}.
$$
The operation $f \mapsto f[E]$ is called the \textit{plethystic substitution} of $E$ into $f$. When $E = Z := z_1 + z_2 + \ds$, we have $p_k[Z] = p_k$ and hence $f[Z] = f$. 

Since we will only use the plethystic substitution in two specific situations, we describe more explicitly what it entails in these situations. Firstly, we have 
$$
E = Z/(1 - t) = r_1 + r_2 + \ds, 
$$
where $r_1,r_2, \ds = z_1,z_2, \ds, tz_1, t z_2, \ds, t^2 z_1, t^2 z_2, \ds$ and
$$
E = Z(1 - t) = r_1 + r_2 + \ds - r_1' - r_2' - \ds, 
$$
where $r_1,r_2, \ds = z_1, z_2, \ds$ and $r_1',r_2', \ds = t z_1, t z_2, \ds$. This implies that  
$$
p_k \left[ \frac{Z}{(1 - t)} \right] = \frac{1}{(1-t^k)} p_k(z), \quad \textrm{resp.} \quad p_k[ Z(1 - t)] = (1 - t^k) p_k(z).
$$
Therefore the operations $f \mapsto f[Z/(1 - t)]$ and $f \mapsto f[Z(1 - t)]$ are inverse to each other. In the second situation we take $E \in \Z[q^{\pm 1}, t^{\pm 1}]$ so that $E = E[r_1,r_2, \ds]$ with $\{r_1,r_2, \ds \} = \{ q^i t^j \ | \ (i,j) \in \Z^2 \}$. For instance, if 
$$
E = 2 t - 3 q t^{-2} + 7 q^3 - 5 \quad \textrm{ then } \quad p_k[E] = 2 t^k - 3 q^k t^{-2k} + 7 q^{3 k} - 5.
$$

\subsection{}

We will also use the $\Omega$ notation. Define $\Omega [ Z ] := \prod_i ( 1 - z_i)$ so that 
$$
\Omega = \exp \left( - \sum_{k = 1}^\infty p_k / k \right).
$$
We remark that our definition of $\Omega$ is \textit{different} from the one given in \cite[\S 3.3]{HaimanSurvey}, where $\Omega [ Z ] := \prod_i \frac{1}{ 1 - z_i}$. As noted in \textit{loc. cit.}, the equalities $p_k[A + B] = p_k[A] + p_k[B]$ and $p_k[-A] = - p_k[A]$ imply that
$$
\Omega[A + B] = \Omega[A] \cdot \Omega[B], \quad \Omega[-A] = 1 / \Omega[A].
$$
The operator $\Omega$ will only be used in one specific type of situation, which we now describe. As above, take $E \in \Z[q^{\pm 1}, t^{\pm 1}]$. Then 
$$
E = \sum_{(i,j) \in \Z^2} a_{i,j} q^i t^j, \quad \textrm{ and } \quad \Omega[E] = \prod_{(i,j) \in \Z^2} (1 - q^i t^j)^{a_{i,j}},
$$ 
where all but finitely many $a_{i,j}$ equal zero. However, if $a_{0,0} < 0$ then $\Omega[E]$ is undefined and if $a_{0,0} > 0$ then $\Omega[E] = 0$. Therefore we define $\Omega [E]^0 := \Omega [ E - a_{0,0}]$ with the convention that $\Omega [0]^0 = 1$. An example:
$$
\Omega [ q^2t^3 - 3 t^{-2} + 2 + 2 q t^{-3} ]^0 = (1 - q^2t^3)(1 - t^{-2})^{-3}(1 - q t^{-3})^2.
$$

\subsection{}
The set of \pnps for $\g = \mathfrak{gl}_n$ are naturally labeled by partitions of $n$. For a given partition $\mu$, one should think of the pair $(e_1,e_2)$ labeled by $\mu$ as a pair of operators acting on the boxes of $Y_{\mu}$. The operator $e_1$ moves each box one to the right and $e_2$ moves each box up by one. We use the standard tableau $\sigma_1$ defined above to realize $e_1$ and $e_2$ as matrices in $\g$. Each node of the tableau $\sigma_1$ corresponds to an element of the standard basis $\{ v_1, \ds, , v_n \}$ of the vectorial representation $V$ of $\g$. Therefore if $v_i$ lies in a given box of $Y_{\mu}$ and $v_j$ is in the box to its right then $e_1 \cdot v_i = v_j$. This gives our matrix corresponding to $e_1$. In a similar way we get the matrix for $e_2$. For instance, if $x$ denotes the principal nilpotent operator with ones just below the diagonal and zeros elsewhere, so that $x \cdot v_i = v_{i+1}$ for all $1 \le i \le n-1$, then $\mu = (n)$ labels the pair $(x,0)$ and $(1^n)$ labels the pair $(0,x)$.

In section \ref{sec:pnp} we fixed a \pnp $\bbe$ with associated semi-simple pair $\mbf{h}$ and corresponding group $A$ so that the expression in Theorem \ref{thm:pnpcharacter} is a sum over all adapted Borel subalgebras of $\g$. Here we take a different approach and fix $\b_1$ to be the Borel subalgebra of lower triangular matrices in $\gl_n$. Then $R_+ = \{ \alpha_{i,j} \ | \ 1 \le i < j \le n \}$ is the set of positive roots corresponding to the weights of $T$ in $[\b_1, \b_1]^*$. For each $w \in \s_n$, the Borel $\b_w := w \cdot \b_1$ is adapted to $\bbe$ if and only if $\b_1$ is adapted to $w^{-1} \cdot \bbe$ i.e. if and only if $\b_1$ contains the elements $w^{-1} \cdot e_1$, $w^{-1} \cdot e_2$, $w^{-1} \cdot h_1$, and $w^{-1} \cdot h_2$.

\begin{lem}\label{tab} 
Let $\bbe$ be the \pnp labeled by $\mu$. Then, the adapted Borel subalgebras are parametrized by the standard tableaux of shape $\mu$.
\end{lem}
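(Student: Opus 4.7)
The plan is to establish the bijection by translating the Lie-algebraic adaptedness condition of Lemma \ref{quad} into the combinatorial standardness condition for fillings of $Y_\mu$.

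First, I would reduce to Borels containing $\t$. By Lemma \ref{quad}, any adapted Borel contains $\g_{0,0}=\t$, hence in particular $h_1$ and $h_2$. Thus $W_{\mathrm{adp}}$ is a subset of $\s_n$, where $w \in \s_n$ indexes $\b_w = w \cdot \b_1$ and $\b_1$ is the fixed lower triangular Borel. Lemma \ref{quad} then says that $w \in W_{\mathrm{adp}}$ iff $\g_{i,j} \subset \b_w$ for every $(i,j) \neq (0,0)$ with $i,j \geq 0$.

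Next, I would unpack the bigrading combinatorially. Because $e_1$ (resp.\ $e_2$) moves the basis vector $v_k$ one step to the right (resp.\ up) inside $Y_\mu$ via the tableau $\sigma_1$, the relation $[h_r,e_s]=\delta_{rs}e_s$ forces the joint $(h_1,h_2)$-eigenvalue on $v_k$ to equal, up to a trivial additive scalar, the coordinates $(a_k,b_k)$ of the box $\sigma_1^{-1}(k)$. Hence $E_{lk}\in \g_{a_l-a_k,\,b_l-b_k}$, and the spaces in $\bigoplus_{(i,j)\neq(0,0),\,i,j\ge 0}\g_{i,j}$ are spanned by exactly those matrix units $E_{lk}$ for which the box of $v_l$ lies weakly northeast of, and is distinct from, the box of $v_k$.

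Finally, I would translate membership in $\b_w$. A direct computation with permutation matrices gives $E_{lk}\in \b_w$ iff $w^{-1}(l)\ge w^{-1}(k)$. Defining the filling $\tau_w(x):=w^{-1}(\sigma_1(x))$ of $Y_\mu$, the adaptedness condition becomes $\tau_w(y)>\tau_w(x)$ whenever $y$ lies above or to the right of $x$, i.e.\ $\tau_w \in \SYT(\mu)$. Since $w\mapsto \tau_w$ is manifestly a bijection $\s_n\iso \YT(\mu)$ (with inverse $\tau\mapsto \sigma_1\circ\tau^{-1}$), it restricts to a bijection $W_{\mathrm{adp}}\iso \SYT(\mu)$.

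The main obstacle, and really the only subtlety, is convention-tracking: I have to align the paper's choice of $\b_1$ as \emph{lower} (not upper) triangular, the orientation of $Y_\mu$ (where $\mu_0$ is the bottom row and the coordinates $(i,j)$ increase rightward and upward), and the paper's convention for standard tableaux, so that the weakly-northeast condition coming from the bigraded root decomposition matches the definition of $\SYT(\mu)$ recalled in \S\ref{sec:partitions} on the nose. Once that alignment is in hand, everything else is a direct unwinding of definitions.
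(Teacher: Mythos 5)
Your proof is correct and takes essentially the same route as the paper: reduce via Lemma \ref{quad} to the bigraded containment condition for $\b_w$, then unwind the matrix-unit bookkeeping to obtain the standard-tableau characterization. The paper packages the last step as ``realizing $e_1,e_2$ as operators via the tableau $\sigma_w$ and asking when they lie in $\b_1$,'' which is the same computation you do with $E_{lk}\in\b_w \Leftrightarrow w^{-1}(l)\ge w^{-1}(k)$; your bijection $w\mapsto\tau_w=w^{-1}\circ\sigma_1$ differs from the paper's $w\mapsto w\circ\sigma_1$ only by the harmless inversion $w\mapsto w^{-1}$, which does not affect the conclusion.
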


\begin{proof}
The symmetric group $\s_n$ acts freely and transitively on $\YT (\mu)$. Therefore we can identify the set $\SYT (\mu)$ of standard tableaux with a certain subset of $\s_n$, $w \leftrightarrow w \cdot \sigma_1 =: \sigma_w$. We just need to show that this subset of $\s_n$ is precisely $(\s_n)_{\mathrm{adp}}$. Let $D$ be some operator acting by moving the boxes of $Y_\mu$. Each tableau $\sigma_w$ gives a realization of $D$ as some linear operator on $V$ and one can see that $D$ is in $\b_1$ if and only if $\sigma_w (D(x)) \le \sigma_w (x)$ for all $x \in Y_\mu$. Since $e_1$ moves things to the right and $e_2$ moves things up we see that these operators belong to $\b_1$ if and only if $\sigma_w$ is a standard tableau.   
\end{proof}

As in \cite[\S 5]{PNP}, each $\sigma \in \SYT (\mu)$ gives us a canonical choice of associated semisimple pairs $(h_1,h_2)$: if we enumerate the nodes $(p,q) \in Y_\mu$ such that $\sigma(a_k,b_k) = k$ then define $h_1 = (a_1, \ds, a_n)$ and $h_2 = (b_1, \ds, b_n)$. The pair $(h_1,h_2)$ is regular semi-simple with $\zf(h_1,h_2) = \mathrm{diag}(\mathfrak{gl}_n) =: \t$. 

\subsection{} The theorem below provides  a purely combinatorial formula for 
the bigraded character of $\rr_{\bbe}$ in the $GL_n$ case.

For a standard tableau $\sigma$, let $c_{\sigma}(i)$ denote the column of $Y_\mu$ containing $i$ and $r_\sigma(i)$ the row of $Y_\mu$ containing $i$ so that $\sigma(c_{\sigma}(i),r_{\sigma}(i)) = i$. A standard tableau $\sigma$ of $\mu$ defines a nested sequence of partitions $\emptyset = \sigma(0) \subset \cdots \subset \sigma(n) = \lambda$.

\begin{thm}\label{thm:pnpcharactergl}
Let $\be\in\zz$ be the \pnp labeled by $\mu$ and, for each standard tableau $\sigma$ of $\mu$, define
$$
\Gamma_{\sigma}(q,t) = \prod_{k = 1}^n \Omega \left[ (1 - q - t + qt) B_{\sigma(k-1)}(q,t) q^{-c_{\sigma}(k)} t^{-r_{\sigma}(k)} \right]^0.
$$
Then the bigraded character of $\rr_\be$ is given by the formula
\beq{eq:pnpgl}
\cchi^A(\rr_\be)\ =\ \frac{\prod_{x \in Y_\mu} (1 - q^{1 + a(x)} t^{- l(x)}) (1 - q^{- a(x)} t^{1 + l(x)})}{(1 - q)^n (1 - t)^n \cdot \Omega[B_\mu(q^{-1},t^{-1})]^0} \cdot \sum_{\sigma \in \SYT (\mu)} \Gamma_{\sigma}(q,t).
\eeq
\end{thm}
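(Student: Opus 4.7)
The plan is to specialize formula (\ref{eq:pnp1}) of Theorem \ref{thm:pnpcharacter} to $G = \GL_n$ by rendering every ingredient as explicit combinatorics of $Y_\mu$. By Lemma \ref{tab}, the sum over $W_{\mathrm{adp}}$ in (\ref{eq:pnp1}) may be replaced by a sum over $\SYT(\mu)$. Realizing $\g = \End(V)$ with $V$ having basis $\{v_x\}_{x \in Y_\mu}$ as in the setup preceding Lemma \ref{tab}, each matrix unit $E_{x,y}$ with $x = (c_1, r_1)$, $y = (c_2, r_2)$ carries $A$-weight $q^{c_1 - c_2} t^{r_1 - r_2}$. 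For $\sigma \in \SYT(\mu)$, the adapted Borel $\b_{w_\sigma}$ is spanned by those $E_{x, y}$ with $\sigma(y) \le \sigma(x)$. Because the row and column ordering of a standard tableau forces $\sigma(c, d) < \sigma(c+1, d), \sigma(c, d+1), \sigma(c+1, d+1)$ whenever the relevant boxes lie in $Y_\mu$, one has $(\b_{w_\sigma})_{1,0} = \g_{1,0}$, $(\b_{w_\sigma})_{0,1} = \g_{0,1}$, and $(\b_{w_\sigma})_{1,1} = \g_{1,1}$ independently of $\sigma$.

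Using the filtration of $\b_{w_\sigma}$ by the $\sigma$-rank of the source box of a matrix unit, the nested chain $\emptyset = \sigma(0) \subset \sigma(1) \subset \cdots \subset \sigma(n) = \mu$ yields
$$
\cchi^A \bigl([\b_{w_\sigma}, \b_{w_\sigma}]^*\bigr) \; = \; \sum_{k=1}^n q^{-c_\sigma(k)} t^{-r_\sigma(k)} \, B_{\sigma(k-1)}(q, t),
$$
together with analogous closed-form expressions for $\cchi^A(\uf_{w_\sigma}^*)$ and for $\cchi^A\bigl((\b_{w_\sigma} \oplus \b_{w_\sigma} / \g_{1,0} \oplus \g_{0,1})^*\bigr)$. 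Since each of these spaces has no $A$-fixed vectors, $\lambda(V) = \Omega[\cchi^A(V)]$ in every case, so the product of the three $\sigma$-dependent factors of (\ref{eq:pnp1}) is $\Omega$ of a single character $\Theta_\sigma$.

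I would then compare $\Omega[\Theta_\sigma]$ with $\Gamma_\sigma(q, t)$. The $(0,0)$-weight component of the sum $\sum_k (1 - q)(1 - t)\, B_{\sigma(k-1)}(q, t) q^{-c_\sigma(k)} t^{-r_\sigma(k)}$ equals $-\dim \g_{1,0} - \dim \g_{0,1} + \dim \g_{1,1}$, which depends only on $\mu$. After a rearrangement this produces $\Omega[\Theta_\sigma] = C_\mu \cdot \Gamma_\sigma(q, t)$ for a $\sigma$-independent rational function $C_\mu$ that absorbs the $A$-invariant contributions, in particular the $(1 - q)^n (1 - t)^n$ factor coming from the two copies of $\t$ in $\b_{w_\sigma} \oplus \b_{w_\sigma}$ embedded in $\gg$.

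The last step is to match $C_\mu \cdot \lambda\bigl(\Snw^* \oplus \Sse^* \oplus (\g / (\mathfrak{z}(\bbe) \oplus \t))^*\bigr)$ with the prefactor of (\ref{eq:pnpgl}). For this I combine the $\Z^2$-graded decomposition $\g = \bigoplus_{p,q} \g_{p,q}$ induced by $\hhh$ with the identification $\Snw \oplus \Sse \simeq H^1(\mathfrak{e}, \g)$ recalled in the proof of Lemma \ref{lem:partialslice} to compute the $A$-character of $\Snw \oplus \Sse \oplus \g / (\mathfrak{z}(\bbe) \oplus \t)$ in closed form. The main obstacle will be translating the $(p, q)$-dimensions of the pieces $\g_{p,q}$, which are explicit counts of pairs of boxes in $Y_\mu$ with prescribed row and column offsets, into the arm--leg product $\prod_{x \in Y_\mu} (1 - q^{1 + a(x)} t^{-l(x)})(1 - q^{-a(x)} t^{1 + l(x)})$ characteristic of the Hilbert-scheme tangent character at the fixed point $I_\mu$. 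Once this identification is made, the remaining denominator factors $(1 - q)^n (1 - t)^n$ and $\Omega[B_\mu(q^{-1}, t^{-1})]^0$ fall into place, yielding formula (\ref{eq:pnpgl}).
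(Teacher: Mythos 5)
Your overall strategy coincides with the paper's: start from formula (\ref{eq:pnp1}), replace the sum over $W_{\mathrm{adp}}$ by a sum over $\SYT(\mu)$ via Lemma \ref{tab}, and compute each $\lambda$-factor as an explicit product over matrix units. Your computation $\cchi^A\bigl([\b_{w_\sigma},\b_{w_\sigma}]^*\bigr) = \sum_k q^{-c_\sigma(k)} t^{-r_\sigma(k)} B_{\sigma(k-1)}(q,t)$ via the tableau filtration is precisely what the paper does (its auxiliary sum $\sum_{i<k} q^{c(i,k)}t^{r(i,k)}$), and the claim that the $\sigma$-dependent factors assemble to $C_\mu \cdot \Gamma_\sigma(q,t)$ for a $\sigma$-independent $C_\mu$ is correct: after writing $\Omega[\sum_k A_{\sigma(k)}]^0 = \Omega[\sum_k (A_{\sigma(k)}+D_{\sigma(k)})]^0 / \Omega[\sum_k D_{\sigma(k)}]^0$ and observing that $\Omega[\sum_k D_{\sigma(k)}]^0 = \lambda\bigl((\g/(\t \oplus \zf(\bbe)))^*\bigr)$ is $\sigma$-independent, the paper obtains exactly the same packaging you describe, merely shuffling this factor between the prefactor and the tableau sum.

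The genuine gap is in your final step. You propose to obtain $\lambda(\Snw^* \oplus \Sse^*) = \prod_{x\in Y_\mu}(1-q^{1+a(x)}t^{-l(x)})(1-q^{-a(x)}t^{1+l(x)})$ indirectly, by computing the $A$-character of $\Snw \oplus \Sse \simeq H^1(\mathfrak{e},\g)$ from the Euler characteristic of the three-term complex (\ref{eq:threetermcomplex}) and the decomposition $\g = \bigoplus_{p,q}\g_{p,q}$, and then ``translating the $(p,q)$-dimensions of the pieces $\g_{p,q}$ \ldots into the arm--leg product.'' That translation is exactly the hard content and you leave it open, even calling it ``the main obstacle.'' It is not a rearrangement: converting alternating sums of box-pair counts (weights $q^{c(x)-c(y)}t^{r(x)-r(y)}$) into the arm--leg product requires a nontrivial combinatorial identity, equivalent to the $T^2$-character of $T_{I_\mu}\Hilb$, and the complex also introduces an $H^2$ (cokernel of $d\kappa$) whose character you would still have to control. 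The paper sidesteps all of this: it cites the explicit bases $\{f_{\mbf{\nu}(p,q)}\}_{(p,q)\in Y_\mu}$ of $S_{\nw}$ and $S_{\se}$ constructed in \cite[\S 7, Example]{PNP}, whose $A$-weights are literally $(\pm a(x),\mp l(x))$, so the arm--leg product falls out immediately as (\ref{eq:nwseformula}). Without either supplying that basis argument or proving the box-pair--to--arm-leg identity, your proof is incomplete at precisely the step you flagged.
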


\begin{examp}
If $\mu = (2,1)$ then
$$
\cchi^A(\rr_\be) = qt + 2 q + 2 t + 1,
$$
or if $\mu = (3,1)$ then
$$
\cchi^A(\rr_\be) = q^3 t + 3 q^3 + 3 q^2 t + 5 q^2 + 5 q t + 3 q + 3 t + 1.
$$
\end{examp}

\subsection{Proof of Theorem \ref{thm:pnpcharactergl}}
To prove Theorem \ref{thm:pnpcharactergl} we simply need to evaluate the various terms appearing in equation (\ref{eq:pnp1}). Fix a standard tableau $\sigma$ of shape $\mu$. Note that if $V$ is an $A$-module then $\lambda(V) = \Omega[ \cchi^A(V)]$. 

\begin{lem}
In $\Q(q,t)$ we have 
\beq{eq:nwseformula} 
\lambda(\Snw^* \oplus \Sse^*) = \prod_{x \in Y_\mu} (1 - q^{1 + a(x)} t^{- l(x)}) (1 - q^{- a(x)} t^{1 + l(x)}),
\eeq
and 
\beq{eq:denomformula} 
\lambda((\g / (\t \oplus \zf(\bbe))^*) = \Omega \left[ \sum_{(r,s) \neq (p,q) \in Y_\mu} q^{r - p} t^{s - q} - \sum_{(0,0) \neq (r,s) \in Y_\mu} q^{-r} t^{-s} \right].
\eeq
\end{lem}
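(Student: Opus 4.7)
My plan is to derive both identities as explicit $A$-character calculations, using the standard identity $\lambda(V) = \Omega[\chi^A(V)]$, which for a rational $A$-module $V = \bigoplus_i \C_{w_i}$ specializes to $\lambda(V) = \prod_i (1 - w_i)$.

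\textbf{For equation (\ref{eq:denomformula}).} Fix the standard tableau $\sigma$ defining the semisimple pair $\mbf{h}$ attached to $\bbe$. The natural $\gl_n$-module $V = \bigoplus_{k=1}^n \C v_k$ then has $A$-weight $(a_k, b_k)$ on $v_k$, where $(a_k, b_k)$ are the coordinates of the box $\sigma^{-1}(k) \in Y_\mu$. Reading off $\g = V \otimes V^*$ and $\t = \bigoplus_k \C E_{kk}$ gives $\chi^A(\g) = \sum_{(r,s),(p,q) \in Y_\mu} q^{r-p} t^{s-q}$ and $\chi^A(\t) = n$. The centralizer $\zf(\bbe)$ has a canonical basis of ``box-shift'' operators $\{D_{r,s}\}_{(r,s) \in Y_\mu}$, where $D_{r,s}$ sends $v_k$ to the basis vector sitting in the box $\sigma^{-1}(k) + (r,s)$ when that box lies in $Y_\mu$ and to zero otherwise. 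A quick verification using the down-closed nature of $Y_\mu$ shows that each $D_{r,s}$ commutes with $e_1$ and $e_2$; they are linearly independent (their actions on $v_1$ are pairwise distinct), and their number equals $n = \rk \g = \dim \zf(\bbe)$ by regularity, so they form a basis. Each $D_{r,s}$ has $A$-weight $(r, s)$, giving $\chi^A(\zf(\bbe)) = B_\mu(q,t)$. Finally, $D_{0,0} = \mathrm{Id}$ spans $\t \cap \zf(\bbe)$, a one-dimensional weight-zero line, so $\chi^A(\t + \zf(\bbe)) = n + B_\mu(q,t) - 1$. Subtracting from $\chi^A(\g)$, dualizing (negating weights), relabeling $(r,s) \leftrightarrow (p,q)$ in the double sum, and applying $\Omega$ yields (\ref{eq:denomformula}).

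\textbf{For equation (\ref{eq:nwseformula}).} Here I need to identify the $A$-weights of the graded pieces $S_{p,q}$ from \S\ref{sec:defns}. Using the Jordan-block decomposition of $e_2$ into blocks indexed by the columns of $\mu$, I would write down an explicit basis of $\zf(e_2)$, then compute the image of $\ad e_1$ block-by-block to read off a $T$-stable complement $S_{p,q}$. The resulting count is that $\Snw$ contains, for each box $x \in Y_\mu$, exactly one weight vector of $A$-weight $(-1 - a(x),\, l(x))$ and, by the symmetric calculation with the roles of $e_1, e_2$ swapped, $\Sse$ contains one weight vector of $A$-weight $(a(x),\, -1 - l(x))$. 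Dualizing gives $\chi^A(\Snw^* \oplus \Sse^*) = \sum_{x \in Y_\mu}(q^{1 + a(x)} t^{-l(x)} + q^{-a(x)} t^{1 + l(x)})$, and applying $\Omega$ recovers the product on the right-hand side of (\ref{eq:nwseformula}).

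\textbf{Main obstacle, and an alternative.} The weight-by-weight identification in (\ref{eq:nwseformula}) is the only substantive step and is a somewhat tedious bookkeeping exercise. A cleaner route, which I would in fact adopt, is to observe that $\Snw^* \oplus \Sse^*$ has the same $A$-character as the tangent space $T_{I_\mu}\hilb^n\C^2$ at the monomial-ideal fixed point labeled by $\mu$, whose Ellingsrud--Str\o mme decomposition is exactly
$$
\chi^A\bigl(T_{I_\mu}\hilb^n\C^2\bigr) = \sum_{x \in Y_\mu}\bigl(q^{1+a(x)} t^{-l(x)} + q^{-a(x)} t^{1+l(x)}\bigr).
$$
The identification itself follows from Lemma \ref{lem:partialslice} together with the $H$-equivariant comparison between the regular locus of $\zz_{\mathrm{norm}}$ and an open substack of $\hilb^n\C^2$ implicit in \cite{Iso}; one then restricts equivariant structures from $T \times \cc$ to the embedded subtorus $A$ to conclude. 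The main remaining subtlety in either approach is ensuring the comparison is $A$-equivariant rather than merely dimensional, but this is automatic once the $T \times \cc$-equivariant comparison is fixed.
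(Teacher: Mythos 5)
Your calculation of \eqref{eq:denomformula} is essentially the paper's: both run the same character-bookkeeping $\cchi^A(\g) - \cchi^A(\t) - \cchi^A(\zf(\bbe))$, the only cosmetic difference being that the paper cites [PNP, Theorem 5.6] for the basis $\{e_1^r e_2^s \mid (r,s)\in Y_\mu\setminus\{(0,0)\}\}$ of $\zf(\bbe)$ while you re-derive it as the ``box-shift'' operators and then correct for the one-dimensional overlap with $\t$. Those are the same operators, and both routes land on the same final character. For \eqref{eq:nwseformula}, your first (direct) route also parallels the paper, which avoids the Jordan-block bookkeeping entirely by pulling the explicit elements $f_{\mbf{\nu}(p,q)}\in S_{\nw},S_{\se}$ and their $\adh$-weights from [PNP, \S7, Example]; the weights you list, $(-1-a(x),l(x))$ for $\Snw$ and $(a(x),-1-l(x))$ for $\Sse$, do match what the paper obtains after accounting for the extra scaling when $S_{\nw},S_{\se}\subset\g$ are embedded as factors of $\gg$.

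Your preferred route (2) for \eqref{eq:nwseformula} --- identifying $\Snw^*\oplus\Sse^*$ with $T_{I_\mu}\hilb^n\C^2$ and quoting the arms-and-legs decomposition of the Hilbert scheme tangent space --- is a genuinely different and appealing shortcut, but the crucial step is underjustified. The required $A$-equivariant identification is not ``implicit in \cite{Iso}'': it is essentially a slice version of the relationship that Proposition \ref{prop:HaimanGarsia} establishes \emph{a posteriori} by matching the two combinatorial formulas. To make route (2) self-contained you would need to exhibit an explicit map near $\bbe$ (e.g.\ the classical map sending a commuting pair together with a cyclic vector to its annihilator ideal in $\C[x,y]$, restricted to the cyclic locus of $\zz^r$) and check that it induces an $A$-equivariant isomorphism from your slice onto $T_{I_\mu}\hilb^n\C^2$, where $A\hookrightarrow T\times\cc$ lands on the $\cc$-factor after the $G$-quotient. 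Until that is pinned down, the claim that equivariance is ``automatic'' is optimistic; the direct computation, as in the paper, is the safer argument for the lemma itself.
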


\begin{proof}
First we show (\ref{eq:nwseformula}). For each $x = (p,q) \in Y_\mu$, let $f_{\mbf{\nu}(p,q)} \in S_{\se}$ be the element defined in \cite[\S 7, Example]{PNP}. The set $\{ f_{\mbf{\nu}(p,q)} \ | \ (p,q) \in Y_{\mu} \}$ is a basis of $S_{\se}$. Then  
$$
[h_1,f_{\mbf{\nu}(p,q)}] = (p_{max} - p) f_{\mbf{\nu}(p,q)}, \quad [h_2,f_{\mbf{\nu}(p,q)}] = (q - q_{max}) f_{\mbf{\nu}(p,q)}.
$$  
Similarly, if $f_{\mbf{\nu}(p,q)} \in S_{nw}$ is the element defined in \cite[\S 7, Example]{PNP}, then  
$$
[h_1,f_{\mbf{\nu}(p,q)}] = (p - p_{max}) f_{\mbf{\nu}(p,q)}, \quad [h_2,f_{\mbf{\nu}(p,q)}] = (q_{max} - q) f_{\mbf{\nu}(p,q)}.
$$  
The above expressions can be rewritten in terms of arms and legs as
$$
[h_1,f_{\mbf{\nu}(x)}] = a(x)  f_{\mbf{\nu}(x)}, \quad [h_2,f_{\mbf{\nu}(x)}] = -l(x) f_{\mbf{\nu}(x)}, \, \forall f_{\mbf{\nu}(x)} \in  S_{se},
$$  
$$
[h_1,f_{\mbf{\nu}(x)}] = -a(x)  f_{\mbf{\nu}(x)}, \quad [h_2,f_{\mbf{\nu}(x)}] = l(x) f_{\mbf{\nu}(x)}, \, \forall f_{\mbf{\nu}(x)} \in  S_{nw}.
$$  
The action of $A$ on $\Snw$ is shifted in comparison to the action of $A$ on $S_{\nw}$ (and similarly for $\Sse$). This accounts for the extra $1$'s in (\ref{eq:nwseformula}).
Now we show (\ref{eq:denomformula}). We have
$$
\lambda((\g / (\t \oplus \zf(\bbe))^*) = \Omega \, [ \cchi^{A}(\g^*) - \cchi^{A}(\t^*) - \cchi^{A}(\zf(\be)^*) ].
$$
Equation (\ref{eq:denomformula} ) follows from 
$$
\cchi^{A}(\g^*) = n + \sum_{\alpha \in R} q^{\alpha(h_1)} t^{\alpha(h_2)} = \sum_{(r,s),(p,q) \in Y_\mu} q^{r - p} t^{s - q};
$$
$$
\cchi^{A}(\t) = n \quad \textrm{ and } \quad \cchi^{A}(\zf(\be)^*) = \sum_{(0,0) \neq (r,s) \in Y_\mu} q^{-r} t^{-s}.
$$
Here we have used the fact, \cite[Theorem 5.6]{PNP}, that $\zf(\bbe)$ has basis $\{ e_1^r e_2^s \, | \, (r,s) \in Y_\mu \backslash \, \{ \, (0,0) \, \} \, \}$. 
\end{proof}

\subsection{} 
Dropping $\sigma$ from the notation, $c(i)$ will denote the column containing $i$, $r(i)$ the row containing $i$ and, for all $i,j \in \{1, \ds, n \}$, we write $c(i,j) := c(i) - c(j)$, $r(i,j) := r(i) - r(j)$. Then 
$$
\langle \alpha_{i,j}, h_1 \rangle = c(i,j), \quad \langle \alpha_{i,j}, h_2 \rangle = r(i,j) \quad \forall \ 1 \le i \neq j \le n.
$$
Hence, for $(p,q) \neq (0,0)$, $\b_{p,q} = \oplus_{\alpha_{i,j}} \g_{\alpha_{i,j}}$, where the sum is over all $i > j$ such that $c(i,j) = p$ and $r(i,j) = q$ (recall that $\b_{0,0} = \t$). We have 
$$
\cchi^A(\b_\sigma^* \oplus \b_\sigma^*) = n q + n t + \sum_{1 \le i < j \le n} q^{1 + c(i,j)} t^{r(i,j)} + q^{c(i,j)} t^{1 + r(i,j)}.
$$
Note that the constant term of $\cchi^A(\b_\sigma^* \oplus \b_\sigma^*)$ is $\cchi^A(\g_{1,0} \oplus \g_{0,1}) = \dim (\g_{1,0} \oplus \g_{0,1})$, so 
$$
\lambda \left( \left(\frac{\b_\sigma \oplus \b_\sigma}{\g_{1,0} \oplus \g_{0,1}} \right)^* \right) = \Omega \left[ n q + n t + \sum_{1 \le i < j \le n} q^{1 + c(i,j)} t^{r(i,j)} + q^{c(i,j)} t^{1 + r(i,j)} \right]^0.
$$
Also,
$$
\lambda([\b_\sigma, \b_\sigma]) = \prod_{\alpha > 0} (1 - q^{-\alpha(h_1)} t^{-\alpha(h_2)}) = \Omega \left[ \sum_{1 \le i < j \le n} q^{c(j,i)} t^{r(j,i)} \right],
$$
and
$$
\lambda(\uf_\sigma^*) = \prod_{\alpha \in J} (1 - q^{\alpha(h_1) + 1} t^{\alpha(h_2) + 1}) = \Omega \left[ \sum_{1 \le i < j \le n} q^{c(i,j) + 1} t^{r(i,j) + 1} \right]^0,
$$
where $J = \{ \alpha \in R^+ \ | \ (\alpha(h_1),\alpha(h_2)) \neq (-1,-1) \}$. For fixed $1 \le k \le n$, one has
$$
\sum_{i < k} q^{c(i,k)} t^{r(i,k)} = q^{-c(k)} t^{-r(k)} B_{\sigma(k-1)}(q,t).
$$ 
Hence, if for $1 \le k \le n$, we define 
$$
A_{\sigma(k)}(q,t) := (qt - q - t) q^{-c(k)} t^{-r(k)} B_{\sigma(k-1)}(q,t) - q - t - q^{c(k)} t^{r(k)} B_{\sigma(k-1)}(q^{-1},t^{-1}),
$$
then 
$$ 
\lambda \left( [\b_\sigma, \b_\sigma] \oplus \left( \frac{\b_\sigma \oplus \b_\sigma}{\g_{1,0} \oplus \g_{0,1}} \right)^* \right)^{-1} \cdot \lambda(\uf_\sigma^*) = \Omega \left[ \sum_{k = 1}^n A_{\sigma(k)}(q,t) \right]^0.
$$
Similarly, for $1 \le k \le n$, we define 
$$
D_{\sigma(k)}(q,t) = q^{-c(k)} t^{-r(k)} B_{\sigma(k-1)}(q,t) + q^{c(k)} t^{r(k)} B_{\sigma(k-1)}(q^{-1},t^{-1}) - q^{-c(k)} t^{-r(k)},
$$ 
so that equation (\ref{eq:denomformula}) shows  
$$
\lambda((\g / (\t \oplus \zf(\be)))^*) = \Omega \left[ \sum_{k = 1}^n D_{\sigma(k)}(q,t) \right]^0,
$$
and 
$$
\lambda((\g / (\t \oplus \zf(\be)))^*) \cdot \left( [\b_\sigma, \b_\sigma] \oplus \left( \frac{\b_\sigma \oplus \b_\sigma}{\g_{1,0} \oplus \g_{0,1}} \right)^* \right)^{-1} \cdot \lambda(\uf_\sigma^*) =  \Omega \left[ \sum_{k = 1}^n A_{\sigma(k)}(q,t) + D_{\sigma(k)}(q,t) \right]^0.
$$
For $1 \le k \le n$, we have
$$
A_{\sigma(k)}(q,t) + D_{\sigma(k)}(q,t) = (1 - q - t + qt) B_{\sigma(k-1)}(q,t) q^{-c_{\sigma}(k)} t^{-r_{\sigma}(k)} - q^{-c_{\sigma}(k)} t^{-r_{\sigma}(k)} - q - t.
$$
Since 
$$
\Omega \left[ - \sum_{k = 1}^n q^{-c_{\sigma}(k)} t^{-r_{\sigma}(k)} \right]^0 = \frac{1}{\Omega[B_\mu(q^{-1},t^{-1})]^0} \quad \textrm{ and } \quad \Omega \left[ - \sum_{k = 1}^n (q + t) \right] = \frac{1}{(1 - q)^{n} (1 - t)^{n}},
$$ 
Theorem \ref{thm:pnpcharactergl} follows.

\subsection{Comparison with the Garsia-Haiman formula}\label{sec:HaimanGarsia}

The Hilbert scheme of $n$ points in the plane, $\Hilb$, is a smooth irreducible variety of dimension $2n$. Haiman has constructed a rank $n!$ vector bundle  $\pp$ on $\Hilb$ called the Procesi bundle. Given a Young diagram $\mu$, let  $\pp_\mu$ be the fiber of $\pp$ at the $\cc$-fixed point $I_\mu \in \Hilb$, a bigraded codimension $n$ ideal $I_\mu \sset \C[x,y]$ associated with $\mu,$ see \cite{HaimanJAMS}.

In the paper \cite{qtwalk}, Garsia and Haiman use an analogue of the Pieri rule for Macdonald polynomials to derive an expression for $\cchi^{\cc}(\pp_{\mu};q,t)$. We recall their result: let $\Rbox_k$, respectively $\Cbox_k$, denote the set of all boxes in the same row, respectively column, of $\sigma(k)$ as the box $p_k = \sigma(k) \backslash \sigma(k-1)$, 
excluding $p_k$ itself. For $x \in Y_{\sigma(k)}$, let $a_k(x)$, respectively $l_k(x)$, denotes the arm length, respectively leg length, of $x$ in $Y_\sigma(k)$. For all $1 \le k \le n$ define
$$
\Pprod_{\sigma(k)}(q,t) := \left( \prod_{x \in \Rbox_k} \frac{(1 - q^{1 + a_k(x)} t^{- l_k(x)})}{(1 - q^{1 + a_{k-1}(x)} t^{- l_{k-1}(x)})} \right) \left( \prod_{x \in \Cbox_k}  \frac{(1 - q^{- a_k(x)} t^{1 + l_k(x)})}{(1 - q^{- a_{k-1}(x)} t^{1 + l_{k-1}(x)})} \right).
$$ 
Then it is shown by Garsia and Haiman \cite[\S $1$]{qtwalk}, that
\beq{eq:GarsiaHaiman}
\cchi^{\cc}(\pp_\mu;q,t) = \sum_{\sigma \in \SYT (\mu)} \, \left( \prod_{k = 1}^n \Pprod_{\sigma(k)}(q,t) \right).
\eeq

\begin{prop}\label{prop:HaimanGarsia}
Let $\sigma$ be a standard tableau of shape $\mu$. Then
\beq{eq:GHcompare}
\frac{\prod_{x \in Y_\mu} (1 - q^{1 + a(x)} t^{- l(x)}) (1 - q^{- a(x)} t^{1 + l(x)})}{(1 - q)^n (1 - t)^n \cdot \Omega[B_\mu(q^{-1},t^{-1})]^0} \cdot \Gamma_{\sigma}(q,t) = \prod_{k = 1}^n \Pprod_{\sigma(k)}(q,t).
\eeq
i.e. the Garsia-Haiman formula is equivalent to equation (\ref{eq:pnpgl}).
\end{prop}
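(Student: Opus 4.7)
My plan is to convert both sides of (\ref{eq:GHcompare}) into the form $\Omega[P]$ for explicit Laurent polynomials $P \in \Z[q^{\pm 1}, t^{\pm 1}]$, reducing the identity to a polynomial congruence modulo constants. The main computational tool is the plethystic identity
$$\Omega\bigl[(1-q)(1-t)q^a t^b\bigr] \;=\; \frac{(1-q^at^b)(1-q^{a+1}t^{b+1})}{(1-q^{a+1}t^b)(1-q^at^{b+1})},$$
which lets one read off the argument from each factor of $\Gamma_\sigma(q,t)$. Multiplicativity of $\Omega$ then yields $\Gamma_\sigma(q,t) = \Omega\bigl[\sum_{k=1}^n E_k\bigr]^0$ with $E_k = (1-q)(1-t)\sum_{y \in \sigma(k-1)} q^{c(y)-c(k)}t^{r(y)-r(k)}$, and analogously $\prod_k \Pprod_{\sigma(k)}(q,t) = \Omega[\sum_k F_k]$, using the observation that $a_k(x) = a_{k-1}(x)+1$ and $l_k(x) = l_{k-1}(x)$ for $x \in \Rbox_k$, with the symmetric statement for $\Cbox_k$. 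The left-hand prefactor is directly $\Omega[C]$ for
$$C = \sum_{x \in Y_\mu}\bigl(q^{1+a(x)}t^{-l(x)} + q^{-a(x)}t^{1+l(x)}\bigr) - n(q+t) - B_\mu(q^{-1},t^{-1}) + 1,$$
so (\ref{eq:GHcompare}) becomes the polynomial identity $C + \sum_{k=1}^n E_k \equiv \sum_{k=1}^n F_k$ modulo constants.

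To verify this identity I would exchange the order of summation so as to regroup all contributions by box. For each fixed $x \in Y_\mu$, the contributions involving $x$ on the left come from the pairs $(y, k)$ with $y = x \in \sigma(k-1)$, i.e.~from all $k > k_x$ (where $k_x$ is the step at which $x$ is added), each giving the four monomials of $(1-q)(1-t)q^{c(x)-c(k)}t^{r(x)-r(k)}$. On the right, $x$ contributes only at steps where $x \in \Rbox_k$ or $x \in \Cbox_k$, i.e.~only when a box is added in the row or column of $x$; each such step contributes two monomials determined by $a_{k-1}(x)$ and $l_{k-1}(x)$. The extremal terms $q^{1+a(x)}t^{-l(x)}$ and $q^{-a(x)}t^{1+l(x)}$ appearing in $C$ are then the ``endpoint'' contributions of a telescoping sum traced along the staircase path of $(a_k(x), l_k(x))$ from $(0,0)$ at $k = k_x$ to $(a(x), l(x))$ at $k = n$.

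The main obstacle is executing this telescoping. For each step $k$ I would split the boxes $y \in \sigma(k-1)$ into three classes according to the position of $p_k$ relative to $y$ (same row, same column, or generic), and re-sort the four monomials of each $E_k$-contribution into those that cancel against one another (via the staircase telescoping) and those that survive to match $F_k$ or $C$. The boundary correction $-n(q+t) - B_\mu(q^{-1},t^{-1})$ in $C$ then arises from the initial-step contributions at $k = k_x$ for each $x$. Each individual cancellation is routine, but assembling them coherently across all boxes and all steps, while keeping the $\Omega[\cdot]^0$ normalization (which in particular removes the constant terms arising when $y$ is one of the west, south, or southwest neighbors of $p_k$) consistent throughout, is the technical heart of the proof.
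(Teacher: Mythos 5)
Your reduction to a polynomial identity $C + \sum_k E_k \equiv \sum_k F_k$ modulo constants is exactly the right way to recast (\ref{eq:GHcompare}), and your formulas for $E_k$, $F_k$, $C$ agree with what the paper implicitly uses. However, the proposed method of verification — regroup by box and telescope along the staircase $(a_k(x), l_k(x))$ — does not work as described, and this is a genuine gap rather than a mere omission of routine detail.

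The central difficulty is that the per-box identity fails. Take $\mu = (2,1)$ with $\sigma$ adding $(0,0)$, then $(1,0)$, then $(0,1)$. For $x = (0,0)$ the combined left-hand contribution (from $C$ together with the $E_2$, $E_3$ terms with $y = x$), minus the right-hand contribution (the $F_2$, $F_3$ terms from $x \in \Rbox_2$, $\Cbox_3$), is $q^2t^{-1} + q^{-1} + t^{-1} - qt^{-1} - q^2 + q$, which is nonzero. The total over all three boxes does vanish, so there is substantial cross-box cancellation that the ``trace the staircase of $x$'' picture cannot see. Relatedly, your description of the $C$-terms $q^{1+a(x)}t^{-l(x)}$ and $q^{-a(x)}t^{1+l(x)}$ as endpoints of a telescope is not literally correct: the $F_k$-contribution of a fixed $x$ consists of increments of $q^{1+a_k(x)}t^{-l_k(x)}$ taken only at arm-moves and of $q^{-a_k(x)}t^{1+l_k(x)}$ only at leg-moves, but both quantities change at both kinds of moves, so the increments do not telescope across the whole path (in the same example one gets $q^2 - q + q^{-1}t^2 - q^{-1}t$, not $q^2t^{-1} - q + q^{-1}t^2 - t$).

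The paper's proof organizes the same sum the other way, by the step $k$ (equivalently, by the box $p_k$ being added) rather than by the pre-existing box $y$. Writing $S_{\sigma(k)} = \prod_{x \in Y_{\sigma(k)}}(1-q^{1+a_k(x)}t^{-l_k(x)})(1-q^{-a_k(x)}t^{1+l_k(x)})$, one telescopes $S_\mu = \prod_k S_{\sigma(k)}/S_{\sigma(k-1)}$ and proves the resulting per-step identity. This works because of the crucial combinatorial simplification (Lemmas~\ref{lem:GH1}--\ref{lem:GH3} in the paper) that $(1-q)(1-t)B_{\sigma(k-1)}(q,t)$ collapses to a short boundary-strip sum: almost all the monomials of $(1-q)(1-t)q^{c(y)}t^{r(y)}$ cancel internally when summed over $y \in \sigma(k-1)$, leaving only terms indexed by the corners of the boundary. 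Lemma~\ref{lem:GH3} then shows this boundary sum, shifted by $q^{-c(k)}t^{-r(k)}$, matches the Garsia--Haiman ratio for step $k$ exactly. This boundary-strip computation is the missing key in your plan; without identifying it, the sorting of the four monomials of each $E_k$-contribution ``into those that cancel and those that survive'' has no effective organizing principle, and the cross-box interactions visible already in the $\mu = (2,1)$ case make the box-centered bookkeeping intractable.
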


An independent construction of a natural
rank $n!$ vector bundle $\wt \pp$ on  $\Hilb$ is given
in  \cite[\S 8]{Iso}.
It is immediate from the construction of $\wt \pp$
that one has an isomorphism  $\wt\pp_\mu \simeq \rr_\bbe$
where $\wt\pp_\mu$ is the fiber of $\wt\pp$  
at the  point $I_\mu \in \Hilb$ and
 $\bbe$ is the principal nilpotent pair associated
with the diagram $\mu$.
Thus, the Proposition above  says that one has an equality
$\cchi^{\cc}(\pp_\mu;q,t) = \cchi^{\cc}(\wt\pp_\mu;q,t).$
This equality is also a consequence of the $n!$ theorem of Haiman
which
implies, in particular,  a vector bundle isomorphism
$\pp\simeq\wt\pp$, cf.  \cite[\S 5]{HaimanSurvey}.
We note that in order to deduce the
equation $\cchi^{\cc}(\pp_\mu;q,t) =
\cchi^{\cc}(\wt\pp_\mu;q,t)$
one does not actually need the full strength
of the  $n!$ theorem. It suffices to use the recent
result of Gordon \cite{GordonMacdonald}
that insures that the
 Macdonald polynomials can be recovered
from the vector bundle $\wt \pp$ without
the knowledge of the isomorphism $\pp\simeq\wt\pp$.

The remainder of this subsection is devoted to the proof of Proposition
 \ref{prop:HaimanGarsia}, which is just a direct calculation. 

\begin{lem}\label{lem:GH1}
Let $\nu$ be a partition, then
$$
B_\nu(q,t) \cdot (1 - q - t + qt) = 1 + \sum_{i = 1}^{\ell(\nu)} q^{\nu_{i}} (t^{i} -  t^{i-1}) - t^{\ell(\nu)} = 1 + \sum_{j = 1}^{\ell(\nu')} (q^{j} - q^{j-1}) t^{\nu_{j}'} - q^{\ell(\nu')}.
$$
\end{lem}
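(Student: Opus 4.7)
The proof plan is short and direct: both equalities follow once one factors $1 - q - t + qt = (1-q)(1-t)$ and writes $B_\nu(q,t)$ either row-by-row or column-by-column as a geometric sum.

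First I would write the Young diagram row-by-row. Using the convention that row $i$ (for $i = 1, \dots, \ell(\nu)$) consists of the boxes $(r, i-1)$ with $0 \le r \le \nu_i - 1$, the generating polynomial becomes
\[
B_\nu(q,t) \;=\; \sum_{i=1}^{\ell(\nu)} t^{i-1} \sum_{r=0}^{\nu_i - 1} q^r \;=\; \frac{1}{1-q}\sum_{i=1}^{\ell(\nu)} (1 - q^{\nu_i})\, t^{i-1}.
\]
Multiplying by $(1-q-t+qt) = (1-q)(1-t)$ cancels the denominator $1-q$ and leaves
\[
B_\nu(q,t)(1-q-t+qt) \;=\; \sum_{i=1}^{\ell(\nu)} (1 - q^{\nu_i})(t^{i-1} - t^{i}).
\]
Then I would split this into two sums. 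The sum $\sum_{i=1}^{\ell(\nu)} (t^{i-1} - t^{i})$ telescopes to $1 - t^{\ell(\nu)}$, while the other piece rearranges as $\sum_{i=1}^{\ell(\nu)} q^{\nu_i}(t^i - t^{i-1})$, producing exactly the first expression in the lemma.

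For the second equality I would run the same argument with rows replaced by columns. Reading $Y_\nu$ column-by-column, column $j$ (for $j = 1, \dots, \ell(\nu')$) contains $\nu_j'$ boxes, so
\[
B_\nu(q,t) \;=\; \sum_{j=1}^{\ell(\nu')} q^{j-1}\,\frac{1 - t^{\nu_j'}}{1-t}.
\]
Multiplying by $(1-q)(1-t)$ and telescoping the sum $\sum_{j}(q^{j-1} - q^{j}) = 1 - q^{\ell(\nu')}$ gives the second form. Equivalently one could obtain the second identity from the first by the symmetry $B_{\nu'}(q,t) = B_\nu(t,q)$ together with the observation that the stated identity is symmetric in the roles of $q$ and $t$ once partitions are conjugated.

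There is no real obstacle here; the only thing to watch is the indexing convention for rows of $Y_\nu$ (rows labeled starting from $0$ in the definition of $Y_\mu$, versus labels starting from $1$ in the statement of the lemma), which simply amounts to shifting the exponent of $t$ by one, and is absorbed harmlessly into the telescoping.
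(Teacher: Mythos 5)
Your proof is correct, and it takes a genuinely different route from the paper's. You factor $1-q-t+qt=(1-q)(1-t)$, expand $B_\nu$ row-by-row as a sum of geometric series in $q$, cancel the $(1-q)$, and telescope the remaining sum in $t$; the second form is then obtained either by repeating column-by-column or by the symmetry $B_{\nu'}(q,t)=B_\nu(t,q)$ under conjugation. The paper instead argues coefficient-by-coefficient: it views the coefficient of $q^u t^v$ in $B_\nu(q,t)(1-q-t+qt)$ as an integer attached to the lattice point $(u,v)$ and checks that this integer vanishes except at $(0,0)$ (value $1$) and at the ``south-west'' and ``north-east'' corners of the boundary strip $Y_\eta\setminus Y_\nu$ (values $-1$ and $+1$), which encodes exactly the right-hand sides. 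Your algebraic argument is shorter, cleaner, and more self-contained; the paper's pictorial argument is less explicit but establishes the geometric framework (tracking corners of the Young diagram) that it then reuses in the proof of Lemma \ref{lem:GH2}, where a purely telescoping argument would be harder to run directly. You correctly flag the only subtlety, namely the $0$- versus $1$-indexing of rows in the definition of $Y_\mu$, and handle it by placing row $i$ at height $i-1$.
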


\begin{proof}
Consider the Young diagram $Y_\nu$ of $\nu$ as a subset of $\Z^2$. The coefficient of $q^u t^v$ in $B_\nu(q,t) \cdot (1 - q - t + qt)$ can be thought of as an integer placed at the point $(u,v) \in \Z^2$. This integer depends on whether the point $(u,v)$ or the point directly below, directly to the left etc. lies in $Y_\nu$ or not. One can check that the only point in $Y_\nu$ whose value is not $0$ is $(0,0)$, whose value is $1$. If we define $\eta$ to be the ``partition'' $(\infty,\nu_1 + 1, \nu_2 + 1, \ds )$ then $Y_\eta \backslash Y_\nu$ is an infinite strip running along the $x$-axis, above the ``diagonal'' edge of $Y_\nu$ and then up the $y$-axis. This strip has two types of corners, those pointing to the south-west like $(0,0)$ in $[2,1]$ and those pointing to the north-east like $(1,1)$ in $[2,2]$. These corners have the value $-1$ and $1$ respectively. All other entries of $Y_\eta \backslash Y_\nu$ (and the rest of $\Z^2$) have value $0$. This picture corresponds to the equation of the lemma.
\end{proof} 

\begin{lem}\label{lem:GH2}
Let $\mu$ be a partition and let $(c,r) \in Y_\mu$ be a removable box. Set $\nu = \mu \backslash \{ (c,r) \}$, then
\beq{eq:expB}
B_\nu(q,t) \cdot (1 - q - t + qt) = 1 - q^c t^r + \sum_{i = 1}^{c} (q^{i} - q^{i-1}) t^{\mu_{i}'} + \sum_{j = 1}^{r} q^{\mu_{j}} (t^{j} - t^{j-1}).
\eeq
\end{lem}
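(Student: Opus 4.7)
The plan is to deduce Lemma \ref{lem:GH2} from Lemma \ref{lem:GH1} by applying the latter to $\nu$ and then repackaging the resulting row-sum into the mixed row/column form that appears on the RHS.

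Since $(c,r)$ is removable from $\mu$, in the 1-indexed convention for partition parts (with 0-indexed box coordinates) we have $\mu_{r+1} = c+1$ and $\nu_i = \mu_i$ for $i \neq r+1$, while $\nu_{r+1} = c$. Applying the first form of Lemma \ref{lem:GH1} to $\nu$ yields
\[
B_\nu(q,t)(1-q-t+qt) \;=\; 1 \;+\; \sum_{i=1}^{\ell(\nu)} q^{\nu_i}(t^i - t^{i-1}) \;-\; t^{\ell(\nu)},
\]
and I would split the sum at $i = r$. The terms with $1 \le i \le r$ give $\sum_{j=1}^{r} q^{\mu_j}(t^j - t^{j-1})$, which matches the row-sum on the RHS of Lemma \ref{lem:GH2} verbatim.

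For the remaining upper part, I would introduce the sub-partition $\tilde\mu = (\nu_{r+1}, \nu_{r+2}, \ldots) = (c, \mu_{r+2}, \ldots, \mu_{\ell(\mu)})$, which in the generic case $c \ge 1$ has largest part $c$, so $\ell(\tilde\mu') = c$. Reindexing $i \mapsto i - r$ converts the upper part into $t^r \sum_{j=1}^{\ell(\tilde\mu)} q^{\tilde\mu_j}(t^j - t^{j-1})$, and inserting the first form of Lemma \ref{lem:GH1} for $\tilde\mu$, combined with $r + \ell(\tilde\mu) = \ell(\nu)$, simplifies the upper part together with $-t^{\ell(\nu)}$ to $t^r B_{\tilde\mu}(q,t)(1-q-t+qt) - t^r$. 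Next I would invoke the second (column) form of Lemma \ref{lem:GH1} for $\tilde\mu$, giving $B_{\tilde\mu}(q,t)(1-q-t+qt) = 1 + \sum_{j=1}^{c}(q^j - q^{j-1}) t^{\tilde\mu_j'} - q^c$. The crucial identity is $\tilde\mu_j' = \mu_j' - r$ for $1 \le j \le c$: columns $1, \ldots, c$ of $\mu$ have length at least $\mu_{c+1}' = r+1$, so deletion of the bottom $r$ rows reduces these column lengths by exactly $r$. The prefactor $t^r$ then combines with $t^{\tilde\mu_j'}$ to produce $t^{\mu_j'}$; the $t^r$ contributions from $B_{\tilde\mu}(q,t)(1-q-t+qt)$ cancel the $-t^r$, the term $-q^c$ yields $-q^c t^r$, and adding back the constant $+1$ one arrives at $1 - q^c t^r + \sum_{j=1}^c (q^j - q^{j-1}) t^{\mu_j'}$, completing the match with the RHS.

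The main obstacle is the careful book-keeping of indices and the degenerate case $c = 0$: removing the box then shortens the partition by one row, so $\ell(\nu) = r = \ell(\mu) - 1$, and the sub-partition $\tilde\mu$ is empty. In this case the column-sum in Lemma \ref{lem:GH2} is also empty, and the relation $-q^0 t^r = -t^r = -t^{\ell(\nu)}$ shows that Lemma \ref{lem:GH2} reduces directly to the first form of Lemma \ref{lem:GH1} applied to $\nu$. Once this edge case is checked by direct substitution, the generic argument above completes the proof.
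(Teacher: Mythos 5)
Your proof is correct, but it takes a genuinely different route from the paper. The paper's proof reruns the same pictorial argument used for Lemma \ref{lem:GH1} directly on the diagram of $\nu$, identifies the mixed row/column sum on the right-hand side of (\ref{eq:expB}) with the boundary-strip picture, and observes that the only discrepancies with $B_\nu(q,t)(1-q-t+qt)$ occur at the two boxes $(0,0)$ (coefficient $1$) and $(c,r)$ (coefficient $-1$). Your argument instead treats Lemma \ref{lem:GH1} as a black box: apply the row form to $\nu$, split the sum at $i=r$, reindex the tail as a smaller partition $\tilde\mu$, convert the tail plus the correction $-t^{\ell(\nu)}$ to $t^r\bigl(B_{\tilde\mu}(q,t)(1-q-t+qt)-1\bigr)$ via the row form, then switch to the column form for $\tilde\mu$ and use the key observation $\tilde\mu_j'=\mu_j'-r$ for $1\le j\le c$. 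This is a purely algebraic deduction that needs no new diagrammatic inspection and cleanly isolates the edge case $c=0$; the paper's version is shorter on the page but leaves the coefficient bookkeeping to ``a pictorial argument as in the proof of Lemma \ref{lem:GH1}.'' Both are valid, and your telescoping decomposition is an attractive alternative precisely because it makes the dependence on Lemma \ref{lem:GH1} explicit.
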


\begin{proof}
Since $(c,r) = (\mu_{r+1},r) = (c, \mu_{c+1}')$, 
\beq{eq:expressionA}
\sum_{i = 1}^{c} (q^{i} - q^{i-1}) t^{\lambda_{i}'} + \sum_{j = 1}^{r} q^{\lambda_{j}} (t^{j} - t^{j-1}) = \sum_{i = 1}^{c} (q^{i } - q^{i-1}) t^{\mu_{i}'} + \sum_{j = 1}^{r} q^{\mu_{j}} (t^{j} - t^{j-1}).
\eeq
Using a pictorial argument as in the proof of Lemma \ref{lem:GH1} one checks that expression (\ref{eq:expressionA}) is the same as the right hand side of (\ref{eq:expB}), except in the boxes $(0,0)$ and $(c,r)$ which have entries $0$. The coefficient of $q^0 t^0$ in the right hand side of (\ref{eq:expB}) is $1$ and the coefficient of $q^c t^r$ is $-1$. Therefore (\ref{eq:expB}) $=$ (\ref{eq:expressionA}) $+ 1 - q^c t^r$. 
\end{proof}

\subsection{}
We write 
$$
S_{\sigma(k)} = \prod_{x \in Y_{\sigma(k)}} (1 - q^{1 + a_k(x)} t^{- l_k(x)}) (1 - q^{- a_k(x)} t^{1 + l_k(x)})
$$
with $S_{\sigma(0)} := 1$, so that $\lambda(\Snw^* \oplus \Sse^*) = S_{\sigma(n)}$. Since
$$
S_{\sigma(n)} = \frac{S_{\sigma(n)}}{S_{\sigma(n-1)}} \cdot \frac{S_{\sigma(n-1)}}{S_{\sigma(n-2)}} \cdots,
$$
we need to show that 
$$
\Pprod_{\sigma(k)}(q,t) = \frac{S_{\sigma(k)}}{S_{\sigma(k-1)}} \cdot  \Omega \left[ -q - t - q^{-c_{\sigma}(k)} t^{-r_{\sigma}(k)} + (1 - q - t + qt) B_{\sigma(k-1)}(q,t) q^{-c_{\sigma}(k)} t^{-r_{\sigma}(k)} \right]^0,
$$
for all $1 \le k \le n$. The argument is similar for all $k$ so we take $k = n$. 

\subsection{} 
Since 
$$
\frac{S_{\sigma(n)}}{S_{\sigma(n-1)}} = \left( \prod_{x \in \Rbox_n \cup \Cbox_n} \frac{(1 - q^{1 + a_n(x)} t^{- l_n(x)})(1 - q^{- a_n(x)} t^{l_n(x) + 1})}{(1 - q^{1 + a_{n-1}(x)} t^{- l_{n-1}(x)})(1 - q^{- a_{n-1}(x)} t^{1 + l_{n-1}(x)})} \right) \cdot (1 - q)(1 - t), 
$$
we must show that $\Omega \left[ - q^{-c_{\sigma}(k)} t^{-r_{\sigma}(k)} + (1 - q - t + qt) B_{\sigma(k-1)}(q,t) q^{-c_{\sigma}(k)} t^{-r_{\sigma}(k)} \right]^0 =$
$$
\left( \prod_{x \in \Cbox_n} \frac{(1 - q^{1 + a_{n-1}(x)} t^{- l_{n-1}(x)})}{(1 - q^{1 + a_{n}(x)} t^{- l_{n}(x)})} \right) \left( \prod_{x \in \Rbox_n}  \frac{(1 - q^{- a_{n-1}(x)} t^{1 + l_{n-1}(x)})}{(1 - q^{- a_{n}(x)} t^{1 + l_{n}(x)})} \right).
$$
The right hand side of the above equation is $\Omega$ applied to 
\beq{eq:answer}
\sum_{x \in \Cbox_n} q^{1 + a_{n-1}(x)} t^{- l_{n-1}(x)} - q^{1 + a_{n}(x)} t^{- l_{n}(x)} + \sum_{x \in \Rbox_n}  q^{- a_{n-1}(x)} t^{1 + l_{n-1}(x)} - q^{- a_{n}(x)} t^{1 + l_{n}(x)}
\eeq

\begin{lem}\label{lem:GH3}
The expression (\ref{eq:answer}) equals
\beq{eq:answer2}
q^{-c} t^{-r} \left( \sum_{i = 1}^{c} (q^{i } - q^{i-1}) t^{\lambda_{i}'} + \sum_{j = 1}^{r} q^{\lambda_{j}} (t^{j} - t^{j-1}) \right),
\eeq
where $c := c(n)$ and $r := r(n)$.
\end{lem}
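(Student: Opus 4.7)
The plan is a direct, bookkeeping-style calculation. First, I would identify the boxes being summed over explicitly. Since $p_n=(c,r)$ is the removable corner added at step $n$, one has $\Cbox_n=\{(c,j):0\le j\le r-1\}$ and $\Rbox_n=\{(i,r):0\le i\le c-1\}$. The key observation (obvious from the picture) is that removing the single box $(c,r)$ does not affect the length of any row $j<r$ nor the length of any column $i<c$; hence for $x\in\Cbox_n$ the \emph{arm} is unchanged between $\sigma(n)$ and $\sigma(n-1)$, while for $x\in\Rbox_n$ the \emph{leg} is unchanged.

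Second, I would record the four arm/leg values explicitly: for $x=(c,j)\in\Cbox_n$, we have $a_n(x)=a_{n-1}(x)=\mu_j-c-1$ and $l_n(x)=r-j$, $l_{n-1}(x)=r-j-1$; for $x=(i,r)\in\Rbox_n$, we have $l_n(x)=l_{n-1}(x)=\mu'_i-r-1$ and $a_n(x)=c-i$, $a_{n-1}(x)=c-i-1$.

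Third, I would substitute these into (\ref{eq:answer}). The $\Cbox_n$ contribution becomes
$$
\sum_{j=0}^{r-1} q^{\mu_j-c}\bigl(t^{-(r-j-1)}-t^{-(r-j)}\bigr)\;=\;q^{-c}t^{-r}\sum_{j=0}^{r-1}q^{\mu_j}(t^{j+1}-t^{j}),
$$
and the $\Rbox_n$ contribution becomes
$$
\sum_{i=0}^{c-1} t^{\mu'_i-r}\bigl(q^{-(c-i-1)}-q^{-(c-i)}\bigr)\;=\;q^{-c}t^{-r}\sum_{i=0}^{c-1}(q^{i+1}-q^{i})t^{\mu'_i}.
$$
Re-indexing the sums ($j\mapsto j-1$, $i\mapsto i-1$) yields exactly (\ref{eq:answer2}), using the standard partition convention $\lambda_j=\mu_{j-1}$ (resp.\ $\lambda'_i=\mu'_{i-1}$) relative to the $0$-indexing of Section \ref{sec:partitions}.

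There is no real obstacle here; the only thing to be careful about is the indexing convention inherited from the definition of $Y_\mu$ and making sure that the telescoping signs match between the two contributions. Once the arm/leg table in step two is written down, the remainder is immediate.
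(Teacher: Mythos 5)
Your proposal is correct and follows essentially the same route as the paper: list the boxes of $\Rbox_n$ and $\Cbox_n$, observe that removing the corner $(c,r)$ leaves the arms in $\Cbox_n$ and the legs in $\Rbox_n$ unchanged, tabulate the four arm/leg values, substitute, and re-index. The only deviation is a purely notational one (your $0$-indexed $\mu_j$ versus the paper's $\lambda_{j+1}$), which you explicitly reconcile at the end.
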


\begin{proof}
Note that $\Rbox_n = \{ (i,r) \, | \, 0 \le i < c \}$ and $\Cbox_n = \{ (c,j) \, | \, 0 \le j < r \}$. Then, for $i < c$ and $j < r$,
$$
a_n(i,r) = c - i, \quad a_{n-1}(i,r) = c - i - 1, \quad a_n(c,j) = a_{n-1}(c,j) = \lambda_{j+1} - c - 1,
$$
$$
l_n(i,r) = l_{n-1}(i,r) = \lambda_{i+1}' - r - 1, \quad l_n(c,j) = r - j, \quad l_{n-1}(c,j) = r - j - 1.
$$
Equation (\ref{eq:answer2}) follows.
\end{proof}

The statement of Proposition \ref{prop:HaimanGarsia} follows by combining Lemmata \ref{lem:GH2} and \ref{lem:GH3}.

\section{Polygraph spaces}\label{sec:polygraph}

\subsection{Two parameter Macdonald polynomials and $(q,t)$-Kostka polynomials}

The combinatorics of the Procesi bundle on the Hilbert scheme is described by transformed Macdonald polynomials. We denote by $\tilde{H}_\mu [Z ; q,t]$ the \textit{transformed Macdonald polynomial} labeled by the partition $\mu$, as defined in \cite[Definition 3.5.2]{HaimanSurvey}. The transformed Macdonald polynomials are related to the integral form $J_\mu(Z;q,t)$ of Macdonald's polynomials as follows:
\beq{eq:integralMacdonald}
J_\mu(Z;q,t) = t^{n(\mu)} \tilde{H}_\mu [ ( 1- t^{-1}) Z ; q,t^{-1}].
\eeq

Since the polynomials $\tilde{H}_\mu [Z ; q,t]$ and $J_{\mu}(Z;q,t)$ are symmetric, there exist polynomials $\tilde{K}_{\lambda,\mu}(q,t)$ and $K_{\lambda,\mu}(q,t)$ such that 
\beq{eq:KostkaMacdonaldpolynomials}
\tilde{H}_\mu [ Z ; q,t] = \sum_{\lambda \vdash n} \tilde{K}_{\lambda,\mu}(q,t) \cdot s_\lambda(z), \qquad J_\mu ( Z ; q,t) = \sum_{\lambda \vdash n} K_{\lambda,\mu}(q,t) \cdot s_\lambda \left[ (1 - t)Z \right].
\eeq
The polynomials $\tilde{K}(q,t)$ are called the \textit{Kostka-Macdonald polynomials}. Equation (\ref{eq:integralMacdonald}) implies that $\tilde{K}(q,t) = t^{n(\mu)} K_{\lambda,\mu}(q,t^{-1})$. A direct corollary of Haiman's proof of the $n !$ conjecture, and the original motivation for the work, is the confirmation of Macdonald's positivity conjecture: $\tilde{K}_{\lambda,\mu}(q,t) \in \Nat[q,t], \ \forall \ \lambda, \mu \vdash n$.

\subsection{}\label{sec:hilbert}
Recall (\ref{sec:HaimanGarsia}) that $\Hilb$ denotes the Hilbert scheme of $n$ points in the plane. Let  $\taut$ denote the tautological bundle on $\Hilb$, whose fiber over a point $I \in \Hilb$ is $\C[x,y] / I$. Since the action of $\s_m$ on $\Hilb$ is trivial, the vector bundle $\taut^{\o m}$ decomposes as 
$$
\taut^{\o m} = \bigoplus_{\mu \vdash m; \ \ell(\mu) \le n.} \taut_\mu \o \chi_\mu.
$$
Write $R(n,\mu) = H^0(\Hilb,\pp \o \taut_\mu)$. By \cite[Theorem
3.5]{HaimanVanishing}, the bigraded $W$-character of $R(n,m)$ is given
by, cf. \eqref{bnu}
\beq{eq:Haiman1}
\cchi^{W \times \cc}(R(n,m)) = \sum_{\nu \vdash n} \frac{B_\nu(q,t)^m \cdot \tilde{H}_{\nu}[Z;q,t]}{\prod_{x \in Y_{\nu}}(1 - t^{1 + l(x)} q^{- a(x)}) (1 - t^{- l(x)} q^{1 + a(x)})}.
\eeq

Choose some ordering $\{(r_1,s_1), \ds , (r_n,s_n) \}$ of the Young diagram $Y_{\nu}$. If $s_\mu(z_1, \ds , z_n)$ is a Schur polynomial then the plethystic substitution $s_\mu \left[ B_\nu(q,t) \right]$ is the polynomial in $q,t$ obtained by evaluating $s_\mu$ at $z_1 = q^{r_1} t^{s_1}, \ds, z_n = q^{r_n} t^{s_n}$. Since the fiber of the vector bundle $\taut_\mu$ at the fixed point of $\Hilb$ labeled by $\nu$ has bigraded character $s_\mu [B_{\nu}(q,t)]$, formula (\ref{eq:Haiman1}) implies that 
\beq{eq:Haiman2}
\cchi^{W \times \cc}(R(n,\mu)) = \sum_{\nu \vdash n} \frac{s_\mu \left[ B_\nu(q,t) \right] \cdot \tilde{H}_{\nu}[Z;q,t]}{\prod_{x \in Y_{\nu}}(1 - t^{1 + l(x)} q^{- a(x)}) (1 - t^{- l(x)} q^{1 + a(x)})}.
\eeq

On the other hand, according to  \cite[Theorem 1.9.1]{Iso}, for each $m \ge 0$ there is a $\cc \times W \times \s_m$-equivariant isomorphism of $\C[\tt]$-modules
\beq{eq:maingeoequation}
R(n,m) \simeq(\rr \o \C^m[V] \o V^{\o m})^{SL_n(\C)},
\eeq
where $\s_m$ acts by permuting the tensorands of $\taut^{\o m}$ on the left-hand side and permutes the tensorands of $V^{\o m}$ on the right-hand side.

Since each $G \times W \times \cc$-isotypic component of $\rr$ is finite dimensional, we can write
$$
\cchi^{G \times W \times \cc}(\rr) = \sum_{\mu \in P^+_n; \, \lambda
\vdash n.} \,
 b_{\mu,\lambda}(q,t) \cdot s_{\mu}(z) \cdot \chi_{\lambda}.
$$

Comparison of formulas \eqref{eq:Haiman2} and \eqref{eq:maingeoequation} allows us to obtain an expression for some of the polynomials $ b_{\mu,\lambda}(q,t)$ in terms of Macdonald polynomials. This is the content of Theorem \ref{thm:Pone} below.

\subsection{} 
Let $\Pnone$ be the set of all partitions with at most $n-1$ parts, thought of as those weights $\mu \in P^+_n$ such that $\mu_n = 0$. Define a partial ordering on $\Pnone$ by setting $\mu \preceq \lambda$ if $Y_\mu \subset Y_\lambda$. We denote by $\psi$ the $\Z$-linear operator on $\Z[\Pnone]$ defined by $\psi(\lambda) = \sum_{\mu} \mu$, where the sum is over all $\mu \preceq \lambda$ such that $\lambda / \mu$ is a skew partition not containing the vertical strip $(1,1)$ (so $\lambda / \mu$ consists entirely of disjoint horizontal strips). The operator $\psi$ is invertible. Its inverse will be denoted $\phi$. 

\begin{examp}  
Let $\lambda = (2,2,1,0)$. Then 
$$
\psi(\lambda) = (2,2,1,0) + (2,2,0,0) + (2,1,1,0) + (2,1,0,0)
$$ 
and 
$$
\phi(\lambda) = (2,2,1,0) - (2,2,0,0) - (2,1,1,0) + (2,1,0,0) + (1,1,1,0) - (1,1,0,0).
$$
\end{examp}

The map $\lambda \mapsto s_\lambda(z)$ realizes $\Z[\Pnone]$ as a
$\Z$-submodule of $\Lambda$. The image of $\phi(\lambda)$, respectively
$\psi(\lambda)$, under this map will be denoted $\Phi_\lambda(z)$,
respectively $\Psi_{\lambda}(z)$. Denote by $w_0$ the longest word in
$\s_n$ so that $(V_\lambda)^* \simeq V_{- w_0(\lambda)}$ for all
$\lambda$ in $P^+_n$. We denote by $\star$ the map from $\Pnone$ to
$Q^+_n$ that sends $\mu$ to 

$$
\mu^\star := -w_0(\mu_1, \ds,\mu_{n-1}, - \sum_{i < n} \mu_i).
$$
This map is injective but clearly not bijective.   

The main result of this section is the following, whose proof is given in the next section.

\begin{thm}\label{thm:Pone}
Let $\mu \in \Pnone$ and $\lambda \vdash n$, then
\beq{eq:mainformula}
b_{\mu^\star,\lambda}(q,t) = \sum_{\nu \vdash n} \frac{\Phi_\mu \left[ B_\nu(q,t) \right] \cdot  \tilde{K}_{\lambda,\nu}(q,t)}{\prod_{x \in Y_{\nu}}(1 - t^{1 + l(x)} q^{- a(x)}) (1 - t^{- l(x)} q^{1 + a(x)})}.
\eeq
\end{thm}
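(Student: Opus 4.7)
The plan is to compute the multiplicity $[\chi_\lambda:R(n,\mu')]$ in two different ways and then apply the operator $\phi$ to invert the resulting relation. Formula (\ref{eq:Haiman2}), combined with the expansion $\tilde{H}_\nu[Z;q,t]=\sum_\lambda\tilde{K}_{\lambda,\nu}(q,t)\,s_\lambda(z)$, gives
$$[\chi_\lambda:R(n,\mu')]=\sum_\nu\frac{s_{\mu'}[B_\nu(q,t)]\,\tilde{K}_{\lambda,\nu}(q,t)}{N_\nu(q,t)},$$
where $N_\nu(q,t):=\prod_{x\in Y_\nu}(1-t^{1+l(x)}q^{-a(x)})(1-t^{-l(x)}q^{1+a(x)})$ denotes the denominator appearing in (\ref{eq:Haiman2}). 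On the other hand, (\ref{eq:maingeoequation}) combined with the Schur--Weyl decomposition $V^{\otimes m}\simeq\bigoplus_{\mu'\vdash m,\,\ell(\mu')\le n}\Schur_{\mu'}(V)\otimes\chi_{\mu'}$ yields, upon extracting $\chi_{\mu'}$-isotypic components, $R(n,\mu')\simeq(\rr\otimes\C^{|\mu'|}[V]\otimes V_{\mu'})^{SL_n}$, so that
$$[\chi_\lambda:R(n,\mu')]=\sum_{\nu'}M(\mu',\nu')\,b_{\nu',\lambda}(q,t),\qquad M(\mu',\nu'):=\dim\bigl(V_{\nu'}\otimes\C^{|\mu'|}[V]\otimes V_{\mu'}\bigr)^{SL_n}.$$

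Next I would form the alternating sum over $\mu'\subseteq\mu$ with $\mu/\mu'$ a vertical strip, weighted by $(-1)^{|\mu/\mu'|}$. By the plethystic identity $s_\mu[Z-1]=\sum_{\mu'}(-1)^{|\mu/\mu'|}s_{\mu'}[Z]$ (with the sum restricted to vertical-strip complements, since $s_\nu[-1]$ vanishes unless $\nu$ is a single column), the first expression above becomes $\sum_\nu\Phi_\mu[B_\nu(q,t)]\tilde{K}_{\lambda,\nu}(q,t)/N_\nu(q,t)$, which is exactly the right-hand side of (\ref{eq:mainformula}). The second expression becomes $\sum_{\nu'}b_{\nu',\lambda}(q,t)\bigl(\sum_{\mu'}(-1)^{|\mu/\mu'|}M(\mu',\nu')\bigr)$, so the proof of the theorem reduces to showing that this inner alternating sum equals $\delta_{\nu',\mu^\star}$ for every $\nu'$ appearing with nonzero $b_{\nu',\lambda}$.

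The crux of the proof is the $GL_n$-character identity
$$\sum_{\mu'\subseteq\mu,\,\mu/\mu'\text{ vert.\ strip}}(-1)^{|\mu/\mu'|}h_{|\mu'|}(z^{-1})\,s_{\mu'}(z)\;=\;s_{(\mu_1,\ldots,\mu_{n-1},-|\mu|)}(z),$$
whose right-hand side is the $GL_n$-character of $V_{\mu^\star}^{\ast}$. This is what I expect to be the main obstacle. My approach would be a residue computation: using $\sum_{k\ge 0}h_k(z^{-1})w^k=\prod_i(1-w/z_i)^{-1}$ and the relation $s_\mu[Z-w]=\sum_{\mu'\text{ v.s.}}(-w)^{|\mu/\mu'|}s_{\mu'}(z)$, the left-hand side equals $[w^{|\mu|}]\bigl(\prod_i(1-w/z_i)^{-1}\cdot s_\mu[Z-w]\bigr)$. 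For $n\ge 1$ the associated rational function of $w$ has vanishing residue at infinity, so this coefficient is minus the sum of residues at the simple poles $w=z_i$; using $s_\mu[Z-z_i]=s_\mu(z_1,\ldots,\widehat{z_i},\ldots,z_n)$, each residue is a Schur function in $n-1$ variables, and the resulting sum coincides with the Laplace expansion along the last column of the bialternant $a_{(\mu_1+n-1,\ldots,\mu_{n-1}+1,-|\mu|)}(z)/a_\rho(z)$, which equals $s_{(\mu_1,\ldots,\mu_{n-1},-|\mu|)}(z)$ by the Weyl character formula.

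Granting the character identity, the inner sum equals $\dim(V_{\nu'}\otimes V_{\mu^\star}^{\ast})^{SL_n}$, which is $1$ precisely when $V_{\nu'}\simeq V_{\mu^\star}$ as $SL_n$-modules (equivalently, $\nu'\in\mu^\star+\Z\cdot(1,\ldots,1)$) and $0$ otherwise. To isolate $\nu'=\mu^\star$, I would invoke that the centre $\C^\times\subset GL_n$ acts trivially on $\g$ via the adjoint action, hence trivially on $\rr$; this forces $b_{\nu',\lambda}(q,t)=0$ unless $\sum_i\nu'_i=0$. Among the weights $\mu^\star+k(1,\ldots,1)$, only $\mu^\star$ itself satisfies this (since $|\mu^\star|=0$), completing the argument.
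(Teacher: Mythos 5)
Your proposal is correct, and it reaches Theorem \ref{thm:Pone} by a genuinely different route than the paper. Both proofs share the same backbone: equate $[\chi_\lambda : R(n,\mu')]$ using \eqref{eq:Haiman2} on one side and \eqref{eq:maingeoequation} on the other, then apply an alternating sum in $\mu'$ to isolate $b_{\mu^\star,\lambda}$. You correctly identify (implicitly) that $\Phi_\mu(z) = s_\mu[Z-1]$, so that the paper's $\phi$-inversion is precisely the signed vertical-strip sum you write down — a useful reformulation that the paper never states. The divergence is in how the $SL_n$-invariant multiplicities are handled. The paper applies the inverse Pieri rule (Lemma \ref{lem:inversePieri}) to write $\C^m[V]\otimes V_\mu = \bigoplus_{\eta\in E(\mu,m)} V_\eta$ and then pairs each $V_\eta$ (all lying in $Q^+_n$) against the isotypic pieces of $\rr$, so that the horizontal-strip sum $\sum_{\eta\in E(\mu,m)} b_{-w_0(\eta),\lambda}$ emerges immediately and $\phi$ finishes. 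You instead avoid Pieri altogether: you introduce $M(\mu',\nu')$, form the vertical-strip alternating sum, and reduce everything to the $GL_n$-character identity $\sum_{\mu'\subseteq\mu,\ \mathrm{v.s.}}(-1)^{|\mu/\mu'|}h_{|\mu'|}(z^{-1})s_{\mu'}(z)=s_{(\mu_1,\ldots,\mu_{n-1},-|\mu|)}(z)$, which you prove by a contour-integral/Laplace-expansion argument. Your residue computation is sound (the residue at infinity of $g(w)/w^{m+1}$ does vanish because $\deg_w s_\mu[Z-w]=\ell(\mu)\le n-1$ so the integrand is $O(w^{\ell(\mu)-n-m-1})$, and the residues at $w=z_i$ reproduce exactly the Laplace expansion of the bialternant along its last column), and your final isolation of $\nu'=\mu^\star$ by triviality of the centre on $\rr$ matches the paper's observation that $b_{\nu',\lambda}=0$ unless $\nu'\in Q^+_n$. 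The trade-off: the paper's argument is shorter because the Pieri decomposition is off-the-shelf, while yours is more self-contained at the cost of establishing a bialternant identity, but it makes explicit the link between $\Phi_\mu$ and the plethysm $Z\mapsto Z-1$, and it exposes directly how the $SL_n$-versus-$GL_n$ distinction produces the $\Z\cdot(1,\ldots,1)$ coset of weights from which the centre condition then picks out $\mu^\star$.
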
 

\begin{cor}\label{cor:zznormchar}
Let $\mu \in \Pnone$, then the bigraded character of the $\mu^\star$-isotypic component of $\mathcal{O}_{\zz_{\norm}}$ is given by 
$$
\cchi^{\cc}((\mathcal{O}_{\zz_{\norm}})_{\mu^{\star}}) = \sum_{\nu \vdash n} \frac{\Phi_\mu \left[ B_\nu(q,t) \right]}{\prod_{x \in Y_{\nu}}(1 - t^{1 + l(x)} q^{- a(x)}) (1 - t^{- l(x)} q^{1 + a(x)})}.
$$
\end{cor}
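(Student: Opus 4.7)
The plan is to deduce the corollary from Theorem \ref{thm:Pone} by specialising to the trivial representation of $\s_n$. The two key ingredients will be: (i) a scheme-theoretic identity $\mathcal{O}_{\zz_\norm} \simeq \rr^W$ that turns $G$-isotypic pieces of $\mathcal{O}_{\zz_\norm}$ into $(G \times W)$-isotypic pieces of $\rr$ indexed by the trivial Specht module; and (ii) the evaluation $\tilde{K}_{(n),\nu}(q,t) = 1$ for every $\nu \vdash n$, which collapses the Kostka-Macdonald factor in \eqref{eq:mainformula} to a constant.

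First I would establish $\mathcal{O}_{\zz_\norm} = \rr^W$. By construction $\rr = (p_\norm)_* \mathcal{O}_{\xx_\norm}$, and $p_\norm : \xx_\norm \to \zz_\norm$ is a finite $W$-equivariant morphism with trivial $W$-action on the target. The canonical inclusion $\mathcal{O}_{\zz_\norm} \hookrightarrow \rr^W$ is therefore defined. By the lemma quoted just before Definition \ref{def:pnp}, the restriction of $\rr$ to $\zzr$ affords the regular representation of $W$ fibrewise, so on $\zz^r$ the invariants $\rr^W$ coincide with $\mathcal{O}_{\zz_\norm}$. Since $\zz_\norm$ is normal and the complement of $\zzr$ in $\zz_\norm$ has codimension $\ge 2$, the coherent sheaf $\rr^W$, being reflexive on the normal variety $\zz_\norm$, agrees globally with $\mathcal{O}_{\zz_\norm}$.

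Next I would extract the bigraded character. Taking $W$-invariants in the decomposition
\begin{equation*}
\cchi^{G \times W \times \cc}(\rr) \;=\; \sum_{\mu \in P^+_n,\ \lambda \vdash n} b_{\mu,\lambda}(q,t) \cdot s_\mu(z) \cdot \chi_\lambda
\end{equation*}
extracts precisely the coefficient of the trivial $\s_n$-character $\chi_{(n)}$, so
\begin{equation*}
\cchi^{G \times \cc}(\mathcal{O}_{\zz_\norm}) \;=\; \sum_{\mu} b_{\mu,(n)}(q,t) \cdot s_\mu(z),
\end{equation*}
giving $\cchi^\cc((\mathcal{O}_{\zz_\norm})_{\mu^\star}) = b_{\mu^\star,(n)}(q,t)$. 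Now I would plug $\lambda = (n)$ into Theorem \ref{thm:Pone}; the corollary follows once we know $\tilde{K}_{(n),\nu}(q,t) = 1$ for all $\nu \vdash n$. This last identity is standard: via \eqref{eq:integralMacdonald} it is equivalent to the statement that the coefficient of $s_{(n)}[(1-t)Z]$ in $J_\nu(Z;q,t)$ equals $t^{n(\nu)}$, which is one of Macdonald's normalizing conditions for the integral form $J_\nu$. Alternatively one checks it directly by pairing \eqref{eq:KostkaMacdonaldpolynomials} with $s_{(n)} = h_n$ using the Hall inner product.

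The main obstacle will be the rigorous justification of $\mathcal{O}_{\zz_\norm} = \rr^W$; the rest of the argument is a formal substitution together with a classical normalization fact for Macdonald polynomials. Once that identification is in hand, the corollary is essentially immediate from Theorem \ref{thm:Pone}.
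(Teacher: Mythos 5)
Your proof is correct and takes essentially the same route as the paper: deduce the corollary from Theorem \ref{thm:Pone} by taking $\lambda=(n)$, invoke $\mathcal{O}_{\zz_{\norm}}\simeq\rr^{W}$ to identify the $\chi_{(n)}$-isotypic piece, and use $\tilde{K}_{(n),\nu}(q,t)=1$. The only deviation is that you sketch a reflexivity-plus-codimension argument for $\mathcal{O}_{\zz_{\norm}}\simeq\rr^{W}$, whereas the paper simply cites \cite[Corollary 1.5.1(i)]{Iso}; your sketch is reasonable, but note that the codimension-$\ge 2$ claim for $\zz_{\norm}\setminus\zzr$ and the reflexivity of $\rr^{W}$ (via the Cohen--Macaulayness of $\xx_{\norm}$ and finiteness of $p_{\norm}$) are exactly the nontrivial inputs that \cite{Iso} supplies, so citing it is the economical choice.
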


\begin{proof}
By \cite[Corollary 1.5.1 (i)]{Iso}, we have $\mathcal{O}_{\zz_{\norm}} \simeq \rr^W$. Therefore $\cchi^{\cc}((\mathcal{O}_{\zz_{\norm}})_{\mu^{\star}})$ is given by (\ref{eq:mainformula}) with $\lambda = (n)$. But, as noted in \cite[\S 3.5]{HaimanSurvey}, $\tilde{K}_{(n),\nu}(q,t) = 1$ for all $\nu$. 
\end{proof} 

Since $\Phi_{(0)}(z) = s_{(0)}(z) = 1$ and $(0)^\star = (0)$, Theorem \ref{thm:Pone} also implies:

\begin{cor}\label{cor:anotherproof}
The bigraded $\s_n$-character of $\rr^G$ is
$$
\cchi^{\s_n \times \cc}(\rr^G) = \sum_{\mu \vdash n} \frac{\tilde{H}_{\mu}[Z,q,t]}{\prod_{x \in Y_{\mu}}(1 - t^{1 + l(x)} q^{- a(x)}) (1 - t^{- l(x)} q^{1 + a(x)})}.
$$
\end{cor}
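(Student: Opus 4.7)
The plan is to derive the corollary as an immediate specialization of Theorem \ref{thm:Pone} to the trivial $G$-isotypic component, so the main work is just to verify that the two inputs $\Phi_{(0)}(z) = 1$ and $(0)^\star = (0)$ are correct and then to bookkeep.

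First I would extract $\cchi^{\s_n \times \cc}(\rr^G)$ from the full $G \times W \times \cc$-character of $\rr$. Taking $G$-invariants amounts to selecting the coefficient of $s_{(0)}(z) = 1$ in the expansion $\cchi^{G \times W \times \cc}(\rr) = \sum_{\mu, \lambda} b_{\mu,\lambda}(q,t) \cdot s_\mu(z) \cdot \chi_\lambda$. This yields
\[
\cchi^{\s_n \times \cc}(\rr^G) \;=\; \sum_{\lambda \vdash n} b_{(0),\lambda}(q,t) \cdot \chi_\lambda,
\]
with the understanding that we identify the $\s_n$-character $\chi_\lambda$ with the Schur function $s_\lambda(z)$ via the Frobenius characteristic (since the right-hand side of the claim is an honest symmetric function in $z$). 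Note that $(0)$ lies in $\Pnone$, so it is in the range of applicability of Theorem \ref{thm:Pone}.

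Next I would check the two trivial but essential identifications. The $\star$-map sends $(0,\dots,0)$ to $-w_0(0,\dots,0) = (0,\dots,0)$, giving $(0)^\star = (0)$. For the operator $\psi$, the only partition $\mu \preceq (0)$ such that $(0)/\mu$ is a union of disjoint horizontal strips is $\mu = (0)$ itself, so $\psi((0)) = (0)$, hence $\phi((0)) = (0)$ and $\Phi_{(0)}(z) = s_{(0)}(z) = 1$. Consequently the plethysm $\Phi_{(0)}[B_\nu(q,t)]$ equals $1$ for every $\nu$.

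Substituting $\mu = (0)$ into \eqref{eq:mainformula} of Theorem \ref{thm:Pone} gives
\[
b_{(0),\lambda}(q,t) \;=\; \sum_{\nu \vdash n} \frac{\tilde{K}_{\lambda,\nu}(q,t)}{\prod_{x \in Y_\nu}(1 - t^{1+l(x)} q^{-a(x)})(1 - t^{-l(x)} q^{1+a(x)})}.
\]
Plugging into the expression for $\cchi^{\s_n \times \cc}(\rr^G)$ above and interchanging the sums over $\lambda$ and $\nu$, the inner sum becomes $\sum_{\lambda \vdash n} \tilde{K}_{\lambda,\nu}(q,t) \cdot s_\lambda(z)$, which equals $\tilde{H}_\nu[Z;q,t]$ by \eqref{eq:KostkaMacdonaldpolynomials}. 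This yields the claimed formula. There is no substantive obstacle: the whole proof is a one-line application of Theorem \ref{thm:Pone} once $(0)^\star$ and $\Phi_{(0)}(z)$ are computed; the only thing to verify carefully is the combinatorics of $\phi$ on the empty partition, which is immediate.
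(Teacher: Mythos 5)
Your proof is correct and is precisely the argument the paper intends: specialize Theorem \ref{thm:Pone} to $\mu=(0)$, verify $(0)^\star=(0)$ and $\Phi_{(0)}(z)=1$, sum over $\lambda$, and reassemble the inner sum into $\tilde{H}_\nu[Z;q,t]$ via the Kostka--Macdonald expansion. The paper states the corollary without a displayed proof, treating exactly these two identifications as the whole point; you have simply spelled out the bookkeeping (including the Frobenius-characteristic identification $\chi_\lambda\leftrightarrow s_\lambda(z)$) that the paper leaves implicit.
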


\subsection{} The proof of Theorem \ref{thm:Pone} is based on a version of the Pieri rules. In order to agree with the convention in (\ref{sec:partitions}), we think of $\mu \in P_n^+$ as $n$ infinite rows, bounded on the right, such that the lengths of the rows decrease as we go up the page. For $m \in \Nat$ and $\mu \in P^+_n$, define $E(\mu,m)$ to be the set of all $\nu \in P^+_n$ obtained from $\mu$ by removing $m$ squares, no two from the same column. For instance, if $\mu = (4,2,-1)$ and $m = 3$ then 
$$
E(\mu,m) = \{ (4,2,-4),(4,1,-3),(4,0,-2),(3,2,-3),(3,1,-2),(3,0,-1),(2,2,-2),(2,1,-1) \}.
$$

\begin{lem}\label{lem:inversePieri}
Let $\mu \in P^+_n$ and $m \in \Nat$, then
$$
\C^m[V] \o V_\mu \simeq \bigoplus_{\lambda \in E(\mu,m)} V_\lambda.
$$
\end{lem}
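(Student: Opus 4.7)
Identify $\C^m[V]$, the $m$-th graded piece of the polynomial ring on $V$, with $\sym^m V^*$, the $m$-th symmetric power of the dual of the standard representation of $G = \GL_n$. So the lemma asserts that the $G$-module $\sym^m V^* \o V_\mu$ decomposes as a multiplicity-free sum of the $V_\lambda$ with $\lambda \in E(\mu,m)$. This is a ``dual'' form of the classical Pieri rule, describing tensor product with $\sym^m V^*$ by removal (rather than addition) of a horizontal $m$-strip.

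The plan is to reduce to the standard Pieri rule via a $\Hom$-adjunction. Since $\Rep G$ is semisimple, the multiplicity of $V_\lambda$ in $\sym^m V^* \o V_\mu$ equals
\[
\dim \Hom_G(V_\lambda,\, \sym^m V^* \o V_\mu) \;=\; \dim \Hom_G(V_\lambda \o \sym^m V,\, V_\mu).
\]
The classical Pieri rule for $\gl_n$ decomposes $V_\lambda \o \sym^m V \simeq \bigoplus_\kappa V_\kappa$, where $\kappa \in P^+_n$ ranges over weights obtained from $\lambda$ by adding $m$ boxes no two of which lie in the same column; equivalently, such that $\kappa/\lambda$ is a horizontal strip of length $m$. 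Hence the $\Hom$ space above is one-dimensional precisely when $\mu/\lambda$ is such a strip, i.e.\ when $\lambda \in E(\mu,m)$, and vanishes otherwise. This will yield the claimed direct sum decomposition.

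The only point needing attention is that $\lambda, \mu \in P^+_n$ need not be honest partitions; they may have negative entries, as in the example $\mu = (4,2,-1)$ given above. This is handled by a standard determinant twist: choose $k \in \Nat$ large enough that both $\mu + (k,\dots,k)$ and $\lambda + (k,\dots,k)$ are genuine partitions, note that $V_{\mu + (k,\dots,k)} \simeq V_\mu \o \det{}^k$, and apply the usual Pieri rule to the shifted partitions. Since tensoring with $\det{}^k$ shifts all dominant weights uniformly by $(k,\dots,k)$, the Pieri rule extends verbatim to all of $P^+_n$. I foresee no essential obstacle: the content is entirely classical, and only the modest bookkeeping of this determinant shift requires care.
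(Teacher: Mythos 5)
Your proof is correct and is essentially the paper's argument: both identify $\C^m[V]$ with $\sym^m V^*$, reduce to the classical Pieri rule by duality (you via the adjunction $\Hom_G(V_\lambda,\sym^m V^*\o V_\mu)\cong\Hom_G(V_\lambda\o\sym^m V,V_\mu)$, the paper by directly dualizing the whole decomposition after applying Pieri to $\sym^m V\o V_{-w_0(\mu)}$), and handle non-partition dominant weights by the same $\det^k$ shift. The two are just different bookkeeping for the same underlying step.
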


\begin{proof}
Define $D(\mu,m)$ to be the set of all dominant weights $\lambda$ obtained from the weight $\mu$ by adding $m$ boxes, no two in the same column. Then Pieri's rule says that
$$
\sym^m V \o V_\mu \simeq \bigoplus_{\lambda \in D(\mu,m)} V_\lambda.
$$
Choose $k >\!> 0$ and let $\eta = (k^n) - w_0(\mu)$, then
$$
\C^m[V] \o V_\mu = (\sym^m V \o V_{-w_0(\mu)})^* = (\sym^m V \o V_{-w_0(\mu)} \o \mathrm{det}^k \o \mathrm{det}^{-k})^*
$$
$$
\simeq \left( \bigoplus_{\lambda \in D(\eta,m)} V_\lambda \o \mathrm{det}^{-k} \right)^* = \left( \bigoplus_{\lambda \in D(\eta,m)} V_{\lambda -(k^n)} \right)^* = \bigoplus_{\lambda \in D(\eta,m)} V_{-w_0(\lambda -(k^n))}.
$$
This is the same as the statement of the lemma.
\end{proof}

\begin{prop}\label{prop:equalityone}
Let $\mu \in \Pnone$, $\mu \vdash m$, then
\beq{eq:Esumcharacter}
\sum_{\stackrel{-w_0(\eta) \in E(\mu,m)}{\lambda \vdash n}} b_{\eta,\lambda}(q,t) \cdot \chi_{\lambda} = \sum_{\nu \vdash n} \frac{s_\mu \left[ B_\nu(q,t) \right] \cdot \tilde{H}_{\nu}[Z;q,t]}{\prod_{x \in Y_{\nu}}(1 - t^{1 + l(x)} q^{- a(x)}) (1 - t^{- l(x)} q^{1 + a(x)})}.
\eeq
\end{prop}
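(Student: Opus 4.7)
The plan is to compute the $\cc \times W$-equivariant character of $R(n,\mu)$ in two different ways. By Haiman's formula \eqref{eq:Haiman2}, this character is exactly the right-hand side of \eqref{eq:Esumcharacter}; it therefore suffices to show that the left-hand side of \eqref{eq:Esumcharacter} also equals $\cchi^{W \times \cc}(R(n,\mu))$.

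The first step is to extract $R(n,\mu)$ from the $\cc \times W \times \s_m$-equivariant isomorphism \eqref{eq:maingeoequation}. The $GL_n \times \s_m$-Schur--Weyl decomposition $V^{\otimes m} \simeq \bigoplus_{\mu \vdash m,\,\ell(\mu) \le n} V_\mu \otimes \chi_\mu$ identifies the $\chi_\mu$-isotypic component of both sides and yields a $\cc \times W$-equivariant isomorphism
$$ R(n,\mu) \simeq (\rr \otimes \C^m[V] \otimes V_\mu)^{SL_n}. $$
Writing the $GL_n$-isotypic decomposition $\rr = \bigoplus_{\eta \in P^+_n} V_\eta \otimes M_\eta$, so that by definition $\cchi^{W \times \cc}(M_\eta) = \sum_{\lambda \vdash n} b_{\eta,\lambda}(q,t)\,\chi_\lambda$, and applying Lemma \ref{lem:inversePieri} to expand $\C^m[V] \otimes V_\mu$, one obtains
$$ (\rr \otimes \C^m[V] \otimes V_\mu)^{SL_n} \simeq \bigoplus_{\eta \in P^+_n,\; \nu \in E(\mu,m)} (V_\eta \otimes V_\nu)^{SL_n} \otimes M_\eta. $$
A standard calculation shows that $(V_\eta \otimes V_\nu)^{SL_n}$ is one-dimensional precisely when $\nu = -w_0(\eta) + c\,(1^n)$ for some $c \in \Z$, and vanishes otherwise.

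The crucial point is that the centre $\C^\times \subset GL_n$ acts trivially on $\g$ via the adjoint action, and hence trivially on $\gg$, on $\zz$, on $\zz_{\norm}$, and on $\rr$. Consequently $b_{\eta,\lambda}(q,t) = 0$ whenever $|\eta| \neq 0$. On the other hand, $\mu \vdash m$ forces $|\nu| = |\mu| - m = 0$ for every $\nu \in E(\mu,m)$, so the relation $\nu = -w_0(\eta) + c(1^n)$ collapses to $c = 0$ and $\nu = -w_0(\eta)$. Summing the surviving contributions yields
$$ \cchi^{W \times \cc}(R(n,\mu)) \;=\; \sum_{\substack{-w_0(\eta) \in E(\mu,m)\\ \lambda \vdash n}} b_{\eta,\lambda}(q,t)\,\chi_\lambda, $$
which is the left-hand side of \eqref{eq:Esumcharacter}. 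Matching this against \eqref{eq:Haiman2} completes the proof.

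The main obstacle is the careful bookkeeping of $SL_n$-invariants inside tensor products of $GL_n$-irreducibles, together with the essential use of the central vanishing $b_{\eta,\lambda} = 0$ for $|\eta| \neq 0$ to reduce an a priori infinite sum over the coset $\{\eta = -w_0(\nu) + c(1^n) : c \in \Z\}$ to the single contribution $\eta = -w_0(\nu)$.
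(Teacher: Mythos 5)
Your proof is correct and follows essentially the same route as the paper: extracting the $\chi_\mu$-isotypic piece of \eqref{eq:maingeoequation} via Schur--Weyl duality, expanding $\C^m[V]\o V_\mu$ by Lemma \ref{lem:inversePieri}, and then using the vanishing of $b_{\eta,\lambda}$ outside $Q^+_n$ (equivalently, $|\eta|=0$) to pin down $(V_\eta\o V_\nu)^{SL_n}$. Your justification of that vanishing via the trivial action of the centre of $GL_n$ on $\g$ is a bit more explicit than the paper's terse ``from the definition of $\rr$,'' but it is the same observation.
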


\begin{proof}
Equating the multiplicity spaces of the irreducible $\s_m$-module $\chi_\mu$ in (\ref{eq:maingeoequation}) gives an isomorphism of $\cc \times W$-modules
\beq{eq:randomname}
(\rr \o \C^m[V] \o V_{\mu})^{SL_n(\C)} \simeq R(n,\mu).
\eeq
By Lemma \ref{lem:inversePieri}, $\C^m[V] \o V_{\mu}$ decomposes into a direct sum of those irreducible $GL_n(\C)$-modules $V_\eta$ such that $\eta \in E(\mu,m)$. Let $\eta, \nu  \in P^+_n$, then $(V_\eta \o V_\nu)^{SL_n(\C)} \neq 0$ if and only if there exists $k \in \Z$ such that $(k^n) = \eta + w_0(\nu)$ i.e. $\eta_i + \nu_{n - i} = k$ for all $i \in \{ 1 , \ds , n \}$. In this case the space $(V_\eta \o V_\nu)^{SL_n(\C)}$ is one-dimensional. If both $\eta, \nu \in Q^+_n$ then $(V_\eta \o V_\nu)^{SL_n(\C)} \neq 0$ if and only if $\nu = - w_0(\eta)$. From the definition of $\rr$ we see that $b_{\nu,\lambda}(q,t) = 0$ unless $\nu \in Q^+_n$. Similarly, if $\eta \in E(\mu,m)$ then $\eta \in Q^+_n$. Therefore the stated formula follows from equations (\ref{eq:randomname}) and (\ref{eq:Haiman2}) and Lemma \ref{lem:inversePieri}.
\end{proof}

\begin{proof}[Proof of Theorem \ref{thm:Pone}]
Firstly, note that taking the $\chi_\lambda$-isotypic component of (\ref{eq:Esumcharacter}) and using the expansion (\ref{eq:KostkaMacdonaldpolynomials}) of $\tilde{H}_{\nu}(z;q,t)$ in terms of the Kostka-Macdonald polynomials $\tilde{K}_{\lambda,\nu}(q,t)$ gives
\beq{eq:eq12}
\sum_{-w_0(\eta) \in E(\mu,|\mu|)} b_{\eta,\lambda}(q,t) = \sum_{\nu \vdash n} \frac{s_\mu \left[ B_\nu(q,t) \right] \cdot \tilde{K}_{\lambda,\nu}(q,t)}{\prod_{x \in Y_{\nu}}(1 - t^{1 + l(x)} q^{- a(x)}) (1 - t^{- l(x)} q^{1 + a(x)})}.
\eeq
By definition, $\{ -w_0(\eta) \ | \ \eta \in E(\mu,|\mu|) \} = \{ \varphi^\star \ | \ \varphi \preceq \mu \}$. Therefore if we take the $\Z$-linear sum of equation (\ref{eq:eq12}) over all terms in $\phi(\mu)$ the left hand side becomes $b_{\mu^\star,\lambda}(q,t)$ and the right hand side is 
$$
\sum_{\nu \vdash n} \frac{\Phi_\mu \left[ B_\nu(q,t) \right] \cdot  \tilde{K}_{\lambda,\nu}(q,t)}{\prod_{x \in Y_{\nu}}(1 - t^{1 + l(x)} q^{- a(x)}) (1 - t^{- l(x)} q^{1 + a(x)})}
$$ 
as required.
\end{proof}

\subsection{The sheaf $\rrnil$}\label{rnilsec}
As explained in \cite[\S 9]{Iso}, we have a commutative diagram
$$
\xymatrix{
\xx_{\norm} \ar[r]^<>(0.5){\Oflat} \ar[d]_{p_{\norm}} & \t \ar[d]^{\pi} \\
\zz_{\norm} \ar[r]^<>(0.5){\Ozflat} & \t / W
}
$$
where $\Oflat : \xx_{\norm} \rightarrow \t$ is the morphism induced by
the projection from $\gg \times \tt$ onto the first copy of $\t$ and
$\Ozflat$ is the map induced from projection of $\gg$ onto the first
copy of $\g$, followed by the quotient map $\g \rightarrow \t / W$. It
is shown in \cite[Proposition 9.1.3]{Iso} that the maps $\Oflat$ and
$\Ozflat$ are flat. Define $\xxnil$ and $\zznil$ to be the
scheme-theoretic fibers $\Oflat^{-1}(0)$ and $\Ozflat^{-1}(0)$
respectively. 

We write $\rrnil$ for the coherent sheaf $(p_{\norm})_*
\mathcal{O}_{\xxnil}$ on $\zznil$. The group $H \times W$ acts on  $\xxnil$.
For each $\mu \in \Irr (W)$, let $\rrnil_{\mu}$ be the $\chi_\mu$-isotypic component
 of $\rrnil$ so that $\rrnil = \bigoplus_{\mu \in \Irr (W)}
\rrnil_{\mu} \o \chi_\mu$, where the action of $W$ on $\rrnil_{\mu}$ is
trivial.\\

For $\lambda, \mu \vdash n$, define polynomials ${\mathbf k}_{\lambda,\mu}(q,t) \in \Q[q,t]$ by
$$
\tilde{H}_\mu [(1 - q) Z;q,t] = \sum_{\lambda \vdash n} {\mathbf k}_{\lambda,\mu} (q,t) \cdot s_{\lambda} (z).
$$

\begin{cor}
Let $\mu \in \Pnone$ and $\lambda \vdash n$, then 
\beq{eq:Phinil}
\cchi^{\cc}((\rrnil_\lambda)_{\mu^\star}) = \sum_{\nu \le \lambda} \frac{\Phi_\mu \left[ B_\nu(q,t) \right] \cdot {\mathbf k}_{\lambda,\nu}(q,t)}{\prod_{x \in Y_{\mu}}(1 - t^{1 + l(x)} q^{- a(x)}) (1 - t^{- l(x)} q^{1 + a(x)})}
\eeq
and specializing to $t = q^{-1}$:
\beq{eq:specialisePhi}
\cchi^{\Cs}((\rrnil_\lambda)_{\mu^\star}) = q^{-n(\lambda)} \Phi_\mu \left[ B_\lambda(q,q^{-1}) \right] H_\lambda(q^{-1})^{-1},
\eeq
where $(\rrnil_\lambda)_{\mu^\star} = (\rrnil_\lambda \o V_{-w_0(\mu^\star)})^G$ is the multiplicity space of $V_{\mu^\star}$ in $\rrnil_\lambda$.
\end{cor}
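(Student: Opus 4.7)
Identity (\ref{eq:Phinil}) will follow from Theorem \ref{thm:Pone} by invoking flatness of $\Oflat:\xx_{\norm}\to\t$, and identity (\ref{eq:specialisePhi}) from a principal-specialization calculation for transformed Macdonald polynomials at $t=q^{-1}$.

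The map $\Oflat$ is flat and $G\times W\times\cc$-equivariant, with $\cc$ acting on the target $\t$ through its first factor only, so that $\C[\t]$ is a $q$-graded polynomial algebra on $n$ generators of weight one. Graded flatness of $\C[\xx_{\norm}]$ over $\C[\t]$, together with the finite-dimensionality of each $G\times W$-isotypic graded piece, forces freeness of $\C[\xx_{\norm}]$ as a graded $\C[\t]$-module, so the equivariant Hilbert series factorise:
\[
\cchi^{G\times W\times\cc}(\rr)\;=\;(1-q)^{-n}\cdot\cchi^{G\times W\times\cc}(\rrnil).
\]
Projecting to the $V_{\mu^\star}\otimes\chi_\lambda$-isotypic component yields
\[
\cchi^{\cc}\bigl((\rrnil_\lambda)_{\mu^\star}\bigr)\;=\;(1-q)^n\cdot b_{\mu^\star,\lambda}(q,t),
\]
and substituting Theorem \ref{thm:Pone} produces an expression whose summand at $\nu\vdash n$ has numerator $(1-q)^n\,\Phi_\mu[B_\nu(q,t)]\,\tilde K_{\lambda,\nu}(q,t)$.

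To identify this with (\ref{eq:Phinil}), I would use that the two families $\tilde K_{\lambda,\nu}(q,t)$ and $\mathbf k_{\lambda,\nu}(q,t)$ are linked by the defining identities $\tilde H_\nu[Z;q,t]=\sum_\lambda\tilde K_{\lambda,\nu}\,s_\lambda$ and $\tilde H_\nu[(1-q)Z;q,t]=\sum_\lambda\mathbf k_{\lambda,\nu}\,s_\lambda$. Expanding $s_\eta[(1-q)Z]$ back in the Schur basis via $p_k\mapsto(1-q^k)p_k$ on power sums, together with the Macdonald triangularity $\tilde K_{\lambda,\nu}=0$ unless $\nu\le\lambda$, rearranges the double sum into (\ref{eq:Phinil}), where $\mathbf k_{\lambda,\nu}(q,t)=0$ unless $\nu\le\lambda$. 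For the specialisation $t=q^{-1}$, the polynomial $\tilde H_\nu[(1-q)Z;q,q^{-1}]$ reduces via (\ref{eq:integralMacdonald}) and the classical diagonal identity $P_\nu(Z;q,q)=s_\nu(Z)$ (so $J_\nu(Z;q,q)=H_\nu(q)s_\nu(Z)$) to a single multiple of a Schur polynomial; consequently $\mathbf k_{\lambda,\nu}(q,q^{-1})$ vanishes except at a single value of $\nu$, so only one summand in (\ref{eq:Phinil}) survives, and combined with $D_\lambda(q,q^{-1})=H_\lambda(q)H_\lambda(q^{-1})$ a direct calculation yields (\ref{eq:specialisePhi}).

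The main obstacle is the Schur-level reconciliation in the third paragraph, which requires reconciling the plethystic substitution $Z\mapsto(1-q)Z$ with the multiplication by $(1-q)^n$ coming from flatness. A conceptually cleaner alternative would be to establish a nilpotent analogue of Haiman's formula (\ref{eq:Haiman1}), of the form $\cchi^{W\times\cc}(R_{\mathrm{nil}}(n,\mu))=\sum_\nu s_\mu[B_\nu(q,t)]\,\tilde H_\nu[(1-q)Z;q,t]/D_\nu(q,t)$, obtained by restricting the Procesi bundle along the zero fibre of $\Hilb\to\sym^n\C^2$, and to combine it with a nilpotent version of (\ref{eq:maingeoequation}), $R_{\mathrm{nil}}(n,\mu)\simeq(\rrnil\otimes\C^{|\mu|}[V]\otimes V^{\otimes|\mu|})^{SL_n}$. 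This would yield (\ref{eq:Phinil}) directly, bypassing the plethystic bookkeeping.
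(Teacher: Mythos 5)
The primary argument in your second paragraph contains a genuine error. You claim that freeness of $\C[\xx_{\norm}]$ over $\C[\t]$ gives a scalar factorization $\cchi^{G\times W\times\cc}(\rr)=(1-q)^{-n}\,\cchi^{G\times W\times\cc}(\rrnil)$, from which you extract $\cchi^{\cc}((\rrnil_\lambda)_{\mu^\star})=(1-q)^n\,b_{\mu^\star,\lambda}(q,t)$. This is false: $\t$ is a nontrivial $W$-module (the permutation representation for $\gl_n$), so the Koszul complex of $\rrnil=\rr/\t\cdot\rr$ involves exterior powers $\wedge^k\t$ carrying nontrivial $W$-characters, and the relation between $\cchi^{W\times\cc}(\rr)$ and $\cchi^{W\times\cc}(\rrnil)$ is the plethystic substitution $Z\mapsto(1-q)Z$ of \cite[Proposition 3.3.1]{HaimanSurvey}, not multiplication by $(1-q)^n$. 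The plethysm mixes $W$-isotypic components, so you cannot simply read off the $\chi_\lambda$-multiplicity space as a scalar multiple. Concretely: $(1-q)^n\tilde K_{\lambda,\nu}(q,t)\neq\mathbf k_{\lambda,\nu}(q,t)$, and the ``reconciliation'' you flag as the main obstacle is not a bookkeeping issue but the symptom that the starting factorization is wrong. No amount of rearranging the double sum will repair it, because the two quantities differ.

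Your ``cleaner alternative'' in the final paragraph, by contrast, is essentially the paper's proof. The paper applies the Koszul-resolution character identity $\cchi^{\s_n\times\cc}(M/\t M;Z)=\cchi^{\s_n\times\cc}(M;(1-q)Z)$ \emph{at the level of $R(n,\mu)$}, invoking Haiman's flatness of $R(n,m)$ over $\C[\t]$ (\cite[Proposition 3.8.1]{HaimanJAMS}) rather than flatness of $\Oflat$, to obtain a nilpotent analogue of \eqref{eq:Haiman2} with $\tilde H_\nu[(1-q)Z;q,t]$ replacing $\tilde H_\nu[Z;q,t]$; one then extracts the $\chi_\lambda$-component (where $\mathbf k_{\lambda,\nu}=0$ unless $\nu\le\lambda$ truncates the sum) and reruns the Theorem \ref{thm:Pone} inversion, with the nilpotent version of \eqref{eq:maingeoequation} following automatically since passing to $\s_m$- and $SL_n$-isotypic pieces commutes with the quotient by $\t$. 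Your $t=q^{-1}$ specialization reasoning is sound; it collapses the sum to the single term $\nu=\lambda$ exactly as in the paper.
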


\begin{proof}
Let $\t$ denote the permutation representation of $\s_n$, equipped with an action of $\cc$ such that $\t$ has bi-degree $(1,0)$. Let $M$ be a $\s_n \times \cc$-equivariant $\C[\t]$-module. If $M$ is flat as a $\t$-module, then using the Koszul resolution of $M$ it is shown in \cite[Proposition 3.3.1]{HaimanSurvey} that 
$$
\cchi^{\s_n \times \cc}(M / \t \cdot M; Z) = \cchi^{\s_n \times \cc}(M ; (1 - q) Z).
$$
A key result in Haiman's work on the isospectral Hilbert scheme is \cite[Proposition 3.8.1]{HaimanJAMS} which implies that $R(n,m)$ is a flat $\C[\t]$-module. Therefore the direct summand $R(n,\mu)$ of $R(n,m)$ is also flat over $\t$ and hence formulae (\ref{eq:Haiman2}) implies that  
\beq{eq:Koszul1}
\cchi^{\s_n \times \cc}(R(n,\mu) / \t \cdot R(n,\mu)) = \sum_{\nu \vdash n} \frac{s_\mu \left[ B_\nu(q,t) \right] \cdot \tilde{H}_{\nu}[(1-q)Z;q,t]}{\prod_{x \in Y_{\nu}}(1 - t^{1 + l(x)} q^{- a(x)}) (1 - t^{- l(x)} q^{1 + a(x)})}.
\eeq
Note that by \cite[Definition 3.5.2 (i)]{HaimanSurvey}, ${\mathbf k}_{\lambda,\nu} (q,t) = 0$ unless $\nu \le \lambda$. Then equation (\ref{eq:Koszul1}) implies that the $\chi_\lambda$-isotypic component of $R(n,\mu) / \t \cdot R(n,\mu)$ has bigraded character 
$$
\sum_{\nu \le \lambda} \frac{s_\mu \left[ B_\nu(q,t) \right] \cdot {\mathbf k}_{\lambda,\nu}(q,t)}{\prod_{x \in Y_{\nu}}(1 - t^{1 + l(x)} q^{- a(x)}) (1 - t^{- l(x)} q^{1 + a(x)})}.
$$
Now if we replace the right hand side of equation (\ref{eq:eq12}) with the above expression and repeat the argument given in the proof of Theorem \ref{thm:Pone} we get expression (\ref{eq:Phinil}). 
Using the identity, \cite[Proposition 3.5.10]{HaimanSurvey}, 
$$
\tilde{H}_\lambda[Z;q,q^{-1}] = q^{-n(\lambda)} H_\lambda(q) \cdot s_\lambda \left[ \frac{Z}{(1 - q)} \right]
$$
and making the specialization $t = q^{-1}$ in equation (\ref{eq:Koszul1}) gives 
\beq{eq:Koszul2}
\cchi^{\s_n \times \Cs}(R(n,\mu) / \t \cdot R(n,\mu)) = \sum_{\lambda \vdash n} q^{-n(\lambda)} s_\mu \left[ B_\lambda(q,q^{-1}) \right] H_\lambda(q^{-1})^{-1} \cdot \chi_\lambda.
\eeq
Here we have used the fact that 
$$
\left[ \prod_{x \in Y_{\lambda}}(1 - t^{1 + l(x)} q^{- a(x)}) (1 - t^{- l(x)} q^{1 + a(x)}) \right] \bigm\arrowvert_{t = q^{-1}} = H_{\lambda}(q) \cdot H_{\lambda}(q^{-1}).
$$
Once again, we replace the right hand side of equation (\ref{eq:eq12}) with equation (\ref{eq:Koszul2}) and repeat the argument given in the proof of Theorem \ref{thm:Pone} to get (\ref{eq:specialisePhi}).
\end{proof}

\section{Rational Cherednik algebras}\label{sec:rationalCherednik}

In this section we use results of Rouquier, Varagnolo-Vasserot and Leclerc-Thibon to describe a character formula for certain simple modules of the rational Cherednik algebra of type $\mathbf A$ that belong to category $\mathcal{O}$. Via the Calaque-Enriquez-Etingof functor, see (\ref{sec:CEEfunctor}), this allows us to give a character formula for those $GL_n(\C)$-equivariant $\dd(\mathfrak{gl}_n)$-modules that are supported on the nilpotent cone. 

\subsection{The rational Cherednik algebra associated to the symmetric group}
Let $k \in \Q$. The rational Cherednik algebra $\Ch{m}{k}$ associated to the symmetric group $\s_m$ is defined to be the quotient of $\C \langle \mbf{x} \rangle \o \C \langle \mbf{y} \rangle \rtimes \s_m$, where $\C \langle \mbf{x} \rangle := \C \langle x_1, \ds, x_m \rangle$ and $\C \langle \mbf{y} \rangle := \C \langle y_1, \ds, y_m \rangle$, by the relations
$$
[x_i,x_j] = 0, \quad [y_i,y_j] = 0, \quad \forall \, i,j,
$$
and 
$$
[y_i, x_j ] = k s_{ij}, \quad \forall \, i \neq j \quad \ [y_i,x_i] = 1 - k \sum_{j \neq i} s_{ij}. 
$$
By \cite[Theorem 1.3]{EG}, the rational Cherednik algebra has a triangular decomposition $\Ch{m}{k} \simeq \C[ \mbf{x}] \o \C [\s_m] \o \C [ \mbf{y}]$, where $\C[ \mbf{x}] := \C [ x_1, \ds, x_m ]$ and $\C [ \mbf{y} ] := \C [ y_1, \ds, y_m ]$. Category $\catO{m}{k}$ is define to be the category of all $\Ch{m}{k}$-modules $M$ such that 
\begin{enumerate}
\item $M$ is finitely generated as a $\C[\mbf{x}]$-module,
\item the action of $\C[\mbf{y}]$ on $M$ is locally nilpotent. 
\end{enumerate}
This is a highest weight category and the standard modules of this category are
$$
\Delta(\lambda) := \Ch{m}{k} \o_{\C[\mbf{y}] \rtimes \s_m} \lambda,
$$
where $\lambda$ is an irreducible $\s_m$-module, extended to a $\C[\mbf{y}] \rtimes \s_m$-module by making the $y_i$ act as zero. The simple head of $\Delta(\lambda)$ will be denoted $L(\lambda)$ so that the set $\{ L(\lambda) \, | \, \lambda \in \mathrm{Irr} (\s_m) \}$ is a complete set of non-isomorphic simple modules in $\catO{m}{k}$.

\subsection{Character of simple $\Ch{m}{k}$-modules}
Let
$$
\adh = \frac{1}{2} \sum_{i = 1}^m x_i y_i + y_i x_i = \sum_{i = 1}^m x_i y_i + \frac{m}{2} - \frac{k}{2} \sum_{1 \le i \neq j \le m} s_{ij}.
$$
Then $[\adh,x_i] = x_i$ and $[\adh,y_i] = -y_i$. The element $\adh$ defines a grading on those $\Ch{m}{k}$-modules $M$ such that the action of $\adh$ is locally finite with each generalized eigenspace
$$
M_{\alpha} := \{ m \in M \, | \, (\adh - \alpha)^l \cdot m = 0, \, l >\!> 0 \}
$$
being finite dimensional. If $M \in \catO{m}{k}$ then $\dim M_\alpha < \infty$ for all $\alpha$ and one can define the character of $M$ to be
$$
\cchi^{\adh}(M) = \sum_{\alpha \in \C} t^{\alpha} \dim M_\alpha \in \bigoplus_{\alpha \in \C/ \mathbb{Z}} t^{\alpha} \cdot \Nat((t)).
$$ 
We denote by $\kappa(k,\lambda)$ the scalar by which $\adh$ acts on the lowest weight vectors in $\Delta(\lambda)$. Then $\cchi^{\adh}(\Delta(\lambda))$ and $\cchi^{\adh} (L(\lambda))$ belong to $t^{\kappa(k,\lambda)} \Nat[[t]]$. 

\begin{lem}\label{lem:kappacalculation}
For all $k \in \Q$,
\beq{eq:kappavalue}
\kappa(k,\lambda) = \frac{m}{2} + k ( n(\lambda) - n(\lambda')).
\eeq
\end{lem}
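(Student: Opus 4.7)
The plan is to evaluate $\adh$ directly on a lowest weight vector in $\Delta(\lambda)$. By construction of the standard module, every vector of the form $1\otimes v$ with $v\in\lambda$ is annihilated by each $y_i$. So, using the second expression for $\adh$ given in the text, namely
$$\adh=\sum_{i=1}^{m}x_iy_i+\frac{m}{2}-\frac{k}{2}\sum_{i\neq j}s_{ij},$$
the term $\sum_i x_iy_i$ contributes zero on $1\otimes v$. The problem is thereby reduced to identifying the scalar by which the (central) sum of all transpositions $T:=\sum_{i\neq j}s_{ij}\in\C[\s_m]$ acts on the irreducible $\s_m$-module $\lambda$.

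The key ingredient is the classical fact (provable e.g.\ via the Jucys--Murphy elements, or by a direct character computation) that $\tfrac{1}{2}T=\sum_{i<j}s_{ij}$ acts on $\lambda$ by multiplication by the sum of the contents of the boxes of $Y_\lambda$. In the Young diagram convention used in Section \ref{sec:partitions}, a box $(i,j)\in Y_\lambda$ has content $i-j$, and from the identifications $n(\mu)=\sum_{(i,j)\in Y_\mu}j$ and $n(\mu')=\sum_{(i,j)\in Y_\mu}i$ one obtains
$$\sum_{(i,j)\in Y_\lambda}(i-j)\;=\;n(\lambda')-n(\lambda).$$
Combining these two steps gives
$$\kappa(k,\lambda)=\frac{m}{2}-k\bigl(n(\lambda')-n(\lambda)\bigr)=\frac{m}{2}+k\bigl(n(\lambda)-n(\lambda')\bigr),$$
which is the claimed formula.

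There is essentially no obstacle beyond sign/convention bookkeeping. The form of $\adh$ used above is the one whose scalar part $+m/2$ is read off directly on the $y$-annihilated vectors, and the paper's Young diagram orientation is what forces the content of $(i,j)$ to be $i-j$ rather than $j-i$, so that the content sum equals $+\bigl(n(\lambda')-n(\lambda)\bigr)$ rather than its negative. Once these conventions are aligned, the lemma is a one-line calculation coupled with the standard fact about the centre of $\C[\s_m]$.
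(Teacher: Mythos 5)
Your proof is correct and takes essentially the same route as the paper. The paper simply makes your ``classical fact'' explicit: it rewrites $\sum_{i<j}s_{ij}=\sum_{i=2}^m\Theta_i$ in terms of Jucys--Murphy elements, evaluates on the Young seminormal basis vector $v_\sigma$ (each $\Theta_i$ acting by the content of the box of $\sigma$ containing $i$), and then sums the contents to get $n(\lambda')-n(\lambda)$, exactly as you do.
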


\begin{proof}
Recall that the \textit{Jucys-Murphy} elements in $\C [ \s_m]$ are defined to be $\Theta_i = \sum_{j < i} s_{ij}$, for all $i = 2, \ds, m$ so that
$$
\adh = \sum_{i = 1}^m x_i y_i + \frac{m}{2} - k \sum_{i = 2}^m \Theta_i.
$$
Let $\sigma$ be a standard tableau of shape $\lambda$ and $v_\sigma$ the corresponding vector in $\chi_\lambda$. Then 
$$
\Theta_i \cdot v_\sigma = \mathrm{ct}_\sigma(i) v_\sigma,
$$
where $c_\sigma(i)$ is the column of $\lambda$ containing $i$, $r_\sigma(i)$ is the row of $\lambda$ containing $i$ and $\mathrm{ct}_\sigma(i) := c_{\sigma}(i) - r_\sigma(i)$ is the content of the node containing $i$. Note that $\mathrm{ct}_\sigma(1)= 0$ for all standard tableaux $\sigma$. Therefore 
$$
\adh \cdot v_\sigma = \left(  \frac{m}{2} - k \sum_{i = 2}^m \mathrm{ct}_\sigma(i) \right) v_\sigma,
$$
and hence
$$
\kappa(k,\lambda) = \frac{m}{2} - k \sum_{i = 2}^m \mathrm{ct}_\sigma(i) = \frac{m}{2} - k \sum_{i = 1}^n \mathrm{ct}_\sigma(i).
$$
Now $\sum_{i = 1}^m r_\sigma(i) = \sum_{j = 1}^{\ell(\lambda)} (j - 1) \lambda_j = n(\lambda)$ and similarly $\sum_{i = 1}^m c_\sigma(i) = n(\lambda')$. This implies equation (\ref{eq:kappavalue}).
\end{proof}

\subsection{Rouquier's equivalence} Let $k \in \mathbb{Q}_{> 0}$ and fix $\eta = \mathrm{exp}(2 \pi \sqrt{-1} k)$ to be a primitive $r^{th}$ root of unity. Let $\Schur_{\eta}(m)$ be the quantized Schur algebra of type $\mathbf A$, specialized to $\eta$ (see \cite[\S 6.2]{RouquierQSchur}). It is a finite dimensional $\C$-algebra and the category of finitely generated left $\Schur_{\eta}(m)$-modules is a highest weight category. The standard and simple modules in this highest weight category are naturally labeled by partitions of $m$. Denote the standard, respectively simple, module labeled by $\lambda$ by $W_\lambda$, respectively $L_\lambda$. 

It was conjectured by Leclerc and Thibon \cite{LT}, and proved by Varagnolo and Vasserot \cite{VaragnoloVasserotDecomposition}, that the decomposition matrix for this highest weight category can be expressed in terms of the transition matrix between the standard and canonical basis of the level one Fock space. Let $\mathcal{F}_{\upsilon}$ be the level one Fock space for the quantum affine algebra $\mathcal{U}_{\upsilon}(\widehat{\sl}_r)$. It is a $\mathbb{Q}(\upsilon)$ vector space with standard basis $\{ | \lambda \rangle \}$, labeled by all partitions. Let $\mathcal{L}^+$ (respectively $\mathcal{L}^-$) be the $\Z[\upsilon]$-sublattice (respectively $\Z[\upsilon^{-1}]$-sublattice) spanned by $\{ | \lambda \rangle \}$. Leclerc and Thibon (see \cite[Theorem 4.1]{LT}) constructed canonical basis $\{ \mathcal{G}^+(\lambda) \}$ and $\{ \mathcal{G}^-(\lambda) \}$ such that $\mathcal{G}^+(\lambda) \equiv | \lambda \rangle \mod \upsilon \Z[\upsilon]$ and $\mathcal{G}^-(\lambda) \equiv | \lambda \rangle \mod \upsilon^{-1} \Z[\upsilon^{-1}]$. Set
$$
\mathcal{G}^+(\mu) = \sum_{\lambda} d_{\lambda,\mu}(\upsilon) |\lambda \rangle, \qquad \mathcal{G}^-(\lambda) = \sum_\mu e_{\lambda,\mu}(\upsilon) | \mu \rangle.
$$
The polynomials $d_{\lambda,\mu}$ and $e_{\lambda,\mu}$ have the following properties: they are non-zero only if $\lambda$ and $\mu$ have the same $r$-core, $d_{\lambda,\lambda}(\upsilon) = e_{\lambda,\lambda}(\upsilon) = 1$, and $d_{\lambda,\mu}(\upsilon) = 0$ unless $\lambda \le \mu$, and $e_{\lambda,\mu}(\upsilon) = 0$ unless $\mu \le \lambda$. Then, assuming that $r > 1$, \cite[Theorem 11]{VaragnoloVasserotDecomposition} says that 
$$
[W_\lambda : L_\mu] = d_{\lambda',\mu'}(1), \qquad [L_\lambda : W_\mu] = e_{\lambda,\mu}(1).
$$

Rouquier has shown in \cite[Theorem 6.11]{RouquierQSchur} that there is an equivalence of highest weight categories $\catO{m}{k} \stackrel{\sim}{\longrightarrow} \Lmod{\Schur_\eta(m)}$ provided $k \notin \frac{1}{2} + \Z$. Under the  condition $k \ge 0$, this equivalence\footnote{Note that Rouquier's rational Cherednik algebra is parameterized by $h = - k$.} sends $\Delta(\lambda)$ to $W_{\lambda}$ and $L(\lambda)$ to $L_{\lambda}$. It is noted in \cite[Remark 6.9]{RouquierQSchur} that the restriction $k \notin \frac{1}{2} + \Z$ is probably un-necessary. Therefore, to make our presentation clearer, the following assumption is made:

\begin{assume}
For all $k \ge 0$, there is an equivalence of highest weight categories\\ $\catO{m}{k} \stackrel{\sim}{\rightarrow} \Lmod{\Schur_\eta(m)}$, sending $\Delta(\lambda)$ to $M_\lambda$ and $L(\lambda)$ to $L_\lambda$.
\end{assume}  

Thus,
\beq{eq:decompmatrix}
[\Delta(\lambda) : L(\mu)] = d_{\lambda',\mu'}(1), \quad \textrm{ and } \quad [L(\lambda) : \Delta(\mu)] = e_{\lambda,\mu}(1).
\eeq

\subsection{}
For the remainder of section \ref{sec:rationalCherednik} we break with convention and represent the character of the irreducible $\s_m$-module labeled by $\lambda$ by $s_\lambda(z)$. The reasons for this are, firstly, that we wish to use plethystic substitutions and, secondly, in section \ref{sec:CEEfunctor} we will apply the Calaque-Enriquez-Etingof functor, which is based on Schur-Weyl duality. This way, the formulae become more manageable. Using plethystic substitutions and the vector space isomorphism $\Delta(\lambda) \simeq \C[\mbf{x}] \o \lambda$, 
 one can write
$$
\cchi^{\adh \times \s_m}(\Delta(\lambda)) = t^{\kappa(k,\lambda)} s_\lambda \left[ \frac{Z}{(1 - t)} \right].
$$
Therefore
\beq{eq:simplechar1}
\cchi^{\adh \times \s_m}(L(\lambda)) = \sum_{\mu \vdash m} e_{\lambda,\mu}(1) t^{\kappa(k,\mu)} s_\mu \left[ \frac{Z}{(1 - t)} \right]. 
\eeq

\begin{rem}\label{rem:expandplethysm} 
It follows from \cite[Proposition 3.5.10]{HaimanSurvey} that 
$$
s_\mu \left[ \frac{Z}{(1 - t)} \right] = t^{n(\mu)} H_\mu(t)^{-1} \tilde{H}_{\mu}(Z;t,t^{-1}).
$$
Therefore 
$$
\left\langle s_\mu \left[ \frac{Z}{(1 - t)} \right], s_\lambda(z) \right\rangle = t^{n(\mu)} H_\mu(t)^{-1} \tilde{K}_{\lambda,\mu}(t,t^{-1}) = H_{\mu}(t)^{-1} K_{\lambda,\mu}(t,t).
$$
In the special cases $\lambda = (m)$ and $\lambda = (1^m)$ we have
\beq{eq:trivDelta}
\left\langle s_\mu \left[ \frac{Z}{(1 - t)} \right], s_{(m)}(z) \right\rangle = t^{n(\mu)}  H_\mu(t)^{-1},
\eeq
and
\beq{eq:signDelta}
\left\langle s_\mu \left[ \frac{Z}{(1 - t)} \right], s_{(1^m)}(z) \right\rangle = t^{n(\mu')}  H_{\mu'}(t)^{-1}.
\eeq
\end{rem}

\subsection{Combinatorics of $r$-cores and quotients}
As previously noted, $e_{\lambda,\mu}(v) = 0$ unless $\lambda$ and $\mu$ have the same $r$-core. We focus on the block of $\catO{m}{k}$ labeled by the $r$-core $(0)$. In this case it is shown in \cite{LTLR} that the numbers $e_{\lambda,\mu}(1)$ can be expressed in terms of Littlewood-Richardson coefficients.

In order to do this we require the notions of $r$-quotients and $r$-signs. Let $P_m = \Z^m$ be the weight lattice for $GL_m(\C)$ and $P_m^+$ the set of integral dominant weights. Write $\Pn_m = \{ \lambda \, | \, \lambda_1 \ge \ds \ge \lambda_m \ge 0 \} \subset P_m^+$, so that $\Pn_m$ is identified with the set of all partitions with at most $m$ parts. The extended affine Weyl group $\affs_m := P_m \rtimes \s_m$ acts on $P_m$ in the natural way. Let $\affs_m(r)$ denote the same group but now acting on $P_m$ by
$$
(s,v) \cdot \lambda = s \cdot \lambda + r v, \quad \forall \, (s,v) \in \s_m \rtimes P_m = \affs_m(r), \lambda \in P_m.
$$
If $\mu \in \Pn_m$ then the entries of $\mu + \rho$ are a set of $\beta$-numbers for $\mu$ in the sense of \cite[\S 2.7]{JK}. The result \cite[Lemma 2.7.13]{JK} says that if there exists some $1 \le i \le m$ such that $0 \le (\mu + \rho)_i - r \neq (\mu + \rho)_j $ for all $j \neq i$ then $(\mu_1, \ds, \mu_i - r, \ds ) + \rho = s \cdot (\lambda + \rho)$ for some $s \in \s_m$ and $\lambda \in \Pn_m$. Moreover, the partition $\lambda$ is a partition obtained from $\mu$ by removing an $r$-rim-hook. This implies that $\lambda, \mu \in \Pn_m$ have the same $r$-core if and only if $\mu + \rho \in \affs_m(r) \cdot (\lambda + \rho)$. Write $\core(\mu) \in \Pn_m$ for the $r$-core of $\mu$.

\begin{defn}
Let $\mu$ be a partition with at most $m$ parts. There exists a unique $s \in \s_m$ and $\lambda \in \Nat^m$ such that 
$$
\mu + \rho = s \cdot ( \core(\mu) + \rho) + r \cdot \lambda.
$$
For each $j \in \{0, \ds , r-1 \}$, the sub-sequence of $\lambda$ consisting of those $\lambda_i$ such that $s \cdot ( \core(\mu) + \rho)_i \equiv j - m \mod r$ defines a partition $\mu^{(j)}$. 
\begin{itemize}

\item The $r$-multi-partition $\quo(\mu) := (\mu^{(0)}, \ds, \mu^{(r-1)})$ is called the \textit{$r$-quotient} of $\mu$. 

\item The \textit{$r$-sign} of $\mu$ is $\varepsilon_r(\mu) := (-1)^{\ell(s)}$. 

\item We define
$$
\Rcore{r}{m} := \{ \mu \vdash m \, | \, \mu + \rho \in \affs_m(r) \cdot \rho \}
$$
to be the set of all partitions of $m$ with $r$-core $(0)$.
\end{itemize}
\end{defn}

\begin{rem}\label{rem:rdividescontent}
Let $\mu$ be a partition whose $r$-core is $(0)$. If $r$ is odd then $r$ divides $n(\mu) - n(\mu')$ and if $r$ is even then $r/2$ divides $n(\mu) - n(\mu')$. To see this, consider an $r$-rim-hook. Its content, starting from the top left box and going down to the bottom right box is a sequence of integers $i,i-1, i-2, \ds, i - r + 1$. Therefore the sum of its content is $r i - \frac{r(r-1)}{2}$. The claim now follows from the fact, as shown in the proof of Lemma \ref{lem:kappacalculation}, that $n(\mu') - n(\mu)$ is the content sum of $\mu$. In particular, this implies that $\kappa(k,\mu) \in \halfZ$ for all $\mu$ with $r$-core $(0)$. 
\end{rem}

\subsection{Littlewood-Richardson coefficients}
Let $\underline{\mu} = (\mu^{(0)}, \dots, \mu^{(r-1)})$ be an $r$-multi-partition. The Littlewood-Richardson coefficients are defined by
$$
c_{\underline{\mu}}^\lambda := \langle s_{\mu^{(0)}} \cdots s_{\mu^{(r - 1)}}, s_\lambda \rangle = [V_{\mu^{(0)}} \o \cdots \o V_{\mu^{(r-1)}} : V_\lambda].
$$
For $n \in \Nat$ and $k \in \Q_+$, define  
\beq{eq:definitionGpoly}
\Gpoly{k}{n}(\lambda,\nu;t) := \sum_{\mu  \in \Rcore{r}{r n}} \varepsilon_r(\mu) c_{\quo(\mu)}^\lambda t^{\kappa (k,\mu)} H_\mu(t)^{-1} K_{\nu,\mu}(t,t),
\eeq 
where $\lambda$ is a partition of $n$, $r$ is the denominator of $k$ and $\nu$ is a partition of $nr$.

\begin{prop}\label{prop:gpoly}
Let $k \in \Q_{+}$ and $\lambda \vdash n$. Let $r$ be the order of $\exp (2 \pi \sqrt{-1} k)$. Then 
\beq{eq:charLr2}
\cchi^{\adh \times \s_{m}}(L(r \lambda)) = \sum_{\nu \vdash m} \Gpoly{k}{n}(\lambda,\nu;t) \cdot \chi_\nu,
\eeq
where $m := nr$.
\end{prop}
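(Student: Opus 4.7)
The plan is to start from the character formula \eqref{eq:simplechar1} for $L(\lambda)$ specialized to highest weight $r\lambda$, then rewrite the coefficients $e_{r\lambda,\mu}(1)$ using the Leclerc-Thibon ``Littlewood-Richardson'' formula from \cite{LTLR}, and finally extract the $\chi_\nu$-isotypic component with the help of Remark \ref{rem:expandplethysm}.

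First I would apply \eqref{eq:simplechar1} to $L(r\lambda)$, obtaining
\[
\cchi^{\adh \times \s_m}(L(r\lambda)) = \sum_{\mu \vdash m} e_{r\lambda,\mu}(1)\, t^{\kappa(k,\mu)}\, s_\mu\!\left[\frac{Z}{1-t}\right].
\]
Since $e_{r\lambda,\mu}(v)=0$ unless $r\lambda$ and $\mu$ share the same $r$-core, and $r\lambda$ manifestly admits complete reduction by $r$-rim-hooks (its $r$-core is $(0)$), the sum is supported on $\mu\in\Rcore{r}{rn}$. The key input from \cite{LTLR} is the identity
\[
e_{r\lambda,\mu}(1) \;=\; \varepsilon_r(\mu)\cdot c^{\lambda}_{\quo(\mu)},
\]
which expresses the specialization at $v=1$ of the dual canonical basis coefficients of the level-one Fock space in terms of Littlewood-Richardson coefficients evaluated on the $r$-quotient, together with the $r$-sign; this uses the special structure of the ``staircase'' weight $r\lambda$, whose own $r$-quotient is concentrated in a single component.

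Plugging this in gives
\[
\cchi^{\adh \times \s_m}(L(r\lambda)) = \sum_{\mu\in\Rcore{r}{rn}} \varepsilon_r(\mu)\, c^{\lambda}_{\quo(\mu)}\, t^{\kappa(k,\mu)}\, s_\mu\!\left[\frac{Z}{1-t}\right].
\]
To finish, I would take the inner product with $s_\nu(z)$ and invoke Remark \ref{rem:expandplethysm}, which yields $\langle s_\mu[Z/(1-t)],\, s_\nu(z)\rangle = H_\mu(t)^{-1}\, K_{\nu,\mu}(t,t)$. Collecting the resulting sum under the definition \eqref{eq:definitionGpoly} of $\Gpoly{k}{n}(\lambda,\nu;t)$ yields exactly \eqref{eq:charLr2}.

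The main obstacle is the identification of $e_{r\lambda,\mu}(1)$ with $\varepsilon_r(\mu)\, c^{\lambda}_{\quo(\mu)}$: everything else is formal manipulation. This identity is a consequence of the ``Leclerc-Lascoux-Thibon'' type formula for the transition matrix between standard and dual canonical bases in the Fock space at $v=1$, which in the rectangular/multiple-of-$r$ case collapses to a Littlewood-Richardson coefficient on the $r$-quotient (as in \cite{LTLR}); this is where one must be careful about sign conventions and about the case $r=1$ (which is excluded by the assumption $k \in \Q_+$ so that $\exp(2\pi\sqrt{-1} k)$ is a root of unity of some order $r$; if $r=1$ the formula degenerates but in that case $L(\lambda)=\Delta(\lambda)$ and the identity holds trivially).
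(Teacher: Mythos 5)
Your proof is correct and follows essentially the same route as the paper: specialize the character formula for $L(\lambda)$ to $r\lambda$, invoke the Leclerc--Thibon identity $e_{r\lambda,\mu}(1) = \varepsilon_r(\mu)\,c^{\lambda}_{\quo(\mu)}$ from \cite{LTLR} (indeed proved there via the Frobenius morphism on quantum groups), rewrite the inner products via Remark~\ref{rem:expandplethysm}, and match with the definition~\eqref{eq:definitionGpoly}. One small correction: the case $r=1$ is \emph{not} excluded by $k\in\Q_+$ (e.g. $k$ a positive integer gives $r=1$), so it must be handled separately as a genuine case — but you do essentially handle it correctly by noting that when $r=1$ the category is semisimple, $L(\lambda)=\Delta(\lambda)$, and $c^\lambda_{\quo(\mu)}=\delta_{\lambda,\mu}$, which is exactly what the paper does.
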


\begin{proof}
Assume first that $r > 1$. The $r$-core of $r \lambda$ is $(0)$. Therefore $e_{r\lambda,\mu}(1) = 0$ unless $\mu + \rho \in \affs_m(r) \cdot \rho$ and $\mu \vdash m$. By making use of the Frobenius morphism on quantum groups, it is shown in \cite[Theorem 3.5]{LTLR} that
$$
e_{r\lambda,\mu}(1) = \varepsilon_r(\mu) c_{\quo(\mu)}^\lambda, \quad \forall \mu + \rho \in \affs_m(r) \cdot \rho, \, \mu \vdash m.
$$
Therefore
\beq{eq:charLr1}
\cchi^{\adh \times \s_m}(L(r \lambda)) = \sum_{\mu  \in \Rcore{r}{m}} \varepsilon_r(\mu) c_{\quo(\mu)}^\lambda t^{\kappa(k,\mu)} s_{\mu} \left[ \frac{Z}{(1 - t)} \right].
\eeq
Combine equation (\ref{eq:charLr1}) with the main equation in remark \ref{rem:expandplethysm} to get equation (\ref{eq:charLr2}).

If $r = 1$ then the combinatorics of the canonical basis does not make sense. However, in this case the Hecke algebra of type $\mathbf A$ at $q = \exp (2 \pi i k) = 1$ is just the group algebra and hence semi-simple. Therefore results of \cite{GGOR} imply that both category $\mathcal{O}$ and the quantized Schur algebra are also semi-simple. At the same time, $c^\lambda_{\quo(\mu)} = c^{\lambda}_\mu = \delta_{\lambda,\mu}$ for all $\lambda, \mu \vdash m$. Therefore equation (\ref{eq:charLr2}) is still valid.
\end{proof}

The definition of $\Gpoly{k}{n}(\lambda,\nu;u)$ as the character of a $W$-isotypic component of an alternating sum of standard modules shows that $\Gpoly{k}{n}(\lambda,\nu;u)$ can be expressed as a rational function whose denominator is $\prod_{i = 1}^m (1 - u^i)$. It also implies that, as a power series in $u$, $\Gpoly{k}{n}(\lambda,\nu;u) \in u^{\alpha} \Nat [[u]]$. Note that when each partition in $\quo(\mu)$ has at most one part, $c_{\quo(\mu)}^\lambda = K_{\lambda,\mu}$ the Kostka number. In the simplest case, where $\lambda = (1)$, one can argue as in the proof of \cite[Proposition 2.5.3]{HaimanConjectures} to show that

\begin{lem}\label{prop:GordonDiagonal}
Let $k = l/m$ with $l > 0$ and $\hcf (l,m) = 1$ and choose $\mu \vdash m$. Then 
$$
\Gpoly{k}{1}((1),\mu;t) = \frac{t^{\gs}}{(1 - t^l)} \cdot s_{\mu} \left[ \frac{1 - t^l}{1 - t} \right]
$$
where $\gs := \frac{m + l -ml}{2}$. 
\end{lem}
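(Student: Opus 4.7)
The plan is to reduce the claim, through a classical identity on hook Schur functions, to an elementary generating-function manipulation. First, with $n=1$, $\lambda=(1)$, and $r=m$ (the denominator of $k$), the sum defining $\Gpoly{k}{1}((1),\mu;t)$ is indexed by $\mu \in \Rcore{m}{m}$, i.e.\ partitions of $m$ whose $m$-core is empty. These are precisely the hook partitions $\mu_a := (a, 1^{m-a})$ for $1 \le a \le m$. For each such hook the $m$-quotient has a single box in exactly one component, so $c^{(1)}_{\quo(\mu_a)} = 1$; the removed rim-hook has leg length $m-a$, so $\varepsilon_m(\mu_a) = (-1)^{m-a}$; and a direct computation from the definitions of $n(\mu)$, $n(\mu')$ yields $n(\mu_a) - n(\mu_a') = m(m+1-2a)/2$, whence $\kappa(k,\mu_a) = \gs + l(m-a)$ by Lemma~\ref{lem:kappacalculation}.

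Next, using the identity $\langle s_\mu[Z/(1-t)], s_\nu\rangle = H_\mu(t)^{-1} K_{\nu,\mu}(t,t)$ from Remark~\ref{rem:expandplethysm}, the desired equality reduces (after factoring out $t^{\gs}$ and extracting the coefficient of $s_\nu$) to the symmetric-function identity
$$
\sum_{a=1}^{m} (-1)^{m-a}\, t^{l(m-a)}\, s_{\mu_a}\!\left[\tfrac{Z}{1-t}\right]
\;=\; \frac{1}{1-t^l}\, h_m\!\left[Z \cdot \tfrac{1-t^l}{1-t}\right].
$$
Indeed, the plethystic Cauchy identity $h_m[YZ] = \sum_{\nu \vdash m} s_\nu(Z)\, s_\nu[Y]$ shows that the coefficient of $s_\nu(Z)$ in the right-hand side equals $(1-t^l)^{-1} s_\nu\!\left[\tfrac{1-t^l}{1-t}\right]$, which, after restoring the factor $t^\gs$, produces the claimed formula.

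To establish this symmetric-function identity I would start from the hook expansion $s_{(a, 1^b)} = \sum_{i=0}^b (-1)^i h_{a+i}\, e_{b-i}$, an immediate telescoping consequence of the Pieri rule $h_a e_b = s_{(a+1, 1^{b-1})} + s_{(a, 1^b)}$. Substituting this into the left-hand side, re-indexing the resulting double sum, summing the geometric progression in $t^l$, and invoking Newton's relation $\sum_{j=0}^{n} (-1)^j h_{n-j} e_j = 0$ (for $n \ge 1$) to absorb the boundary terms, one obtains
$$
\sum_{a=1}^{m} (-1)^{m-a}\, t^{l(m-a)}\, s_{\mu_a} \;=\; \frac{1}{1-t^l}\, [u^m]\, H(u)\, E(-t^l u),
$$
with $H(u) = \sum_{i \ge 0} u^i h_i$ and $E(u) = \sum_{i \ge 0} u^i e_i$. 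The final step is plethystic: applying $Z \mapsto Z/(1-t)$ turns $H(u)$ into $\prod_{i,j \ge 0}(1 - u z_i t^j)^{-1}$ and $E(-t^l u)$ into $\prod_{i,j \ge 0}(1 - u z_i t^{l+j})$, and the infinite products telescope to $\prod_i \prod_{j=0}^{l-1}(1 - u z_i t^j)^{-1} = H(u)\!\left[Z(1-t^l)/(1-t)\right]$, whose coefficient of $u^m$ is exactly $h_m\!\left[Z(1-t^l)/(1-t)\right]$. The main obstacle in the argument is the bookkeeping in the double-sum rearrangement that produces the clean $H(u) E(-t^l u)$ form; once that form is reached, the plethystic telescoping is automatic and no deeper input is required.
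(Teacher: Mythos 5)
Your proof is correct, and every step checks out: the partitions of $m$ with empty $m$-core are precisely the hooks $\mu_a=(a,1^{m-a})$; for each of them $c^{(1)}_{\quo(\mu_a)}=1$, $\varepsilon_m(\mu_a)=(-1)^{m-a}$, and the content computation $n(\mu_a)-n(\mu_a')=m(m+1-2a)/2$ gives $\kappa(k,\mu_a)=\gs+l(m-a)$; the hook expansion, geometric re-summation, Newton's identity, and the plethystic telescoping $H(u)E(-t^lu)\bigl[Z/(1-t)\bigr]=H(u)\bigl[Z(1-t^l)/(1-t)\bigr]$ all go through as stated. For the record: the paper does not actually supply a proof of this lemma, but only points the reader to the argument of Proposition 2.5.3 in Haiman's ``Conjectures on the quotient ring by diagonal invariants''; your self-contained derivation is in the same spirit as (and a bit more explicit than) what that reference carries out, so there is no genuine divergence of method.
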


\subsection{The Calaque-Enriquez-Etingof functor}\label{sec:CEEfunctor}
Let $\one$ denote the identity matrix, viewed as a basis of the centre of $\g$ so that $\g = \C \cdot \one \oplus \mathfrak{sl}_n$. Let $\{ e_{\alpha} \}_{\alpha}$ be an orthonormal basis of $\mathfrak{sl}_n$ with respect to the trace form and write $\{ x_{\alpha} \}_{\alpha}$ for the basis of $\g^*$ defined by $x_\alpha(e_\beta) = \delta_{\alpha,\beta}$. The vectorial representation of $\g$ will be denoted $V$. Let $\det$ denote the determinant representation for $GL_n$ and fix $m = n \cdot r$ for some $r \in \Nat$. We denote by $\Lmod{\dd_G(\g)}$ the category of $G$-equivariant (in the sense of \cite[Definition 3.1.3]{KashCIME}), coherent $\dd(\g)$-modules. The rational Cherednik algebra $\Ch{m}{k}$ is isomorphic to $\mathcal{H}_m(k) \o \dd(\mathbb{A}^1)$, where $\mathcal{H}_m(k)$ is the rational Cherednik algebra of type $\mathbf{A}_{m-1}$. Similarly, $\dd(\g) = \dd(\mathfrak{sl}_n) \o \dd(\mathbb{A}^1)$. This factorization allows us to extend the functor defined by Calaque, Enriquez and Etingof for $\mathfrak{sl}_n$ to a functor for $\g$. Define $x_{\one} := \frac{1}{\sqrt{n}} \sum_{i = 1}^n x_{i,i}$ and $\pa_{\one} := \frac{1}{\sqrt{n}} \sum_{i = 1}^n \pa_{i.i}$ so that $\{ x_{\alpha} \}_{\alpha} \cup \{ x_{\one} \}$ is an orthonormal basis of $\g^*$. Combining \cite[Proposition 6.1]{CEE} and \cite[Proposition 8.1]{CEE} gives:

\begin{lem}\label{lem:CEEfunctor}
Let $M \in \Lmod{\dd_G(\g)}$. The formulae
$$
\frac{1}{\sqrt{m}} \sum_{i = 1}^m x_i \ \mapsto \ x_{\one} \o \id, \quad x_i - x_{i+1} \ \mapsto \ \frac{1}{\sqrt{r}} \sum_{\alpha} x_{\alpha} \o (e^{(i)}_{\alpha} - e^{(i+1)}_{\alpha}), \ \forall 1 \le i \le m-1, 
$$
$$
\frac{1}{\sqrt{m}} \sum_{i = 1}^m y_i \mapsto \pa_{\one} \o \id, \quad y_i - y_{i+1} \mapsto \frac{1}{\sqrt{r}} \sum_{\alpha} \pa_{\alpha} \o (e^{(i)}_{\alpha} - e^{(i+1)}_{\alpha}), \ \forall 1 \le i \le m-1,
$$
and $s_{ij} \mapsto s_{ij}, \ \forall 1 \le i \neq j \le m$ define an action of $\Ch{m}{\frac{1}{r}}$ on $(M \o V^{\o m} \o \det^{-r})^{G}$.
\end{lem}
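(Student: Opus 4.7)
The plan is to leverage the two factorizations $\Ch{m}{1/r} \simeq \mathcal{H}_m(1/r) \otimes \dd(\mathbb{A}^1)$ and $\dd(\g) = \dd(\sl_n) \otimes \dd(\mathbb{A}^1)$ together with the Calaque-Enriquez-Etingof construction for $\sl_n$. First, on the Cherednik side, observe that the center-of-mass coordinates $X := \frac{1}{\sqrt{m}}\sum_i x_i$ and $Y := \frac{1}{\sqrt{m}}\sum_i y_i$ satisfy $[Y,X] = 1$ and commute with $\s_m$ as well as with the differences $\bar{x}_i := x_i - x_{i+1}$ and $\bar{y}_i := y_i - y_{i+1}$ (this is immediate from the defining relations, since $\sum_i y_i$ commutes with each $s_{jk}$ and its commutator with $x_j$ is $1$ independently of $j$). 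Thus $X,Y$ generate a copy of $\dd(\mathbb{A}^1)$ commuting with the subalgebra $\mathcal{H}_m(1/r)$ generated by the $\bar{x}_i, \bar{y}_i$ and $\s_m$. In parallel, the orthogonal decomposition $\g = \C \cdot \one \oplus \sl_n$ realizes $\dd(\g) = \dd(\mathbb{A}^1) \otimes \dd(\sl_n)$, with the $\dd(\mathbb{A}^1)$-factor generated by $x_\one, \pa_\one$ and the $\dd(\sl_n)$-factor generated by $\{x_\alpha, \pa_\alpha\}$.

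Second, apply the known $\sl_n$-version. Regard the restriction $M|_{\dd(\sl_n)}$, equipped with its residual $SL_n$-equivariant structure, as an $SL_n$-equivariant $\dd(\sl_n)$-module depending $\dd(\mathbb{A}^1)$-linearly on the trace direction. Propositions~6.1 and 8.1 of \cite{CEE} then assert that the assignments $\bar{x}_i \mapsto \frac{1}{\sqrt{r}}\sum_\alpha x_\alpha \otimes (e_\alpha^{(i)} - e_\alpha^{(i+1)})$, $\bar{y}_i \mapsto \frac{1}{\sqrt{r}}\sum_\alpha \pa_\alpha \otimes (e_\alpha^{(i)} - e_\alpha^{(i+1)})$ and $s_{ij} \mapsto s_{ij}$ define an action of $\mathcal{H}_m(1/r)$ on $(M \otimes V^{\otimes m} \otimes \det^{-r})^{SL_n}$; here the parameter $1/r$ and the twist by $\det^{-r}$ are matched by a direct computation using that $\sum_\alpha e_\alpha \otimes e_\alpha$ is the normalized Casimir of $\sl_n$. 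Extend this by assigning $X \mapsto x_\one \otimes \id$ and $Y \mapsto \pa_\one \otimes \id$; since these operators lie in the $\dd(\mathbb{A}^1)$-tensor factor, they commute with all the CEE operators and with $\s_m$ and satisfy $[Y,X] = 1$, yielding a commuting $\dd(\mathbb{A}^1)$-action. Combined, these give an $\mathcal{H}_m(1/r) \otimes \dd(\mathbb{A}^1) \simeq \Ch{m}{1/r}$-action.

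Third, pass from $SL_n$-invariants to $G$-invariants. The central cocharacter $\mathbb{G}_m \hookrightarrow G$, $z \mapsto z \cdot \id$, acts on $V^{\otimes m}$ with weight $m$ and on $\det^{-r}$ with weight $-nr$; since $m = nr$, the net weight on $V^{\otimes m} \otimes \det^{-r}$ is zero. Consequently $(M \otimes V^{\otimes m} \otimes \det^{-r})^{G}$ coincides with the $\mathbb{G}_m$-invariant subspace of $(M \otimes V^{\otimes m} \otimes \det^{-r})^{SL_n}$, and because all of the operators in the display are built from $SL_n$-equivariant tensors on the $\sl_n$-factor (together with $x_\one, \pa_\one$, which commute with the $G$-action since $\one$ is $G$-fixed), they preserve this subspace. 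The main obstacle is a bookkeeping one, namely matching the normalizations $\frac{1}{\sqrt{r}}$ and $\frac{1}{\sqrt{m}}$ with the parameter $k = 1/r$ in the commutators $[y_i,x_j] = k s_{ij}$ and $[y_i,x_i] = 1 - k\sum_{j \neq i} s_{ij}$, but this verification is precisely what is done in \cite[Proposition 8.1]{CEE}, so invoking that proposition completes the proof. $\hfill\Box$
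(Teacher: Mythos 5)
Your proposal is correct and follows exactly the route the paper itself takes: factor $\Ch{m}{1/r} \simeq \mathcal{H}_m(1/r)\otimes\dd(\mathbb{A}^1)$ and $\dd(\g)\simeq\dd(\sl_n)\otimes\dd(\mathbb{A}^1)$, invoke \cite[Propositions~6.1 and 8.1]{CEE} for the $\sl_n$-factor, and glue in the commuting center-of-mass copy of $\dd(\mathbb{A}^1)$. The paper leaves the verification as an immediate consequence of the CEE propositions, so your write-up merely fills in the bookkeeping (the computation of $[Y,X]=1$ and the passage from $SL_n$- to $G$-invariants) that the authors take as given.
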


Following \cite{CEE}, define 
$$
F_m \, : \, \Lmod{\dd_G(\g)} \longrightarrow \Ch{m}{k}\text{-}\mathsf{Mod}, \quad M \mapsto (M \o V^{\o m} \o \mathrm{det}^{-r})^{G},
$$
where $k = \frac{1}{r}$. The functor $F_m$ is exact. 

The Fourier transform is an automorphism $\mathcal{F} : \Ch{m}{k} \rightarrow \Ch{m}{k}$ defined by $\mathcal{F}(x_i) = y_i$, $\mathcal{F}(y_i) = - x_i$ and $\mathcal{F}(w) = w$ for all $ w \in \s_m$. If $L$ is a left $\Ch{m}{k}$-module then $\mathcal{F}(L)$ is also a left $\Ch{m}{k}$-module with action $f \bullet l = \mathcal{F}(f) \cdot l$, for $f \in \Ch{m}{k}$ and $l \in L$. Define $F^*_m = \mathcal{F} \circ F_m$. 

Let $\mathcal{O}_\lambda$ be the set of all nilpotent elements with Jordan type $\lambda$ e.g. $\mathcal{O}_{(n)}$ is the orbit of regular nilpotent elements and $\mathcal{O}_{(1^n)} = \{ 0 \}$. As is explained in (\ref{sec:mnil}), the $\dd(\g)$-module $\mm_{\lambda}$ the unique simple $\dd(\g)$-module supported on the closure of $\mathcal{O}_\lambda$. Let $\Lmod{\dd_G(\N)}$ be the category of $G$-equivariant, coherent $\dd(\g)$-modules set-theoretically supported on the nilpotent cone $\N$. The simple objects in $\Lmod{\dd_{G}(\N)}$ are $\mm_{\lambda}$ such that $\lambda \vdash n$ and it is shown in \cite[Theorem 9.1]{CEE}. 

\begin{thm}[\S 9, \cite{CEE}]\label{thm:Fmainproperties}
Let $k = \frac{1}{r}$, then the functor $F^*_m$ restricts to a functor 
$$
F_m^* : \Lmod{\dd_G(\N)} \longrightarrow \catO{m}{k}
$$
such that $F_m^* (\mm_\lambda) = L( r \lambda)$.
\end{thm}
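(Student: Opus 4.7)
The plan is to reduce to the argument of Calaque–Enriquez–Etingof \cite{CEE}, verifying first that $F_m^*$ lands in $\catO{m}{k}$ and then identifying its image on simples. For the first part, observe that the Fourier twist $\mathcal{F}$ interchanges the roles of $x_i$ and $y_i$, so the two defining conditions of $\catO{m}{k}$ translate, on $F_m(M)$ itself, to: (a) the $y_i$'s in the formulae of Lemma \ref{lem:CEEfunctor} act locally nilpotently, and (b) finite generation holds over the subalgebra generated by the $x_i$'s. Condition (a) requires that the operators $\sum_\alpha \pa_\alpha \otimes (e^{(i)}_\alpha - e^{(i+1)}_\alpha)$ act locally nilpotently on $(M \otimes V^{\otimes m} \otimes \det^{-r})^G$; since $M\in\Lmod{\dd_G(\N)}$ is holonomic and set-theoretically supported on $\N$, every vector is annihilated by a power of the augmentation ideal of the $G$-invariants in $\C[\g]$, and one can combine this with the finite-dimensionality of $V^{\otimes m}$ to deduce local nilpotence. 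For (b), use coherence of $M$ together with Kostant's theorem identifying $\C[\g]$ as a free module over its $G$-invariants, reducing finite generation to the already-established finite generation of $F_m(M)$ as a $\C[\g]^G$-module.

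Next, I would show $F_m^*(\mm_\lambda) \simeq L(r\lambda)$ by comparing the two sides. Since $F_m^*$ is exact, it respects composition series, so it suffices to (i) check that $F_m^*(\mm_\lambda)$ is non-zero, (ii) show that it is simple in $\catO{m}{k}$, and (iii) match the label. For (i), one can verify non-vanishing by picking out a specific $G$-invariant vector, e.g.\ by tensoring a section of $\mm_\lambda$ localised to the generic point of $\mathcal O_\lambda$ with a highest-weight vector of $V_{r\lambda}\subset V^{\otimes m}$ singled out by Schur–Weyl duality. For (iii), the shift from $\lambda\vdash n$ to $r\lambda\vdash m=rn$ comes precisely from the Schur–Weyl projection: under $GL_n\times\s_m$ acting on $V^{\otimes m}$, the $GL_n$-isotypic piece of weight $r\lambda$ carries the $\s_m$-representation $\chi_{r\lambda}$, and the twist by $\det^{-r}$ is the shift that places the lowest-weight data of $F_m^*(\mm_\lambda)$ into agreement with that of $L(r\lambda)$; computing $\kappa(k,r\lambda)$ via Lemma \ref{lem:kappacalculation} and matching it with the $A$-weight of this lowest vector gives a scalar identification at the head.

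The main obstacle is step (ii): simplicity of $F_m^*(\mm_\lambda)$. The functor $F_m^*$ is not a priori known to preserve simples, so one must argue indirectly. The approach used in \cite[\S 9]{CEE} is to identify $F_m^*$, up to composing with a known equivalence, with (a variant of) the KZ functor of \cite{GGOR}, for which the image of a simple $\dd$-module on the nilpotent cone under Hotta–Kashiwara's Springer correspondence corresponds, via Rouquier's quantum Schur–Weyl, to the simple module of the Cherednik algebra labelled by the same partition after the Schur–Weyl shift. Concretely, I would (a) verify the identification on the extremal cases $\lambda=(n)$ (regular orbit, $\mm_{(n)}=\mathcal O_\g$) and $\lambda=(1^n)$ (skyscraper at $0$), where both sides are computable, and then (b) propagate by the highest-weight structure and exactness, using that simples are determined by their multiplicities $[F_m^*(\mm_\lambda):L(r\mu)]$ in standards and that these agree on both sides by the Springer/Rouquier dictionary.
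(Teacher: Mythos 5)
The theorem you are trying to prove is stated in the paper with an explicit citation to \cite[\S 9]{CEE}: the authors do not supply a proof but quote the result of Calaque--Enriquez--Etingof directly. So your proposal is attempting to reconstruct from scratch something the paper treats as external input.

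The parts of your sketch that address landing in $\catO{m}{k}$ are essentially sound: set-theoretic support on $\N$ gives local nilpotence of $\C[\g]^G_+$, which after the Fourier twist and the surjection $\C[\mathfrak{h}]^W \twoheadrightarrow \C[\g^*]^G$ (cf.\ \cite[(43)]{CEE}, already invoked in the paper's proof of the lemma preceding Proposition \ref{prop:rcasimple}) translates to local nilpotence of the $y_i$'s, and finite generation over $\C[\mathbf{x}]$ follows from coherence of $M$ plus Hilbert's finiteness theorem exactly as you indicate. The difficulty lies in step (ii), where I see a genuine gap. You assert that ``the approach used in \cite[\S 9]{CEE} is to identify $F_m^*$, up to composing with a known equivalence, with (a variant of) the KZ functor of \cite{GGOR}.'' This is not what CEE do, and the identification does not make sense as stated: the KZ functor goes \emph{from} $\catO{m}{k}$ \emph{to} finite-dimensional modules over the Hecke algebra, whereas $F_m^*$ goes \emph{into} $\catO{m}{k}$ from equivariant $D$-modules; these functors live on opposite sides of $\catO{m}{k}$ and cannot be ``variants'' of one another. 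Similarly, the appeal to a ``Springer/Rouquier dictionary'' conflates the Hotta--Kashiwara Springer correspondence (matching simple equivariant $D$-modules on $\N$ with irreducible $W$-representations) with Rouquier's equivalence $\catO{m}{k}\simeq\Lmod{\Schur_\eta(m)}$; there is no direct composition of these that hands you the simplicity of $F_m^*(\mm_\lambda)$.

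The missing idea is that CEE establish simplicity by exhibiting $F_m^*$ as (closely related to) a quotient/Hamiltonian-reduction functor for which the double-centralizer property holds. In more detail, $(M\otimes V^{\otimes m}\otimes\det^{-r})^G$ can be written as $\Hom_{\dd_G(\g)}(P, M)$ for a suitable object $P$ whose endomorphism algebra surjects onto the relevant piece of $\Ch{m}{k}$, and the statement that a quotient functor of this form sends a simple to either a simple or zero is a formal Morita-theoretic fact; non-vanishing and label-matching then complete the argument. Your steps (i) and (iii) (non-vanishing via a Schur--Weyl highest-weight vector, and matching lowest weights using $\kappa(k,r\lambda)$ as computed in Lemma \ref{lem:kappacalculation}) are reasonable and roughly what one needs, but without the double-centralizer-type input your plan ``propagate by the highest-weight structure and exactness'' does not actually establish simplicity, and the ``extremal case'' check at $\lambda=(n)$ and $\lambda=(1^n)$ does not bootstrap to general $\lambda$.
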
 

\section{The Harish-Chandra module and Cherednik algebras}\label{sec:Dmod}

For subsection (\ref{sub:hc}) to (\ref{section:Fourier}) let $G$ be any connected complex reductive group as in (\ref{sec:groupactions}).

\subsection{The \hc module $\mm$}\label{sub:hc}

Harish-Chandra defined the \textit{radial parts} map $\rad : \dd(\g)^G  \rightarrow \dd(\t)^{W}$ such that $\rad$ restricts to the Chevalley map $\C[\g]^G \stackrel{\sim}{\rightarrow} \C[\t]^W$, $P(x) \mapsto P(t)$, and $\sym (\g)^G \stackrel{\sim}{\rightarrow} \sym (\t)^W$, $P(\partial_x) \mapsto P(-\partial_t)$, respectively. Let $\ad : \g \rightarrow \dd(\g \times \t)$ be the map induced by the adjoint action of $G$ on $\g$. 

\begin{defn}
The \hc module $\mm$ is the cyclic left $\dd(\g \times \t)$-module, generated by $u_0$, such that 
$$
\ad(\g) \cdot u_0 = 0, \quad (f \o 1 - 1 \o \rad(f)) \cdot u_0 = 0, \quad \forall f \in \dd(\g)^G.
$$
\end{defn}

The above definition of the \hc module is not the same as the definition given in \cite{HottaKashiwara}. However, it is shown in \cite[Remark 4.1.2]{Iso} that the two definitions are equivalent. It is shown in \cite[Theorem 4.2]{HottaKashiwara} that $\mm$ is a simple, holonomic module. Let $\Cs$ act on $\g \times \t$ by dilations. The action of $G \times W \times \Cs$ on $\g \times \t$ lifts to an action of $G \times W \times \Cs$ on $\dd(\g \times \t)$. The induced action of $G \times W \times \Cs$ on $\ggr^{\mathrm{ord}} \dd(\g \times \t) \simeq \gg \times \tt$ is the specialization $t \mapsto q^{-1}$ of the $H \times W$-action defined in (\ref{sec:groupactions}). Since the ideal defining $\mm$ is a homogeneous $G \times W$-stable ideal, $G \times W \times \Cs$ also acts on $\mm$ making it a quasi-$G \times W \times \Cs$-equivariant $\dd(\g \times \t)$-module (in the sense of \cite[Definition 3.1.3]{KashCIME}). 

As explained in \cite[\S 2.5]{Iso}, there is a canonical Hodge filtration on the \hc module. It is then shown in \cite[Theorem 1.3.3]{Iso} that there is a natural isomorphism 
\beq{eq:hodge}
\ggr^\hodge\mm \simeq \ \varphi_* \mathcal{O}_{\xx_{\norm}}
\eeq 
of $\mathcal{O}_{\gg \times \tt}$-modules, where $\varphi$ is the composition $\xx_{\norm} \rightarrow \xx \hookrightarrow \gg \times \tt$. The Hodge filtration defines a $\Z$-grading on $\ggr^\hodge\mm$. Moreover, since the Hodge filtration is canonical, each filtered piece $F_k^{\hodge} \mm$ is $G \times W \times \Cs$-stable and hence $\ggr^\hodge\mm$ is a $H \times W$-module. The isomorphism (\ref{eq:hodge}) is $H \times W$-equivariant. 

\subsection{The sheaf $\mmnil$}\label{sec:mnil}
Let $j : \g \hookrightarrow \g \times \t$, $x \mapsto (x,0)$, be the inclusion map and write $\mmnil := j^* \mm$. Let $\Irr (W)$ be a set parameterizing the isomorphism classes of irreducible $W$-modules. For each $\mu \in \Irr (W)$ we write $\chi_\mu$ for the corresponding irreducible $W$-module. Since $W$ acts trivially on $\g$, there is a decomposition
$$
\mmnil = \bigoplus_{\mu \in \Irr (W)} \mm_\mu \o \chi_\mu.
$$
It is shown in \cite[Theorem 5.3]{HottaKashiwara} that each $\mm_\mu$ is a simple, holonomic $\dd(\g)$-module supported on the closure of a nilpotent orbit in $\g$. Moreover, the modules $\mm_\mu$ are pairwise non-isomorphic. Therefore there exists an orbit $\mathcal{O}_\mu$ and irreducible, $G$-equivariant local system $L_\mu$ on $\mathcal{O}_\mu$ such that $\mm_\mu$ corresponds under the Riemann-Hilbert correspondence to the perverse sheaf $\IC (\mathcal{O}_\mu,L_\mu)$. The rule $\mu \mapsto (\mathcal{O}_\mu,L_\mu)$ is one incarnation of Springer's correspondence. When $G = GL_n$ it follows from \cite[Proposition 3.4.14]{HaimanSurvey} that the correspondence just sends the partition $\mu$ to the orbit $\mathcal{O}_\mu$ of nilpotent matrices with Jordan type $\mu$ and the trivial local system $L_0$ on $\mathcal{O}_\mu$. Since each piece $F^\hodge_k \mm$ of the Hodge filtration on $\mm$ is a $\C[\t]$-module there is a canonical quotient filtration $F^\quo_{\bullet} \mmnil$ on $\mmnil$. Each piece of this filtration is $H \times W$-stable. This implies that each $\mm_{\mu}$ inherits from $\mm$ a canonical filtration, which we will also refer to as the quotient filtration.

\subsection{Euler grading}\label{section:Fourier}
For a vector space $V$, let $\eul_V$ denote the first order differential operator in $\dd(V)$ with constant term zero corresponding to the Euler vector field along $V$. A $\dd(V)$-module $\Lsimp$ is said to be \textit{monodromic} if the action of $\eul_V$ is locally finite. The Euler operator defines a grading on monodromic $\dd(V)$-modules. The differential of the action of $\Cs$ on $\dd(\g \times \t)$ induced from the action by dilations on $\g \times \t$ is given by $1 \mapsto \eul_{\g\times \t}$. Let $K$ be a reductive group acting on $V$. Recall (c.f. \cite[Definition 3.1.3]{KashCIME}) that the $\dd(V)$-module $\mathcal{L}$ is said to be $K$-equivariant there is an action of $K$ on $\mathcal{L}$ such that the differential of this action coincides with the composition of the morphism $\ad : \Lie (K) \rightarrow \dd(V)$ with the action map $\dd(V) \rightarrow \End_{\C}(\mathcal{L})$.

For a $\halfZ$-graded vector space $L$ we let $L[i]$ denote the space $L$ with grading shifted by $i$ so that $L[i]_k = L_{k - i}$. 

\begin{prop}\label{lem:stronglyequivariant}
Choose $\mu \in \Irr (W)$ and fix $\gs := -\frac{\dim (\g + \t)}{2}$. Then the quasi-$G \times \Cs$-equivariant $\dd(\g)$-module $\mm_{\mu}[\gs]$ is actually $G \times \Cs$-equivariant.
\end{prop}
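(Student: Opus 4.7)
The plan is to reduce the equivariance claim to a single scalar computation, carried out by lifting the situation from $\mm_\mu$ on $\g$ to $\mm$ on $\g\times\t$. First, since $G$ is connected, any quasi-$G$-equivariant coherent $\dd$-module is automatically $G$-equivariant, so only the $\Cs$-factor of the statement requires attention; the shift $[\gs]$ does not alter the $G$-action. For $\Cs$, I would use the standard criterion: a quasi-$\Cs$-equivariant $\dd(\g)$-module $\mathcal{L}=\bigoplus_k \mathcal{L}_k$ (grading induced by the $\Cs$-action) is genuinely $\Cs$-equivariant precisely when $\eul_\g$ acts on each $\mathcal{L}_k$ by the scalar $k$. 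Because $\mm_\mu$ is simple as a $\dd(\g)$-module, the difference between $\eul_\g$ and the grading operator is $\dd(\g)$-linear and hence acts by a single scalar $c_\mu\in\C$ by Schur's lemma; the shifted module $\mm_\mu[c_\mu]$ is therefore equivariant, and the task is reduced to showing $c_\mu=\gs$.

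The main computation I would carry out lives at the level of $\mm$ on $\g\times\t$, which admits an explicit cyclic generator $u_0$ with defining relations $\ad(\g)\cdot u_0=0$ and $(f-\rad(f))\cdot u_0=0$ for $f\in\dd(\g)^G$. The analogous scalar $c_{\mm}$ for $\mm$ can be pinned down as follows. Applied to $\Cs$-invariant operators of pure degree in $\dd(\g)^G$, the relations control the action of $\eul_\g$ on $u_0$ relative to that of $\eul_\t$. Using the isomorphism $\ggr^\hodge\mm\simeq\varphi_*\mathcal{O}_{\xx_{\norm}}$ from \cite[Theorem 1.3.3]{Iso}, the cyclic vector $u_0$ corresponds to the unit section of $\mathcal{O}_{\xx_{\norm}}$, and the natural $\Cs$-grading, induced by dilations on $\g\times\t$, places this section at the ``middle'' degree $-\dim(\g\times\t)/2=\gs$. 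This half-dimensional position is the standard choice compatible with the Lagrangian character of $\xx\subset T^*(\g\times\t)$ and simplicity of the holonomic module, yielding $c_{\mm}=\gs$.

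The final step transfers $c_{\mm}=\gs$ down to each summand $\mm_\mu$. Since $\eul_\t=\sum_j t_j\pa_{t_j}$ visibly maps $\mm$ into $\t\cdot\mm$, the operator $\eul_\t$ acts as zero on $\mmnil=\mm/\t\cdot\mm=j^*\mm$. Consequently $\eul_\g=\eul_{\g\times\t}-\eul_\t$ acts on $\mmnil$ with the same excess scalar $\gs$ as $\eul_{\g\times\t}$ does on $\mm$. Passing to the $\chi_\mu$-isotypic component then gives $c_{\mm_\mu}=\gs$ for every $\mu\in\Irr(W)$, as required. The hard part I expect is the rigorous identification of the natural grading position of $u_0$: making it precise amounts to matching the Hodge filtration on $\mm$ with the dilation $\Cs$-action on $\varphi_*\mathcal{O}_{\xx_{\norm}}$, and it is at this step that the half-integer shift $-\dim(\g+\t)/2$ naturally appears.
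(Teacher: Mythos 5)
Your overall scaffolding matches the paper's: reduce to the $\Cs$-equivariance statement; reduce the comparison of $\eul_{\g\times\t}$ with the grading to a single scalar (you via Schur's lemma on the simple module, the paper by computing directly on the cyclic vector $u_0$); and descend to $\mmnil$, hence to each $\mm_\mu$, by observing that $\eul_\t$ acts by zero on $\mm/\t\cdot\mm$. That last step is essentially identical to the paper's.

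However, the crucial middle step — pinning down that the scalar equals $\gs$ — is not actually proved in your proposal, and the reasoning you offer for it contains a genuine error. You assert that under $\ggr^\hodge\mm\simeq\varphi_*\mathcal{O}_{\xx_{\norm}}$ the unit section of $\mathcal{O}_{\xx_\norm}$ sits in degree $\gs=-\dim(\g\times\t)/2$ for the dilation $\Cs$-grading. It does not: the constant function $1$ is $\Cs$-invariant, so its degree under dilations is $0$. You are conflating the dilation grading with the grading coming from the Hodge (order) filtration, which are different gradings — the former weighs $x_i$ by $+1$ and $\partial_i$ by $-1$, the latter weighs only the $\partial_i$'s. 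The "half-dimension" shift in the proposition does not come from the Lagrangian character of $\xx$ or from where the unit section lands; it comes from the normalization of the $\mathfrak{sl}_2$-triple inside $\dd(\g\times\t)^{G\times W}$. The paper computes it concretely following Hotta--Kashiwara: taking $E_i,H_i,F_i$ the $\mathfrak{sl}_2$-triples with $H_1=\sum x_\alpha\partial_{x_\alpha}+\tfrac{\dim\g}{2}$, $H_2=\sum y_\beta\partial_{y_\beta}+\tfrac{\dim\t}{2}$ and the defining relations of $\mm$ (in particular $\rad(E_1)=E_2$, $\rad(F_1)=F_2$), one gets $0=[E_1\otimes1-1\otimes E_2,\ F_1\otimes1-1\otimes F_2]\cdot u_0=(H_1+H_2)\cdot u_0=(\eul_{\g\times\t}-\gs)\cdot u_0$; the constant $\dim\g/2+\dim\t/2$ is forced by $[\partial^2,x^2]=4x\partial+2$. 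So the shift is an honest computation with the Harish-Chandra relations, not a feature of the Hodge isomorphism. A minor further point: the assertion that "any quasi-$G$-equivariant $\dd$-module is automatically $G$-equivariant when $G$ is connected" is false in general (twisting the $G$-action by a character gives counterexamples for $G=GL_n$); fortunately $\mm_\mu$ is $G$-equivariant directly from the defining relation $\ad(\g)\cdot u_0=0$, which is what the paper uses.
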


\begin{proof}
It follows from the definition of $\mm_\mu$ given above that it is a $G$-equivariant $\dd(\g)$-module. Therefore we just need to show that it is also $\Cs$-equivariant. 

First we show that $\mm[\gs]$ is a $\Cs$-equivariant $\dd(\g \times \t)$-module. For this we just have to show that if $\bar{y} \in \mm[\gs]$ is homogeneous, $\lambda \cdot \bar{y} = \lambda^n \bar{y}$ for some $n \in \Z$ then $\eul_{\g \times \t} \cdot \bar{y} = n \bar{y}$. To begin with, we consider the action of $\eul_{\g \times \t}$ on $u_0$. The Killing form on $\g$ induces a non-degenerate bilinear pairing on $\g \times \t$. Let $\{ x_\alpha \}$, respectively $\{ y_\beta \}$, be an orthonormal basis on $\g$, respectively $\t$. Since $[\pa_\alpha^2, x_\alpha^2] = 4 x_\alpha\pa_\alpha + 2$, the elements
$$
E_1 = \frac{1}{2} \sum_{\alpha} x_\alpha^2, \quad H_1 = \sum_{\alpha} x_\alpha \pa_{x_\alpha} + \frac{\dim \g}{2}, \quad F_1 = \frac{-1}{2} \sum_{\alpha} \pa_{x_\alpha}^2,
$$
and 
$$
E_2 = \frac{1}{2} \sum_{\beta} y_\beta^2, \quad H_2 = \sum_{\beta} y_\beta \pa_{y_\beta} + \frac{\dim \t}{2}, \quad F_2 = \frac{-1}{2} \sum_{\beta} \pa_{y_\beta}^2,
$$
form $\mathfrak{sl}_2$-triples in $\dd(\g)^{G}$ and $\dd(\t)^{W}$ respectively. Following \cite[Lemma 7.1.1]{HottaKashiwara} and using the fact that $\rad(E_1) = E_2$, $\rad(F_1) = F_2$ we get
$$
0 = [E_1 \o 1 - 1 \o E_2,F_1 \o 1 - 1 \o F_2] \cdot u_0 = (H_1 + H_2) \cdot u_0 = (\eul_{\g \times \t} - \gs) \cdot u_0.
$$
Now choose a homogeneous lift $y$ in $\dd(\g \times \t)$ of $\bar{y} = y \cdot u_0$. Then $\lambda \cdot y = \lambda^{n - \gs} y$ and hence
$$
\eul_{\g \times \t} \cdot \bar{y} = [\eul_{\g \times \t}, y] \cdot u_0 - \gs \cdot u_0 = n \cdot \bar{y}.
$$
It is shown in \cite[Proposition 4.8.1]{HottaKashiwara} that $\mmnil = \mm / \t \cdot \mm$. Since $\eul_{\g \times \t} \cdot \t \cdot \mm \subset \t \cdot \mm$, the operator $\eul_{\g \times \t}$ acts on $\mmnil$. Moreover, the fact that $\eul_{\t} \cdot \mm \subset \t \cdot \mm$ shows that the action of $\eul_{\g \times \t}$ on $\mmnil$ equals the action of $\eul_{\g}$ on $\mmnil$. Thus $\mmnil[\gs]$ is a $\Cs$-equivariant $\dd(\g)$-module, as required.
\end{proof}

\subsection{} From now on, we return to the case of $G=GL_n$, so $W=\s_n$.

Let $P_n$ denote the weight lattice of $GL_n(\C)$ and $Q_n$ the root lattice. Recall that $Q^+_n = P^+_n \cap Q_n$. Denote by $\Jset$ the set of all weights $\nu \in P^+_n$ such that $\nu_n = 1$ and $| \nu | \equiv 0 \mod n$. There is a natural bijection $( - )^\dagger : Q^+_n \stackrel{\sim}{\longrightarrow} \Jset$, $\mu^\dagger := \mu + ((1 - \mu_n)^n)$. For brevity write $\mathcal{G} (\lambda, \nu;t) := \Gpoly{n / |\nu|}{n}(\lambda,\nu;t)$. 

\begin{thm}\label{prop:charhc1}
Let $\lambda \vdash n$, then
\beq{eq:charhc1}
\cchi^{G \times \eul_{\g}}(\mm_\lambda;q) = q^{-\frac{\dim \g}{2}} \sum_{\mu \in Q^+_n} \mathcal{G} (\lambda, \mu^{\dagger};q^{-1}) \cdot s_{\mu}(z).
\eeq
\end{thm}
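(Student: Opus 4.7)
The plan is to apply the Calaque--Enriquez--Etingof functor $F_m$ to $\mm_\lambda$ and to compute the graded $\s_m$-character of $F_m(\mm_\lambda)$ in two ways, using the result to read off the $G$-isotypic multiplicities in $\mm_\lambda$. For each $\mu \in Q^+_n$, choose $r := 1 - \mu_n \ge 1$ and $m := nr$, so that $\mu^\dagger = \mu + (r^n) \in \Jset$ is a partition of $m$. Since $\mm_\lambda$ is supported on the nilpotent cone, the centre of $\g$ acts trivially on $\mm_\lambda$, so we may write the $G$-isotypic decomposition as $\mm_\lambda = \bigoplus_{\nu \in Q^+_n} V_\nu \otimes N_\nu^\lambda$, and the task reduces to identifying each graded multiplicity $N_\mu^\lambda$ with $q^{-\dim\g/2}\,\mathcal{G}(\lambda,\mu^\dagger;q^{-1})$.

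First I would apply Schur--Weyl duality to $F_m(\mm_\lambda) = (\mm_\lambda \otimes V^{\otimes m} \otimes \det^{-r})^G$. Using $V^{\otimes m} = \bigoplus_\eta V_\eta \otimes \chi_\eta$, the identity $V_\eta \otimes \det^{-r} = V_{\eta - (r^n)}$, and the invariant projection $(V_\alpha \otimes V_\beta)^G = \delta_{\beta,-w_0\alpha}$, one obtains
\begin{equation*}
F_m(\mm_\lambda) \;=\; \bigoplus_{\nu \in Q^+_n} N_\nu^\lambda \otimes \chi_{(r^n) - w_0 \nu}.
\end{equation*}
Reading off the $\chi_{\mu^\dagger}$-component forces $\nu = -w_0 \mu$, giving multiplicity $N^\lambda_{-w_0 \mu}$. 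A symmetry argument based on $\mm_\lambda$ being the IC-extension of the structure sheaf of $\mathcal{O}_\lambda$ (the trivial, self-dual local system attached to $\mm_\lambda$ by Springer's correspondence), together with the Frobenius-reciprocity equality $\dim V_\mu^{Z_G(e)} = \dim V_{-w_0 \mu}^{Z_G(e)}$ for any $e\in\mathcal{O}_\lambda$, shows that the graded $G$-character of $\mm_\lambda$ is invariant under $z \mapsto z^{-1}$; hence $N^\lambda_{-w_0\mu} \cong N^\lambda_\mu$ as graded vector spaces.

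Second, Theorem \ref{thm:Fmainproperties} identifies $L(r\lambda)$ with $\mathcal{F}(F_m(\mm_\lambda))$ in $\catO{m}{1/r}$. Since the Fourier automorphism preserves the $\s_m$-action but reverses the $\adh$-grading, Proposition \ref{prop:gpoly} yields that the graded multiplicity of $\chi_{\mu^\dagger}$ in $F_m(\mm_\lambda)$, computed in the $\adh$-grading parameter $t$, equals $\Gpoly{1/r}{n}(\lambda,\mu^\dagger;t^{-1}) = \mathcal{G}(\lambda,\mu^\dagger;t^{-1})$. Matching this with the Schur--Weyl computation above identifies the graded multiplicity $N_\mu^\lambda$ with $\mathcal{G}(\lambda,\mu^\dagger;t^{-1})$, up to a shift of the grading variable.

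The final step is to convert the $\adh$-grading on $F_m(\mm_\lambda)$ into the intrinsic $\eul_\g$-grading on $\mm_\lambda$. Using the explicit formulas of Lemma \ref{lem:CEEfunctor}, $\adh$ acts on $F_m(\mm_\lambda)$ as $\eul_\g$ (acting on the $\mm_\lambda$-tensorand) plus an explicit scalar coming from normal-ordering $\tfrac12\sum_\alpha (x_\alpha\pa_\alpha + \pa_\alpha x_\alpha)$ and from the factor $m/2$ in $\adh$; the normalization of Proposition \ref{lem:stronglyequivariant} provides the remaining piece of the shift through $\gs = -\dim(\g+\t)/2$. The hard part will be the careful bookkeeping: one must verify that the accumulated shifts from CEE, from the Fourier inversion $t\leftrightarrow t^{-1}$, and from the Hodge normalization combine, summand by summand, into the single $\mu$-independent prefactor $q^{-\dim\g/2}$ claimed in the theorem, with all $r$- and $m$-dependent contributions cancelling.
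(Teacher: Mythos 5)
Your overall strategy coincides with the paper's: apply the Calaque--Enriquez--Etingof functor and Schur--Weyl duality to identify each graded $G$-isotypic multiplicity space of $\mm_\lambda$ with a $\s_m$-isotypic component of $L(r\lambda)$, then transport the character formula of Proposition~\ref{prop:gpoly}. A genuine merit of your write-up is that you flag the $\mu\leftrightarrow -w_0\mu$ subtlety that the paper passes over silently: the raw Schur--Weyl computation pairs $\chi_{\mu+(r^n)}$ with $(\mm_\lambda)_{-w_0\mu}$ rather than $(\mm_\lambda)_\mu$, and the graded symmetry $(\mm_\lambda)_\mu\cong(\mm_\lambda)_{-w_0\mu}$ is needed to restore the statement. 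However, your proposed justification via $\dim V_\mu^{Z_G(e)}$ controls only ungraded local data; a cleaner argument is to pull $\mm_\lambda$ back along the $\Cs$-equivariant automorphism $x\mapsto -x^{T}$ of $\gl_n$, which fixes every $\overline{\mathcal{O}_\lambda}$ and hence fixes $\mm_\lambda$, while twisting $V_\mu$ into $V_{-w_0\mu}$; this gives the required isomorphism compatibly with the $\eul_\g$-grading.

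The genuine gap is that you defer the grading conversion to ``careful bookkeeping'' without carrying it out. That conversion is precisely where the prefactor $q^{-\dim\g/2}$ and the inversion $q\mapsto q^{-1}$ arise, so without it the theorem is not established. The paper closes this in one stroke by invoking \cite[Proposition 8.7]{CEE}, which (combined with $\mathcal{F}(\adh)=-\adh$) yields the operator identity $\eul_\g + \tfrac{\dim\g}{2} = -\adh$ on $(\mm_\lambda)_\mu \cong (L(r\lambda)\otimes\chi_{\mu^\dagger})^{\s_m}$. This identity is uniform in $\mu$, $r$ and $m$, so there are no residual $r$- or $m$-dependent shifts to cancel; the relation $\cchi^{\eul_\g}((\mm_\lambda)_\mu;q) = q^{-\dim\g/2}\,\cchi^{\adh}\bigl((L(r\lambda)\otimes\chi_{\mu^\dagger})^{\s_m};q^{-1}\bigr)$ follows immediately, and Proposition~\ref{prop:gpoly} then completes the proof. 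You need to either cite this result or reproduce it; a bare normal-ordering computation of $\adh$ from Lemma~\ref{lem:CEEfunctor} will not by itself explain why all the $m$- and $r$-dependent contributions vanish, which is the crux of the claim.
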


\begin{proof}
For $\mu \in P_n^+$, let $(\mm_{\lambda})_{\mu} = (\mm_{\lambda} \o V^*_{\mu})^G$ be the $\mu$-isotypic component of $\mm_{\lambda}$. As a $G$-equivariant $\dd(\g)$-module, $\mm_{\lambda}$ is a direct summand of a certain quotient of $\dd(\g)$. This implies that $(\mm_\lambda)_{\mu} \neq 0$ only if $\mu \in Q^+_n$. Write 
$$
\cchi^{G \times \eul_{\g}}(\mm_\lambda;q) = \sum_{\mu \in Q^+_n}  \cchi^{\eul_{\g}}((\mm_\lambda)_{\mu};q) \cdot s_{\mu}(z).
$$
By Theorem \ref{thm:Fmainproperties} and Schur-Weyl duality, 
$$
F^*_{nr}(\mm_\lambda) = \bigoplus_{\mu} \ (\mm_\lambda)_{\mu} \o \chi_{\mu + (r^n)} = L(r \lambda),
$$ 
where the sum is over all $\mu \in Q^+_n$ such that $\mu_i \ge -r$ for all $i$. Therefore, if we fix $\mu \in Q^+_n$ and let $r = 1 - \mu_n$, then 
$$
(\mm_\lambda)_{\mu} = (L(r \lambda) \o \chi_{\mu^\dagger})^{\s_m}.
$$
This is an equality of graded vector spaces and we just need to match up the grading. The Euler operator $\eul_\g$ acts on the left hand side and the operator $\adh$ acts on the right. Using the fact that $\Ch{m}{k} = \mathcal{H}_m(k) \o \dd(\mathbb{A}^1)$ and that $\dd(\g) = \dd(\mathfrak{sl}_n) \o \dd(\mathbb{A}^1)$, together with the fact that $\mathcal{F}(\adh) = -\adh$, it follows from \cite[Proposition 8.7]{CEE} that we have an equality of operators $\eul_\g + \frac{\dim \g}{2} = -\adh$ on the space $(\mm_\lambda)_{\mu}$. This implies that 
$$
\cchi^{\eul_\g}((\mm_\lambda)_{\mu};q) = q^{- \frac{\dim \g}{2}} \cchi^{\adh}((L(r \lambda) \o \chi_{\mu^\dagger})^{\s_m};q^{-1}).
$$
Equation (\ref{eq:charhc1}) now follows from equation (\ref{eq:charLr2}). 
\end{proof}

\begin{rem}
If the Lie group $GL_n(\C)$ is replaced by $SL_n(\C)$ then there exist non-trivial irreducible $G$-equivariant local systems on some nilpotent orbits $\mathcal{O}_\lambda$. Therefore the number of simple modules in the category $\dd_{SL_n}(\N)$ is greater. One can repeat the above arguments to get a formula for the $G \times \eul_\g$-character of these simple modules in terms of the polynomials $\mathcal{G} (\lambda, \mu^{\dagger};q^{-1})$ as in Theorem \ref{prop:charhc1}. The details are left to the interested reader.
\end{rem}  

\subsection{Relation to Hesselink's character formula}

Let $\sym \g$ denote the algebra of constant coefficient differential
operators in $\dd(\g)$. When $\lambda = (1^n)$, $\mm_{(1^n)}$ is the
unique simple $\dd(\g)$-module supported on $\{ 0 \}$. This module can
be naturally identified with $\sym \g$. Using Kostant's Theorem,
Hesselink \cite{Hesselink} showed that
$$
\cchi^{G \times \Cs}(\sym \g;q) = \frac{1}{\prod_{i = 1}^n (1 - q^{-i})}
\sum_{\mu \in Q^+_n} K_{\mu,0}(q^{-1}) \cdot s_\mu(z),
$$
where the $\Cs$-action corresponds to the grading that places $\sym^k
\g$ in degree $-k$. This differs from the grading on $\mm_{(1^n)}$
coming from $\mm$ by a shift. The fact that $\sym \g$ is the cyclic
$\dd(\g)$-module generated by $v_0$ and satisfying the relation $\g^*
\cdot v_0 = 0$ implies that $\eul_\g \cdot v_0 = - \dim \g \cdot v_0$
and hence $\cchi^{G \times \Cs}(\sym \g;q) = q^{\dim \g} \cdot \cchi^{G
\times \eul_\g}(\sym \g;q)$. Comparing this equation with equation
(\ref{eq:charhc1}) and remebering that $t = q^{-1}$ produces the
identity
\beq{eq:compareHesselink}
\mathcal{G}((1^n),\mu^\dagger; t) = \frac{t^{\frac{\dim \g}{2}}
K_{{\mu},0}(t)}{\prod_{i = 1}^n (1 - t^i)}.
\eeq

\subsection{} Recall (\ref{tx}) the map $u : \zz_{\norm} \rightarrow
\gg$
and the coherent sheaf $\rrnil$ on $\zz_{\norm}$, see \S\ref{rnilsec}.

\begin{prop}\label{prop:mmnilfilter}
There is a canonical $H \times W$-equivariant isomorphism 
\beq{eq:nilhodgeiso}
\ggr^\quo \mmnil \simeq u_* \ \rrnil,
\eeq
of coherent sheaves on $\gg$. 
\end{prop}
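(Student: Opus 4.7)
The plan is to obtain the isomorphism \eqref{eq:nilhodgeiso} by applying a base-change-at-$\{0\}$ operation to the isomorphism \eqref{eq:hodge}. Recall from the proof of Proposition \ref{lem:stronglyequivariant} that $\mmnil = \mm/\t\cdot\mm$ (citing \cite[Proposition 4.8.1]{HottaKashiwara}), and that the quotient filtration is by definition induced from $F^\hodge_\bullet \mm$ via this quotient. The key technical step is therefore to show that taking associated graded commutes with reduction modulo $\t$; this will follow from a flatness argument driven by \cite[Proposition 9.1.3]{Iso}.

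The first main step is to establish flatness of each $F^\hodge_k \mm$ over $\C[\t]$. Since $\Oflat : \xx_\norm \to \t$ is flat, the sheaf $\mathcal{O}_{\xx_\norm}$ and its pushforward $\varphi_* \mathcal{O}_{\xx_\norm}$ are $\C[\t]$-flat, so via \eqref{eq:hodge} the same holds for $\ggr^\hodge \mm$. The filtration steps $F^\hodge_k \mm$ are preserved by $\C[\t]$, and an induction on $k$ using the short exact sequences
\[
0 \to F^\hodge_{k-1} \mm \to F^\hodge_k \mm \to \ggr^\hodge_k \mm \to 0
\]
then shows that each $F^\hodge_k \mm$ is $\C[\t]$-flat. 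Flatness ensures $F^\hodge_k \mm \cap \t\cdot\mm = \t \cdot F^\hodge_k \mm$, so the quotient filtration is identified with flat base change along $\{0\}\hookrightarrow\t$, namely $F^\quo_k \mmnil \simeq F^\hodge_k \mm \otimes_{\C[\t]} \C$. Passing to graded pieces yields
\[
\ggr^\quo \mmnil \simeq \ggr^\hodge \mm \otimes_{\C[\t]} \C.
\]

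The second main step is to identify the right-hand side with $u_* \rrnil$. Substituting \eqref{eq:hodge} and using that $\varphi_*$ commutes with the flat base change $-\otimes_{\C[\t]}\C$ (as $\varphi$ is a closed embedding), we get
\[
\ggr^\hodge \mm \otimes_{\C[\t]} \C \simeq \varphi_*\bigl(\mathcal{O}_{\xx_\norm} \otimes_{\C[\t]} \C\bigr) \simeq \varphi_* \mathcal{O}_{\xxnil},
\]
where the last equality uses the definition of $\xxnil$ as the scheme-theoretic fiber $\Oflat^{-1}(0)$ together with the flatness of $\Oflat$. The composition $\xxnil \hookrightarrow \xx_\norm \to \gg\times\tt$ factors through $\xxnil \xrightarrow{p_\norm|_{\xxnil}} \zznil \hookrightarrow \gg$, so pushing $\mathcal{O}_{\xxnil}$ forward along this factorization recovers $u_* (p_\norm)_* \mathcal{O}_{\xxnil} = u_* \rrnil$ by the definition of $\rrnil$ in \S\ref{rnilsec}. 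Equivariance is automatic since every ingredient --- the Hodge filtration, the induced quotient filtration, \eqref{eq:hodge}, and the factorization of $\varphi$ through $p_\norm$ and $u$ --- is compatible with the $H \times W$-action.

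The main obstacle is the inductive flatness statement in the first step: once each $F^\hodge_k \mm$ is known to be $\C[\t]$-flat, the rest of the argument is formal, amounting to commutation of flat base change with pushforward along a closed embedding and unwinding definitions. One should also verify that the $\dd(\g)$-module filtration obtained on $\mmnil$ really produces the same associated graded sheaf on $\gg$ as the one extracted from $\ggr^\hodge \mm \otimes_{\C[\t]} \C$ via the support statement for $\varphi_* \mathcal{O}_{\xxnil}$; this compatibility is essentially built into the construction of the Hodge filtration, but it is the point at which the greatest care is warranted.
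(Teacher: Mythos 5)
Your overall route --- first reduce the algebraic claim to ``taking $\ggr$ commutes with killing $\t$'' via $\C[\t]$-flatness of $\ggr^\hodge\mm$, then feed in \eqref{eq:hodge} and push forward --- is the same as the paper's, which cites the regular-sequence fact from \cite[Claim 1]{GordonMacdonald} (equivalent to the flatness you extract from $\Oflat$ being flat) and then invokes \cite[Theorem 4.7]{KashAMS} for the filtration/quotient compatibility. The geometric half of your argument is fine apart from being a little loose about the implicit pushforward along $\gg\times\tt\to\gg$.

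The algebraic half, however, has a genuine gap. The assertion ``flatness ensures $F^\hodge_k\mm\cap\t\cdot\mm=\t\cdot F^\hodge_k\mm$'' does not follow from flatness of the \emph{submodule} $F^\hodge_k\mm$, which is what your induction establishes. Flatness of a submodule never controls its saturation: take $R=\C[y]$, $M=R^2$ and $N=R\cdot(1,0)\oplus R\cdot(0,y)\subset M$; then $N$ is free, yet $N\cap yM=yM\supsetneq yN$. What injectivity of $F^\hodge_k\mm\otimes_{\C[\t]}\C\to\mm\otimes_{\C[\t]}\C$ actually requires is $\Tor_1^{\C[\t]}(\mm/F^\hodge_k\mm,\C)=0$, i.e.\ a flatness (or Tor-acyclicity) statement about the \emph{quotient} $\mm/F^\hodge_k\mm$, not the sub. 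The fix is at hand: since $\C[\t]$ acts in order-degree zero, $\ggr^\hodge\mm=\bigoplus_k\ggr^\hodge_k\mm$ as $\C[\t]$-modules, so each graded piece is a direct summand of a flat module, hence flat; then each $\mm/F^\hodge_k\mm$ is a filtered colimit of modules with finite filtrations by the flat pieces $\ggr^\hodge_j\mm$ ($j>k$), hence flat, and the desired $\Tor_1$ vanishes. This makes the induction on $F^\hodge_k\mm$ itself superfluous --- you should discard it and run the argument on the quotients. This is precisely the content the paper outsources to Kashiwara's Theorem 4.7, which is stated for good filtrations and regular sequences and hides exactly this bookkeeping.
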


\begin{proof}
The statement of the proposition is proved as part of the proof of \cite[Theorem 2]{GordonMacdonald}. Let $y_1, \ds, y_n$ be homogeneous algebraically independent generators of $\C[\t]$. Then it is shown in \cite[Claim 1]{GordonMacdonald} that $y_1, \ds, y_n$ form a regular sequence for $\ggr^\hodge \mm$. Equipped with this fact, the proof of \cite[Theorem 4.7]{KashAMS} shows that $\ggr^\quo \mmnil \simeq \ggr^\hodge \mm \ / \ \t \cdot \ggr^\hodge \mm$. Now the proposition follows from the isomorphism (\ref{eq:hodge}).
\end{proof}

Note that the proof of Proposition \ref{lem:stronglyequivariant} implies
that the modules $\mm$ and $\mm_\mu$ are monodromic. 
 Proposition \ref{prop:mmnilfilter} implies that there is a $H$-equivariant isomorphism of sheaves $\ggr^\hodge\mm_{\mu} \simeq \rrnil_{\mu}$ for each $\mu \in \Irr (W)$. 

We obtain the following result.

\begin{cor}\label{cor:Xnilchar}
Let $\lambda$ be a partition of $n$. The graded $G$-character of $\rrnil_\lambda$ is 
\beq{eq:charnil}
\cchi^{G \times \Cs}( \rrnil_\lambda;q ) = q^{\frac{\dim \t}{2}} \sum_{\mu \in Q^+_n} \mathcal{G} (\lambda,\mu^{\dagger};q^{-1}) \cdot s_{\mu}(z).
\eeq
\end{cor}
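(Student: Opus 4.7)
The plan is to combine Proposition \ref{prop:mmnilfilter} with the character formula of Theorem \ref{prop:charhc1}, carefully tracking grading shifts.

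First, I would extract the $\chi_\lambda$-isotypic component from the $H \times W$-equivariant isomorphism $\ggr^\quo \mmnil \simeq u_* \rrnil$ of Proposition \ref{prop:mmnilfilter}. Since $W$ acts trivially on $\g$ and hence on $u_*\rrnil$, this yields a canonical $H$-equivariant isomorphism
$$
\ggr^\quo \mm_\lambda \;\simeq\; u_*\, \rrnil_\lambda
$$
of coherent sheaves on $\gg$. Because $u$ is a closed embedding, the $G \times \Cs$-character of $\rrnil_\lambda$ agrees with that of $u_*\rrnil_\lambda$ as a graded $\C[\gg]$-module.

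Next, I would use the standard fact that if a $\dd(\g)$-module $M$ carries a good filtration compatible with the Euler action on $\g$, then $M$ and $\ggr M$ have the same $G \times \eul_\g$-character as graded vector spaces. Applied to the quotient filtration on $\mm_\lambda$, this gives
$$
\cchi^{G \times \eul_\g}(\ggr^\quo \mm_\lambda;q) = \cchi^{G \times \eul_\g}(\mm_\lambda;q),
$$
which by Theorem \ref{prop:charhc1} equals
$$
q^{-\dim \g/2} \sum_{\mu \in Q^+_n} \mathcal{G}(\lambda,\mu^\dagger;q^{-1}) \cdot s_\mu(z).
$$

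Finally, I would reconcile this with the natural $\Cs$-grading on $\rrnil_\lambda$ coming from its realization as a sheaf on $\zznil \subset \zz_\norm \subset \gg$. By Proposition \ref{lem:stronglyequivariant}, $\mm_\lambda[\sigma]$ with $\sigma = -\tfrac{1}{2}\dim(\g+\t)$ is the properly $\Cs$-equivariant normalization of the Euler-graded module; moreover, by the argument in the proof of Proposition \ref{lem:stronglyequivariant}, on the quotient $\mmnil$ the operator $\eul_{\g \times \t}$ reduces to $\eul_\g$, so the grading transferred to $u_*\rrnil_\lambda$ through Proposition \ref{prop:mmnilfilter} is the natural $\Cs$-grading shifted by the factor $q^{-\sigma} = q^{\dim(\g+\t)/2}$. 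Multiplying the formula above by this shift produces the prefactor $q^{-\dim\g/2} \cdot q^{\dim(\g+\t)/2} = q^{\dim \t/2}$, yielding exactly the formula of the corollary.

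The main obstacle is the bookkeeping of grading normalizations: one must verify that the Hodge filtration on $\mm$ descends correctly under $j^\ast$ to produce a filtration on $\mmnil$ whose associated graded, viewed on $\gg$, carries the same $\Cs$-action as $u_*\rrnil$ obtained from its embedding $\zznil \hookrightarrow \gg$, up precisely to the shift $\sigma$ established in Proposition \ref{lem:stronglyequivariant}. Once this identification is in place, the calculation is purely bookkeeping.
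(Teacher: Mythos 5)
Your proof is correct and follows essentially the same route as the paper: combine Proposition \ref{prop:mmnilfilter} (the $H$-equivariant identification $\ggr^\quo \mm_\lambda \simeq u_*\rrnil_\lambda$), Theorem \ref{prop:charhc1} (the Euler-graded $G$-character of $\mm_\lambda$), and the normalization from Proposition \ref{lem:stronglyequivariant} to pass from the $\eul_\g$-grading to the natural $\Cs$-grading, yielding the overall shift $q^{-\gs-\dim\g/2}=q^{\dim\t/2}$. The paper collapses these steps into two sentences; you make explicit the intermediate fact that a $G\times\Cs$-stable filtration preserves the equivariant character, which is indeed what is implicitly used.
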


\begin{proof}
Proposition \ref{lem:stronglyequivariant} and Theorem \ref{prop:charhc1} imply that 
$$
\cchi^{G \times \Cs}(\mm_\lambda;q) = q^{\frac{\dim \t}{2}} \sum_{\mu \in Q^+_n} \mathcal{G} (\lambda, \mu^{\dagger};q^{-1}) \cdot s_{\mu}(z).
$$
Therefore equation (\ref{eq:charnil}) follows from the $G \times \Cs$-equivariant isomorphism (\ref{eq:nilhodgeiso}).
\end{proof} 

\begin{cor}
The graded $\s_n$-character of $(\rrnil)^G$ is 
\beq{eq:charnilinv}
\cchi^{\Cs \times \s_n}((\rrnil)^G;q) = \sum_{\lambda \vdash n} q^{- n(\lambda)}H_{\lambda}(q^{-1})^{-1} \cdot \chi_{\lambda}.
\eeq
\end{cor}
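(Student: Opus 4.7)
The plan is to extract the $G$-invariant component from Corollary \ref{cor:Xnilchar}. Since $\rrnil = \bigoplus_{\lambda} \rrnil_\lambda \otimes \chi_\lambda$ as $H \times W$-modules with $W$ acting trivially on each $\rrnil_\lambda$, taking $G$-invariants gives $(\rrnil)^G = \bigoplus_{\lambda} (\rrnil_\lambda)^G \otimes \chi_\lambda$. The $G$-invariant component of $\rrnil_\lambda$ is the coefficient of $s_0(z) = 1$ in $\cchi^{G \times \Cs}(\rrnil_\lambda;q)$, i.e.\ the $\mu = 0$ term in the sum of Corollary \ref{cor:Xnilchar}. Noting that $\dim\t = n$ and that $\mu = 0 \in Q^+_n$ has $\mu_n = 0$, so $\mu^\dagger = (1^n)$, we obtain
$$
\cchi^{\Cs}((\rrnil_\lambda)^G;q) = q^{n/2}\, \mathcal{G}(\lambda,(1^n);q^{-1}).
$$
Thus the theorem reduces to the combinatorial identity $q^{n/2}\,\mathcal{G}(\lambda,(1^n);q^{-1}) = q^{-n(\lambda)} H_\lambda(q^{-1})^{-1}$.

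To establish this identity, I would first unfold the definition of $\mathcal{G}(\lambda,(1^n);t) = \Gpoly{1}{n}(\lambda,(1^n);t)$, noting that the parameter is $k = n/|(1^n)| = 1$, so the root of unity $\eta = e^{2\pi i k} = 1$ has order $r = 1$. As explained at the end of the proof of Proposition \ref{prop:gpoly}, in this degenerate regime the sum defining $\Gpoly{k}{n}$ collapses: $c^\lambda_{\quo(\mu)} = \delta_{\lambda,\mu}$, $\varepsilon_1(\lambda) = 1$, and only the single term $\mu = \lambda$ contributes. This yields
$$
\mathcal{G}(\lambda,(1^n);t) = t^{\kappa(1,\lambda)} H_\lambda(t)^{-1} K_{(1^n),\lambda}(t,t).
$$

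For the final simplification, I would compare the two expressions given in Remark \ref{rem:expandplethysm} for the inner product $\langle s_\lambda[Z/(1-t)], s_{(1^n)}(z)\rangle$: the general formula $H_\lambda(t)^{-1} K_{(1^n),\lambda}(t,t)$ and the specialization \eqref{eq:signDelta} equal to $t^{n(\lambda')} H_{\lambda'}(t)^{-1}$. Combined with the elementary fact that hook lengths are invariant under transposition, hence $H_\lambda(t) = H_{\lambda'}(t)$, these yield $K_{(1^n),\lambda}(t,t) = t^{n(\lambda')}$. Inserting this along with Lemma \ref{lem:kappacalculation}, which gives $\kappa(1,\lambda) = n/2 + n(\lambda) - n(\lambda')$, produces
$$
\mathcal{G}(\lambda,(1^n);t) = t^{n/2 + n(\lambda)} H_\lambda(t)^{-1}.
$$
Substituting $t = q^{-1}$ and multiplying by $q^{n/2}$ gives exactly $q^{-n(\lambda)} H_\lambda(q^{-1})^{-1}$, completing the proof. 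The calculation is essentially routine once all the ingredients are assembled; the only potential subtlety is handling the $r = 1$ case in the definition of $\Gpoly{k}{n}$, where one must invoke the semisimple specialization argument used in Proposition \ref{prop:gpoly} rather than the Varagnolo--Vasserot decomposition formula.
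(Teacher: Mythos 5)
Your proposal is correct and follows essentially the same route as the paper: take the $\mu=0$ isotypic component of Corollary \ref{cor:Xnilchar}, note $(0)^\dagger=(1^n)$ and $\dim\t=n$, and then evaluate $\mathcal{G}(\lambda,(1^n);t)=\Gpoly{1}{n}(\lambda,(1^n);t)$ via the degenerate $r=1$ case of Proposition \ref{prop:gpoly}, finishing with Remark \ref{rem:expandplethysm} (equation \eqref{eq:signDelta}), Lemma \ref{lem:kappacalculation}, and $H_\lambda=H_{\lambda'}$. The only cosmetic difference is that you pass through $K_{(1^n),\lambda}(t,t)=t^{n(\lambda')}$ rather than writing $\mathcal{G}(\lambda,(1^n);t)$ directly as $t^{\kappa(1,\lambda)}\langle s_{(1^n)}(z), s_\lambda[Z/(1-t)]\rangle$ and applying \eqref{eq:signDelta} in one step, which is a trivial rearrangement of the same ingredients.
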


\begin{proof}
Since 
$$
\cchi^{\Cs \times \s_n}((\rrnil)^G;q ) = \sum_{\lambda \vdash n} \cchi^{\Cs}( (\rrnil_\lambda)^G;q ) \cdot \chi_{\lambda},
$$
Corollary \ref{cor:Xnilchar} implies that 
$$
\cchi^{\Cs \times \s_n}((\rrnil)^G;q ) = q^{\frac{\dim \t}{2}} \sum_{\lambda \vdash n} \mathcal{G} (\lambda, (1^n);q^{-1}) \cdot \chi_{\lambda},
$$
where we have used the fact that $(0)^\dagger = (1^n)$. In this situation $\mathcal{G} (\lambda, (1^n);t) = \Gpoly{1}{n} (\lambda, (1^n);t)$ and the proof of Proposition \ref{prop:gpoly} (in the case $r = 1$) shows that 
$$
\mathcal{G} (\lambda, (1^n);t) = t^{\kappa(1,\lambda)} \left\langle s_{(1^n)}(z), s_{{\lambda}} \left[ \frac{Z}{(1 - t)} \right] \right\rangle = t^{\kappa(1,\lambda) + n(\lambda')} H_{\lambda'}(t)^{-1}.
$$
Noting that $\kappa(1,\lambda) = \frac{n}{2} + n(\lambda) - n(\lambda')$ and the fact that $H_{\lambda'}(t) = H_{\lambda}(t)$, we get the required formula.
\end{proof}

\begin{rem}\label{rem:reoccuringdream}
It follows from \cite[Theorem 1.6.1]{Iso} that $\Gamma(\zz_{\norm},\rrnil)^G = \C[\t]$, where the action of $\Cs$ on $\t$ is expanding. This agrees with equation (\ref{eq:charnilinv}) since $q^{- n(\lambda)}H_{\lambda}(q^{-1})^{-1}$ is just the graded character of $(\C[\t] \o \chi_{\lambda})^{\s_n}$, which can be seen from the corresponding formula for the fake polynomial \cite[Theorem 3.2]{Stembridge}. We will give yet another derivation of the character formula (\ref{eq:charnilinv}) in (\ref{rem:yetanotherderivation}).
\end{rem}

We now have, at least for those weights $\mu^\star \ | \ \mu \in
 \Pnone$, two different expressions for the character of the various
 $G$-isotypic components of $\rr$. The first expression comes from the
geometry of the isospectral commuting variety and the Hilbert
scheme. The second expression comes from the theory
of Cherednik algebras.
 Comparing these formulae via the
Calaque-Enriquez-Etingof functor produces 
the following rather interesting and nontrivial identities.

\begin{thm}\label{thm:identities}
Let $\mu \in \Pnone$ and $\lambda \vdash n$, then
$$
\Phi_\mu \left[ B_\lambda(q,q^{-1}) \right] = q^{\frac{\dim \t}{2} + n(\lambda)} H_\lambda (q^{-1}) \ \mathcal{G} ( \lambda,(\mu^\star)^\dagger;q^{-1})
$$
and
$$
K_{\mu^\star,0}(t) = \Phi_{\mu} \left[ \frac{1 - t^n}{1 - t} \right].
$$
\end{thm}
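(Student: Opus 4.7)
The plan is to obtain both identities as straightforward consequences of the character formulas established in Sections \ref{sec:polygraph} and \ref{sec:Dmod} for the sheaf $\rrnil_\lambda$. The first identity comes from equating two independent expressions for the graded character of the $V_{\mu^\star}$-isotypic component of $\rrnil_\lambda$, and the second identity will follow from the first by specializing to $\lambda = (1^n)$ and invoking Hesselink's formula.

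For the first identity, I take the $V_{\mu^\star}$-isotypic component of formula \eqref{eq:charnil} in Corollary \ref{cor:Xnilchar}. Since the $\star$-map sends $\Pnone$ injectively into $Q_n^+$, extracting the coefficient of $s_{\mu^\star}(z)$ yields
$$
\cchi^{\Cs}((\rrnil_\lambda)_{\mu^\star};q) \;=\; q^{\frac{\dim \t}{2}} \,\mathcal{G}(\lambda,(\mu^\star)^\dagger;q^{-1}).
$$
On the other hand, equation \eqref{eq:specialisePhi} gives
$$
\cchi^{\Cs}((\rrnil_\lambda)_{\mu^\star};q) \;=\; q^{-n(\lambda)} \,\Phi_\mu\!\left[ B_\lambda(q,q^{-1}) \right] H_\lambda(q^{-1})^{-1}.
$$
Equating the two right-hand sides and solving for $\Phi_\mu[B_\lambda(q,q^{-1})]$ gives the first identity immediately.

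For the second identity, I specialize the first identity to $\lambda = (1^n)$. In this case $B_{(1^n)}(q,q^{-1}) = 1 + q^{-1} + \cdots + q^{-(n-1)} = (1-t^n)/(1-t)$ with $t = q^{-1}$, the hook lengths are $1,2,\ldots,n$ so that $H_{(1^n)}(q^{-1}) = \prod_{i=1}^n (1-q^{-i})$, and $n((1^n)) = \binom{n}{2}$. Moreover, Hesselink's identity \eqref{eq:compareHesselink} provides
$$
\mathcal{G}((1^n),(\mu^\star)^\dagger;q^{-1}) \;=\; \frac{q^{-\dim\g/2}\, K_{\mu^\star,0}(q^{-1})}{\prod_{i=1}^n (1-q^{-i})}.
$$
Substituting these ingredients into the first identity and simplifying, using $\dim\t = n$, $\dim\g = n^2$, and $\frac{n}{2} + \binom{n}{2} - \frac{n^2}{2} = 0$, the powers of $q$ cancel and the factors $\prod_{i=1}^n(1-q^{-i})$ telescope to leave precisely $K_{\mu^\star,0}(t) = \Phi_\mu\!\left[\frac{1-t^n}{1-t}\right]$ at $t = q^{-1}$.

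Since all the genuinely substantive work (the $\rrnil_\lambda$ character formula via the Calaque-Enriquez-Etingof functor and Proposition \ref{prop:mmnilfilter}; the polygraph-based formula \eqref{eq:specialisePhi}; and the derivation \eqref{eq:compareHesselink} of Hesselink's identity inside our framework) has already been carried out in the preceding sections, there is no real obstacle here: the proof is entirely a matter of matching the two character formulas and performing a careful bookkeeping of grading shifts. The only point requiring vigilance is the substitution $t = q^{-1}$ and keeping track of the half-integer shifts $\dim\g/2$ and $\dim\t/2$, which appear in both Proposition \ref{lem:stronglyequivariant} and in the comparison between the $\eul_\g$-grading on $\mm_\lambda$ and the $\Cs$-grading on $\rrnil_\lambda$.
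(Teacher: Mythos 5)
Your proof is correct and follows essentially the same route as the paper: the first identity is obtained by equating the two expressions for $\cchi^{\Cs}((\rrnil_\lambda)_{\mu^\star};q)$ coming from \eqref{eq:specialisePhi} and \eqref{eq:charnil}, and the second follows by specializing $\lambda=(1^n)$ in the first identity and plugging in Hesselink's comparison \eqref{eq:compareHesselink}. The only difference is that you spell out the grading-shift bookkeeping ($\frac{n}{2}+\binom{n}{2}-\frac{n^2}{2}=0$) which the paper leaves implicit.
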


\begin{proof}
Comparing equation (\ref{eq:specialisePhi}) with equation (\ref{eq:charnil}) produces the first equality. Then comparing this with equation (\ref{eq:compareHesselink}) and noting that $B_{(1^m)}(q,q^{-1}) = (1 - q^{-n}) / (1 - q^{-1})$ and $H_{(1^m)}(q^{-1}) = \prod_{i = 1}^n (1 - q^{-i})$ produces the second identity. 
\end{proof}

\begin{rem}\label{rem:yetanotherderivation}
If we take $\mu = (0)$ in equation (\ref{eq:specialisePhi}) then we get yet another proof of the identity $\cchi^{\Cs}((\rrnil_{\lambda})^G) = q^{-n(\lambda)} H_{\lambda}(q^{-1})^{-1}$ (c.f. remark \ref{rem:reoccuringdream}).
\end{rem}

\subsection{Filtrations on simple $\Ch{m}{k}$-modules}

An interesting consequence of the fact that the simple $\dd(\g)$-modules $\mm_\mu$ have a canonical filtration coming from the Hodge filtration on $\mm$ is that the simple $\Ch{m}{k}$-modules $L( r \lambda)$ also have a canonical filtration. Define a filtration on $\Ch{m}{k}$ by putting the $y_i$ and group elements in degree zero and the $x_i$ in degree one. Then the associated graded of $\Ch{m}{k}$ is the skew group ring $\C[\mathfrak{h}^* \times \mathfrak{h}] \rtimes \s_m$, where $\mathfrak{h}$ is an $m$-dimensional vector space such that the symbol of $x_i$ becomes a linear function on $\mathfrak{h}$ and $y_j$ a linear function on $\mathfrak{h}^*$. Now fix $r > 0$, $m = nr$ and $k = 1/r$. Recall that $F_{\bullet}^\quo \mm_\lambda$ denotes the filtration on $\mm_\lambda$ inherited from the Hodge filtration on $\mm$. Define $F_{\bullet}^\can L(r \lambda)$ by 
$$
F_{l}^\can L(r \lambda) := F^*_m(F_{l}^\quo \mm_\lambda).
$$
The quotient filtration on $\mm_\lambda$ is compatible with the order filtration on $\dd(\g)$. Bearing in mind that the Fourier transform swaps $x_{i}$ with $y_{i}$, it is then clear from Lemma \ref{lem:CEEfunctor} that the filtration $F_{\bullet}^\can L(r \lambda)$ is compatible with the filtration on $\Ch{m}{k}$. 

\begin{lem}
The $\mathcal{O}_{\mathfrak{h}^* \times \mathfrak{h}}$-module $\ggr^\can L(r \lambda)$ is coherent.
\end{lem}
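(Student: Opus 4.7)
The plan is to realize $F^\can_\bullet L(r\lambda)$ as a good filtration, so that $\ggr^\can L(r\lambda)$ is automatically finitely generated over $\ggr\,\Ch{m}{k} = \C[\mathfrak{h}^*\times\mathfrak{h}]\rtimes\s_m$, and a fortiori coherent over $\mathcal{O}_{\mathfrak{h}^*\times\mathfrak{h}}$. The key observation is that $F^\can$ is obtained by applying the exact functor $F^*_m$ to the good Hodge filtration on $\mm$.

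First, the Hodge filtration $F^\hodge_\bullet \mm$ is by construction good with respect to the order filtration on $\dd(\g\times\t)$, and $\ggr^\hodge \mm$ is coherent on $\gg\times\tt$ by (\ref{eq:hodge}). As in the proof of Proposition \ref{prop:mmnilfilter}, the regular-sequence property for $y_1,\ldots,y_n \in \C[\t]$ ensures that the induced quotient filtration $F^\quo_\bullet \mmnil$ is good with respect to the order filtration on $\dd(\g)$, with $\ggr^\quo \mmnil \simeq u_*\rrnil$. Restricting to the $\chi_\lambda$-isotypic summand then yields a good filtration $F^\quo_\bullet \mm_\lambda$ on the simple holonomic module $\mm_\lambda$, with $\gr^\quo \mm_\lambda = \rrnil_\lambda$, a coherent sheaf on $\gg$.

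Next, I apply $F^*_m$. The underlying vector space of $F^\can_l L(r\lambda)$ is $(F^\quo_l \mm_\lambda \o V^{\o m}\o\det^{-r})^G$; the Fourier transform only twists the $\Ch{m}{k}$-action, and this twisted action is filtered with respect to the filtration on $\Ch{m}{k}$, as has already been verified just above the statement. Since $G$ is reductive, the functor $(-\o V^{\o m}\o\det^{-r})^G$ is exact and commutes with direct sums, whence
$$\ggr^\can L(r\lambda) \;\simeq\; (\rrnil_\lambda\o V^{\o m}\o\det^{-r})^G,$$
equipped with the $\C[\mathfrak{h}^*\times\mathfrak{h}]\rtimes\s_m$-action obtained from the symbol-level analogues of the formulas of Lemma \ref{lem:CEEfunctor} (with $x_\alpha,\pa_\alpha$ replaced by the corresponding coordinate functions on $\g\times\g^*\simeq\gg$).

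Finally, finite generation: since $\rrnil_\lambda$ is a $G$-equivariant coherent sheaf on $\gg$, its global sections form a finitely generated $\C[\gg]$-module, and tensoring with $V^{\o m}\o\det^{-r}$ and taking $G$-invariants gives a finitely generated $(\C[\gg]\o\End(V^{\o m}))^G$-module. The main obstacle is to verify that the symbol-level CEE map $\C[\mathfrak{h}^*\times\mathfrak{h}]\rtimes\s_m \to (\C[\gg]\o\End(V^{\o m}))^G$ has image over which the target is module-finite. Via Schur--Weyl duality (which gives $\End(V^{\o m})^G = \C[\s_m]$), this reduces to showing that the images of $\C[\mathbf{x}]^{\s_m}$ and $\C[\mathbf{y}]^{\s_m}$ are finite extensions of the Chevalley subrings $\C[\g^*]^G$ and $\C[\g]^G$ respectively---a classical integrality property of the Dunkl/CEE construction. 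Granting this, coherence of $\ggr^\can L(r\lambda)$ over $\mathcal{O}_{\mathfrak{h}^*\times\mathfrak{h}}$ follows.
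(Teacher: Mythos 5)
The setup of your argument is correct and matches the paper's: showing that $F^\can_\bullet$ is compatible with the filtration on $\Ch{m}{k}$, using exactness of invariants for reductive $G$ to identify $\ggr^\can L(r\lambda) \simeq (\rrnil_\lambda \o V^{\o m}\o\det^{-r})^G$, and then trying to deduce finite generation. But the last step, which you explicitly flag as ``the main obstacle'' and then wave away, contains a false claim. You assert that $(\C[\gg]\o\End(V^{\o m}))^G$ is module-finite over the image of the symbol-level CEE map from $\C[\mathfrak{h}^*\times\mathfrak{h}]\rtimes\s_m$. This is not true: that image essentially only hits (a finite extension of) $\C[\g^*]^G$ and $\C[\s_m]$, whereas $\C[\gg]^G$ is a much larger ring -- it is freely generated over $\C[\g]^G\o\C[\g^*]^G$ by mixed invariants of bidegree $(>0,>0)$, and the integrality you invoke gives you no control over these. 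For the same reason, a generic $G$-equivariant coherent sheaf on $\gg$ would \emph{not} give a coherent $\mathcal{O}_{\mathfrak{h}^*\times\mathfrak{h}}$-module under this construction, so something special about $\mm_\lambda$ must enter.

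The missing ingredient is the support condition: $\mm_\lambda$ is supported on the nilpotent cone $\N\sset\g$, which implies that $\C[\g]^G$ acts \emph{locally nilpotently} on $\ggr^\quo\mm_\lambda$. Combined with Hilbert's theorem (giving finite generation of $(\ggr^\quo\mm_\lambda\o V^{\o m}\o\det^{-r})^G$ over the Noetherian ring $\C[\gg]^G$), local nilpotence of the $\C[\g]^G$-action cuts this down to finite generation over $\C[\g^*]^G$ alone. One then invokes CEE's formula (43) to see that the $\C[\mathfrak{h}]^W$-action on $\ggr^\can L(r\lambda)$ factors through a \emph{surjection} $\C[\mathfrak{h}]^W\twoheadrightarrow\C[\g^*]^G$, giving finite generation over $\C[\mathfrak{h}]^W\sset\C[\mathfrak{h}^*\times\mathfrak{h}]$, which suffices for coherence. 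Without the nilpotent-support step your reduction has a genuine gap, and the ``classical integrality'' you appeal to does not fill it.
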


\begin{proof}
The quotient filtration $F_{k}^{\quo} \mm_\lambda$ is good with respect to the order filtration on $\dd(\g)$. Therefore $\ggr (\mm_\lambda)$ is a finitely generated $\C[\gg]$-module. Since $G$ is reductive, Hilbert's Theorem (c.f. \cite[Zusatz 3.2]{Kraft}) implies that $( \ggr (\mm_\lambda) \o V^{\o m} \o \mathrm{det}^{-r})^G$ is a finitely generated $\C[\gg]^G$-module. The reductivity of $G$ also implies that  
$$
( \ggr (\mm_\lambda) \o V^{\o m} \o \mathrm{det}^{-r})^G \simeq \ggr^\can L(r \lambda).
$$
Since $\mm_\lambda$ is supported on the null cone $\N$, $\C[\g]^G$ acts locally nilpotently on $\ggr (\mm_\lambda)$. Hence $\ggr^\can L(r \lambda)$ is a finitely generated $\C[\g^*]^G$-module. The equation \cite[(43)]{CEE} shows that the action of $\C[\mathfrak{h}]^W$ on $\ggr^\can L(r \lambda)$ factors through a surjective morphism $\C[\mathfrak{h}]^W \twoheadrightarrow \C[\g^*]^G$. This implies that $\ggr^\can L(r \lambda)$ is a finitely generated $\C[\mathfrak{h}]^W$-module. 
\end{proof}

The isomorphism $F^*_m(\mm_\lambda) \simeq L(r \lambda)$ means that the $\C^*$-action on $\mm_\lambda$ defines a $\C^*$-action on $L(r \lambda)$. This is related to the grading coming from the operator $\adh$ by
$$
\cchi^{\C^*}(L(r \lambda);q) = q^{\frac{n}{2}} \cchi^{\adh}(L(r \lambda);q^{-1}).
$$
Since the filtration $F_{\bullet}^\quo \mm_\lambda$ respects the $G \times \Cs$-action on $\mm_\lambda$, the filtration $F_{\bullet}^\can L(r \lambda)$ respects the $\s_m \times \Cs$-action on $L(r \lambda)$. Define an action of $\cc$ on $\mathfrak{h}^* \times \mathfrak{h}$ by making the first copy of $\Cs$ act by dilations on $\mathfrak{h}^*$ and the second copy of $\Cs$ acts by dilations on $\mathfrak{h}$. Then $\ggr^\can L(r \lambda)$ is a $\s_m \times \cc$-equivariant $\mathcal{O}_{\mathfrak{h}^* \times \mathfrak{h}}$-module. 

\begin{prop}\label{prop:rcasimple}
The bigraded $\s_m$-character of $\ggr^\can L(r \lambda)$ is 
$$
\cchi^{\s_m \times \cc}(\ggr^\can L(r \lambda)) = \sum_{\stackrel{\mu \in \mathcal{I}_{n,r};}{\nu \le \lambda.}} \frac{\Phi_\mu \left[ B_\nu(q,t) \right] \cdot {\mathbf k}_{\lambda,\nu}(q,t) \cdot \chi_{\mu^\star + (r^n)}}{\prod_{x \in Y_{\mu}}(1 - t^{1 + l(x)} q^{- a(x)}) (1 - t^{- l(x)} q^{1 + a(x)})}
$$
where $\mathcal{I}_{n,r}$ is the set of all $\mu \in \Pnone$ such that $\mu_i \le r$ for all $i = 1, \ds, n-1$. 
\end{prop}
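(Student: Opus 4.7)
The plan is to transport the problem across the Calaque--Enriquez--Etingof functor $F_m^*$, reducing it to the bigraded $GL_n$-isotypic character of $\rrnil_\lambda$ that was already computed in equation (\ref{eq:Phinil}).

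The starting point is the identification
\begin{equation*}
\ggr^\can L(r\lambda) \;\simeq\; F_m^*(\ggr^\quo \mm_\lambda) \;\simeq\; F_m^*(\rrnil_\lambda)
\end{equation*}
as $\s_m \times \cc$-equivariant modules over $\C[\mathfrak{h}^* \times \mathfrak{h}] \rtimes \s_m$. The first isomorphism follows from the definition $F^\can_\bullet L(r\lambda) := F_m^*(F^\quo_\bullet \mm_\lambda)$ together with the exactness of $F_m^*$, which lets taking associated graded commute with $F_m^*$. That the filtration on $\mm_\lambda$ is carried by $F_m^*$ to the declared filtration on $\Ch{m}{k}$ (placing the $x_i$ in degree one and the $y_i$ together with $\s_m$ in degree zero) follows from the formulas of Lemma \ref{lem:CEEfunctor} and the fact that the Fourier transform $\mathcal{F}$ exchanges the $x$-degree and $y$-degree filtrations. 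The second isomorphism is immediate from Proposition \ref{prop:mmnilfilter}.

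Next, Schur--Weyl duality $V^{\otimes m} \simeq \bigoplus_\nu V_\nu \otimes \chi_\nu$, with $\nu$ running over partitions of $m$ with at most $n$ parts, combined with $V_\nu \otimes \det^{-r} \simeq V_{\nu - (r^n)}$, yields
\begin{equation*}
F_m(\rrnil_\lambda) \;\simeq\; \bigoplus_\nu (\rrnil_\lambda)_{\nu - (r^n)} \otimes \chi_\nu.
\end{equation*}
The $GL_n$-isotypic component $(\rrnil_\lambda)_\sigma$ vanishes unless $\sigma \in Q_n^+$. The explicit description $\varphi \mapsto \varphi^\star = (|\varphi|, -\varphi_{n-1}, \ldots, -\varphi_1)$ shows that $(-)^\star \colon \Pnone \iso Q_n^+$ is a bijection, and $\mu^\star + (r^n)$ is a partition with nonnegative entries precisely when $\mu_1 \le r$, i.e.\ $\mu \in \mathcal{I}_{n,r}$. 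Hence the surviving $\s_m$-labels are exactly $\{\chi_{\mu^\star + (r^n)} : \mu \in \mathcal{I}_{n,r}\}$, and substituting the character formula (\ref{eq:Phinil}) for each $\cchi^\cc((\rrnil_\lambda)_{\mu^\star})$ produces the claimed expression.

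The main obstacle is the careful bookkeeping of the two $\cc$-actions: the bigrading on $\ggr^\can L(r\lambda)$ comes from the dilation action of $\cc$ on $\mathfrak{h}^* \times \mathfrak{h}$ built into $\ggr \Ch{m}{k}$, whereas the bigrading on $\rrnil_\lambda$ comes from dilations on the two copies of $\g$ inside $\gg$. The Fourier transform $\mathcal{F}$ in $F_m^* = \mathcal{F} \circ F_m$ exchanges the two $\Cs$-factors; once one verifies that this exchange matches the $(q,t)$-conventions used in writing (\ref{eq:Phinil}), the formula in the proposition follows by direct substitution.
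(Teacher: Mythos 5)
Your high-level strategy is the same as the paper's: identify $\ggr^\can L(r\lambda)$ with the Hamiltonian-reduction of $\rrnil_\lambda$ (via $F_m^*$ and Proposition~\ref{prop:mmnilfilter}), apply Schur--Weyl duality, and then plug in the isotypic character formula~\eqref{eq:Phinil}. However, the argument as written contains a genuine error.

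You assert that the explicit formula $\varphi \mapsto \varphi^\star = (|\varphi|, -\varphi_{n-1},\ldots,-\varphi_1)$ shows that $(-)^\star \colon \Pnone \to Q_n^+$ is a bijection. This is false, and the paper says so explicitly in \S\ref{sec:hilbert} (``This map is injective but clearly not bijective''). The image of $(-)^\star$ consists only of those $\sigma\in Q_n^+$ whose entries $\sigma_2,\ldots,\sigma_n$ are all $\le 0$; already for $n=4$ the weight $(2,1,1,-4)\in Q_4^+$ is not of this form. Your proof leans on the bijection at the crucial step: for each partition $\nu\vdash m$ with at most $n$ parts, the weight $\nu-(r^n)$ lies in $Q_n^+$ (it is dominant with coordinate sum $m-nr=0$), so by Schur--Weyl every such $\nu$ contributes a $\chi_\nu$-summand. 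You need to explain why the only $\nu$ that actually contribute are those of the form $\mu^\star+(r^n)$ with $\mu\in\mathcal{I}_{n,r}$ --- equivalently, those $\nu$ with $\nu_2,\ldots,\nu_n\le r$. Invoking a bijection that doesn't exist is not a substitute for this; for $\nu$ with $\nu_2 > r$ (e.g.\ $\nu=(3,3,0)$ with $n=3$, $r=2$) the associated $GL_n$-weight is in $Q_n^+$ but \emph{not} in the image of $(-)^\star$, and formula~\eqref{eq:Phinil} simply does not apply to that isotypic component. Either those components must be shown to vanish (a fact not proved in your write-up), or an additional identification is needed; as it stands, the claimed enumeration of surviving $\s_m$-labels is unjustified.

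A secondary point worth tightening: when you write $(\rrnil_\lambda \otimes V_{\nu - (r^n)})^G = (\rrnil_\lambda)_{\nu-(r^n)}$, be careful with the paper's convention $(\rrnil_\lambda)_\sigma := (\rrnil_\lambda \otimes V_{-w_0(\sigma)})^G$; under that convention the invariant space equals $(\rrnil_\lambda)_{-w_0(\nu-(r^n))}$. Make sure the labeling you adopt is consistent with the one used to derive~\eqref{eq:Phinil}, or record the symmetry you are silently using to trade $\sigma$ for $-w_0(\sigma)$.
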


\begin{proof}
The compatibility of the Hodge filtration with the action of $H$ means that we have an equality of $W \times \cc$-modules
$$
\ggr^\can L(r \lambda) = (\ggr^\quo \mm_\lambda \o V^{\o m} \o \mathrm{det}^{-r})^G = (\rrnil_\lambda \o V^{\o m} \o \mathrm{det}^{-r})^G.
$$
Via Schur-Weyl duality, this implies that 
$$
\cchi^{\s_m \times \cc}(\ggr^\can L(r \lambda)) = \sum_{\mu} \cchi^{\cc}((\rrnil_\lambda)_{\mu^\star}) \cdot \chi_{\mu^\star + (r^n)},
$$
where the sum is over the set of all $\mu \in \Pnone$ such that $\mu^\star = \eta - (r^n)$ for some partition $\eta$ of $m$ with at most $n$ parts. One can check that this set is precisely $\mathcal{I}_{n,r}$. Now the equation follows from (\ref{eq:Phinil}).
\end{proof}

It would be interesting to have an equivalent definition of the filtration $F_{\bullet}^\can L(r \lambda)$ that does not involve the functor $F^*_m$.

\section{Appendix by Eliana Zoque: \\ $T$-orbits of \pnps}
\subsection{}
Let $\g=\sl_n$. It has been proved in \cite{PNP} that every principal nilpotent pair is associated to the Young diagram of a partition $\mu$ of size $n$. We consider an $n$-dimensional vector space $V$ with a basis indexed by the boxes in the Young diagram of $\mu$. Consider $V=\langle v_{i,j}\rangle$ where $v_{i,j}$ corresponds to the square in the $i$-th row and $j$-th column. If $(i,j)$ is outside the diagram we define $v_{i,j}=0$. Let $\mathbf{e}=(e_1,\,e_2)$ be the principal nilpotent pair of $V$ defined by $e_1v_{i,j}=v_{i+1,j},\,e_2v_{i,j}=v_{i,j+1}$.

The main goal of this Appendix is to prove the following result.
\begin{thm}\label{thmclosure}
$T\cdot\mathbf{e}$ is dense in $(\g_{1,0}\oplus\g_{0,1})\cap\mathfrak{C}$.
\end{thm}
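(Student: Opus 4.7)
The plan is to prove that $Z:=(\g_{1,0}\oplus\g_{0,1})\cap\mathfrak{C}$ is irreducible of dimension $n-1$. By Lemma~\ref{lem:Torbitdense}, $T\cdot\mathbf{e}$ is already open in $Z$ and of dimension $\dim T=n-1$; since $T\cdot\mathbf{e}$ is irreducible, its closure is contained in a single component of $Z$, so irreducibility of $Z$ is equivalent to the desired density of $T\cdot\mathbf{e}$ in $Z$.

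I will first derive the defining equations of $Z$ in coordinates adapted to the bigrading. Let $V$ and $H$ denote the sets of vertical and horizontal adjacent pairs of boxes (``dominoes'') in $Y_\mu$; these index bases of $\g_{1,0}$ and $\g_{0,1}$ respectively. Writing $x=\sum_{(i,j)\in V}a_{i,j}\,E_{(i+1,j),(i,j)}$ and $y=\sum_{(i,j)\in H}b_{i,j}\,E_{(i,j+1),(i,j)}$, a direct computation shows $[x,y]\in\g_{1,1}$ has coefficient $b_{i,j}a_{i,j+1}-a_{i,j}b_{i+1,j}$ at $E_{(i+1,j+1),(i,j)}$. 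Hence $Z$ is the affine variety in $\mathbb{C}^V\oplus\mathbb{C}^H$ cut out by these \emph{square binomials}, one for each interior box $(i+1,j+1)\in Y_\mu$. A standard count gives $|V|=n-\mu_1$, $|H|=n-\mu'_1$, and number of relations $n-\mu_1-\mu'_1+1$, so $\dim Z\ge n-1$.

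The main step is to identify $Z$ with an affine toric variety. Consider the lattice homomorphism $\pi\colon\mathbb{Z}^{V\sqcup H}\to\mathbb{Z}^{Y_\mu}$ sending $a_{i,j}\mapsto e_{i+1,j}-e_{i,j}$ and $b_{i,j}\mapsto e_{i,j+1}-e_{i,j}$. Its image is the torsion-free sum-zero sublattice of $\mathbb{Z}^{Y_\mu}$, of rank $n-1$; hence $L:=\ker\pi$ is saturated, the associated toric ideal $I_L$ is prime, and $V(I_L)$ is an irreducible affine toric variety of dimension $n-1$. The dense torus orbit in $V(I_L)$ is parametrized by $(t_v)\in(\mathbb{C}^{\times})^{Y_\mu}/\mathbb{C}^{\times}$ via $a_{i,j}=t_{i+1,j}/t_{i,j}$, $b_{i,j}=t_{i,j+1}/t_{i,j}$, and coincides with $T\cdot\mathbf{e}$; therefore $V(I_L)=\overline{T\cdot\mathbf{e}}\subseteq Z$.

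The hard part is the reverse inclusion $Z\subseteq V(I_L)$, i.e., showing the square binomials cut out the full toric variety rather than a larger reducible scheme. On the open locus where all $a_{i,j},b_{i,j}\ne 0$ this is automatic. For the boundary strata, I expect to proceed by induction on $|Y_\mu|$: if $a_{i_0,j_0}=0$ at some vertical domino, the square relations propagate the vanishing along a row-and-column pattern, and the residual system on the complement reduces to the square relations for a smaller (possibly disconnected) Young-like region, for which the statement holds inductively. Alternatively, one can invoke the Eisenbud--Sturmfels theory of binomial ideals combined with the simple-connectivity of the planar region $Y_\mu$ (which guarantees that the square binomials generate $L$ as a lattice and that no saturation by coordinate variables is needed) to conclude that the square-binomial ideal equals $I_L$ on the nose. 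This combinatorial analysis of the zero-strata is the principal obstacle.
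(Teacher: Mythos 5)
Your reduction to irreducibility of $Z := (\g_{1,0}\oplus\g_{0,1})\cap\mathfrak{C}$ is correct: since Lemma~\ref{lem:Torbitdense} makes $T\cdot\mathbf{e}$ an open irreducible subset of $Z$, density is equivalent to $Z$ having a single irreducible component. Your coordinate setup, the identification of the defining ``square binomials,'' the Krull-type lower bound $\dim Z\geq n-1$, and the observation that the toric variety $V(I_L)$ is contained in $Z$ and has $T\cdot\mathbf{e}$ as its dense torus orbit, are all sound. But the argument stops exactly at the decisive point, and you say so yourself: the inclusion $Z\subseteq V(I_L)$, i.e.\ the absence of extra components of $Z$ meeting coordinate hyperplanes, is asserted but not proved. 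The claim that simple connectivity of $Y_\mu$ implies that ``no saturation by coordinate variables is needed'' is not a theorem one can quote; in Eisenbud--Sturmfels theory, the ideal generated by binomials corresponding to generators of the lattice $L$ equals the lattice ideal $I_L$ only after saturating by the product of the variables, and deciding when the saturation is superfluous is precisely the kind of stratum-by-stratum combinatorial analysis you defer to ``induction on $|Y_\mu|$'' or to an unspecified appeal to Eisenbud--Sturmfels. In other words, the whole content of the theorem has been relocated into a gap. The Krull bound does not help either: it shows every component of $Z$ has dimension at least $n-1$, not that there is only one.

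The paper's proof (the Appendix, due to Zoque) takes a genuinely different and self-contained route that avoids toric ideals entirely. It uses \cite[Lemma~5.7]{PNP} to conjugate an arbitrary $\mathbf{x}\in Z$ by $T$ into a normal form $\bigoplus_{k}\mathbf{e}_{\lambda_k}$ indexed by an admissible decomposition of $Y_\mu$ into skew diagrams $\lambda_1,\dots,\lambda_m$, where the boundary lines record exactly which matrix entries of $\mathbf{x}$ vanish. The combinatorial Lemma~\ref{lines} then shows that whenever $m>1$ one can erase a segment of the boundary of $\lambda_1$ and still have an admissible configuration, merging $\lambda_1$ with some $\lambda_s$. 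The proof concludes by an explicit one-parameter degeneration: the line $\{\mathbf{x}+\tau\mathbf{x}'\}$, where $\mathbf{x}'$ is the difference between $\mathbf{x}$ and the merged normal form, lies in $Z$ and for $\tau\neq 0$ gives a point $T$-conjugate to a pair with $m-1$ blocks; induction on $m$ then puts $\mathbf{x}$ in $\overline{T\cdot\mathbf{e}}$. This degeneration argument is exactly the ``analysis of the zero-strata'' your proposal leaves open, carried out directly on the commuting pairs rather than on the ideal. If you want to salvage the toric approach, you would still need an argument of essentially this combinatorial nature to rule out embedded and extraneous components; at that point the toric packaging is doing no work.
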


Let
$\mathbf{x}=(x_1,x_2)\in(\g_{1,0}\oplus\g_{0,1})\cap\mathfrak{C}$. 
The pair $\mathbf{x}$ admits the same associated semisimple pair as $\mathbf{e}$, therefore by Lemma 5.7 in \cite{PNP} it is conjugate to a pair of the form $\bigoplus_k\mathbf{e}_{\lambda_k}$ for a certain collection of skew-diagrams $\mathbb{\lambda}=\{\lambda_1,\, \lambda_2,\,\dots,\lambda_m\}$ with $\sum_k|\lambda_k|=n.$ The skew-diagrams in $\mathbf{\lambda}$ are subdiagrams of $\mu$ which can be described as follows:
In the Young diagram of $\mu$ draw the edge joining the squares in positions $(i,j),\,(i+1,j)$ (resp.\ $(i,j),\,(i,j+1)$) if and only if $x_1v_{i,j}\neq0$ (resp.\ $x_2v_{i,j}\neq0$).
These lines divide the diagram into the skew-diagrams $\{\lambda_1,\, \lambda_2,\,\dots,\lambda_m\}$.
Such a decomposition into a disjoint union of skew-diagrams is called admissible.
Let $\lambda_1$ be the subdiagram that contains the lower left corner $(0,0)$.

\begin{lem}\label{lines}

If $\{\lambda_1,\, \lambda_2,\,\dots,\lambda_m\}$ and $m>1$ there exists $s>1$ so that $\{\lambda_1\cup\lambda_s\}\cup\{\lambda_r|2\leq r\leq m,\,r\neq s\}$ is also admissible.
\end{lem}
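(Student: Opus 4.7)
The plan is to exploit the fact that $\lambda_1$, containing the lower-left corner $(0,0)$ and being a skew-diagram inside $\mu$, must itself be a straight partition shape. Since $m>1$ and $\mu$ is connected, $\lambda_1\subsetneq\mu$, so $\lambda_1$ admits at least one outer corner $(i_0,j_0)\in\mu\setminus\lambda_1$, i.e., a box of $\mu$ not in $\lambda_1$ whose neighbors below and to the left lie in $\lambda_1$ or outside $\mu$. I would first pick such an outer corner in a specific extremal position, say the one with maximal $j_0$ (and among these, minimal $i_0$), and let $\lambda_s$ be the piece of the given decomposition containing $(i_0,j_0)$; by construction $s\ge 2$.

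Next, I would show that $\lambda_1\cup\lambda_s$ is itself a skew-diagram. Because $\lambda_s$ is a skew-diagram and $(i_0,j_0)$ has been chosen extremally on the boundary, every box of $\lambda_s$ adjacent to $\lambda_1$ must in fact be an outer corner of $\lambda_1$; this prevents $\lambda_s$ from wrapping around underneath $\lambda_1$, so $\lambda_1\cup\lambda_s$ admits a description as a set-theoretic difference $\alpha/\beta$ of two partitions inside $\mu$.

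Finally, I would verify admissibility of the merged decomposition. Starting from operators $(x_1,x_2)\in(\g_{1,0}\oplus\g_{0,1})\cap\mathfrak{C}$ realizing the given decomposition, I would modify them by activating the edges that newly connect $\lambda_1$ to $\lambda_s$: each such edge corresponds to a coefficient of $x_1$ or $x_2$ that was previously zero and must now be assigned a non-zero value. The commutation relation $[x'_1,x'_2]=0$ imposes a linear condition on these new coefficients at each $2\times 2$ block straddling the old boundary between $\lambda_1$ and $\lambda_s$. In each such block, three of the four potential edges are already present in the old decomposition, so the relation is of the form $\alpha\cdot\beta=\gamma\cdot\delta$ with two known non-zero factors, uniquely determining the unknowns up to an overall rescaling of the basis vectors in $\lambda_s$.

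The main obstacle is this last step: the linear relations coming from consecutive $2\times 2$ blocks along the boundary staircase could in principle combine into a non-trivial cyclic compatibility condition. The extremal choice of $(i_0,j_0)$ is designed to guarantee that the graph of constraints on the boundary is a tree, so that coefficient choices propagate consistently from one block to the next. Formalizing this acyclicity, most likely by an induction along the boundary staircase between $\lambda_1$ and $\lambda_s$ combined with the fact that $\lambda_1$ is a straight partition, will be the principal technical effort of the proof.
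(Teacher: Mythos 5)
Your approach is based on a wrong idea, and the key intermediate claims are false. The assertion that ``every box of $\lambda_s$ adjacent to $\lambda_1$ must be an outer corner of $\lambda_1$'' fails already in simple cases (e.g.\ $\mu=(3,3)$, $\lambda_1=(2,2)$, $\lambda_s=\{(0,2),(1,2)\}$: the box $(1,2)$ is adjacent to $\lambda_1$ but is not an addable corner). More seriously, the extremal choice of outer corner does not produce an admissible merge. Take $\mu=(4,4,4)$, $\lambda_1=(3,2)=\{(0,0),(0,1),(0,2),(1,0),(1,1)\}$, $\lambda_2=\{(0,3),(1,3)\}$, $\lambda_3=\{(1,2)\}$, $\lambda_4=\{(2,0),(2,1),(2,2),(2,3)\}$. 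One checks directly that putting coefficient $1$ on every edge interior to a piece and $0$ elsewhere gives a commuting pair, so the decomposition is admissible. Your rule picks the addable corner with largest column, which is $(0,3)\in\lambda_2$. But $\lambda_1\cup\lambda_2$ is not even a skew diagram (its row $1$ is $\{0,1,3\}$, with a gap), and activating the edge joining $(0,2)$ and $(0,3)$ forces the commutator relation at the $2\times2$ block $\{(0,2),(0,3),(1,2),(1,3)\}$ to read ``(nonzero)$\cdot$(nonzero) $=$ $0\cdot 0$'', since the path $(0,2)\to(0,3)\to(1,3)$ is now fully present while both edges of the other path $(0,2)\to(1,2)\to(1,3)$ remain walls. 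No choice of nonzero coefficients can fix this, so the merge is inadmissible. The correct $\lambda_s$ in this example is the single box $\lambda_3=\{(1,2)\}$, at a corner of strictly smaller column index.

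This also reveals that your last paragraph worries about the wrong obstruction. Once the pattern of zero/nonzero coefficients has no forbidden $2\times2$ configuration, the remaining multiplicative constraints on the nonzero coefficients form a homogeneous system that is always solvable (set all of them equal to $1$); there is no cyclic compatibility issue and nothing to propagate. The genuine obstruction is the combinatorial one exhibited above, namely creating a $2\times2$ block in which exactly one of the two paths around the block is completed. The paper avoids a global a~priori choice of $\lambda_s$ altogether: it walks along the entire boundary staircase of $\lambda_1$, marks the points $P_1,\dots,P_r$ where other walls meet it, shows by the local $2\times2$ analysis of Figure~$3$ that at each $P_i$ at least one of the two incident boundary segments may be erased without producing a forbidden block, and then runs a chain argument starting from the free endpoints $P_0,P_{r+1}$ on $\partial\mu$ to locate one segment $[P_i,P_{i+1}]$ that is simultaneously erasable at both of its ends. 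The mergeable piece $\lambda_s$ is found by this propagation, not by selecting a single extremal corner.
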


\begin{proof}
The commutativity of $x_1$ and $x_2$ imply that the configurations in Figure \ref{fig1} are impossible in any $2\times 2$ square.

\setlength{\unitlength}{2pt}
\begin{figure}[ht]
\begin{center}
\begin{picture}(190,20)
{\color{gray1}
\multiput(10,0)(30,0){6}{\framebox(20,20)}
\multiput(10,10)(30,0){6}{\line(1,0){20}}
\multiput(20,0)(30,0){6}{\line(0,1){20}}
}
\linethickness{1mm}
\put(10,10){\line(1,0){10}}
\put(50,10){\line(1,0){10}}
\put(110,10){\line(0,-1){10}}
\put(80,10){\line(0,1){10}}
\put(140,10){\line(0,1){10}}
\put(140,10){\line(-1,0){10}}
\put(170,10){\line(0,-1){10}}
\put(170,10){\line(1,0){10}}
\end{picture}
\end{center}
\caption{Impossible configurations}\label{fig1}
\end{figure}
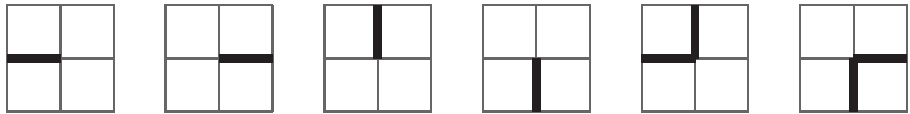

Only the configurations shown in Figure \ref{fig2} are possible in an admissible configuration.

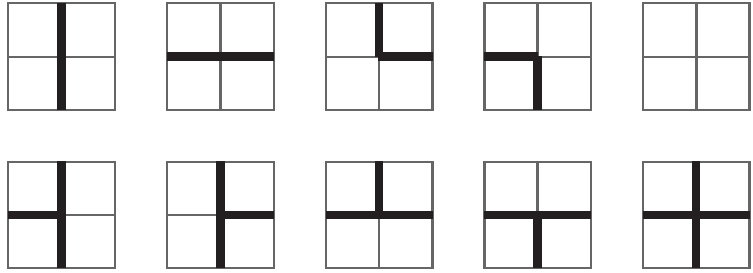
\begin{figure}[ht]
\setlength{\unitlength}{2pt}
\begin{center}
\begin{picture}(160,50)
{\color{gray1}
\multiput(10,0)(30,0){5}{\framebox(20,20)}
\multiput(10,10)(30,0){5}{\line(1,0){20}}
\multiput(20,0)(30,0){5}{\line(0,1){20}}
\multiput(10,30)(30,0){5}{\framebox(20,20)}
\multiput(10,40)(30,0){5}{\line(1,0){20}}
\multiput(20,30)(30,0){5}{\line(0,1){20}}
}
\linethickness{1mm}
\put(20,30){\line(0,1){20}}
\put(40,40){\line(1,0){20}}
\put(80,40){\line(1,0){10}}
\put(80,40){\line(0,1){10}}
\put(110,40){\line(-1,0){10}}
\put(110,40){\line(0,-1){10}}
\put(20,0){\line(0,1){20}}
\put(10,10){\line(1,0){10}}
\put(50,0){\line(0,1){20}}
\put(50,10){\line(1,0){10}}
\put(110,10){\line(1,0){10}}
\put(110,10){\line(-1,0){10}}
\put(110,10){\line(0,-1){10}}
\put(80,10){\line(1,0){10}}
\put(80,10){\line(0,1){10}}
\put(80,10){\line(-1,0){10}}
\put(140,10){\line(1,0){10}}
\put(140,10){\line(0,1){10}}
\put(140,10){\line(-1,0){10}}
\put(140,10){\line(0,-1){10}}
\end{picture}
\end{center}
\caption{Possible configurations}\label{fig2}
\end{figure}

We are to prove that it is possible to delete some lines on the boundary of $\lambda_1$ and obtain another admissible configuration.

Divide the boundary of $\lambda_1$ in the points $P_1,\,\dots P_r$ where it meets other lines. The endpoints of the boundary of $\lambda_1$ are located on the left and lower sides of the diagram of $\mu$, let $P_0$ and $P_{r+1}$ be those points. The boundary of $\lambda_1$ can be divided into the segments joining $P_{i}$ and $P_{i+1}$, for $0\leq i\leq r$.

The configurations where the points $P_0,\,\dots,\,P_{r+1}$ are shown in Figure \ref{fig3}. Note that in each of these cases there is at least one line segment on the lower left square that can be erased to obtain an admissible configuration. Those segments are dotted in Figure \ref{fig3}. Since the lines at $P_0$ and $P_{r+1}$ can be safely removed, it follows that there is a whole segment that can be removed to obtain an admissible configuration.

\begin{figure}[ht]
\setlength{\unitlength}{2pt}
\begin{center}
\begin{picture}(160,30)
{\color{gray1}
\multiput(10,0)(30,0){5}{\framebox(20,20)}
\multiput(10,10)(30,0){5}{\line(1,0){20}}
\multiput(20,0)(30,0){5}{\line(0,1){20}}
}
\multiput(20,10)(30,0){5}{\circle*{3}}
\linethickness{1mm}
\put(20,0){\line(0,1){20}}
\put(10,10){\dashbox{1}(10,0)}
\put(50,10){\line(0,1){10}}
\put(50,10){\line(1,0){10}}
\put(70,10){\dashbox{1}(10,0)}
\put(110,10){\line(1,0){10}}
\put(110,10){\line(-1,0){10}}
\put(110,0){\dashbox{1}(0,10)}
\put(80,10){\line(1,0){10}}
\put(80,10){\line(0,1){10}}
\put(50,0){\dashbox{1}(0,10)}
\put(140,10){\line(1,0){10}}
\put(140,10){\line(0,1){10}}
\put(140,0){\dashbox{1}(0,10)}
\put(130,10){\dashbox{1}(10,0)}
\end{picture}
\end{center}
\caption{Possible points of intersection}\label{fig3}
\end{figure}
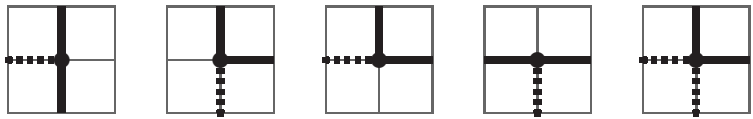

\end{proof}

\begin{examp}\label{ex1}
Let $\lambda_1$ be as shown in Figure \ref{fig4}. The segments that can be removed are the ones that join $P_2$ with $P_3$ and $P_4$ with $P_5$.

\begin{figure}[ht]
\setlength{\unitlength}{2.1pt}
\begin{center}
\begin{picture}(80,60)
{\color{gray1}
\put(0,10){\line(1,0){80}}
\put(0,20){\line(1,0){80}}
\put(0,30){\line(1,0){40}}
\put(0,40){\line(1,0){40}}
\put(0,50){\line(1,0){40}}

\put(10,0){\line(0,1){60}}
\put(20,0){\line(0,1){60}}
\put(30,0){\line(0,1){60}}
\put(40,0){\line(0,1){30}}
\put(50,0){\line(0,1){30}}
\put(60,0){\line(0,1){30}}
\put(70,0){\line(0,1){30}}
}
\linethickness{1mm}
\put(0,0){\line(1,0){80}}\put(0,0){\line(0,1){60}}
\put(0,50){\line(1,0){40}}
\put(30,30){\line(1,0){10}}
\put(30,20){\line(1,0){50}}
\put(30,20){\line(0,1){30}}
\put(60,20){\line(0,1){10}}
\put(70,0){\line(0,1){20}}

\put(1,51){$P_0$}
\put(31,51){$P_1$}
\put(31,31){$P_2$}
\put(61,21){$P_3$}
\put(71,21){$P_4$}
\put(71,1){$P_5$}
\end{picture}
\end{center}\caption{$\lambda_1$ in Example \ref{ex1}}\label{fig4}
\end{figure}
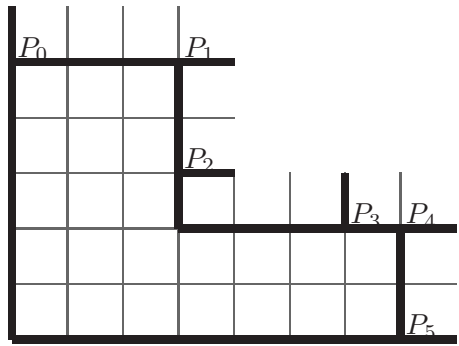
\end{examp}

\begin{proof}[Proof of Theorem \ref{thmclosure}]

Let $\mathbf{x}\in(\g_{1,0}\oplus\g_{0,1})\cap\mathfrak{C}$, we can assume that $\mathbf{x}=\bigoplus_{k\geq1}\mathbf{e}_{\lambda_k}$ with $\mathbf{\lambda}=\{\lambda_1,\, \lambda_2,\,\dots,\lambda_m\}$ as above.

We will prove that $\mathbf{x}\in\overline{(T\cdot\mathbf{e})}$ by induction on $m$. Clearly $m=1$ if an only if $\mathbf{x}\in T\cdot\mathbf{e}$. If $m>1$ we can assume that $s=2$ in Lemma \ref{lines}. We are to prove that every open set in $(\g_{1,0}\oplus\g_{0,1})\cap\mathfrak{C}^r$ containing $\mathbf{x}$ contains a pair conjugated to $\mathbf{e}_{\lambda_1\cup\lambda_2}\oplus\bigoplus_{k\geq3}\mathbf{e}_{\lambda_k}$.

Let $(x_1',x_2')=\mathbf{x'}=(\mathbf{e}_{\lambda_1\cup\lambda_2}\oplus\bigoplus_{k\geq3}\mathbf{e}_{\lambda_k})-\mathbf{x}$, i.e.,
$$x_1'v_{ij}=\begin{cases}v_{i+1,j}&\text{if }(i,j)\in\lambda_1,\,(i+1,j)\in\lambda_2\\0&\text{otherwise}
\end{cases}$$
$$x_2'v_{ij}=\begin{cases}v_{i,j+1}&\text{if }(i,j)\in\lambda_1,\,(i,j+1)\in\lambda_2\\0&\text{otherwise}
\end{cases}
$$
\end{proof}

Clearly $\mathbf{x'}\in\mathfrak{C}$ since $x_1'x_2'=x_2'x_1'=0$. Also,
$\mathbf{x'}+\mathbf{x},\mathbf{x}\in \mathfrak{C}$ and therefore
$[x_1',x_2]+[x_1,x_2']=0$. Then the line $\{\mathbf{x}+\tau\mathbf{x'}
\,|\,\tau\in\C\}$ is contained in
$(\g_{1,0}\oplus\g_{0,1})\cap\mathfrak{C}^r$ and intersects every open
set at a point other than $\mathbf{x}$. It is clear that for
$\tau\neq0$, $\mathbf{x}+\tau\mathbf{x'}$ is conjugate to
$\mathbf{e}_{\lambda_1\cup\lambda_2}\oplus\bigoplus_{k\geq3}\mathbf{e}_{\lambda_k}$.
 This
completes the induction.


\def\cprime{$'$} \def\cprime{$'$} \def\cprime{$'$} \def\cprime{$'$}
  \def\cprime{$'$} \def\cprime{$'$}

\end{document}